\newtheorem{thm}{Theorem}[subsection]
\newtheorem{cor}[thm]{Corollary}
\newtheorem{prop}[thm]{Proposition}
\newtheorem{lem}[thm]{Lemma}
\theoremstyle{definition}
\newtheorem{defn}[thm]{Definition}
\newtheorem{comp}[thm]{Computation}
\theoremstyle{remark}
\newtheorem{rem}[thm]{Remark}
\newtheorem{rems}[thm]{Remarks}
\let\c@equation\c@thm
\numberwithin{equation}{section}
\begin{document}

\title{Fields of Rationality of Cusp Forms}

\author{John Binder}
\email{binderj@math.mit.edu}
\address{
	Department of Mathematics\\Massachusetts Institute of Technology\\77 Massachusetts Avenue\\Cambridge, MA, USA}
	

\newcommand{\NN}{\mathscr{N}}
\newcommand{\CC}{\mathbb{C}}
\newcommand{\DDD}{\mathscr{D}}
\newcommand{\HH}{\mathcal{H}}
\newcommand{\RR}{\mathbb{R}}
\newcommand{\RRR}{\mathscr{R}}
\newcommand{\FF}{\mathcal{F}}
\newcommand{\KK}{\mathscr{K}}
\newcommand{\UU}{\mathscr{U}}
\newcommand{\II}{\mathscr{I}}
\newcommand{\EE}{\mathscr{E}}
\newcommand{\GG}{\mathscr{G}}
\newcommand{\PZ}{\mathbb{P}_{\mathbb{Z}}}
\newcommand{\ZZ}{\mathbb{Z}}
\newcommand{\PP}{\mathscr{P}}
\newcommand{\PPP}{\mathscr{P}}
\newcommand{\SSS}{\mathcal{S}}
\newcommand{\LL}{\mathcal{L}}
\newcommand{\MM}{\mathscr{M}}
\newcommand{\AAA}{\mathbb{A}}
\newcommand{\GGG}{\mathscr{G}}
\newcommand{\AAAA}{\AAA}
\newcommand{\Df}{\mathcal{D}_F}
\newcommand{\QQ}{\mathbb{Q}}
\newcommand{\QQQ}{\mathscr{Q}}
\newcommand{\DD}{\mathscr{D}}
\newcommand{\OO}{\mathcal{O}}
\newcommand{\VV}{\mathscr{V}}
\newcommand{\pp}{\mathfrak{p}}
\newcommand{\qq}{\mathfrak{q}}
\newcommand{\faa}{\mathfrak{a}}
\newcommand{\mm}{\mathfrak{m}}
\newcommand{\IIII}{\mathcal{I}}
\newcommand{\JJ}{\mathscr{J}}
\newcommand{\weak}{\rightharpoonup}
\newcommand{\weaks}{\rightharpoonup^*}
\newcommand{\simga}{\sigma}
\newcommand{\linf}[2]{\left\langle {#1},\, {#2}\right\rangle}
\newcommand{\into}{\hookrightarrow}
\newcommand{\im}{\text{im}}
\newcommand{\lists}[3]{{#1}_1{#2}\ldots {#2}{#1}_{#3}}
\newcommand{\qr}[2]{\left(\frac{#1}{#2}\right)}
\newcolumntype{M}{>{$}c<{$}}
\newcommand{\alg}{\text{alg}}
\newcommand{\Spec}{\mathop{\mathrm{Spec}}\nolimits}
\newcommand{\Proj}{\mathop{\mathrm{Proj}}\nolimits}
\newcommand{\Tr}{\mathop{\mathrm{Tr}}\nolimits}
\newcommand{\Hom}{\mathop{\mathrm{Hom}}\nolimits}
\newcommand{\spa}{\mathop{\mathrm{sp}}\nolimits}
\newcommand{\rank}{\mathop{\mathrm{rank}}\nolimits}
\newcommand{\Pic}{\mathop{\mathrm{Pic}}\nolimits}
\newcommand{\image}{\mathop{\mathrm{Im}}\nolimits}
\newcommand{\tors}{\mathop{\mathrm{tors}}\nolimits}
\newcommand{\ord}{\mathop{\mathrm{ord}}\nolimits}
\newcommand{\Imag}{\mathop{\mathrm{Im}}\nolimits}
\newcommand{\trdeg}{\mathop{\mathrm{trdeg}}\nolimits}
\newcommand{\codim}{\mathop{\mathrm{codim}}\nolimits}
\newcommand{\hei}{\mathop{\mathrm{ht}}\nolimits}
\newcommand{\sgn}{\mathop{\mathrm{sgn}}\nolimits}
\newcommand{\Gal}{\mathop{\mathrm{Gal}}\nolimits}
\newcommand{\supp}{\mathop{\mathrm{supp}}\nolimits}
\newcommand{\Cl}{\mathop{\mathrm{Cl}}\nolimits}
\newcommand{\CaCl}{\mathop{\mathrm{Ca\,Cl}}\nolimits}
\newcommand{\Div}{\mathop{\mathrm{Div}}\nolimits}
\newcommand{\Sym}{\mathop{\mathrm{Sym}}\nolimits}
\newcommand{\coker}{\mathop{\mathrm{coker}}\nolimits}
\newcommand{\imag}{\mathop{\mathrm{im}}\nolimits}
\newcommand{\End}{\mathop{\mathrm{End}}\nolimits}
\newcommand{\Frob}{\mathop{\mathrm{Frob}}\nolimits}
\newcommand{\Ann}{\mathop{\mathrm{Ann}}\nolimits}
\newcommand{\Art}{\mathop{\mathrm{Art}}\nolimits}
\newcommand{\rec}{\mathop{\mathrm{rec}}\nolimits}
\newcommand{\Aut}{\mathop{\mathrm{Aut}}\nolimits}
\newcommand{\Ad}{\mathop{\mathrm{Ad}}\nolimits}
\newcommand{\nr}{\mathop{\mathrm{nr}}\nolimits}
\newcommand{\cond}{\mathop{\mathrm{cond}}\nolimits}

\newcommand{\Ind}{\mathop{\mathrm{Ind}}\nolimits}
\newcommand{\spann}{\mathop{\mathrm{span}}\nolimits}
\newcommand{\Vol}{\mathop{\mathrm{Vol}}\nolimits}
\newcommand{\Irr}{\mathop{\mathrm{Irr}}\nolimits}
\newcommand{\Res}{\mathop{\mathrm{Res}}\nolimits}
\newcommand{\genus}{\mathop{\mathrm{genus}}\nolimits}
\newcommand{\scusp}{\mathop{\mathrm{scusp}}\nolimits}
\newcommand{\PProj}{\mathbb{P}\mathop{\mathrm{roj}}\nolimits}
\newcommand{\cones}[3]{\langle v_{#1},\, v_{#2},\, v_{#3}\rangle}
\newcommand{\Rsheaf}{\mathscr{R}}
\newcommand{\Qsheaf}{\mathscr{Q}}
\newcommand{\Ksheaf}{\mathscr{K}}
\newcommand{\Hsheaf}{\mathscr{H}}
\newcommand{\Msheaf}{\mathscr{M}}
\newcommand{\Rei}{\mathcal{R}}
\newcommand{\Rie}{\Rei}
\newcommand{\hol}{\text{hol}}
\newcommand{\Nsheaf}{\mathscr{N}}
\newcommand{\unr}{\text{unr}}
\newcommand{\sHom}{\mathcal{H}om}
\newcommand{\smallmat}[4]{\left(\begin{smallmatrix}{#1} & {#2} \\ {#3} & {#4} \end{smallmatrix} \right)}
\newcommand{\twomat}[4]{\begin{pmatrix}{#1} & {#2} \\ {#3} & {#4} \end{pmatrix}}
\newcommand{\Proh}{\Proj}
\newcommand{\jj}{\mathfrak{j}}
\newcommand{\old}{\text{old}}
\newcommand{\bs}{\backslash}
\newcommand{\diam}[1]{\langle {#1} \rangle}
\newcommand{\Alg}{\textbf{Alg\,}}
\newcommand{\BB}{\mathcal{B}}
\newcommand{\Detla}{\Delta}
\newcommand{\iso}{\xrightarrow{\sim}}
\newcommand{\dep}{\mathop{\mathrm{dep}}\nolimits}
\newcommand{\ind}{\mathop{\mathrm{ind}}\nolimits}
\newcommand{\vol}{\mathop{\mathrm{vol}}\nolimits}
\newcommand{\tr}{\mathop{\mathrm{tr}}\nolimits}
\newcommand{\Stab}{\mathop{\mathrm{Stab}}\nolimits}
\newcommand{\St}{\mathop{\mathrm{St}}\nolimits}
\newcommand{\meas}{\mathop{\mathrm{meas}}\nolimits}
\newcommand{\disc}{\mathop{\mathrm{disc}}\nolimits}
\newcommand{\cusp}{\mathop{\mathrm{cusp}}\nolimits}
\newcommand{\spec}{\mathop{\mathrm{spec}}\nolimits}
\newcommand{\geom}{\mathop{\mathrm{geom}}\nolimits}
\newcommand{\pl}{\mathop{\mathrm{pl}}\nolimits}
\newcommand{\new}{\mathop{\mathrm{new}}\nolimits}

\newcommand{\Lie}{\mathop{\mathrm{Lie}}\nolimits}

\newcommand{\trace}{\mathop{\mathrm{trace}}\nolimits}
\newcommand{\mupl}{\widehat \mu^{\mathop{\mathrm{pl}}}}
\newcommand{\nupl}{\widehat \nu^{\mathop{\mathrm{pl}}}}
\newcommand{\mucusp}{\widehat \mu^{\cusp}}
\newcommand{\mudisc}{\widehat \mu^{\disc}}
\newcommand{\GL}{\mathop{\mathrm{GL}}\nolimits}
\newcommand{\PGL}{\mathop{\mathrm{PGL}}\nolimits}
\newcommand{\SL}{\mathop{\mathrm{SL}}\nolimits}
\newcommand{\EP}{\mathop{\mathrm{EP}}\nolimits}

\newcommand{\one}{\mathbf{1}}

\newcommand{\n}{\mathfrak{n}}
\newcommand{\nn}{\mathfrak{n}}
\newcommand{\ff}{\mathfrak{f}}
\newcommand{\oo}{\mathfrak{o}}
\newcommand{\ft}{\mathfrak{t}}
\newcommand{\dd}{\mathfrak{d}}
\newcommand{\fa}{\mathfrak{a}}
\newcommand{\XX}{\mathfrak{X}}

\newcommand{\wh}{\widehat}

\begin{abstract}
In this paper, we prove that for any totally real field $F$, weight $k$, and nebentypus character $\chi$, the proportion of Hilbert cusp forms over $F$ of weight $k$ and character $\chi$ with bounded field of rationality approaches zero as the level grows large. This answers, in the affirmative, a question of Serre. The proof has three main inputs: first, a lower bound on fields of rationality for admissible $\GL_2$ representations; second, an explicit computation of the (fixed-central-character) Plancherel measure for $\GL_2$; and third, a Plancherel equidsitribution theorem for cusp forms with fixed central character.  The equidistribution theorem is the key intermediate result and builds on earlier work of Shin and Shin-Templier and mirrors work of Finis-Lapid-Mueller by introducing an explicit bound for certain families of orbital integrals.\end{abstract}

\maketitle

\section{Introduction} 
Given a cuspidal Hecke eigenform $f$, define its field of rationality $\QQ(f)$ to be the number field generated by all its Fourier coefficients $a_n(f)$.

In \cite{Ser97}, Serre proved the following:
\begin{thm}[(Serre, 1997)] \label{thm1.1} Fix an even weight $k$, a prime $p$, and an integer $A \in \ZZ_{\geq 1}$. Let $(N_\lambda)$ be a sequence of levels coprime to $p$ with $N_\lambda \to \infty$. As $\lambda \to \infty$, the proporition of cusp forms of level $\Gamma_0(N_\lambda)$ whose field of rationality satisfies $[\QQ(f):\QQ] \leq A$ approaches $0$.
\end{thm}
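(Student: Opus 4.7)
The plan is to combine a Northcott-type finiteness result for the possible Hecke eigenvalues with an equidistribution theorem for the eigenvalues of a fixed Hecke operator across forms of varying level. Fix an auxiliary prime $\ell\neq p$, and take $\lambda$ large enough that $\ell\nmid N_\lambda$. If $f\in S_k(\Gamma_0(N_\lambda))$ is a normalized Hecke eigenform with $[\QQ(f):\QQ]\leq A$, then $a_\ell(f)$ is an algebraic integer of degree at most $A$ lying in $\QQ(f)$. Its Galois conjugates $\sigma(a_\ell(f))=a_\ell(f^\sigma)$ are themselves $T_\ell$-eigenvalues, since $S_k(\Gamma_0(N_\lambda))$ carries a $\QQ$-rational structure preserved by $T_\ell$; they therefore all satisfy Deligne's bound $|\sigma(a_\ell(f))|\leq 2\ell^{(k-1)/2}$. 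By Northcott's theorem, the set $\Omega_{A,\ell,k}$ of possible values of $a_\ell(f)$ is \emph{finite}, independent of $\lambda$.

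The second ingredient is equidistribution: as $N_\lambda\to\infty$, the normalized eigenvalues $a_\ell(f)/\ell^{(k-1)/2}$, for $f$ ranging over an eigenform basis of $S_k(\Gamma_0(N_\lambda))$, should equidistribute on $[-2,2]$ with respect to the unramified tempered $\GL_2(\QQ_\ell)$-Plancherel measure $\mu_\ell^{\pl}$. This measure is absolutely continuous with respect to Lebesgue measure, so the finite, hence null, set $\ell^{-(k-1)/2}\Omega_{A,\ell,k}$ has $\mu_\ell^{\pl}$-mass zero. Consequently the proportion of $f\in S_k(\Gamma_0(N_\lambda))$ with $a_\ell(f)\in\Omega_{A,\ell,k}$ tends to $0$; a fortiori so does the proportion with $[\QQ(f):\QQ]\leq A$, yielding the theorem.

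The equidistribution step proceeds via the Eichler-Selberg trace formula. For each $n\geq 1$, I would expand $\Tr(T_{\ell^n}\mid S_k(\Gamma_0(N_\lambda)))$ as a main identity contribution of order $\dim S_k(\Gamma_0(N_\lambda))\cdot c_n$, where $c_n$ is (the appropriate normalization of) the $n$-th moment of $\mu_\ell^{\pl}$, plus elliptic, hyperbolic, and unipotent contributions that grow as $o(\dim S_k(\Gamma_0(N_\lambda)))$ uniformly in $n$ on any compact range. Equidistribution of the normalized eigenvalues then follows by Stone-Weierstrass applied to polynomials in $a_\ell(f)/\ell^{(k-1)/2}$. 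The main obstacle I anticipate is controlling these non-identity contributions uniformly as the level varies: this requires estimates on orbital integrals at primes dividing $N_\lambda$ together with discriminant- or class-number-based bounds in the elliptic term, and is precisely the kind of input that the more general Plancherel equidistribution machinery outlined in the abstract is designed to furnish.
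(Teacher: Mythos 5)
Your argument is, modulo one slip, the one the paper attributes to Serre---and indeed the paper does not reprove Theorem \ref{thm1.1}, it only sketches Serre's original proof: Northcott finiteness for Hecke eigenvalues of bounded degree, plus Plancherel equidistribution of the eigenvalues of a single Hecke operator at a prime coprime to the level, plus the observation that a finite set is null for the absolutely continuous limiting measure. Your Eichler--Selberg sketch of the equidistribution input is also consistent with Serre's.

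The slip is the opening move. You fix an auxiliary prime $\ell\neq p$ and then ``take $\lambda$ large enough that $\ell\nmid N_\lambda$.'' Nothing in the hypotheses guarantees this: take $N_\lambda = \ell q_\lambda$ for distinct primes $q_\lambda \neq p,\ell$ and $\ell\mid N_\lambda$ for every $\lambda$. The theorem's only coprimality hypothesis is $(N_\lambda,p)=1$; the auxiliary prime must therefore be $p$ itself, and your stipulation $\ell\neq p$ has no basis and in fact rules out the one valid choice. This is not cosmetic: the inability to choose an auxiliary prime eventually coprime to an arbitrary level sequence is precisely the obstruction that Serre's question (which the rest of this paper answers) is about. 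Replace $\ell$ by $p$ throughout and your proof becomes correct and matches the paper's account of Serre's argument.
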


The argument was as follows: first, he used a trace formula argument to show that, as $N_\lambda \to \infty$, the eigenvalues of $T_p$ are distributed according to the Plancherel measure on $[-2p^{\frac{k-1}{2}},\, 2p^{\frac{k-1}{2}}]$. He then noted that all points have measure zero, and that the set of Weil-$p$-integers of weight $k$ and degree at most $A$ is finite. In particular, the proportion of cusp forms with $[\QQ(a_p(f)):\QQ] \leq A$ must be asymptotically zero.

On page 89 that paper, Serre posited that his theorem could be extended to arbitrary sequences. It is our goal to answer Serre's question in the affirmative and extend the result in three directions. First, we look at Hilbert cusp forms over an arbitrary totally real field $F$. Second, instead of restricting to cusp forms with trivial character, we allow ourselves to look at forms of an arbitrary (fixed) character.  Third, we'll be able to look at cusp forms of either even or odd weight.

We'll fix here some notation that will be in use throughout the paper. Fix a totally real field $F$ with $[F:\QQ] = n$, a weight $k = (k_1,\ldots,\, k_n)$, and a level $\nn\subseteq \oo_F$; let $\chi: (\oo_F/\nn)^\times \to \CC^\times$ be a character. Let $B_{k}(\Gamma_1(\nn),\, \chi)$ be a basis of Hecke eigenforms of weight $k$, level $\Gamma_1(\nn)$, and character $\chi$. Fix moreover an integer $A \in \ZZ_{\geq 1}$. We define
$$B_k(\Gamma_1(\nn),\, \chi)_{\leq A} = \{f\in B_{k}(\Gamma_1(\nn),\, \chi) \mid [\QQ(f):\QQ] \leq A\}.$$

In our notation, Serre's theorem can be rephrased as follows: 
\begin{thm} \label{thm1.2}Let $F = \QQ$. Fix an auxiliary prime $p$, an even weight $k$, and an integer $A \geq 1$. Let $\nn_\lambda \to \infty$ be a sequence of levels with $(\nn_\lambda,\, p) = 1$ for all $\nn_\lambda$. Then
$$\lim_{\lambda \to \infty}\frac
	{B_k(\Gamma_1(\nn_\lambda),\,1)_{\leq A}}
	{B_k(\Gamma_1(\nn_\lambda),\,1)}
= 0$$
where $1$ denotes the trivial character.
\end{thm}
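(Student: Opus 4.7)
The strategy I would pursue mirrors Serre's original four-step argument.

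First, normalize the Hecke eigenvalues. For each $f \in B_k(\Gamma_1(\nn_\lambda),\, 1)$, set $a'_p(f) = a_p(f)/p^{(k-1)/2}$; since $(\nn_\lambda,\, p) = 1$, the operator $T_p$ acts on this space through the unramified Hecke algebra at $p$, and Deligne's Ramanujan bound gives $a'_p(f) \in [-2,\, 2]$.

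Second, control the possible values of $a_p(f)$ under the hypothesis $[\QQ(f):\QQ] \leq A$. For any $\sigma \in \Gal(\overline{\QQ}/\QQ)$ there is a conjugate eigenform $f^\sigma \in B_k(\Gamma_1(\nn_\lambda),\, 1)$ with $a_n(f^\sigma) = \sigma(a_n(f))$ for all $n$, so Deligne applied to $f^\sigma$ yields $|\sigma(a_p(f))| \leq 2p^{(k-1)/2}$. Thus $a_p(f)$ is an algebraic integer of degree at most $A$, all of whose Galois conjugates lie in a fixed compact interval. By a Northcott-type finiteness argument, the set $S_A \subseteq [-2,\, 2]$ of all possible values of $a'_p(f)$ under the hypothesis is \emph{finite}, independently of $\lambda$.

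Third, and this is the crux, establish the equidistribution statement
$$\mu_\lambda \;:=\; \frac{1}{\# B_k(\Gamma_1(\nn_\lambda),\,1)} \sum_{f \in B_k(\Gamma_1(\nn_\lambda),\,1)} \delta_{a'_p(f)} \;\weak\; \mu_p^{\pl} \qquad \text{as } \lambda\to\infty,$$
where $\mu_p^{\pl}$ is the local Plancherel measure on the unramified tempered principal series of $\PGL_2(\QQ_p)$, pushed forward to $[-2,\,2]$ via the Satake parameter. The method is the Eichler-Selberg trace formula: apply it with test function equal to the characteristic function of $\Gamma_1(\nn_\lambda)$ away from $p$, tensored with the $m$-th unramified Hecke operator $T_{p^m}$ at $p$; matching moments $\int t^m\, d\mu_\lambda$ against $\int t^m\, d\mu_p^{\pl}$ for all $m\geq 0$ reduces the claim to showing the identity orbital integral (of volume $\vol(\Gamma_1(\nn_\lambda)\bs\HH)$) dominates, and all non-central elliptic and hyperbolic geometric contributions are $o(\# B_k(\Gamma_1(\nn_\lambda),\,1))$. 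I expect this third step to be the main obstacle: one needs bounds on orbital integrals that are uniform enough in the level $\nn_\lambda$ to win against the growth of $\# B_k$.

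Fourth, conclude. Since $\mu_p^{\pl}$ is absolutely continuous with respect to Lebesgue measure on $[-2,\,2]$, it has no atoms, so $\mu_p^{\pl}(S_A) = 0$. Given $\epsilon>0$, pick a finite union $U$ of small open intervals containing $S_A$ with $\mu_p^{\pl}(U) < \epsilon$; by the portmanteau theorem, $\limsup_\lambda \mu_\lambda(U) \leq \epsilon$. Every $f \in B_k(\Gamma_1(\nn_\lambda),\,1)_{\leq A}$ satisfies $a'_p(f) \in S_A \subseteq U$, so the desired ratio is bounded above by $\mu_\lambda(U)$, and letting $\epsilon \to 0$ yields the theorem.
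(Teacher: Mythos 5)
Your proposal reproduces Serre's original argument from \cite{Ser97}, and this is precisely what the paper invokes for this result: Theorem \ref{thm1.2} is stated as a rephrasing of Theorem \ref{thm1.1}, with the outline you give (trace-formula equidistribution of normalized $T_p$-eigenvalues to the Plancherel measure, plus finiteness of Weil-$p$-integers of bounded degree, plus a portmanteau step) matching the paper's recap of Serre's proof verbatim. The step you flag as the main obstacle — uniform bounds on orbital integrals in the Eichler--Selberg trace formula as $\nn_\lambda \to \infty$ — is indeed the technical core, and it is exactly here that the hypothesis $(\nn_\lambda, p) = 1$ is used: it keeps $T_{p^m}$ in the unramified Hecke algebra at $p$ and makes the non-identity geometric terms bounded while the identity term grows like $\dim S_k$. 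Serre supplies these estimates, so your outline is correct.

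It is worth contrasting this with the route the paper itself takes to its Main Theorem (\ref{thm1.3}/\ref{thm10.1}), which subsumes Theorem \ref{thm1.2} and drops the coprimality hypothesis that your approach needs. Rather than tracking Hecke eigenvalues at a fixed \emph{unramified} auxiliary prime, the paper works adelically: it proves a fixed-central-character Plancherel equidistribution theorem (Theorem \ref{thm9.1}) for $\GL_2(F_S)^{\wedge,\chi}$, relates fields of rationality to \emph{conductors} of local representations (Proposition \ref{prop3.6}), computes the fixed-central-character Plancherel measure explicitly (Computation \ref{comp6.13}), and then partitions the level sequence into subsequences according to $\ord_{\pp_i}(\nn_\lambda)$ at a finite set of large auxiliary primes. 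The local analysis there happens precisely at primes \emph{dividing} the level, where your Satake-parameter argument has no purchase, and it replaces the Eichler--Selberg formula with Arthur's invariant trace formula with fixed central character, bounding orbital integrals via the Bruhat--Tits tree. Your more elementary approach is the right one for Theorem \ref{thm1.2} as stated, but it does not extend to the arbitrary sequences of Theorem \ref{thm1.3}, which is the paper's actual goal.
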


Let $\chi:(\oo_F/\nn)^{\times} \to \CC^\times$ be a character and $k$ be a weight. There is an obstruction to the existence of a cusp form of weight $k$ and character $\chi$. The weight $k$ determines the central character $\chi_{\infty}$ of the associated automorphic representation at the Archimedean places. As such, if such a cusp form exists, then there must be an automorphic character $Z(F) \bs Z(\AAA_F) \to \CC^\times$ that restricts to $\chi$ on $\wh \oo_F^\times$ and $\chi_{\infty}$ on $Z(F_\infty)$. If such an automorphic character exists, we say $\chi$ \emph{occurs in weight $k$}. For instance, when $F = \QQ$, a character $\chi$ occurs in weight $k$ if and only if $\chi(-1) = (-1)^k$. When $F \neq \QQ$ this requirement is more stringent because $\oo_F^\times$ is infinite. 

Our Main Theorem is:
\begin{thm}[(Theorem \ref{thm10.1})] \label{thm1.3} Fix a totally real field $F$, a weight $k = (k_1,\ldots,\, k_n)$, a character $\chi: F^\times \bs \AAA_F^\times \to \CC^{\times}$ of conductor $\ff$ occurring in weight $k$, and an integer $A \geq 1$. Let $(\nn_{\lambda})$ be any sequence of ideals with $\ff \mid \nn_\lambda$ and $N(\nn_\lambda) \to \infty$ as $\lambda \to \infty$. Then
$$\lim_{\lambda \to \infty} \frac
	{B_k(\Gamma_1(\nn_\lambda),\,\chi)_{\leq A}}
	{B_k(\Gamma_1(\nn_\lambda),\,\chi)}
= 0.$$
\end{thm}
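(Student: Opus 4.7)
The plan is to follow Serre's three-step strategy, generalized via the three inputs advertised in the abstract. Fix the sequence $(\nn_\lambda)$ as in the hypothesis.

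\textbf{Step 1 (auxiliary place and local rationality).} First, I would select an auxiliary finite place $v\nmid \ff$ at which the $v$-adic valuation of $\nn_\lambda$ stays bounded along some subsequence; this is available by pigeonhole unless the level grows at every prime, in which case the equidistribution step below must be invoked in a level-varying form. Applying the paper's first input, a lower bound on fields of rationality for admissible $\GL_2$ representations, the constraint $[\QQ(f):\QQ]\leq A$ translates into a constraint on the local component: $\pi_{f,v}$ is forced into a subset $\Omega_{v,\leq A}$ of $\widehat{\GL_2(F_v)}_{\chi_v}$ whose members have local field of rationality of degree at most $A$. Combined with the Ramanujan bound for Hilbert cusp forms and Northcott's theorem applied to the resulting Satake parameters, $\Omega_{v,\leq A}$ is a finite subset of the tempered unitary dual with central character $\chi_v$.

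\textbf{Step 2 (measure-zero bad set).} Using the paper's explicit computation of the fixed-central-character Plancherel measure $\widehat\mu^{\pl}_{v}$ on $\widehat{\GL_2(F_v)}_{\chi_v}$, I would observe that this measure has continuous density on the tempered principal-series component and, crucially, assigns mass zero to every single point of that component. In particular, the finite set $\Omega_{v,\leq A}$ from Step 1 is a $\widehat\mu^{\pl}_v$-null set.

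\textbf{Step 3 (equidistribution and conclusion).} Finally, I would invoke the paper's Plancherel equidistribution theorem for cusp forms with fixed central character: the empirical measures
$$\widehat\mu_\lambda \;:=\; \frac{1}{|B_k(\Gamma_1(\nn_\lambda),\chi)|}\sum_{f\in B_k(\Gamma_1(\nn_\lambda),\chi)} \delta_{\pi_{f,v}}$$
converge weakly to $\widehat\mu^{\pl}_{v}$. Since $\Omega_{v,\leq A}$ is a closed null set and $\widehat\mu^{\pl}_v$ admits upper-approximation of $\one_{\Omega_{v,\leq A}}$ by compactly supported continuous functions of arbitrarily small integral, one obtains
$$\limsup_{\lambda\to\infty} \frac{|B_k(\Gamma_1(\nn_\lambda),\chi)_{\leq A}|}{|B_k(\Gamma_1(\nn_\lambda),\chi)|} \;\leq\; \widehat\mu^{\pl}_{v}(\Omega_{v,\leq A}) \;=\; 0,$$
yielding the theorem.

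The main obstacle will be establishing the Plancherel equidistribution theorem of Step 3. Working with fixed central character forces one to rework the trace-formula techniques of Shin and Shin-Templier in this constrained setting, and allowing the level to vary arbitrarily (rather than coprimely to a fixed auxiliary place, as in Serre) requires uniform bounds on the relevant families of orbital integrals in the spirit of Finis-Lapid-Mueller. A secondary subtlety is the lower bound of Step 1: one must carefully relate the global field $\QQ(f)$ to the local fields $\QQ(\pi_{f,v})$ and translate the arithmetic constraint $[\QQ(f):\QQ]\leq A$ into the geometric statement that $\pi_{f,v}$ belongs to a thin, explicitly describable subset of the unitary dual.
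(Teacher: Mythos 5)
There is a genuine gap, and it is precisely the ``$r=1$ difficulty'' the paper flags in its introduction. Your Step 1 does not produce the situation you need. First, the pigeonhole claim fails as stated: for a sequence like $\nn_\lambda = \prod_{p\leq \lambda} p$, every prime eventually divides $\nn_\lambda$ exactly once, so no fixed place $v$ has $\ord_v(\nn_\lambda)=0$ along an infinite subsequence; the best you can arrange is a \emph{bounded, nonzero} valuation. Second, at such a place the finiteness-plus-null-set argument collapses. If $\ord_v(\nn_\lambda)=1$, the local component $\pi_{f,v}$ may be an unramified twist of Steinberg: its field of rationality is small (so it lies in your $\Omega_{v,\leq A}$), there is no Satake parameter to which a Northcott-type argument applies, and -- crucially -- the Steinberg points are discrete series, so by the fixed-central-character Plancherel computation (Computation \ref{comp6.13}(3), Proposition \ref{prop6.12}(ii)) they carry \emph{positive} mass $\tfrac{q-1}{2}$ each. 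Hence $\Omega_{v,\leq A}$ is not $\mupl_{v,\chi_v}$-null, and your Step 3 only yields a limsup bound of roughly $\tfrac{q-1}{q+1}$, not $0$. (A further technical point: Theorem \ref{thm9.1} requires the levels to be coprime to $S$, so you cannot feed a varying level at $v$ into the test-function slot at $v$; the paper instead fixes the local level there by absorbing $\phi_{\pp^{t},\chi,\pp}$ into the counting function.)

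The paper's proof is built to get around exactly this: it fixes $\epsilon>0$, chooses a \emph{large finite set} of large primes $\pp_1,\ldots,\pp_r$ coprime to $\ff$ with $\prod_i \tfrac{q_i-1}{q_i+1}<\epsilon$, and splits $(\nn_\lambda)$ into subsequences according to $\ord_{\pp_i}(\nn_\lambda)\in\{0,1,2,\geq 3\}$. Conductor $\geq 3$ at some $\pp_i$ is handled by Proposition \ref{prop3.6} together with a dimension-count of oldforms (Lemma \ref{lem10.2}, using Corollary \ref{cor9.3}); conductor $0$ at some $\pp_i$ is Serre's argument, which is where your Steps 1--3 are actually valid (Lemma \ref{lem10.4}); conductor exactly $2$ uses the explicit computation that almost all conductor-$2$ orbits have large field of rationality (Lemma \ref{lem10.3}) plus equidistribution; and the remaining case, conductor exactly $1$ at \emph{all} the $\pp_i$, is where the Steinberg mass appears and is beaten only by multiplying the factors $\tfrac{q_i-1}{q_i+1}$ over many primes. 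So the single-auxiliary-place strategy cannot prove the theorem as stated; you need either the multi-prime product trick or some other mechanism to handle sequences whose valuation at every prime is eventually exactly $1$.
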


The key intermediate result in our paper is the Plancherel equidistribution theorem:
\begin{thm}[(Plancherel equidistribution theorem, \ref{thm9.1})] \label{thm1.4}Fix $F$ and let $S$ be a finite set of finite places of $F$. Fix a discrete series representation $\pi_{\infty}$ of $\GL_2(F_{\infty})$ and let $\chi: F^\times \bs \AAA_F^\times \to \CC^\times$ be an automorphic character of conductor $\ff$ extending $\chi_{\pi_\infty}$.
Let $\nn_\lambda$ be a sequence of levels coprime to $S$, so that $\ff^S \mid \nn_\lambda$ and $N(\nn_\lambda) \to \infty$. 

As $\lambda \to \infty$, the $S$-components of cuspidal automorphic representations $\pi = \pi_S\otimes\pi^{S,\infty}\otimes \pi_{\infty}$, where $\pi^{S,\infty}$ has conductor dividing $\nn_\lambda$, and where $\chi_{\pi} = \chi$, are equidistributed according to the Plancherel measure $\mupl_{S,\chi}$, when counted with the appropriate multiplicity.
\end{thm}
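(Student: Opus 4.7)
The plan is to apply the Arthur-Selberg trace formula for $\GL_2$ with fixed central character $\chi$, in the style of Shin and Shin-Templier, and show that the spectral average is dominated by the central identity term on the geometric side, which will reproduce the Plancherel measure. By the standard density argument it suffices, for each test function $\phi_S\in C_c^\infty(\GL_2(F_S),\chi_S^{-1})$, to verify
$$\frac{1}{N_\lambda}\sum_{\pi}m(\pi)\,\tr\pi_S(\phi_S)\;\longrightarrow\;\int \tr\pi_S(\phi_S)\,d\mupl_{S,\chi}(\pi_S),$$
where the sum runs over cuspidal $\pi$ with central character $\chi$, archimedean component in the discrete series of $\pi_\infty$, and $S$-outside conductor dividing $\nn_\lambda$, and $N_\lambda$ is the corresponding normalization.

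To realize the left-hand side spectrally I will take a global test function $\phi=\phi_S\otimes \phi^{S,\infty}_{\nn_\lambda}\otimes \phi_\infty$, with $\phi^{S,\infty}_{\nn_\lambda}$ the normalized $\chi$-twisted projector onto $K_1(\nn_\lambda)^{S,\infty}$-invariants (the weighting to isolate exact conductor will come from the explicit Plancherel computation for $\GL_2$ already in the paper), and $\phi_\infty$ a pseudo-coefficient (Euler-Poincar\'e function) for $\pi_\infty$. Applied to the invariant trace formula, the cuspidal spectral side then produces the desired weighted sum; the Eisenstein contribution, parametrized for $\GL_2$ by pairs of Hecke characters, I expect to be negligible because a discrete-series pseudo-coefficient has essentially vanishing trace against the continuous spectrum, up to boundary terms of lower order in $N(\nn_\lambda)$. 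On the geometric side, the central classes $\gamma\in Z(F)$ contribute $\phi_S(1)\cdot \vol(K_1(\nn_\lambda)^{S,\infty})^{-1}$ up to fixed volume constants, and a direct calculation using the Plancherel formula for $\GL_2(F_S)$ with central character $\chi_S$ will show that after dividing by $N_\lambda$ this matches the right-hand side exactly.

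The main obstacle, and the technical heart of the argument, will be showing that every non-central geometric term is $o(N_\lambda)$; this is the ``explicit bound for certain families of orbital integrals'' promised in the abstract, mirroring Finis-Lapid-Mueller. Writing a non-central term as
$$\vol_\gamma\cdot O_\gamma^{S}(\phi_S)\cdot O_{\gamma,\infty}(\phi_\infty)\cdot O_\gamma^{S,\infty}(\phi^{S,\infty}_{\nn_\lambda}),$$
the archimedean factor $O_{\gamma,\infty}(\phi_\infty)$ confines $\gamma$ to a controlled region (a finite regular-elliptic support plus polynomially-weighted hyperbolic classes), and $O_\gamma^S(\phi_S)$ is harmlessly bounded in $\gamma$. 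The crux will be a uniform bound of the form $O_\gamma^{S,\infty}(\phi^{S,\infty}_{\nn_\lambda})\ll \vol(K_1(\nn_\lambda))^{-1}N(\nn_\lambda)^{-\delta}$ for some $\delta>0$, proved place by place on $\GL_2$ by an explicit Bruhat decomposition and lattice-point count whose decay in $N(\nn_\lambda)$ is extracted from the conductor of $\gamma$. Summing over $\gamma$, using that the number of $F$-conjugacy classes with discriminant and support bounded in terms of $\nn_\lambda$ is polynomial in $N(\nn_\lambda)$, will close the argument.
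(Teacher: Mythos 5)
Your overall route is the same as the paper's: a fixed-central-character simple invariant trace formula with an Euler--Poincar\'e pseudo-coefficient at infinity and a level projector away from $S$, the central term recovering $\phi_S(1)=\mupl_{S,\chi}(\widehat\phi_S)$ via the fixed-central-character Plancherel formula, and a Sauvageot density argument to pass from $\widehat\phi_S$ to general $\widehat f_S$. (Two small points where the paper is cleaner: because $\phi_\xi$ is cuspidal at the archimedean places, the invariant trace formula used here has \emph{no} continuous contribution at all, so there is nothing to dismiss ``up to boundary terms''; and for $F=\QQ$ the geometric side also contains constant terms $Q_\gamma$ along the diagonal torus, which are not orbital integrals and must be bounded separately, as in Propositions \ref{prop8.4} and \ref{prop8.5}.)

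The genuine gap is in your final summation over $\gamma$. You propose a per-class bound $O_\gamma^{S,\infty}(\phi^{S,\infty}_{\nn_\lambda})\ll \vol(K_1(\nn_\lambda))^{-1}N(\nn_\lambda)^{-\delta}$ for ``some $\delta>0$'' and then sum over a family of classes whose cardinality you allow to grow polynomially in $N(\nn_\lambda)$; as stated this does not close, since an unspecified power saving cannot beat an unspecified polynomial count, and the uniformity in $\gamma$ you invoke is exactly what is hard (the natural bound, as in Proposition \ref{prop8.6}, carries a $\gamma$-dependent constant such as $O_\gamma(\one_{Z\cdot K})^2$). The missing observation is that no growing family of classes occurs: the test function away from $S$ is supported in $Z(\AAA^{S,\infty})K^{S,\infty}$ for every $\lambda$ (since $\Gamma_0(\nn_\lambda)\subseteq K^{S,\infty}$), and the cuspidality of $\phi_\xi$ forces the contributing $\gamma$ to be elliptic at all infinite places; hence only a finite set of semisimple classes modulo $Z(F)$, independent of $\lambda$, intersects the support (this is Lemma \ref{lem9.2}). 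Once the class set is fixed, any per-class decay suffices, and the paper obtains $O_\gamma(\one_{Z(\AAA^{\infty})\Gamma_0(\nn_\lambda)})\ll_{\gamma,\epsilon} N(\nn_\lambda)^{-1+\epsilon}$ (and the analogous bound for $Q_\gamma$) by counting fixed segments in the Bruhat--Tits tree, which plays the role of your Bruhat-decomposition lattice count. So either insert the finiteness lemma, or, if you insist on letting the class count grow, you must prove a bound uniform in $\gamma$ with an explicit $\delta$ exceeding the growth exponent, which your sketch does not supply.
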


Once this is proved, the Main Theorem follows by relating the field of rationality of certain local representations to their conductors (Proposition \ref{prop3.6}) and from explicit computations with the (fixed central character) Plancherel measure over $\GL_2(F_{\pp})$ (Computation \ref{comp6.13}).  Indeed, for a large prime and $\pp$ and a given conductor, $r\neq 1$ at $\pp$, we show that only a small proportion of automorphic representations (by Plancherel measure) of conductor $r$ have small field of rationality.  Thus, if we take a large enough prime and break our sequence of levels $(\nn_\lambda)$ into subsequences depending on $\ord_{\pp}(\nn_\lambda)$, we can use the Plancherel equidistribution theorem to handle each subsequence separately.  There is some difficulty with the $r = 1$ case, which we get around by taking a large set of large primes.  The details are the crux of the proof in Chapter 10.

We have stated the Plancherel equidistribution theorem in greater generality than necessary to prove the main theorem. Indeed, our main theorem is (conjecturally) vacuous in certain situations: for instance, if $k = (k_1,\ldots,\, k_n)$ and there is an $i,\,j$ with $k_i\not\equiv k_j$ mod $2$, then the associated representation is not $C$-algebraic and therefore, at least conjecturally, will not have a finite-degree field of rationality (see \cite[section 2]{ST13} for a discussion of $C$-algebraicity; the failure of $C$-algebraicity for mixed-parity cusp forms is basically \cite[Theorem 1.4 (2)]{RT11}). However, because the methods we use to prove the Plancherel equidistribution theorem are representation-theoretic in nature, we can prove it without any algebraicity assumptions.

We will briefly mention three papers that include results in this direction, and which are the inspiration for our ideas:
\begin{itemize} 
\item Shin proves an equidistribution theorem for Hilbert modular forms of level $\Gamma$, where $(\Gamma)$ is a sequence of open-compact subgroups that `converge to one' in the appropriate sense. For instance, if $(\nn_{\lambda})$ is a nested sequence of ideals of $\oo_F$ whose intersection is the zero ideal, then the sequence $(\Gamma(\nn_{\lambda}))$ converges to one, but the sequence $(\Gamma_0(\nn_{\lambda}))$ does not. However, his method is sufficiently general to extend to representations of other algebraic groups.
\item In \cite{ST12}, Shin and Templier prove an equidistribution theorem for representations of $G(\AAA_F)$ of increasing level when $G$ is a \emph{cuspidal} group. In \cite{ST13} they prove, as a corollary, that if $\nn_{\lambda}$ is a sequence of ideals with $\ord_\pp(\nn_\lambda) \to \infty$ for some prime $\pp$, then 
$$\lim_{\lambda \to \infty} \frac
	{B_k(\Gamma_1(\nn_\lambda), 1)_{\leq A}}
	{B_k(\Gamma_1(\nn_\lambda), 1)} = 0.$$
\item Finis, Lapid, and Mueller have done considerable work on the Limit Multiplicity Problem, which is analogous to our Plancherel equidstribution.  The primary difference is that we follow \cite{Shi12} and \cite{ST12} by fixing a discrete series representation at $\infty$ and examine the limit multiplicities only at finite places, whereas they look at limit multiplicities at infinite places.  In \cite{FLM14} they solved the Limit Multiplicity Problem for a large class of groups (specifically those satisfying properties (BD) and (TWN) as given in Section 5 of that paper.  These groups include $\GL_n$ and $\SL_n$).  Since this paper was released as a preprint they have solved the Limit Multiplicity Problem in even greater generality by reducing the restriction on the sequence of level subgroups (see \cite{FL15}); we discuss the relationship between our work and theirs more below.
\end{itemize}

The broad ideas for proving our Plancherel equidistribution theorem stem from the proofs of similar theorems in these papers. Like them, we will use the trace formula, Harish-Chandra's Plancherel theorem, and Sauvageot's density theorem. However, in our case it is necessary to adapt these existing tools suitably to our situation. We will need versions of the Harish-Chandra Plancherel theorem and the Sauvageot density theorem over local fields to the fixed-central-character setting, at least for $\GL_2$. We also need a fixed-central-character version of the trace formula.  For $\GL_2$, the fixed-central-character trace formula is classical and has been stated in \cite{Shi63}, \cite{GJ79}, \cite{KL06}, \cite{Pal12} and elsewhere; a more general invariant fixed-central-character version has been stated in \cite{Art02}.  Versions of Arthur's (non-fixed central character) trace formula in \cite{Art88} and \cite{Art89}, however, are significantly more `user friendly' in that it is easier to bound the noncentral terms.  Therefore, we will adapt the invariant trace formula to the fixed-central-character setting.

Once these are in place, a key step of our proof is a careful asymptotic estimation of the geometric side of the trace formula. Specifically, we will examine the asymptotic behavior of the geometric terms of the trace formula for characteristic functions of $\Gamma_0(\nn)$ as $N(\nn) \to \infty$. This builds on the work of Shin and Shin-Templier, who chose sequences of functions whose orbital integrals eventually vanished, and their constant-term computations were simplified because they used characteristic functions of \emph{normal} subgroups of the maximal compact subgroup $K^\infty$. The function $\one_{\Gamma_0(\nn)}$ has nonzero orbital integrals for many $\gamma \in \GL_2(F)$, but we will be able to bound these orbital integrals explicitly as $N(\nn) \to \infty$.

As part of their work on the Limit Multiplicity Problem, Finis and Lapid have obtained bounds on trace formula terms for characteristic functions of level subgroups (see, for instance \cite[Section 5]{FL13}).  This has allowed them to solve the Limit Multiplicity Problem for all groups satisfying their conditions (BD) and (TWN) and any sequence of level subgroups whose level approaches $\infty$.  Even though we give bounds only for $\Gamma_0$-level subgroups of $\GL_2$, we hope that our work will not appear redundant, for the following reasons: first, our method of achieving bounds through a careful analysis of Bruhat-Tits buildings is intuitively different from their methods, even if it is perhaps more difficult to generalize to higher-rank groups; second, we obtain a concrete description of the rate at which our terms approach zero.  It is also worth noting that because they do not work with cuspidal functions at $\infty$, they use a non-invariant version of the trace formula.

The outline of this paper is as follows. In section \ref{sec:2}, we discuss the tempered spectrum $\GL_2(L)^{\wedge, t}$ of $\GL_2(L)$ for a $p$-adic field $L$, and recall how it is naturally endowed with the structure of a disjoint union of countably many compact real orbifolds. In section \ref{sec:3}, we discuss fields of rationality of cusp forms and tempered representations. A necessary result (Proposition \ref{prop3.6}) is that if the residue characteristic of $L$ is sufficiently large and $\pi$ is a tempered representation of $\GL_2(L)$ with conductor at least $3$, then its field of rationality must be large.

In section \ref{sec:4}, we define the Plancherel transform $\widehat f$ for a function $f$ in various Hecke algebras.
In section \ref{sec:5}, we discuss Euler-Poincar\'e functions on $\GL_2(\RR)$; these will allow us to apply the trace formula to count cuspidal automorphic representations whose Archimedean component is a fixed discrete series representation. In section \ref{sec:6}, we state the necessary representation-theoretic fixed-central-character prerequisites for our proof of the Plancherel equidistribution theorem: the trace formula, the Plancherel formula, and Sauvageot's density theorem.  We do not prove these results until the appendix, since they follow from the standard (non-fixed central character) analogs in the literature from elementary abelian Fourier analysis.  Indeed, the proofs are not necessary on first reading.  We also give an explicit description of the fixed-central-character Plancherel measure for $\GL_2(L)$.

In section \ref{sec:7}, we introduce counting measures and construct explicit test functions whose Plancherel transforms count the cusp forms of fixed character, weight, and level. In section \ref{sec:8}, we show an asymptotic vanishing result for orbital integrals and constant terms. In section \ref{sec:9}, we use the results from section 6-8 to prove the Plancherel equidistribution theorem.

Finally, in section \ref{sec:10}, we prove our main theorem. The proof follows from the Plancherel equidistribution theorem and a careful assessment of the explicit (fixed-central-character) Plancherel measure on $\GL_2(F_\pp)$.

In the appendix (section \ref{sec:11}), we prove the fixed-central-character trace formula and necessary properties of the fixed-central-character Plancherel measure.

\subsection{Notation and Conventions} We will fix here the following conventions:
\begin{itemize} \item $F$ will always refer to a totally real field, and $L,\, L'$ will always refer to $p$-adic fields. $K$ will be reserved for compact subgroups of $\GL_2(R)$, where $R$ is a local field or an ad\`ele ring $\AAA_F$.
	\item The notation $x\mapsto\widehat x$ takes many uses, so we fix a convention here. We will reserve lower-case Greek letters $\widehat \phi,\, \widehat \psi$ for Plancherel transforms of elements of the Hecke algebra of $\GL_2$ (see Definition \ref{defn5.2}). Latin letters $\widehat f,\, \widehat h$ will always denote general functions in $\mathscr{F}_0(\GL_2^{\wedge})$ (see Definition \ref{defn7.4}). Both $\widehat f$ and $\widehat \phi$ are complex-valued functions on the unitary spectrum of $\GL_2$, but the former is more general.  Upper case Greek letters such as $\Phi$ and $\Psi$ are used to denote functions on a subgroup of the center of $\GL_2$. In this case, $\widehat \Phi$ and $\widehat \Psi$ will denote their \emph{Fourier} transforms as functions on a locally compact abelian group.
	\item Lower-case fraktur letters will refer to integral ideals in $F$ or $L$. $\oo_F,\, \oo_L$ will always refer to the ring of integers, and $\pp$ will always refer to a prime. $p$ will be reserved for rational primes.
	\item By a \emph{sequence of levels} $(\nn_\lambda)$, we mean a sequence $(\nn_\lambda)$ of ideals of $\oo_F$. We always assume $N(\nn_\lambda) \to \infty$.
	\item Given a representation $\pi$ of $\GL_2(L)$, the conductor $c(\pi)$ will take values $0, \, 1, \, 2\ldots$. For a representation of $\GL_2(\AAA_F)$, the conductor $\ff(\pi)$ will always be an \emph{ideal} in $\oo_F$. As such, if $\pi$ is a representation of $\GL_2(\AAA_F)$, then $\ff(\pi) = \prod_{\pp} \pp^{c(\pi_\pp)}$.
	\item All characters $\chi$, $\chi',\, \eta$, etc, will be \emph{unitary} characters, and if they are characters on the ad\`ele group $\AAA_F^\times$, they will be assumed to be trivial on $F^\times$. If $\pi$ is a representation of a $p$-adic or ad\`elic group, its central character will be denoted $\chi_\pi$. A character $\chi_0,\, \eta_0$, etc. will always refer to a character on the elements of absolute $1$. 
	\item $\xi$ will always be used to denote a finite-dimensional irreducible representation of $\GL_2(F_\infty)$. If $k = (k_1,\ldots,\, k_n)$ is a weight, then $\xi_k$ denotes the finite-dimensional complement of the discrete series representation associated to any cusp form of weight $k$; in particular, $\xi_k$ will decompose as a tensor product of irreducible representations of the form $\Sym^{k_i - 2}(\RR^2)\otimes |\det|^{\frac{-k_i-2}{2}}$.\end{itemize}

\subsection{Acknowledgements} I would like to thank Ruthi Hortsch, Padma Srinivasan, Nicolas Templier, and the anonymous referee for their helpful suggestions.

I am especially indebted to my adviser, Sug Woo Shin, for his unfailing support and friendship, without which this paper would have been impossible.

\section{The Tempered Spectrum of $\GL_2(L)$}\label{sec:2}

The goal of this section is to briefly recall some topological properties of the tempered spectrum of $\GL_2(L)$ where $L$ is a $p$-adic field. Throughout, $q$ will denote the cardinality of the residue field of $L$. We recall some definitions and preliminary results. Let $\GL_2(L)^{\wedge}$ denote the set of irreducible unitary admissible representations of $\GL_2(L)$ (up to isomorphism); in particular, if $\pi \in \GL_2(L)^{\wedge}$, its central character $\chi_\pi$ is unitary.

\begin{defn}\label{defn2.1} Let $G$ be a connected, reductive group over a $p$-adic field $L$ and let $\pi$ be an admissible representation of $G(L)$. We say $(\pi,\, V_\pi)$ is a \emph{discrete series representation} if the matrix coefficient $g \mapsto \linf{\pi(g)v}{\tilde v}$ is in $L^2(G(L)/Z(L))$ for every $v\in V_\pi,\, \tilde v \in V_{\widetilde\pi}$. 

We say $\pi$ is \emph{tempered} if, instead, every matrix coefficient lies in $L^{2 + \epsilon}(G(L)/Z(L))$ for every $\epsilon > 0$.\end{defn}

Throughout this paper, we will denote the set of unitary representations of $G(L)$ as $G(L)^{\wedge}$, and the set of tempered unitary representations of $G(L)$ as $G(L)^{\wedge,\, t}$.

For the rest of the subsection we assume $G = \GL_n$. The following results are classical:

\begin{prop}\label{prop2.2} A representation $\pi \in \GL_n(L)^{\wedge}$ is a discrete series representation if and only if it is a generalized Steinberg representation $\St(\sigma,\, m)$ for a unitary supercuspidal representation $\sigma \in \GL_d(L)^\wedge$, and $n = md$.

A representation $\pi\in \GL_n(L)^{\wedge}$ is tempered if and only if it is of the form
$$I_P^G(\pi_1 \otimes \ldots \otimes \pi_r)$$
where $\pi_i$ is a discrete series representation of $\GL_{n_i}(L)$, with $n = n_1 + \ldots + n_r$, and $I_P^G$ denotes normalized induction.
\end{prop}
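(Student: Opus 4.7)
The plan is to derive both assertions from the Bernstein--Zelevinsky theory of parabolically induced representations of $\GL_n$. The common starting point is Jacquet's subrepresentation theorem: every irreducible admissible representation $\pi$ of $\GL_n(L)$ embeds as a subquotient of some $I_P^G(\sigma_1 \otimes \ldots \otimes \sigma_r)$ with each $\sigma_i$ a supercuspidal representation of $\GL_{n_i}(L)$, and the multiset $\{\sigma_1, \ldots, \sigma_r\}$, the cuspidal support of $\pi$, is uniquely determined up to unramified twist. Combining this with the Langlands classification will reduce both statements to a combinatorial analysis of Jacquet modules.

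For the tempered classification, the easy direction is that a fully induced $I_P^G(\pi_1 \otimes \ldots \otimes \pi_r)$ from tempered data is tempered: matrix coefficients of parabolic inductions can be estimated in terms of matrix coefficients of the $\pi_i$ together with the Harish-Chandra $\Xi$-function on $G/P$, and discrete series are tempered by definition. For the converse direction, I would invoke the Langlands classification, which writes every irreducible admissible $\pi$ uniquely as the Langlands quotient of an induced representation $I_P^G(\tau \otimes e^\nu)$ with $\tau$ tempered on a Levi $M$ and $\nu \in \faa_M^*$ in the open positive chamber. Casselman's temperedness criterion, that all exponents of all Jacquet modules $\pi_N$ must have real parts in the closed negative chamber, then forces $\nu = 0$ whenever $\pi$ is itself tempered. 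That the resulting induced representation $I_P^G(\tau)$ is already irreducible (rather than merely a subquotient of a reducible induction) is special to $\GL_n$: it is a theorem of Bernstein that normalized induction from the tempered dual of a Levi in $\GL_n$ is always irreducible.

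For the discrete series classification, I would apply Casselman's sharper square-integrability criterion, in which the exponents of every proper Jacquet module are required to lie \emph{strictly} in the open negative chamber, to a representation with cuspidal support $(\sigma_1, \ldots, \sigma_r)$. The Jacquet modules of $I_P^G(\sigma_1 \otimes \ldots \otimes \sigma_r)$ are computed, up to semisimplification, by Bernstein--Zelevinsky's geometric lemma, which expresses them as sums indexed by certain Weyl group cosets. A combinatorial bookkeeping of the resulting central characters on each $M$-factor shows that a square-integrable subquotient can exist only when all the $\sigma_i$ have the same $\GL_d$ and, after reordering, form a single segment $\sigma, \sigma|\det|, \ldots, \sigma|\det|^{m-1}$ for some supercuspidal $\sigma$ on $\GL_d(L)$ with $n = md$; in this case the unique square-integrable subquotient is the generalized Steinberg $\St(\sigma, m)$, which is unitary once $\sigma$ is twisted appropriately. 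Conversely one checks directly from the geometric lemma that the segment condition on the support produces exponents with the required strict negativity.

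The main obstacle is the combinatorial analysis of Jacquet modules at the last step: one must set up Bernstein--Zelevinsky's language of segments and linked segments, and verify that the segment condition is both necessary and sufficient for the strict negativity of exponents demanded by Casselman's criterion. Everything else reduces to invoking the Langlands classification and the irreducibility theorem for tempered induction on $\GL_n$, both standard.
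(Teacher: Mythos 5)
The paper itself offers no proof of this proposition (it is stated as classical, with Zelevinsky's paper the implicit reference), so the only question is whether your sketch is sound. The discrete-series half and the easy direction of the tempered half follow the standard route (Casselman's square-integrability criterion plus Bernstein--Zelevinsky segment combinatorics, and $\Xi$-function estimates for temperedness of induction from discrete series) and are fine as an outline, up to the minor point that the strict-negativity check for $\St(\sigma,m)$ must be carried out on the Jacquet modules of that particular subquotient, not merely on the semisimplified Jacquet modules of the full induced representation.

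The converse direction of the tempered statement, however, has a genuine gap: as written it is circular. In the Langlands classification with \emph{tempered} inducing data the exponent $\nu$ is required to lie in the open positive chamber of $\faa_M^*/\faa_G^*$, so the Langlands datum of a tempered representation is simply $(P,\tau,\nu)=(G,\pi,0)$; ``forcing $\nu=0$'' therefore returns $M=G$ and $\tau=\pi$, and the resulting ``induced representation'' is $\pi$ itself. This says nothing about whether $\pi$ is induced from discrete series of a proper Levi, which is precisely the content of the assertion. What is missing is the nontrivial input: either Harish-Chandra's theorem that every irreducible tempered representation is a direct summand of $I_P^G(\omega)$ for a discrete series $\omega$ of a Levi (proved by showing that a tempered but non-square-integrable $\pi$ has a Jacquet-module exponent lying on a wall of the closed negative chamber, and then using Frobenius reciprocity to embed $\pi$ into an induction from a proper parabolic with square-integrable data), or, specific to $\GL_n$, the Bernstein--Zelevinsky form of the classification in which every irreducible representation is the Langlands quotient of a product of \emph{essentially square-integrable} segment representations $\St(\sigma_i,m_i)\otimes|\det|^{e_i}$ with weakly decreasing real exponents. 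Applied to that version, Casselman's temperedness criterion genuinely forces all $e_i=0$, and then irreducibility of the induction from unitary discrete series (Bernstein, or Zelevinsky's Theorem 4.2, since segments with unitary central twists are unlinked) completes the argument exactly as you intend. With that substitution your outline becomes the standard proof.
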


Fix a standard parabolic $P$ with Levi subgroup $M$, and let $X_u(M)$ denote the group of unramified unitary characters of $M$. Then $X_u(M)$ acts on the set of discrete series representations $\omega$ of $M$ via $\chi \cdot \omega = \omega \otimes \chi$. Each orbit $\OO_M$ under the action of $X_u(M)$ naturally acquires the topology of a \emph{compact orbifold}, and as such the set of discrete series representations of $M$ acquires the topology of a countable union of disjoint compact orbifolds.

Denote by $\Theta$ the set of pairs $(M,\, \OO_M)$ where $\OO_M$ is an orbit of discrete series representations of $M$. Say two pairs $(M,\, \OO_M)$ and $(M',\, \OO_{M'}')$ \emph{associated} if there is an element $s \in W^G$, the Weyl group of $G$, such that $s\cdot M = M'$ and $s\cdot \OO_M = \OO'_{M'}$. 

The normalized induction functor gives a surjective map
$$\coprod_{\Theta/\text{assoc}} (M,\, \OO_M)\to \GL_n(L)^{\wedge, t}.$$
(The fact that $I_P^G(\omega)$ is irreducible when $\omega$ is a discrete series representation of $M$ follows from \cite[Theorem 4.2]{Zel80}; we note here that this does not hold for general reductive $p$-adic groups).

Moreover, for a given orbit $(M,\, \OO_M)$, the stabilizer 
$$\Stab(M,\,\OO_M) = \{s \in W^G/W^M: s\cdot M = M,\, s\cdot\OO_M = \OO_M\}$$
 acts on $\OO_M$. The map above descends to a \emph{bijection}
$$\coprod_{\Theta/\text{assoc}} (M,\, \OO_M)/\Stab(M,\, \OO_M)\xrightarrow\sim G^{\wedge, t}.$$
This gives the tempered spectrum of $G$ the structure of a countable disjoint union of compact orbifolds.

{Throughout this paper, we will use $\OO_M$ to refer to an orbit of discrete series representations of a Levi subgroup $M$ of $G$. We will use $\OO$ to refer to an orbit in $G^{\wedge, t}$; that is, $\OO$ will refer to the image of an orbit $(M,\, \OO_M)$ under the normalized induction functor.}

\subsection{Tempered Orbits of $\GL_2(L)$}
\label{subsec:2.1}
In this subsection, we'll recall some facts about the tempered orbits of $\GL_2(L)$. We'll follow the standard practice of writing $\sigma_1\times \sigma_2$ for $I_P^G(\sigma_1\otimes \sigma_2)$ when $\sigma_1 \otimes \sigma_2$ is a discrete series representation of a Levi subgroup $M$. $\pi$ is irreducible since $\sigma_1$ and $\sigma_2$ are unitary.

It is convenient to partition the set of orbits into four types:
\begin{enumerate} [Type (1):]
	\item $\OO$ consists of elements $\chi \times \chi'$, where $\chi\chi^{\prime -1}$ is unramified;
	\item $\OO$ consists of elements $\chi \times \chi'$, where $\chi\chi^{\prime -1}$ is ramified;
	\item $\OO$ consists of elements $\St(\chi)$ where $\chi$ is a character; and
	\item $\OO$ consists of supercuspidal representations $\pi$.
\end{enumerate}

It is worth recalling the following:

\begin{defn}\label{defn2.3} Let $L'/L$ be a quadratic extension, let $\psi_L$ be an additive character on $L$, and let $\eta$ be a multiplicative character on $L'$ that is not $\Gal(L'/L)$-invariant. The \emph{dihedral representation} $\pi_\eta$ of $\GL_2(L)$ is defined as follows. First, let $\omega^1_{\eta, \psi}$ be the Weil representation of $\SL_2(L)$ on the subspace of functions $f \in C_c^{\infty}(L')$ satisfying the transformation property
$$f(yv) = \eta(y)^{-1} f(v)\,\,\,\,\,\,\,\,\, \text{for all $v\in E,\, y\in \ker(N_{L'/L})$.}$$
Upgrade this to a representation $\omega_{\eta,\, \psi}$ of 
$$\GL_2(L)^{L'} = \{g\in \GL_2(L): \det(g) \in N_{L'/L}(L'^\times)\}$$
by setting
$$\left(\omega_{\eta,\psi}\twomat{a}{0}{0}{1}f\right)(v) = |a|_F^{1/2} \eta(b)f(bv),\,\,\,\,\,\,\,\,\,\, a = N_{L'/L}(b).$$
Let $\pi_{\eta} = \Ind_{\GL_2(L)^{L'}}^{\GL_2(L)}(\omega_{\eta,\psi})$; this is independent of the choice of additive character $\psi$.
\end{defn}

We have the following facts:

\begin{enumerate}[(i)]
	\item The central character of $\pi_{\eta}$ is $\chi_{L'/L}\cdot \eta|_{F^\times}$. Here $\chi_{L'/L}: L^\times \to \CC^\times$ is the unique nontrivial character whose kernel is $N_{L'/L}(L'^{\times})$.
	\item $\pi_{\eta} \cong \pi_{\eta'}$ iff $\eta$ and $\eta'$ are characters on the same quadratic extension $L'$, and $\eta$ and $\eta'$ are $\Gal(L'/L)$-conjugate.
	\item If the residue characteristic of $L$ is odd, then all supercuspidal representations of $\GL_2(L)$ are dihedral.
	\item If $\chi$ is a character of $L^\times$ then $\pi_{\eta} \otimes \chi \cong \pi_{\eta \otimes (\chi \circ N_{L'/L})}$. 
\end{enumerate}

Facts (i) and (iv) are on page 121 of \cite{Sch02}, while (iii) is on page 120. Fact (ii) follows by noting that $\pi_\eta$ corresponds to the irreducible Weil representation $I_{W(L')}^{W(L)}(\eta)$ under the Local Langlands correspondence.

When the residue characteristic is odd, we have the following characterization of the orbits:

\begin{prop}\label{prop2.4} Assume the residue characteristic of $L$ is $p > 2$.
\begin{enumerate} 
	\item The orbits of type (1) are in correspondence with characters $\chi_0: \oo_L^\times \to \CC^{\times}$.
	\item The orbits of type (2) are in correspondence with pairs of characters $\chi_0 \neq \chi_0': \oo_L^{\times} \to \CC^{\times}$.
	\item The orbits of type (3) are in correspondence with characters $\chi_0: \oo_L^{\times} \to \CC^\times$.
	\item The orbits of type (4) are in correspondence with pairs $({L'},\, \{\eta_0,\, \overline\eta_0\})$ where ${L'}/L$ is a quadratic extension, and $\{\eta_0,\, \overline\eta_0\}: \oo_{L'}^\times \to \CC^{\times}$ is a $\Gal({L'}/L)$-conjugate pair of characters with $\eta_0 \neq \overline{\eta}_0$.
\end{enumerate}
\end{prop}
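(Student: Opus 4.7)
The plan is to treat the four types of orbits in turn, applying the framework established before the proposition: tempered orbits of $\GL_2(L)$ correspond to pairs $(M,\, \OO_M)$ of a Levi $M$ with an $X_u(M)$-orbit of discrete series representations of $M$, taken modulo the association equivalence generated by the Weyl group $W^G/W^M$. There are two Levi subgroups to consider: the diagonal torus $T$, which yields types (1) and (2) via normalized induction, and $G = \GL_2(L)$ itself, which yields types (3) (Steinberg twists) and (4) (supercuspidal representations).

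For types (1) and (2) I take $M = T$, whose discrete series are ordered pairs $(\chi,\, \chi')$ of unitary characters of $L^\times$. Since $X_u(T)$ acts by independent unramified twists of the two factors, the $X_u(T)$-orbit of such a pair is recorded precisely by the restriction $(\chi_0,\, \chi_0') := (\chi|_{\oo_L^\times},\, \chi'|_{\oo_L^\times})$, and the Weyl group $W^G \cong \ZZ/2$ then swaps these two entries. The hypothesis of type (1) that $\chi\chi'^{-1}$ be unramified forces $\chi_0 = \chi_0'$, so the Weyl action is trivial on the orbit datum and orbits correspond to a single character $\chi_0$ of $\oo_L^\times$. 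In type (2), $\chi_0 \neq \chi_0'$, so the swap identifies $(\chi_0,\, \chi_0')$ with $(\chi_0',\, \chi_0)$, giving the stated bijection with unordered pairs. For type (3) I take $M = G$ with discrete series $\St(\chi)$ and use $\St(\chi)\otimes\eta = \St(\chi\eta)$ to see that the $X_u(G)$-orbit of $\St(\chi)$ is determined by $\chi_0 = \chi|_{\oo_L^\times}$; there is no Weyl contribution since $M = G$.

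The substantive case is type (4). Here fact (iii), using the odd residue characteristic hypothesis, lets me write every supercuspidal as dihedral, $\pi = \pi_\eta$ for a quadratic extension $L'/L$ and a character $\eta$ of $L'^\times$ with $\eta \neq \overline\eta$. By fact (iv), unramified unitary twisting by $\chi$ yields $\pi_\eta \otimes (\chi\circ\det) = \pi_{\eta \cdot (\chi \circ N_{L'/L})}$; since $\chi$ is trivial on $\oo_L^\times$ and the norm sends $\oo_{L'}^\times$ to $\oo_L^\times$, the twist $\chi \circ N_{L'/L}$ is trivial on $\oo_{L'}^\times$. Thus the $X_u(G)$-orbit of $\pi_\eta$ is faithfully recorded by the pair $(L',\, \eta_0 := \eta|_{\oo_{L'}^\times})$, and fact (ii) then descends this to the unordered Galois pair $\{\eta_0,\, \overline\eta_0\}$. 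The step needing the most care—and the one I expect to be the main obstacle—is verifying that the condition $\eta_0 \neq \overline\eta_0$ on the parameter correctly captures the irreducibility condition $\eta \neq \overline\eta$ on the representation: one must examine the interaction of the Galois action on $\oo_{L'}^\times$ with the unramified twists and confirm that no supercuspidal orbit is lost or double-counted under the parameterization by the restriction $\eta_0$.
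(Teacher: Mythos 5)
Your treatment of types (1)--(3) is correct and is essentially the paper's argument: within an $X_u(T)$-orbit (resp.\ $X_u(G)$-orbit of Steinbergs) the characters are only changed by unramified twists, so the orbit is recorded by restrictions to $\oo_L^\times$, with the Weyl swap accounting for the unordered pair in type (2).

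In type (4), however, you have only proved half of what is needed, and the half you leave open is exactly where the paper does its work. You show the map \emph{orbit} $\mapsto (L',\,\{\eta_0,\overline\eta_0\})$ is constant on orbits: twisting by an unramified $\chi$ of $L^\times$ replaces $\eta$ by $\eta\cdot(\chi\circ N_{L'/L})$, which does not change $\eta|_{\oo_{L'}^\times}$. But for the stated \emph{correspondence} you also need injectivity: if $\eta$ and $\eta'$ (characters of the same $L'^\times$) have the same restriction to $\oo_{L'}^\times$, then $\pi_\eta$ and $\pi_{\eta'}$ must lie in the same orbit, i.e.\ differ by an unramified twist of $\GL_2(L)$. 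This is not automatic from facts (ii) and (iv): $\eta'\eta^{-1}$ is an unramified character of $L'^\times$, and one must check that every such character is of the form $\chi\circ N_{L'/L}$ with $\chi$ an unramified character of $L^\times$ (in the unramified case $v_L\circ N = 2v_{L'}$, so one solves $t^2=s$; in the ramified case $N(\varpi_{L'})$ is a uniformizer of $L$ times a unit, so one takes $t=s$); only then does fact (iv) give $\pi_{\eta'}\cong\pi_\eta\otimes\chi$. The paper proves precisely this two-way statement (``$\pi_\eta$ and $\pi_{\eta'}$ differ by an unramified twist if and only if $\eta$ and $\eta'$ do''), and without it your parameterization could a priori be finer than the set of orbits, i.e.\ the ``double-counting'' you flag is a real possibility that your argument does not exclude. (Your separate worry about whether $\eta_0\neq\overline\eta_0$ faithfully reflects $\eta\neq\overline\eta$ is not where the paper places the burden; the missing ingredient is the norm-factorization of unramified characters just described, together with fact (ii) for the converse direction you did carry out.)
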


\begin{proof} For the first statement, we note that $\chi_1 \times \chi_2$ and $\chi_1' \times \chi_2'$ are in the same orbit if $\chi_1'\chi_1^{-1}$ and $\chi_2'\chi_1^{-1}$ are unramified. Moreover, $\chi_1\chi_2^{-1}$ is unramified, so any two characters differ by an unramified twist. As such, $\chi_i$ and $\chi_i'$ all share the same restriction to $\oo_L^{\times}$: this determines a canonical bijection between orbits and characters $\chi_0:\oo_L^{\times} \to \CC^\times$.

The proofs of (2) and (3) are exactly the same.

For (4), because $p > 2$, every supercuspidal representation of $\pi$ of $\GL_2(L)$ is a {dihedral} representation, so there is a pair $({L'},\, \eta)$ as above such that $\pi = \pi_\eta$. The proof will follow once we show that, given characters $\eta,\, \eta'$ of ${L'}^\times$, then $\pi_\eta$ and $\pi_{\eta'}$ differ by an unramified twist if and only if $\eta$ and $\eta'$ differ by an unramified twist. On the one hand, assume $\eta = \theta\eta'$ for an unramified $\theta: {L'}^\times \to \CC^{\times}$. Since $\theta$ is unramified we can write $\theta = \chi\circ N_{{L'}/L}$ for an unramified character $\chi$; then we have
$$\pi_{\eta'} = \pi_{\eta \otimes(\chi \circ N_{{L'}/L})} \cong \pi_{\eta} \otimes \chi.$$

On the other hand, if $\chi$ is an unramified character of $L^\times$ and $\pi_{\eta'} = \pi_{\eta} \otimes \chi$, then $\pi_{\eta'} = \pi_{\eta \otimes (\chi\circ N_{{L'}/L})}$, and so $\eta (\chi \circ N_{{L'}/L}) = \eta'$ or ${\overline \eta'}$.

Therefore, the supercuspidal orbits are parameterized by pairs of $\{\eta,\, {\overline\eta}\}$ up to unramified twist, and giving a character up to unramified twist is the same as giving its restriction to $\oo_{L'}^\times$ as above, completing the proof.
\end{proof}

It follows immediately from above that if $\pi$ and $\pi'$ are in the same orbit, then $\chi_\pi|_{\oo_L^\times} = \chi_{\pi'}|_{\oo_L^{\times}}$. We define $\chi_\OO = \chi_\pi|_{\oo_L^{\times}}$ for any $\pi \in \OO$.

A list of conductors of tempered representations is given in \cite[p. 122]{Sch02}:
\begin{itemize} 
	\item If $\pi = \chi \times \chi'$, then $c(\pi) = c(\chi) + c(\chi')$.
	\item If $\pi = \St(\chi)$, then
		$$c(\pi) = \begin{cases}
			1 & \text{if } \chi \text{ is unramified}
			\\ 2 \cdot c(\chi) &\text{if } \chi \text{ is ramified.}
		\end{cases}$$
	\item If $\pi$ is the dihedral representation $\pi_{\eta}$, then
		$$c(\pi) = \begin{cases}
			2 \cdot c(\eta) & \text{ if $L'/L$ is unramified}
			\\ c(\eta) + 1 & \text{ if $L'/L$ is ramified.}
		\end{cases}$$
\end{itemize}
Since the conductor of a character $\chi$ or $\eta$ depends only on its restriction to $\oo_L^\times$ or $\oo_{L'}^\times$, we can make the definition:

\begin{defn}\label{defn2.5} Let $\OO$ be an orbit in $\GL_2(L)^{\wedge, t}$. We define its conductor $c(\OO)$ to be the conductor $c(\pi)$ for any $\pi \in \OO$.\end{defn}


\section{Preliminaries on Fields of Rationality}\label{sec:3}

Throughout, let $F$ be a totally real field.

\begin{defn} \label{defn3.1}Let $f$ be a Hilbert modular form over $F$ of level $\Gamma_1(\nn)$, weight $k$, and character $\chi$ that is a Hecke eigenform. Then $\QQ(f) \subseteq \overline \QQ$ is the field generated by all the Fourier coefficients of $f$.
\end{defn}

\begin{defn}\label{defn3.2} Fix a level $\nn$, a weight $k$, a character $\chi: (\oo_F/\nn)^\times \to \CC^\times$ occuring in weight $k$, and an integer $A \in \ZZ_{\geq 1}$. We denote by $B_k(\Gamma_{1}(\nn),\, \chi)$ a basis of normalized Hecke eigenforms in $S_k(\Gamma_1(\nn),\,\chi)$, and define
$$B_k(\Gamma_1(\nn),\, \chi)_{\leq A} = \{f \in B_k(\Gamma_1(\nn),\,\chi)\mid [\QQ(f):\QQ] \leq A\}.$$
\end{defn} 

\begin{defn} \label{defn3.3}Let $G$ be a reductive group over a $p$-adic field $L$ and let $\pi$ be an admissible $G(L)$-representation. The \emph{field of rationality} $\QQ(\pi)$ is the fixed field of the subgroup
$$\{\sigma\in \Aut(\CC): {^\sigma\pi}\cong \pi\}.$$

If $\pi$ is an automorphic representation of $G(\AAA_F)$, then $\pi$ decomposes as $\pi \cong \bigotimes_v \pi_v$, and $\QQ(\pi)$ is the compositum of the fields $\QQ(\pi_v)$ over the finite places $v$ of $F$.
\end{defn}

\begin{lem} \label{lem3.4}Let $f$ be a Hecke eigenform of weight $k$, level $\nn$, and character $\chi$, and let $\pi_f$ be the associated $\GL_2(\AAA_{F})$-representation. Then $\QQ(f) = \QQ(\pi_f)$.\end{lem}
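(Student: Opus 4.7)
The plan is to establish both inclusions $\QQ(f) \subseteq \QQ(\pi_f)$ and $\QQ(\pi_f) \subseteq \QQ(f)$, via the standard dictionary between Fourier coefficients of $f$ and local data (Hecke eigenvalues, Satake parameters) of $\pi_f$, combined with strong multiplicity one for $\GL_2$ over $F$.

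For $\QQ(f)\subseteq \QQ(\pi_f)$, suppose $\sigma\in\Aut(\CC)$ satisfies ${}^\sigma\pi_f\cong\pi_f$, which implies ${}^\sigma\pi_{f,\pp}\cong\pi_{f,\pp}$ at every finite place $\pp$. When $\pi_{f,\pp}$ is unramified, $a_\pp(f)$ is, up to an explicit factor depending only on $k$ and $N(\pp)$, the trace of the Satake parameter and hence an isomorphism invariant of $\pi_{f,\pp}$; therefore $\sigma$ fixes $a_\pp(f)$. When $\pp\mid\nn$, $a_\pp(f)$ equals the $U_\pp$-eigenvalue on the canonical one-dimensional newform line inside $\pi_{f,\pp}$ singled out by Casselman's newform theory, and is again intrinsic to the local isomorphism class. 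Multiplicativity of Fourier coefficients for a normalized Hecke eigenform then gives $\sigma(a_\mm(f))=a_\mm(f)$ for every ideal $\mm$, so $\sigma$ fixes $\QQ(f)$.

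For the reverse inclusion, suppose $\sigma$ fixes every $a_\mm(f)$. At any $\pp\nmid\nn$, the representation $\pi_{f,\pp}$ is an unramified principal series whose isomorphism class is determined by the unordered pair of Satake parameters, and these are in turn determined by their sum $a_\pp(f)$ and product, which is a fixed integer power of $N(\pp)$ times $\chi(\pp)$. The value $\chi(\pp)$ can be extracted from the Fourier coefficients through the standard Hecke recurrence expressing $a_{\pp^{r+1}}(f)$ in terms of $a_\pp(f)$, $a_{\pp^r}(f)$, $a_{\pp^{r-1}}(f)$, and $\chi(\pp)$, so $\sigma$ fixes $\chi(\pp)$ as well. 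Hence ${}^\sigma\pi_{f,\pp}\cong\pi_{f,\pp}$ at every unramified finite place, and strong multiplicity one for $\GL_2$ over $F$ (applied to the cuspidal automorphic representations $\pi_f$ and ${}^\sigma\pi_f$) forces ${}^\sigma\pi_f\cong\pi_f$, i.e.\ $\sigma$ fixes $\QQ(\pi_f)$.

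The step requiring the most care is the ramified case in the first direction: one must verify that the $U_\pp$-eigenvalue on the newform line of $\pi_{f,\pp}$ is genuinely an invariant of the local isomorphism class, which uses the canonicity of that line from Casselman's theory once the conductor is fixed. Once that is in hand, the remainder of the proof is formal, following from the Satake parametrization, the Hecke recurrence, and strong multiplicity one.
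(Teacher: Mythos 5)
The paper does not prove this lemma; it appeals directly to \cite[Theorem 1.4(5)]{RT11} and omits the argument. Your sketch reconstructs the standard proof, and the broad strategy --- translating Fourier coefficients into local data (Satake parameters at unramified places, the $U_\pp$--eigenvalue on Casselman's newvector line at ramified ones) and invoking strong multiplicity one to pass from almost all places to all finite places --- is indeed the route taken in the cited reference.

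However, there is a genuine gap in the step ``$a_\pp(f)$ is, up to an explicit factor, the trace of the Satake parameter and hence an isomorphism invariant; therefore $\sigma$ fixes $a_\pp(f)$.'' The inference from ``${}^\sigma\pi_{f,\pp}\cong\pi_{f,\pp}$'' to ``$\sigma$ fixes the Satake trace'' does not hold, because the $\Aut(\CC)$-action on admissible representations does \emph{not} commute with the Satake parametrization: normalized parabolic induction involves $\delta_B^{1/2}$, which introduces $N(\pp)^{1/2}$, and one computes ${}^\sigma(\chi_1\times\chi_2)\cong({}^\sigma\chi_1\times{}^\sigma\chi_2)\otimes\nu_\sigma$ where $\nu_\sigma$ is the unramified quadratic character sending a uniformizer to $\sigma(N(\pp)^{1/2})/N(\pp)^{1/2}=\pm1$. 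So the Satake parameter of ${}^\sigma\pi_{f,\pp}$ is $\sigma$ applied to that of $\pi_{f,\pp}$ only up to this sign twist. Likewise, the ``explicit factor'' $N(\pp)^{(k-1)/2}$ relating $a_\pp(f)$ to the unitarily normalized Satake trace is itself not $\Aut(\CC)$-stable when the exponent is a half-integer. These two defects cancel --- one checks that $\sigma$ fixes the product $N(\pp)^{(k-1)/2}\cdot(\alpha_\pp+\beta_\pp)$ precisely because the twist $\nu_\sigma$ pairs with $\sigma\bigl(N(\pp)^{(k-1)/2}\bigr)/N(\pp)^{(k-1)/2}$ --- but that cancellation is exactly the bookkeeping your argument omits. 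The same care is required in the ramified case (the newvector $U_\pp$-eigenvalue is likewise sensitive to normalization) and in the reverse inclusion, where you again pass freely between ``$\sigma$ fixes the Satake parameters'' and ``${}^\sigma\pi_{f,\pp}\cong\pi_{f,\pp}$.'' Raghuram--Tanabe handle these rationality-of-normalization issues explicitly; as written, your sketch proves the statement only modulo the implicit (and false in general) assumption that the Satake map is $\Aut(\CC)$-equivariant.
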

\begin{proof} This is \cite[Theorem 1.4 (5)]{RT11}. We omit the proof.\end{proof}

\subsection{Fields of rationality of tempered orbits of $\GL_2(L)$} \label{subsec:3.1}
In this subsection, we switch back to the local theory. Let $p > 2$. We assume $L$ is a $p$-adic field whose residue field has cardinality $q$. Throughout, $\pi$ will denote an irreducible unitary admissible representation of $\GL_2(L)$.

\begin{defn} \label{defn3.5}Let $\OO$ be an orbit in $GL_2(L)^{\wedge, t}$. We define $\QQ(\OO)$ to be the intersection of all $\QQ(\pi)$ for $\pi \in \OO$.\end{defn}

The goal of this subsection is to prove the following, in analogy with Corollary 3.12 of \cite{ST13}:

\begin{prop} \label{prop3.6}Let the $p$ be the residue characteristic of $L$, and assume $p > 2A + 1$. Let $\OO$ be a tempered orbit of $\GL_2(L)$ of conductor at least $3$. Then $[\QQ(\OO): \QQ] > A$.\end{prop}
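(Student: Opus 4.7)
The plan is to analyze each of the four orbit types from Proposition \ref{prop2.4} individually, reduce the statement to a lower bound on the degree of $\QQ(\chi_0)$ for a character $\chi_0:\oo_L^{\times}\to\CC^\times$ (or $\eta_0:\oo_{L'}^\times\to\CC^\times$), and then invoke the arithmetic of ramified characters of a $p$-adic field.

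The first step is to identify the $\Aut(\CC)$-action on an orbit $\OO$ in terms of the parameters of Proposition \ref{prop2.4}. Twisting by $\sigma\in\Aut(\CC)$ sends a character $\chi$ to $\sigma\circ\chi$ and, via the compatibility of local Langlands with $\Aut(\CC)$, a dihedral representation $\pi_\eta$ to $\pi_{\sigma\eta}$. Thus in types (1) and (3) the orbit is parameterized by a single character $\chi_0$ and $\QQ(\OO)=\QQ(\chi_0)$; in type (2), the swap symmetry gives $[\QQ(\chi_0,\chi_0'):\QQ(\OO)]\leq 2$; and in type (4), $L'/L$ is fixed by every $\sigma$ (it is intrinsic to the orbit), and the conjugation symmetry again gives $[\QQ(\eta_0):\QQ(\OO)]\leq 2$ after accounting for the $\{\eta_0,\overline\eta_0\}$ pairing.

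The arithmetic core is the following observation. Suppose $\chi_0:\oo_L^\times\to\CC^\times$ has conductor $m\geq 2$. Then $\chi_0$ factors through $(\oo_L/\pp^m)^\times$ and is nontrivial on the subgroup $(1+\pp^{m-1})/(1+\pp^m)$, which is isomorphic as an abelian group to the additive group $\FF_q$ with $q=p^f$, hence is a nontrivial $p$-group. Therefore the order of $\chi_0$ is divisible by $p$, so $\QQ(\chi_0)\supseteq\QQ(\mu_p)$ and
\[ [\QQ(\chi_0):\QQ]\geq p-1>2A \]
by the hypothesis $p>2A+1$. Exactly the same argument applies to $\eta_0:\oo_{L'}^\times\to\CC^\times$ of conductor $\geq 2$, since the residue field of $L'$ is again of characteristic $p$.

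Finally, I combine these with the conductor list at the end of Subsection \ref{subsec:2.1}. The hypothesis $c(\OO)\geq 3$ forces the underlying character to have conductor at least $2$ in every case: in type (1), $c(\OO)=2c(\chi_0)$; in type (3), $c(\OO)=2c(\chi_0)$ when ramified and $c(\OO)=1$ when unramified; in type (2), $c(\OO)=c(\chi_0)+c(\chi_0')$ so at least one summand is $\geq 2$; and in type (4), $c(\OO)$ equals $2c(\eta)$ or $c(\eta)+1$, both of which force $c(\eta)\geq 2$. Combining with the symmetry factor gives $[\QQ(\OO):\QQ]\geq (p-1)/2>A$, which is the claim. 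The only mildly subtle point is the $\Aut(\CC)$-equivariance for type (4), which I expect to be the place requiring the most care, but it follows cleanly from the local Langlands correspondence once one notes $\pi_\eta\leftrightarrow\Ind_{W(L')}^{W(L)}\eta$.
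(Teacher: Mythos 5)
Your proposal is correct and follows essentially the same route as the paper: classify the orbit by type as in Proposition \ref{prop2.4}, use the conductor formulas to force the underlying character ($\chi_0$ or $\eta_0$) to have conductor at least $2$, observe that it is then nontrivial on a pro-$p$ group so its values generate $\QQ(\zeta_p)$, and absorb the swap/Galois-conjugation symmetry into a factor of $2$, giving $[\QQ(\OO):\QQ]\geq \frac{p-1}{2}>A$. The only real difference is at the point you flagged: for supercuspidal orbits the paper establishes $^\sigma\pi_\eta\cong\pi_{\sigma\circ\eta}$ by a direct computation with the Weil-representation model (using independence of the additive character $\psi$), rather than by invoking $\Aut(\CC)$-equivariance of the local Langlands correspondence.
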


It is worth comparing this result to \cite[Corollary 3.12]{ST13}

We begin with three lemmas, which rely on the characterization of orbits given in Proposition \ref{prop2.4}.

\begin{lem}\label{lem3.7} Let $\OO$ be the supercuspidal orbit corresponding to $\eta_0:\oo_{L'}^\times\to \CC^\times$. Let $\eta_0(x) = \zeta$ for some $x\in\oo_{L'}^\times$. Then $[\QQ(\zeta):\QQ] \leq 2[\QQ(\OO):\QQ]$.\end{lem}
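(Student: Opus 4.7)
The plan is to exploit the fact that $\zeta = \eta_0(x)$ lies in the cyclotomic field $K_{\eta_0} := \QQ(\eta_0)$ generated by all values of $\eta_0$ on $\oo_{L'}^\times$, so $\QQ(\zeta) \subseteq K_{\eta_0}$ trivially. The goal is then to show $[K_{\eta_0}:\QQ] \leq 2[\QQ(\OO):\QQ]$, which immediately implies the claimed bound.

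To produce this index bound, I will consider the setwise stabilizer $\Gamma := \{\sigma \in \Aut(\CC) : \sigma\OO = \OO\}$ of the orbit. By the parametrization of supercuspidal orbits in Proposition \ref{prop2.4}, $\OO$ is determined by the unordered pair $\{\eta_0, \overline{\eta}_0\}$, so $\sigma \in \Gamma$ iff $\sigma\eta_0 \in \{\eta_0, \overline{\eta}_0\}$ (the corresponding condition on $\overline{\eta}_0$ is automatic, since Galois conjugation in $L'/L$ commutes with the action of $\Aut(\CC)$ on character values). The subgroup $\Gamma_0 := \{\sigma : \sigma\eta_0 = \eta_0\}$ is precisely $\Aut(\CC/K_{\eta_0})$ and has index at most $2$ in $\Gamma$. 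Letting $\tilde K$ denote the fixed field of $\Gamma$, the restriction map $\Gamma \to \Gal(K_{\eta_0}/\QQ)$ has kernel $\Gamma_0$, so standard Galois theory for the abelian cyclotomic extension $K_{\eta_0}/\QQ$ yields $[K_{\eta_0}:\tilde K] = [\Gamma:\Gamma_0] \leq 2$.

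The key step is then to check $\tilde K \subseteq \QQ(\OO)$. For each $\pi \in \OO$, the stabilizer $H_\pi := \{\sigma : {}^\sigma\pi \cong \pi\}$ is contained in $\Gamma$, since an automorphism fixing $\pi$ \emph{a fortiori} preserves the orbit containing $\pi$. Using that an intersection of fixed fields is the fixed field of the generated group, I get
$$\QQ(\OO) = \bigcap_{\pi \in \OO} \QQ(\pi) = \CC^{\langle \bigcup_{\pi} H_\pi \rangle} \supseteq \CC^\Gamma = \tilde K,$$
and combining everything gives $[\QQ(\zeta):\QQ] \leq [K_{\eta_0}:\QQ] = [K_{\eta_0}:\tilde K] \cdot [\tilde K:\QQ] \leq 2[\QQ(\OO):\QQ]$. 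The main subtlety to verify is the $\Aut(\CC)$-equivariance of the orbit parametrization, i.e., that ${}^\sigma\pi_\eta \cong \pi_{\sigma\eta}$ for dihedral representations, which makes the action of $\Gamma$ on the label $\{\eta_0, \overline{\eta}_0\}$ legitimate; this should follow from the Weil-representation construction in Definition \ref{defn2.3}, after checking that the isomorphism class $\pi_\eta$ does not depend on the choice of additive character $\psi$.
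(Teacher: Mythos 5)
Your argument is correct, and it rests on the same key input as the paper's proof: the $\Aut(\CC)$-equivariance ${}^\sigma\pi_\eta\cong\pi_{\sigma\circ\eta}$ of the dihedral construction (the paper verifies exactly the point you flag, by noting that $\sigma$ transports $\omega_{\eta,\psi}$ to $\omega_{\sigma\circ\eta,\sigma\circ\psi}$ and that $\pi_\eta$ is independent of $\psi$), combined with fact (ii) to conclude that any $\sigma$ with ${}^\sigma\pi\cong\pi$ must permute $\{\eta_0,\overline\eta_0\}$. Where you diverge is the finishing step: the paper stays elementary and exhibits the explicit quadratic $T^2-(\eta_0+\overline\eta_0)(x)T+(\eta_0\overline\eta_0)(x)\in\QQ(\OO)[T]$, showing each value $\zeta$ has degree at most $2$ over $\QQ(\OO)$ (the same symmetric-function device used in Lemma \ref{lem3.9}), whereas you run an orbit--stabilizer and fixed-field computation with $\Gamma\supseteq\Gamma_0=\Aut(\CC/\QQ(\eta_0))$, which buys the marginally stronger statement that the whole cyclotomic field $\QQ(\eta_0)$ has degree at most $2[\QQ(\OO):\QQ]$. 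Two small points of hygiene in your version: it is cleaner to \emph{define} $\Gamma$ by the condition $\sigma\eta_0\in\{\eta_0,\overline\eta_0\}$ rather than as the literal setwise stabilizer of $\OO$ inside the unitary dual, since $\sigma$ applied to a unitary unramified twist need not stay unitary, so $\sigma$ need not preserve $\OO$ as a set; your proof only uses the one-directional implication ${}^\sigma\pi\cong\pi\Rightarrow\sigma\eta_0\in\{\eta_0,\overline\eta_0\}$, i.e.\ $H_\pi\subseteq\Gamma$, which is unaffected. Second, the identity $[K_{\eta_0}:\tilde K]=[\Gamma:\Gamma_0]$ tacitly uses that the fixed field of $\Aut(\CC/K_{\eta_0})$ is exactly $K_{\eta_0}$ (standard in characteristic zero, but worth a word), or one can sidestep it by working with the fixed field of the image of $\Gamma$ inside $K_{\eta_0}$ instead of $\CC^\Gamma$.
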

\begin{proof} Recall the construction of the dihedral representation in Definition 2.3, and let $\sigma\in \Aut(\CC)$; it is easy to check if $GL_2(L)^{L'}$ acts on $f$ via $\omega_{\eta, \psi}$, then it acts on $\sigma\circ f$ as $\omega_{\sigma\circ \eta, \,\sigma\circ\psi}$. This exhibits an isomorphism $^\sigma \omega_{\eta,\psi} \cong \omega_{\sigma\circ \eta, \,\sigma\circ\psi}$. 

But the representation $\pi_\eta$ is independent of the choice of $\psi$, so upon induction we get
$$^\sigma\pi_{\eta} = {^\sigma\pi_{\eta, \psi}} \cong \pi_{\sigma\circ \eta, \,\sigma\circ\psi} = \pi_{\sigma\circ\eta}.$$

As such, if $^\sigma \pi \cong \pi$ for some $\pi \in \OO$ then $\sigma$ permutes the character $\eta_0$ with its conjugate $\overline \eta_0$ under $\Gal({L'}/L)$. Therefore, $\sigma$ fixes $(\eta_0 + \overline \eta_0)(x)$ and $(\eta_0 \overline \eta_0)(x)$ for $x\in \oo_{L'}^\times$ so both these quantities are in $\QQ(\OO)$. As such, $\zeta = \eta_0(x)$ is a root of the polynomial
$$T^2 - (\eta_0 + \overline \eta_0)(x)T + (\eta_0 \overline \eta_0)(x) \in \QQ(\OO)[T]$$
and so $\zeta$ is of degree at most $2$ over $\QQ(\OO)$, completing the proof.
\end{proof}

\begin{lem} \label{lem3.8}Let $\OO$ be a Steinberg orbit corresponding to $\chi_0: \oo_L^\times \to \CC^\times$. Then $\QQ(\OO) \supseteq \QQ(\chi_0).$\end{lem}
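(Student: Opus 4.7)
To show $\QQ(\OO) \supseteq \QQ(\chi_0)$, by definition of $\QQ(\OO)$ as the intersection of the $\QQ(\pi)$ over $\pi \in \OO$, it suffices to verify that $\QQ(\chi_0) \subseteq \QQ(\pi)$ for each $\pi \in \OO$. Fix such a $\pi$; by Proposition \ref{prop2.4} and the description of Steinberg orbits, we can write $\pi = \St(\chi)$ for some character $\chi: L^\times \to \CC^\times$ with $\chi|_{\oo_L^\times} = \chi_0$. The plan is to show that any $\sigma \in \Aut(\CC)$ with $^\sigma \pi \cong \pi$ necessarily fixes $\chi$ (and hence its restriction $\chi_0$), from which the containment of fixed fields follows.

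The first step is to identify the Galois conjugate $^\sigma \St(\chi)$. Realize $\St(\chi)$ as the unique irreducible subrepresentation (or quotient, depending on convention) of the normalized parabolic induction $\chi|\cdot|^{1/2} \times \chi|\cdot|^{-1/2}$. Applying $\sigma$ componentwise to the underlying complex vector space intertwines the $\GL_2(L)$-action with the action on $\chi \mapsto {}^\sigma\chi$ (i.e.\ $(^\sigma \chi)(x) = \sigma(\chi(x))$, which is again a character since $|\cdot|$ is $\QQ$-valued and unaffected by $\sigma$). This yields a canonical isomorphism
$$^\sigma \St(\chi) \cong \St({}^\sigma \chi).$$

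Next, recall that $\St(\chi) \cong \St(\chi')$ if and only if $\chi = \chi'$: the central character of $\St(\chi)$ is $\chi^2$ and the Jacquet module recovers $\chi|\cdot|^{1/2}$ (up to the relevant conventional shift), so $\chi$ is an invariant of $\St(\chi)$. Thus if $^\sigma \pi \cong \pi$, we must have ${}^\sigma \chi = \chi$, hence ${}^\sigma \chi_0 = \chi_0$. In other words, every $\sigma$ fixing $\QQ(\pi)$ fixes $\chi_0$, which proves $\QQ(\chi_0) \subseteq \QQ(\pi)$ and completes the argument. No step poses a genuine obstacle here; the only subtlety is the standard verification that the Galois action on the induced representation is compatible with the action on the inducing character, which is a routine matter of checking that the realization of $\St(\chi)$ on a space of locally constant functions is defined over $\QQ(\chi)$.
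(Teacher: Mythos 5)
Your proof is correct and follows essentially the same route as the paper: realize $\St(\chi)$ in a space of functions on $G$, observe that postcomposition with $\sigma$ intertwines $^\sigma\St(\chi)$ with $\St(^\sigma\chi)$, and conclude from the injectivity of $\chi\mapsto\St(\chi)$ that any $\sigma$ fixing the isomorphism class of $\pi=\St(\chi)$ must fix $\chi$, hence $\chi_0$. The one point worth making precise (the paper sidesteps it by using the unnormalized model $f(tug)=\chi(t_1t_2)f(g)$) is that in the normalized picture the half-integral powers of $q$ coming from $|\cdot|^{\pm 1/2}$ and from $\delta_B^{1/2}$ combine into the $\QQ$-valued character $|t_1/t_2|$, so the transformation law is indeed $\QQ(\chi)$-rational and $\sigma$ acts on the inducing data purely through $\chi\mapsto{}^\sigma\chi$.
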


\begin{proof} Let $t = \smallmat {t_1}{0}{0}{t_2}$, and let $f:G \to \CC$ satisfy $f(tu g) = \chi(t_1t_2) f(g)$ for some $\chi$ with $\chi|_{\oo_L^\times} = \chi_0$ Then $\sigma \circ f$ satisfies $\sigma \circ f(tug) = \sigma(\chi(t_1t_2))$, exhibiting an isomorphism between $^\sigma\St(\chi)$ and $\St(\sigma(\chi))$. Now the proof follows exactly as above.\end{proof}

\begin{lem}\label{lem3.9} Let $\OO$ be an orbit consisting of principal series representations corresponding to $\chi_0,\, \chi_0' :\oo_L^\times \to \CC^\times$, with $\chi_0(x) = \zeta$ for $x\in \oo_L^\times$. Then, $[\QQ(\zeta):\QQ] \leq 2[\QQ(\OO):\QQ]$.\end{lem}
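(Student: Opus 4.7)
The plan is to mirror the arguments in Lemmas \ref{lem3.7} and \ref{lem3.8}: first identify how $\sigma \in \Aut(\CC)$ transports a principal series representation, then use the orbit classification in Proposition \ref{prop2.4} to constrain $\sigma$ in the stabilizer of $\OO$, and finally exhibit $\zeta$ as a root of a quadratic with coefficients in $\QQ(\OO)$.

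For the first step, I would verify that for any $\sigma \in \Aut(\CC)$, the representation $^\sigma(\chi \times \chi')$ lies in the same orbit as $(\sigma\circ\chi)\times(\sigma\circ\chi')$. As in Lemma \ref{lem3.8}, if $f$ is a function in the standard model of $I_P^G(\chi\otimes\chi')$ transforming on the Borel by $\chi(t_1)\chi'(t_2)\delta_P(t)^{1/2}$, then $\sigma \circ f$ transforms by $(\sigma\chi)(t_1)(\sigma\chi')(t_2)\sigma(\delta_P^{1/2}(t))$. Because $\delta_P^{1/2}$ is trivial on $\oo_L^\times \times \oo_L^\times$, the ratio $\sigma(\delta_P^{1/2})/\delta_P^{1/2}$ is an unramified character of the diagonal torus, so $^\sigma(\chi\times\chi')$ differs from $(\sigma\chi)\times(\sigma\chi')$ at worst by an unramified twist on the Levi. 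Consequently, the two representations lie in the same $X_u(M)$-orbit.

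Next, I would invoke Proposition \ref{prop2.4}(1)--(2): an orbit of principal series type is determined by the unordered pair of restrictions $\{\chi_0,\chi_0'\}$ to $\oo_L^\times$. If $^\sigma\pi \cong \pi$ for some $\pi \in \OO$, then $^\sigma\pi \in \OO$, which forces $\{\sigma\chi_0,\sigma\chi_0'\} = \{\chi_0,\chi_0'\}$ as unordered pairs. Thus every $\sigma$ in the stabilizer of $\OO$ either fixes $\chi_0$ and $\chi_0'$ individually or interchanges them.

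Finally, setting $s_1(x) = \chi_0(x) + \chi_0'(x)$ and $s_2(x) = \chi_0(x)\chi_0'(x)$, both scalars are fixed by every $\sigma$ with $^\sigma\pi \cong \pi$ for some $\pi \in \OO$, and hence $s_1(x),\, s_2(x) \in \QQ(\OO)$. Therefore $\zeta = \chi_0(x)$ is a root of
$$T^2 - s_1(x)T + s_2(x) \in \QQ(\OO)[T],$$
which gives $[\QQ(\zeta):\QQ(\OO)] \leq 2$ and hence $[\QQ(\zeta):\QQ] \leq 2[\QQ(\OO):\QQ]$. The only real subtlety is the modulus factor $\delta_P^{1/2}$; the observation that its Galois-conjugate contributes only an unramified twist is what keeps the Galois-translate inside the orbit and lets the rest of the argument proceed exactly as in the supercuspidal case.
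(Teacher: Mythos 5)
Your proof is correct and follows essentially the same route as the paper: pass from $^\sigma\pi\cong\pi$ to the fact that $\sigma$ permutes the restrictions $\{\chi_0,\chi_0'\}$, deduce that the symmetric functions $\chi_0(x)+\chi_0'(x)$ and $\chi_0(x)\chi_0'(x)$ lie in $\QQ(\OO)$, and conclude that $\zeta$ satisfies a quadratic over $\QQ(\OO)$. The one place where you are actually more careful than the paper is the $\delta_P^{1/2}$ factor --- the paper's proof asserts $^\sigma(\chi\times\chi')\cong(\sigma\chi)\times(\sigma\chi')$ on the nose, whereas the correct statement is only up to an unramified twist (coming from $\sigma(q^{1/2})/q^{1/2}$), but as you observe this still lands in the same orbit, so the orbit-level classification from Proposition \ref{prop2.4} gives the needed constraint on $\sigma$.
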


\begin{proof} Assume $\pi = \chi \times \chi'$ and assume $^\sigma \pi \cong \pi$. Arguing as above, we have an isomorphism $^\sigma \chi \times \chi' \cong (\sigma(\chi)) \times (\sigma (\chi'))$ and therefore $^\sigma \chi \times \chi' \cong \chi \times \chi'$ if and only if $\sigma$ permutes $\chi$ and $\chi'$. As such, $\sigma$ fixes both $\chi + \chi'$ and $\chi \chi'$. Therefore, $\chi_0(x) + \chi_0'(x)$ and $\chi_0(x)\chi_0'(x)$ are in $\QQ(\OO)$ for all $x\in \oo_L^\times$.

If $\chi_0(x) = \zeta$ then $\zeta$ is a root of
$$T^2 - (\chi_0(x) + \chi'_0(x))T + \chi_0(x) \chi'_0(x) \in \QQ(\OO)[T].$$
In particular, $\zeta$ is of degree at most $2$ over $\QQ(\OO)$, completing the proof.\end{proof}

With these lemmas in hand, we can prove Proposition \ref{prop3.6}.

\begin{proof} From the discussion of conductors before Definition \ref{defn2.5}, we see that if $c(\OO) \geq 3$ then $\OO$ takes one of the following forms:
\begin{itemize} 
	\item $\OO$ is a supercuspidal orbit corresponding to $\eta_0:\oo_{L'}^\times \to \CC^\times$, with $c(\eta_0) \geq 2$
	\item $\OO$ is a Steinberg orbit corresponding to $\chi_0:\oo_L^\times \to \CC^\times$, with $c(\chi_0) \geq 2$
	\item $\OO$ is a principal-series orbit corresponding to $\chi_0 \times \chi_0'$, where $c(\chi_0) \geq 2$ (up to switching $\chi_0$ and $\chi_0'$). \end{itemize}
In the second two cases, $\chi_0$ is nontrivial on $1 + \varpi_L\oo_L$, a pro-$p$-group and so $\zeta_p \in c(\chi_0)$. In the first case, $\eta_0$ is nontrivial on $1 + \varpi_{L'} \oo_{L'}$, again a pro-$p$ group, so $\zeta_p \in \QQ(\eta_0)$. Therefore, in all cases, $[\QQ(\OO):\QQ] \geq \frac{1}{2} [\QQ(\zeta_p): \QQ] = \frac{p-1}{2} > A$.
\end{proof}

\section{Fixed-Central Character Hecke Algebras and Plancherel Transforms}\label{sec:4}

In this section, we briefly introduce fixed-central-character Hecke algebras and Plancherel transforms.

Throughout this section, $F$ is a totally real field with $[F:\QQ] = n$ and $\AAA$ is the ring of ad\`eles over $F$. $R$ will be used to denote $\AAA$ or $F_v$ for some place $v$ of $F$.

The definitions and lemmas below will depend upon a choice of Haar measure. Fix once and for all the following Haar measures:
\begin{itemize} 
	\item If $L$ is a $p$-adic field, and $G(L)$ the group of $L$-points of some reductive group, the we choose the Haar measure giving a maximal compact subgroup measure $1$.
	\item We choose the Euler-Poincar\'e measure on $\GL_2(\RR)$ (see section 5) and the standard Haar measure on $\RR^\times$.
	\item On an ad\`elic group such as $\GL_2(\AAA_F)$ or $\AAA_F^\times$, we take the product measure of the local measures just described.
\end{itemize}

\begin{defn} \label{defn4.1}Let $\XX$ be a closed subgroup of the center $Z(\GL_2(R))$, and let $\chi: \XX \to \CC^{\times}$ be a unitary character. The \emph{Hecke algebra} $\HH(\GL_2(R),\, \XX,\,\chi)$ is the convolution algebra of smooth functions $\phi: \GL_2(R) \to \CC$ that are compactly-supported modulo $\XX$ and that satisfy the transformation property 
$$\phi(gx) = \phi(g)\chi(x)^{-1}\text{ for all $g\in G,\, x\in \XX$.}$$
\end{defn}

\begin{defn} \label{defn4.2}Let $\phi \in \HH(\GL_2(R),\, \XX,\,\chi)$. We define its \emph{Plancherel transform} $\widehat \phi$ as a complex function on the space of representations $\pi$ with $\chi_\pi|_{\XX} = \chi$, by
$$\widehat \phi(\pi) = \tr_{\XX} \pi(\phi) = \tr\left(v \mapsto \int_{\XX \bs\GL_2(R)} \phi(g)\pi(g)v\, dg\right).$$

The integrand is well-defined since $\phi(gx)\pi(gx) = \phi(g)\chi^{-1}(x) \chi(x) \pi(g) = \phi(g)\pi(g)$ for all $g\in G,\, x\in \XX$.
\end{defn}

We repeat here a piece of notation that will be in effect throughout the paper.  Greek letters like $\phi$ and $\psi$ are reserved for functions in some Hecke algebra, and then $\wh \phi,\, \wh \psi$ will be denote their Plancherel transforms on the tempered spectrum.  Latin letters like $\wh f,\, \wh h$ will be used to denote elements of the larger set $\FF_0$ of complex-valued functions on $\GL_2(L)^{\wedge}$; see Definition \ref{defn6.7}.

\section{Euler-Poincar\'e measures and Euler-Poincar\'e functions}
\label{sec:5}

Let $\xi$ be an irreducible, finite-dimensional representation of $\GL_2(F_\infty)$ and let $\pi_{\xi}$ be its discrete-series complement: that is, for every $v\mid \infty$, $\xi_v + \pi_{\xi, v}$ is equivalent to an induced representation in the Grothendieck group. In this section, we will prove the existence of a function $\phi_{\xi} \in \HH(\GL_2(F_\infty),\, Z(F_\infty),\, \chi_{\xi})$ such that for any \emph{infinite-dimensional} representation $\pi'$ of $\GL_2(F_\infty)$,

\begin{equation} 
\label{eq5.1}
\tr_{Z(F_\infty)}\pi'(\phi_\xi,\, \mu^{\EP}) = \begin{cases} 
	(-1)^{[F:\QQ]} & \pi' \cong \pi_{\xi}
	\\ 0 & \text{otherwise;}
\end{cases}
\end{equation}
here the trace is taken with respect the \emph{Euler-Poincar\'e measure} on $\GL_2(F_\infty)/Z(F_\infty)$:

\begin{defn} \label{defn5.2}Let $\overline{G}$ be the compact inner form of $\GL_2(\RR)/Z(\RR)$, and let $\overline \mu^{\EP}$ be the Haar measure on $\overline{G}$ of total measure $1$. We define the \emph{Euler-Poincare} measure on $\GL_2(\RR)/Z(\RR)$ as the unique Haar measure such that the induced measure on $\overline{G}$ is $\overline{\mu}^{\EP}$.

The Euler-Poincare measure on $\GL_2(F_\infty)/Z(F_\infty)$ is given by the product measure under the identification
$$\frac{\GL_2(F_\infty)}{Z(F_\infty)} 
	\cong \prod_{v\mid \infty} \frac{\GL_2(F_v)}{Z(F_v)} 
	\cong \prod_{v\mid \infty} \frac{\GL_2(\RR)}{Z(\RR)}.$$
\end{defn}

To construct $\phi_\xi$, it's enough to have local functions $\phi_{\xi_v}$ and let $\phi_\xi = \prod_v \phi_{\xi_v}$.

Let $K_{v}' = F_{v, > 0} \cdot O(2)_v \subseteq \GL_2(F_v)$.
For an irreducible finite-dimensional representation $\xi_v$ of $\GL_2(F_v)$ and an admissible representation $\pi_v$ such that $\xi_v$ and $\pi_v$ have the same central character on $F_{v, >0}$, we define the Euler-Poincar\'e characteristic:
$$\chi_{\EP}(\pi_v \otimes \xi_v^{\vee}) = \sum_{i \geq 0} (-1)^i \dim H^i(\Lie \GL_2(F_v),\, K_{v}',\, \pi_v \otimes \xi_v^{\vee});$$
(here the cohomology is $(\mathfrak{g}, K)$-Lie algebra cohomology).

Clozel and Delorme \cite[Th\'eor\`eme 3]{CD90} have constructed a function $\phi_{\xi_v} \in \HH(\GL_2(F_v),\, F_{v, >0},\, \chi_{\xi_v})$ such that
$$\tr_{F_{v, > 0}} \pi_v(\phi_{\xi_v},\, \mu^{\EP}) = \chi_{EP}(\pi_v \otimes \xi_v^{\vee}).$$
It is moreover well-known that $\chi_{\EP}(\pi_v \otimes \xi_v^{\vee}) = 0$ unless $\pi_v$ has the same infinitesimal character as $\xi_v$ (see the bottom of page 43 of \cite{ST12}). Since $\pi_v$ and $\xi_v$ also have the same central character (restricted to $F_{v, >0}$), then the Langlands classification for admissible representations of $\GL_2(\RR)$ tells us that if $\tr \pi_v(\phi_{\xi_v}) \neq 0$, then $\pi_v$ must be of one of the following three forms:
\begin{itemize}
	\item $\pi_v = \xi_v$
	\item $\pi_v$ is the discrete series complement of $\xi_v$; i.e., there is an exact sequence
		$$0 \to \xi_v \to \mu_1 \times \mu_2 \to \pi_v \to 0$$
		where $\mu_1 \times \mu_2$ is the representation induced from the character $\mu_1\otimes \mu_2$ on the Borel subgroup.
	\item If $\mu_1,\, \mu_2$ is as above, then $\pi_v = \mu_1 \times (\mu_2 \cdot \sgn)$ or $\pi_v = (\mu_1 \cdot \sgn) \times \mu_2$.
\end{itemize}
However, in the third case, $\pi_v$ is in the continuous series, and since $\tr \pi'_v (\phi_{\xi_v}) = 0$ for all other continuous-series representations $\pi'_v$, then we must have $\tr \pi_v(\phi_{\xi_v}) = 0$. We have therefore proved:
\begin{prop} \label{prop5.3}Assume $\pi_v'$ is infinite-dimensional, that $\chi_{\EP}(\pi_v'\otimes \xi_v^{\vee}) \neq 0$, and that $\chi_{\pi_v'}$ and $\chi_{\xi_v}$ agree on $A_{G,\infty}$. Then $\pi_v'$ is the discrete-series complement of $\xi_v$. 
\end{prop}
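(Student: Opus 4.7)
The plan is to combine the Clozel--Delorme construction with the Langlands classification for $\GL_2(\RR)$ and then eliminate the non-desired candidates by a continuity argument on the tempered dual.

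First, I would invoke the Clozel--Delorme identity
$$\tr_{F_{v,>0}} \pi_v'(\phi_{\xi_v},\, \mu^{\EP}) = \chi_{\EP}(\pi_v' \otimes \xi_v^\vee),$$
together with the fact (cited from the bottom of p.\ 43 of \cite{ST12}) that this Euler--Poincar\'e characteristic vanishes unless $\pi_v'$ and $\xi_v$ have the same infinitesimal character. Combined with the hypothesis that $\chi_{\pi_v'}$ and $\chi_{\xi_v}$ agree on $A_{G,\infty} = F_{v,>0}$, the Langlands classification for admissible representations of $\GL_2(\RR)$ pins $\pi_v'$ down to the short list already enumerated in the paragraph preceding the proposition: $\xi_v$ itself, its discrete-series complement, or one of the two irreducible principal series $\mu_1 \times (\mu_2 \cdot \sgn)$ and $(\mu_1 \cdot \sgn) \times \mu_2$.

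Next, I would eliminate every possibility except the discrete-series complement. The infinite-dimensionality hypothesis rules out $\pi_v' = \xi_v$ at once. To dispatch the two twisted principal series, I would view them as isolated points inside a connected continuous family of principal series $\mu_1' \times \mu_2'$. Generic members of that family have infinitesimal character distinct from that of $\xi_v$, so by the infinitesimal-character vanishing result their trace against $\phi_{\xi_v}$ is zero. Since the distribution $\pi_v' \mapsto \tr \pi_v'(\phi_{\xi_v})$ is continuous along each connected component of the tempered dual, it must also vanish at the two special points; this contradicts the standing hypothesis that $\chi_{\EP}(\pi_v' \otimes \xi_v^\vee) \neq 0$.

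The main obstacle is the continuity step, namely the fact that the trace of any Hecke-algebra function is a continuous function on each connected component of the tempered dual. This is a classical consequence of Harish-Chandra's theory of characters and is exactly what the phrase ``$\tr \pi_v'(\phi_{\xi_v}) = 0$ for all other continuous-series representations'' in the paragraph preceding the proposition was tacitly invoking; once granted, the proposition reduces to a direct bookkeeping check against the Langlands classification.
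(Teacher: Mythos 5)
Your argument is correct and follows essentially the same route as the paper: the paragraph preceding Proposition \ref{prop5.3} \emph{is} the paper's proof, and it likewise combines the Clozel--Delorme identity, the infinitesimal-character vanishing criterion, and the central-character hypothesis with the Langlands classification for $\GL_2(\RR)$ to reduce to the three candidates, then discards $\xi_v$ by infinite-dimensionality and the two twisted principal series by exactly the (tacitly invoked) continuity of $\pi \mapsto \tr \pi(\phi_{\xi_v})$ along the continuous series that you spell out. Nothing further is needed.
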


If $\pi_{\xi_v}$ is the discrete-series complement of $\xi_v$, then $\tr_{F_{v, >0}} \pi_{\xi_v}(\phi_{\xi_v}) = -1$ (see the fact at the top of page 44 of \cite{ST12}). By replacing $\phi_{\xi_v}$ with $g\mapsto \frac{1}{2} (\phi_{\xi_v}(g) + \chi_{\xi_v}(-1)\phi_{\xi_v}(-g))$, we may assume $\phi_{\xi_v} \in \HH(\GL_2(F_v),\, Z(F_v),\, \chi_{\xi_v})$. In this case we have
$$\tr_{F_v^\times} \pi_{\xi_v}(\phi_{\xi_v}) = -1.$$
Here we are making a choice of Haar measure that will be in effect for the rest of the paper: the Haar measure on $\GL_2(F_v)/Z(F_v)$ is chosen so that the finite group $Z(F_v)/F_{v, >0}\cong \{\pm 1\}$ gets total measure $1$, the measure on $\GL_2(F_v)/F_{v, >0}$ is the Euler-Poincar\'e measure, and the measures are compatible under
$$1 \to 
	\frac{Z(F_v)}{F_{v, >0}}
	\to \frac{\GL_2(F_v)}{F_{v, >0}}
	\to \frac{\GL_2(F_v)}{Z(F_v)}
	\to 1.$$

We will need later that $\phi_{\xi_v}(1) = -\dim(\xi_v)$. This basically follows from the Plancherel theorem for \emph{real} groups, and is proven at the bottom of p. 276 in \cite{Art89}.

Let $\xi = \bigotimes_v \xi_v$, and let $\phi_{\xi} = \prod_v \phi_{\xi_v}$. Its discrete-series complement is $\bigotimes_v \pi_{\xi_v}$. We have proven the following:
\begin{cor} \label{cor5.4}Let $F$ be a totally real field and let $\xi$ be an irreducible finite-dimensional representation of $\GL_2(F_\infty)$, whose complementary discrete series representation is $\pi_{\xi}$. Then there is a function $\phi_{\xi} \in \HH(\GL_2(F_\infty),\, Z(F_\infty),\, \chi_{\xi})$ such that
	\begin{itemize} 
		\item for any generic representation $\pi$ of 
		$\GL_2(F_\infty)$, 
		$$\tr_{Z(F_\infty)} \pi(\phi_{\xi}) = 
		\begin{cases} 
			(-1)^{[F:\QQ]} & \text{if }\pi = \pi_{\xi}
			\\ 0 & \text{otherwise.}\end{cases}$$
			\item $\phi_{\xi}(1) = (-1)^{[F:\QQ]}\dim \xi$.
	\end{itemize}
\end{cor}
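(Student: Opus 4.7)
The plan is to define $\phi_{\xi}$ as the product of local Euler--Poincar\'e functions already constructed in the paragraphs preceding the corollary, and to verify the two claims by factoring everything over the archimedean places of $F$. The key input from representation theory is already in hand in Proposition \ref{prop5.3} together with the normalization $\tr_{F_v^\times} \pi_{\xi_v}(\phi_{\xi_v}) = -1$, so the work is essentially bookkeeping across the $n = [F:\QQ]$ archimedean places.

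First, write $\xi = \bigotimes_{v \mid \infty} \xi_v$ and set $\phi_{\xi}(g) = \prod_{v \mid \infty} \phi_{\xi_v}(g_v)$ on $\GL_2(F_\infty) = \prod_{v\mid \infty} \GL_2(F_v)$. Each $\phi_{\xi_v}$ was arranged to satisfy $\phi_{\xi_v}(g_v z_v) = \phi_{\xi_v}(g_v)\chi_{\xi_v}(z_v)^{-1}$ for $z_v \in Z(F_v)$, so the product lies in $\HH(\GL_2(F_\infty),\, Z(F_\infty),\, \chi_{\xi})$ as required. Since the Haar measure on $\GL_2(F_\infty)/Z(F_\infty)$ is the product of the local measures fixed above Proposition \ref{prop5.3}, a generic (in particular infinite-dimensional) irreducible representation $\pi = \bigotimes_{v\mid \infty} \pi_v$ with $\chi_{\pi}|_{Z(F_\infty)} = \chi_{\xi}$ satisfies
$$\tr_{Z(F_\infty)} \pi(\phi_{\xi}) = \prod_{v\mid \infty} \tr_{Z(F_v)} \pi_v(\phi_{\xi_v}).$$

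Next, I would analyze each local factor. By Proposition \ref{prop5.3}, if $\pi_v$ is infinite-dimensional with the same central character on $F_{v,>0}$ as $\xi_v$, then $\tr_{F_v^\times}\pi_v(\phi_{\xi_v}) = 0$ unless $\pi_v$ is the discrete-series complement $\pi_{\xi_v}$, in which case it equals $-1$ by the explicit normalization. Since each local factor $\pi_v$ of a generic global $\pi$ is infinite-dimensional, the product vanishes as soon as some $\pi_v \ne \pi_{\xi_v}$, and equals $\prod_{v\mid\infty}(-1) = (-1)^{[F:\QQ]}$ when $\pi = \pi_{\xi}$. This establishes the first bullet.

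Finally, for $\phi_{\xi}(1)$, I would use the local identity $\phi_{\xi_v}(1) = -\dim \xi_v$ recalled from the bottom of p.~276 of \cite{Art89} together with $\dim \xi = \prod_{v\mid \infty} \dim \xi_v$, giving $\phi_{\xi}(1) = \prod_{v\mid\infty}(-\dim \xi_v) = (-1)^{[F:\QQ]} \dim \xi$. The only mildly subtle point, and arguably the main obstacle, is ensuring the correct central-character bookkeeping when passing from the Clozel--Delorme function (which lived over $F_{v,>0}$) to its symmetrized version over $Z(F_v)$; but this was already handled above Proposition \ref{prop5.3} by averaging $\phi_{\xi_v}(g) \mapsto \tfrac{1}{2}(\phi_{\xi_v}(g) + \chi_{\xi_v}(-1)\phi_{\xi_v}(-g))$, which preserves both the trace identity and the value at the identity.
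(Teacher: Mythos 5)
Your proposal is correct and follows essentially the same route as the paper: the paper assembles the same local Clozel--Delorme/Euler--Poincar\'e inputs (Proposition \ref{prop5.3}, the normalization $\tr_{F_v^\times}\pi_{\xi_v}(\phi_{\xi_v}) = -1$, and $\phi_{\xi_v}(1) = -\dim\xi_v$) into the product $\phi_\xi = \prod_v \phi_{\xi_v}$, with the only remark in its proof being the fact you also use, namely that a representation of $\GL_2(F_\infty)$ is generic if and only if it is infinite-dimensional at every archimedean place.
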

\begin{proof} The only point that needs to be made is that $\xi$ is generic (i.e. has a Whittaker model) if and only if it is infinite-dimensional at every place.\end{proof}

\section{Representation-Theoretic Results for Fixed Central Character}
\label{sec:6}

In this section, we will state three important representation-theoretic results for fixed central character: first, a simple version of the invariant Trace formula; second, a description of the fixed-central-character Plancerhel measure; and third, a fixed-central-character version of Sauvageot's density theorem.  To our knowledge, these results as stated are not explicitly written down  in the literature, though they are known to the experts.  

We make a brief note on the proofs of these results.  The results can be derived from the non-fixed central character versions stated in the literature with abelian Fourier analysis; this is the tack we will take.  Because the proofs are long but elementary, we have decided to put them in the appendix; we will simply state the results here.

For the fixed-central-character trace formula, it is worth noting that the versions of the trace formula for $\GL_2$ stated, for instance, in \cite[(7.14)-(7.19)]{GJ79}, \cite{Shi63},\cite[Theorem 22.1]{KL06} and \cite{Pal12} are all fixed-central-character versions.  However, we believe it is easiest to take the version from \cite{Art02} as a starting point since it fits most nicely into the framework of \cite{Art88} and \cite{Art89}, and the geometric terms of trace formulae in these papers are easiest to manage.


\subsection{Fixed-Central-Character Invariant Trace Formula for $\GL_2$}
\label{subsec:6.1}

We begin with a definition:

\begin{defn} \label{defn6.1} Let $\phi: \GL_2(\AAA) \to \CC$ be smooth and compactly-supported modulo the center.  
\begin{itemize}
	\item Let $\gamma \in \GL_2(\AAA)$ and let $G_{\gamma}(\AAA)$ be its centralizer in $\GL_2(\AAA)$.  We define the \emph{orbital integral} 
		$$O_{\gamma}(\phi) = \int_{G_\gamma(\AAA)\bs G(\AAA)} \phi(g^{-1}\gamma g)\,dg$$
	\item Let $\gamma \in T(\AAA^\infty)$, the torus of diagonal elements.  We define the \emph{constant term}
		$$Q_{\gamma}(\phi) = \int_{K^\infty} \int_{\AAA^\infty} \phi\left(k^{-1}\gamma \twomat{1}{a}{0}{1} k\right)\,da\,dk.$$
\end{itemize}
\end{defn}

If $\phi$ is a product of local functions, the the constant terms and orbital integrals decompose as products of local constant terms and local orbital integrals.

\begin{defn} 
\label{defn6.2}
	Let $\gamma_v\in \GL_2(F_v)$.  We say $\gamma_v$ is \emph{elliptic} if it is semisimple and the split component of the center of the centralize $G_{\gamma_v}$ is $A_G(F_v)$. Equivalently, in the case of $\GL_2$, $\gamma_v$ is either central, or it is semisimple but not diagonal in $\GL_2(F_v)$.
	Let $\phi = \prod_v$ be smooth and compactly-supported modulo the center.  We say $\phi$ is \emph{cuspidal} at a place $v$ if for every element $\gamma_v \in \GL_2(F_v)$ that is not elliptic, the orbital integral $O_{\gamma_v}(\phi_v)$ vanishes.
\end{defn}

Here we note that the Euler-Poincare functions $\phi_\xi$ at $\infty$ from the previous section are cuspidal (see, for instance, page 267 of \cite{Art89}). this will allow us to use simpler forms of the trace formula.

\begin{prop}[(Fixed-Central-Character Invariant Trace Formula)] \label{prop6.3} Let $F$ be a totally real field and let $\AAA$ be its ad\`ele ring.  Let $\chi$ be an automorphic character of $\AAA^\times$.  Let $\phi = \phi^\infty \phi_{\xi} \in \HH(\GL_2(\AAA), Z(\AAA), \chi)$, where $\phi_\xi$ is an Euler-Poincare function as in Corollary \ref{cor5.4}.
\begin{itemize}
	\item If $F = \QQ$ then 
		\begin{align*} \sum_{\pi} \tr_{Z}(\phi) & = 
			\vol(G(F)Z(\AAA)\bs G(\AAA)) \phi(1)
			\\ & + \sum_{\substack{
				\gamma \in (G(F) - Z(F))/Z(F) 
				\\ \gamma \text{ semisimple}
				\\ \gamma_{\infty} \text{ elliptic}}} 
			C(G,\, \gamma) \vol(Z(F)A_{G,\infty}\bs Z(\AAA)) O_{\gamma}(\phi)
			\\ & +  \sum_{\gamma \in T(F)/Z(F)} C(T,\,\gamma) \vol(Z(F)A_{G,\infty}\bs Z(\AAA))Q_{\gamma}(\phi)
		\end{align*}
	\item If $F \neq \QQ$ then
		\begin{align*} \sum_{\pi} \tr_{Z}(\phi) & = 
			\vol(G(F)Z(\AAA)\bs G(\AAA)) \phi(1)
			\\ & + \sum_{\substack{
				\gamma \in (G(F) - Z(F))/Z(F) 
				\\ \gamma \text{ semisimple}
				\\ \gamma_{\infty} \text{ elliptic}}} 
			C(G,\, \gamma) \vol(Z(F)A_{G,\infty}\bs Z(\AAA)) O_{\gamma}(\phi)
		\end{align*}
\end{itemize}
Here $C(G,\gamma),\, C(T,\, \gamma)$ are constants that depend only on $\gamma$ and not on $\phi$.
\end{prop}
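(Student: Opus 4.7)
The strategy is to derive the statement from Arthur's (non-fixed-central-character) invariant trace formula in \cite{Art02}, \cite{Art88}, \cite{Art89} by means of abelian Fourier analysis over the center. Arthur's formulation applies to functions $\Phi \in C_c^\infty(\GL_2(\AAA)/A_{G,\infty})$ and sums over discrete automorphic representations whose central character is trivial on $A_{G,\infty}$. Our $\phi \in \HH(\GL_2(\AAA),\, Z(\AAA),\, \chi)$ descends naturally to a function on $\GL_2(\AAA)/A_{G,\infty}$ that transforms under the \emph{compact} quotient $\widetilde Z := Z(\AAA)/(A_{G,\infty} Z(F))$ by $\chi|_{\widetilde Z}$, so Fourier analysis on $\widetilde Z$ is the natural tool.

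First, I would build an auxiliary compactly supported function $\Phi$ on $\GL_2(\AAA)/A_{G,\infty}$ by multiplying $\phi$ by a smooth cutoff on $\GL_2(\AAA)/Z(\AAA)$, normalized so that the $\chi$-isotypic projection $\Phi \mapsto \int_{\widetilde Z} \Phi(zg)\chi(z)\,dz$ returns $\phi$. Applying Arthur's formula to $\Phi$ and taking the $\chi$-isotypic part of both sides yields a preliminary identity. On the spectral side, only those discrete $\pi$ with $\chi_\pi = \chi$ survive, producing $\sum_\pi \tr_Z \pi(\phi)$. On the geometric side, the cuspidality of $\phi_\xi$ at every Archimedean place (page 267 of \cite{Art89}) kills the weighted orbital integrals on non-elliptic classes, leaving only the central contribution $\vol(G(F)Z(\AAA)\bs G(\AAA))\phi(1)$ and the elliptic contribution $\sum_\gamma C(G,\gamma)\vol(Z(F)A_{G,\infty}\bs Z(\AAA)) O_\gamma(\phi)$.

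The split-torus boundary term persists only for $F = \QQ$: there, a single Archimedean place does not suffice to eliminate the parabolic constant-term contribution, which surfaces as $\sum_{\gamma \in T(F)/Z(F)} C(T,\gamma) \vol(Z(F)A_{G,\infty}\bs Z(\AAA)) Q_\gamma(\phi)$; for $F \neq \QQ$, cuspidality at each of the multiple Archimedean places forces this term to vanish. The main obstacle will be the careful bookkeeping of Haar measure normalizations: Arthur's formula uses $A_{G,\infty}$ as the central datum, whereas we fix $\chi$ on the full adelic center $Z(\AAA)$, and the Fourier projection introduces a factor of $\vol(\widetilde Z)$; matching these produces the $\vol(Z(F)A_{G,\infty}\bs Z(\AAA))$ factor and absorbs residual ratios $\vol(G_\gamma(F)Z(\AAA)\bs G_\gamma(\AAA))$ into the constants $C(G,\gamma),\, C(T,\gamma)$. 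Verifying that this bookkeeping reproduces the classical fixed-central-character trace formulas in \cite{Shi63, GJ79, KL06, Pal12} is tedious but mechanical, which is why the full proof is deferred to the appendix.
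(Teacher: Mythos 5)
Your route is genuinely different from the paper's. The paper does not derive Proposition \ref{prop6.3} from the non-fixed-central-character formula by Fourier analysis on the center: it starts from Arthur's invariant trace formula with a fixed central character datum, which already exists in \cite{Art02} (geometric side in Proposition 2.2 there, with conjugacy classes taken modulo the center; spectral side in Proposition 3.1, restricted to representations with the prescribed central character), and then reruns the standard simplifications for test functions that are cuspidal at the archimedean places: sections 2--6 of \cite{Art89} for $F=\QQ$ (which is why the torus terms $Q_\gamma$ survive only there) and \cite{Art88}, Corollary 7.5, for $F\neq\QQ$, where cuspidality at two archimedean places removes all non-elliptic and boundary terms. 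Your plan instead manufactures the fixed-central-character statement from the ordinary formula by a central isotypic projection; this is a legitimate alternative (it is the strategy the paper uses for the other fixed-central-character results of Section \ref{sec:6}), and it buys independence from \cite{Art02} at the cost of redoing analysis Arthur has already packaged.

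As written, though, the key step is a gap rather than a proof. ``Taking the $\chi$-isotypic part of both sides'' of a numerical identity is not an operation: you must apply the trace formula to the central translates $g\mapsto\Phi(gz)$, note that both sides are unchanged under $z\mapsto\gamma z$ for $\gamma\in Z(F)$, and integrate the resulting family of identities against $\chi(z)$ over $Z(\AAA)/A_{G,\infty}Z(F)$ (or run an approximate-identity argument over $Z(\AAA)/A_{G,\infty}$). The real content is then (a) justifying the interchange of this central average with the infinite spectral sum and with the geometric sum, and (b) the unfolding step: the sum over $\gamma\in Z(F)$ of central contributions, and the sums over $Z(F)$-cosets of orbital integrals and constant terms, recombine under the central average, and it is precisely this Poisson-summation-type step over $Z(F)\subset Z(\AAA)$ that produces the factors $\vol(Z(F)A_{G,\infty}\bs Z(\AAA))$, the sums over classes in $(G(F)-Z(F))/Z(F)$ and $T(F)/Z(F)$, and the identity $\vol(G(F)Z(\AAA)\bs G(\AAA)) = \vol(G(F)A_{G,\infty}\bs G(\AAA))/\vol(Z(F)A_{G,\infty}\bs Z(\AAA))$ behind the leading term; this is not mere measure bookkeeping. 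Two smaller corrections: the cutoff defining $\Phi$ must be taken in the central direction $Z(\AAA)/A_{G,\infty}$, not ``on $\GL_2(\AAA)/Z(\AAA)$''; and you must verify that this truncation preserves cuspidality of the relevant local components (it does, since the truncated orbital integral is $O_{\gamma_v}(\phi_v)$ times a factor depending only on $\det\gamma_v$), since otherwise the simple forms of Arthur's formula you invoke do not apply to $\Phi$.
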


It will be useful to name the expressions in the above equation.  We denote the left-hand, or \emph{spectral} side, as $I_{\spec}(\phi,\,Z(\AAA),\,\chi)$.  The right hand, or \emph{geometric} side, we will denote by $I_{\geom}(\phi,\, Z(\AAA),\,\chi)$.

\begin{rems} \label{rems6.4} The exact values of the constants $C(G,\,\gamma)$ and $C(T,\,\gamma)$ are unnecessary for our purposes since we will show that these terms vanish asymptotically.  The interested reader can see Theorem 6.1, and the subsequent remark, in \cite{Art89}, or (4.2), (4.3), and (4.4) of \cite{Shi12}.
\end{rems}

\begin{proof}
Only a sketch will be necessary, since the leg work has been done in \cite{Art88}, \cite{Art89}, and \cite{Art02} (in fact, we will simply piece these results together).  To this end, consider the fixed central character invariant trace formula given in \cite{Art02}.  The geometric side is given in \cite{Art02}[Proposition 2.2]; this contains the same terms as the geometric side of the trace formula given in \cite{Art88}, except that the sum is over conjugacy classes \emph{modulo center}.  The spectral side given in \cite{Art02}[Proposition 3.1] and matches the spectral side in \emph{Art88}, except that it restricts to the set of representations where the central character is fixed.

With this in hand, the versions of the trace formula given above follow exactly as the proofs of the non-fixed central character analogs.  For the first version, we can follow the arguments of sections 2-6 of \cite{Art89} to discern (i) as the analog of Theorem 6.1 there.  The second version follows similarly as an analog of \cite{Art88}[Corollary 7.5].

\end{proof}

\subsection{The Fixed-Central-Character Plancherel Measure}
\label{subsec:6.2}

We now turn away from the trace formula to a pair of local results involving the Plancherel measure.  For reference, we begin with the following result, following \cite{Wal03}:

\begin{thm}[(Harish-Chandra's Plancerhel Theorem)] \label{thm6.5} Let $L$ be a local field and let $\GL_2(L)^{\wedge,t}$ be the tempered spectrum of $\GL_2(L)$.  Given a measure on $\GL_2(L)$, there is a unique measure $\mupl$ on $\GL_2(L)^{\wedge,t}$, called the \emph{Plancherel measure}, such that, for any function $\phi\in C_c^\infty(\GL_2(L))$, we have
$$\phi(1) = \int_{\GL_2(L)^{\wedge, t}} \wh\phi(\pi)\,d\mupl(\pi).$$
\end{thm}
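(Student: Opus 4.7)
The plan is not to reprove Harish--Chandra's Plancherel theorem but to extract it in the form needed from Waldspurger's treatment \cite{Wal03}. The only thing to verify is compatibility with the choices of Haar measure fixed in Section \ref{sec:4}, so only a sketch of the strategy is needed.

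First, one considers the right-regular representation of $\GL_2(L)$ on $L^2(\GL_2(L))$ (with the chosen Haar measure) and uses Harish-Chandra's theory of Eisenstein integrals and intertwining operators to decompose it as a direct integral over the tempered dual. This produces a measure $\mupl$ on $\GL_2(L)^{\wedge,t}$ and a Hilbert-space isometry of $L^2(\GL_2(L))$ onto $\int^{\oplus} \pi \otimes \pi^{\vee}\, d\mupl(\pi)$. Evaluating the inversion formula at $g = 1$ and using that $\widehat\phi(\pi) = \tr\pi(\phi)$ for $\phi \in C_c^\infty(\GL_2(L))$ gives the stated identity.

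For $\GL_2$ specifically, the tempered dual decomposes into the four types of orbits described in Proposition \ref{prop2.4}. The discrete-series orbits (supercuspidal and Steinberg) contribute point masses proportional to formal degrees, while the principal-series orbits acquire an absolutely continuous density coming from the Harish-Chandra $\mu$-function along the complex parameter $s$ in $\chi|\cdot|^s \times \chi'|\cdot|^{-s}$. This gives an explicit description of $\mupl$, which is what one uses later when passing to the fixed-central-character version in Section \ref{sec:6}.

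Uniqueness of $\mupl$ follows from the fact that the Plancherel transforms $\{\widehat\phi : \phi \in C_c^\infty(\GL_2(L))\}$ separate points of $\GL_2(L)^{\wedge,t}$ and, restricted to any individual orbit, are dense in the continuous functions on that compact orbifold; so any measure satisfying the displayed identity is determined by its pairing with these transforms. The genuine analytic difficulty in an original proof would be controlling the Eisenstein integrals and $\mu$-functions sufficiently well to construct $\mupl$; that work is carried out in detail in \cite{Wal03}, which we use as a black box.
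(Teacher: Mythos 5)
Your proposal is correct and takes the same route as the paper: Theorem \ref{thm6.5} is stated in the paper "following \cite{Wal03}" with no proof given, exactly the black-box citation of Waldspurger that you describe. Your sketch of how existence (direct-integral decomposition, evaluate inversion at $g=1$) and uniqueness (trace Paley--Wiener giving density of $\{\widehat\phi\}$ on each compact orbit, then Stone--Weierstrass) emerge from that source is a faithful and slightly more detailed account of what the paper is implicitly relying on.
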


We also have the following density theorem of Sauvageot \cite[Thm 7.3]{Sau97}:
\begin{thm}\label{thm6.6}[(Sauvageot's Density Theorem)] \label{thm6.6} Let $\wh f: \GL_2(L)^{\wedge} \to \CC$ be supported on a finite number of Bernstein components and assume it is continuous outside a set of Plancherel measure zero.  Given $\epsilon > 0$, there are functions $\phi,\, \psi \in C_c^\infty(\GL_2(L))$ such that
\begin{enumerate}[(i)]
	\item $|\wh f(\pi) - \wh\phi(\pi)| \leq \wh\psi(\pi)$ for all $\pi \in \GL_2(L)^{\wedge}$, and
	\item $\mupl(\wh\psi) < \epsilon$
\end{enumerate}
\end{thm}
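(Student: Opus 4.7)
The plan is to invoke Sauvageot's original argument \cite[Thm 7.3]{Sau97} in essentially the same form as the statement above. Since $\wh f$ is supported on finitely many Bernstein components, by linearity we can reduce to the case of a single component $\OO$. Recall from Section \ref{sec:2} that each such component has the structure of a compact orbifold, parametrized by unramified unitary twists of a discrete series representation on a Levi subgroup modulo a finite stabilizer.

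The first step is a Stone--Weierstrass density statement: the restrictions $\{\wh\phi|_{\OO} : \phi \in C_c^\infty(\GL_2(L))\}$ are uniformly dense in $C(\OO)$. Closure under conjugation is realized by $\phi^*(g) := \overline{\phi(g^{-1})}$, closure under products is realized by convolution, and separation of points follows from the existence of matrix coefficients distinguishing inequivalent tempered representations. Applied on the compact orbifold $\OO$, Stone--Weierstrass then yields the desired density.

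Next, let $E \subseteq \OO$ be the Plancherel-measure-zero set where $\wh f$ fails to be continuous. By outer regularity of $\mupl$, cover $E$ by an open set $U$ with $\mupl(U)$ as small as we like. On the compact set $\OO \setminus U$, $\wh f$ is then uniformly continuous, so by the density step it is uniformly approximated to within some small $\delta$ by a Plancherel transform $\wh\phi$. The dominating function $\wh\psi$ is then engineered so that $\wh\psi \approx \delta$ on $\OO \setminus U$ while $\wh\psi \geq |\wh f - \wh\phi|$ on $U$, again realizing the required function as a Plancherel transform using the same density argument.

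The main technical obstacle is the construction of $\wh\psi$ with $\mupl(\wh\psi) < \epsilon$ while simultaneously dominating $|\wh f - \wh\phi|$ on all of $\GL_2(L)^{\wedge}$. The value of $\wh\psi$ on $U$ must be large enough to dominate the potentially wild behavior of $\wh f$ near its discontinuities, yet $\mupl(U)$ must be chosen so small that this large value still integrates to less than $\epsilon$. Balancing these competing requirements, and carrying out the bump-function construction on the orbifold quotient so that $\wh\psi$ actually arises from a function in $C_c^\infty(\GL_2(L))$, is the technical heart of Sauvageot's proof.
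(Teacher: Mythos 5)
The paper does not prove this statement at all: it is quoted as a known result, with the proof being the citation to \cite[Thm 7.3]{Sau97}, so for the purposes of this paper a citation is all that is required. Your proposal instead sketches a proof of Sauvageot's theorem itself, and read as a proof it has genuine gaps. The most concrete one is the Stone--Weierstrass step: the set of Plancherel transforms is not closed under pointwise multiplication via convolution, because
$$\wh{\phi_1 * \phi_2}(\pi) = \tr\bigl(\pi(\phi_1)\pi(\phi_2)\bigr),$$
and the trace of a product is not the product of traces; so the family $\{\wh\phi|_{\OO}\}$ is not exhibited as an algebra by your argument. The standard repair is the trace Paley--Wiener theorem of Bernstein--Deligne--Kazhdan, which identifies the image of $\phi\mapsto\wh\phi$ with the regular functions (Laurent polynomials in the unramified twisting parameters) on each Bernstein component; density in $C(\OO)$ then follows, but from that input rather than from closure under convolution.

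A second gap is that condition (i) must hold for all $\pi\in\GL_2(L)^{\wedge}$, i.e.\ on the full unitary dual, whereas your compactness, uniform continuity, and bump-function arguments all take place on the tempered orbifolds. A Bernstein component of the unitary dual contains non-tempered points (complementary series), on which the Plancherel measure gives no control, and arranging the domination $|\wh f-\wh\phi|\leq\wh\psi$ at those points is one of the genuinely delicate parts of Sauvageot's argument; your sketch does not address it. Finally, you explicitly defer the construction of $\wh\psi$ as ``the technical heart,'' so what you have is an outline of the known strategy rather than a proof. The honest and sufficient course here is to do what the paper does and cite \cite{Sau97}.
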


In view of this theorem, we make the following definition:
\begin{defn} \label{defn6.7} The set $\mathscr{F}_0(\GL_2(L)^\wedge)$ is the set of complex-valued function that are supported on a finite number of Bernstein components and that are continuous outside a set of Plancherel measure zero.  
\end{defn}

Indeed, Sauvageot proves that the function in $\FF_0$ are \emph{precisely} those for which Sauvageot's density theorem holds, but we will not need this fact here.

\begin{rem} \label{rem6.8} We note that Harish-Chandra's Plancherel Theorem and Sauvageot's density theorem apply equally well to a finite set of places, in the following sense: if $F$ is a global field and $S$ a finite set of finite places, we may replace the $L$ in the statements above with $F_S$.
\end{rem}

In order to state the properties of the fixed-central-character Plancherel theorem, we'll need to briefly recall some facts about the construction of the Plancherel measure from \cite{Wal03} and \cite{AP05}.  Let $\OO$ be a tempered orbit in $\GL_2(L)^{\wedge, t}$ and let $\pi\in\OO$.  Then there is a parabolic subgroup $P$, a Levi subgroup $M\leq P$, and a discrete series representation $\omega$ of $M$ such that $\pi \cong I_P^G(\omega)$.

Let $X_u(M)$ be the group of unramified characters on $M(L)$, and let $\OO_M$ be the set $\{\omega\otimes\tau: \tau \in X_u(M)\}$.  Then there are surjections 
$$X_u(M) \twoheadrightarrow \OO_M \twoheadrightarrow \OO$$
where the first map is $\tau \mapsto \omega\otimes\tau$ and the second map is $\omega'\mapsto I_P^G\omega'$.

\begin{defn} \label{defn6.9} Let $i: X \to Y$ be a surjective, finite map of orbifolds equipped with measures $\mu_X,\, \mu_Y$. We say $i$ \emph{locally preserves measures} if there is an open $X'\subseteq X$ and an open cover $\{U_{\alpha}\}$ of $X'$ such that $\mu_X(X - X') = 0$, $\mu_Y(i(X - X')) = 0$, and for each $U \subseteq U_\alpha$, $\mu_X(U) = \mu_Y(i(U))$.
\end{defn}

This definition may be ugly, but has the following useful property: if $i: X \to Y$ locally preserves measures and $E \subseteq X$ is an open fundamental domain for the map (so that $E \to Y$ is injective and covers $Y$ up to a set of measure $0$), then for any function $h: Y \to \CC$ we have 
$$\int_Y f\, d\mu_Y = \int_E (f\circ i) \, d\mu_X.$$

We now define the \emph{canonical measure}.

\begin{defn} \label{defn6.10} Let $M\subseteq \GL_2(L)$ be a Levi subgroup with center $Z(M)$. Let $\OO$ be an orbit in $\GL_2(L)^{\wedge,t}$ induced from $M$. Consider the surjective, finite maps
$$X_u(Z(M)) \xleftarrow{i} X_u(M) \xrightarrow{j} \OO.$$
We give $X_u(Z(M))$ the Haar measure with total measure $1$. If measures $d\chi_M$ on $X_u(M)$ and $d\pi$ on $\OO$ are chosen so that $i,\, j$ locally preserve measures, then we call $d\pi$ the \emph{canonical measure} on $\OO$.\end{defn}

The Plancherel measure $\mupl$ is absolutely continuous with respect to the canonical measure $d\pi$: there is a continuous function $\nupl$ such that $d\mupl(\pi) = \nupl(\pi)\,d\pi$.  The Plancherel density function is given explicitly by 
\begin{align*} 
	\nupl(I_P^G\omega) 
	& = c(G|M)^{-2}\gamma(G|M)^{-1} \mu_{G|M}(\omega) d(\omega)
	\\ & = \gamma(G|M)^{-1}j(\omega)^{-1} d(\omega).
\end{align*}

The $\gamma$ and $c$ factors is as described and computed on p. 241 of loc. cit. The term $d(\omega)$ is the formal degree of $\omega$; this is defined by the condition that
$$\int_{A_M \bs M} \linf{\omega(m)v_1}{\tilde v_1}\linf{v_2}{\tilde \omega(m) \tilde v_2} \, dm = d(\omega)^{-1} \linf{v_1}{\tilde v_2} \linf{v_2}{\tilde v_1}$$
for $v_1,\, v_2 \in V_{\omega}$ and $\tilde v_1,\, \tilde v_2 \in V_{\tilde \omega}$, where $\tilde \omega$ is the contragredient representation.

The $j(\omega)$ is the scalar given by an intertwining operator $I_P^G\omega \to I_P^G\omega$; these intertwining operators are defined in ch. I of loc. cit.  Finally $\mu_{G|M}(\omega)$ is chosen to be equal to $c(G|M)^2 j(\omega)$.

\begin{rem} \label{rem6.11} Note the dependence on Haar measures: $\wh \phi(\pi)$ depends on a Haar measure on $G$, whereas $j(\omega)^{-1}$ and $d(\omega)$ depend inversely on Haar measures on $N,\, M$ respectively (where $N$ is the unipotent radical of $P = MN$); we choose Haar measures $dg,\, dm,\, dn,\, dk$ so that $dk$ is the restriction of $dg$ to the maximal compact subgroup $K$ and so that
$$\int_G \phi(g)\,dg = \int_{M}\int_N\int_K \phi(mnk)\,dk\,dn\,dm$$
for any $\phi\in C_c^\infty(G)$.
\end{rem}

We define similarly the fixed-central-character canonical measure.  Fix a character $\chi: L^\times \to \CC^\times$ and let $\OO_{\chi}$ be the subset of $\OO$ where $\chi_\pi = \chi$.  Fix $\omega$ such that $\pi = I_P^G\omega$ and $\chi_\pi = \chi$, and let $X_u(M)_0$ be the kernel of the restriction map $X_u(M) \to X_u(Z(G))$. Then the surjection $X_u(M) \to \OO$ restricts to a surjection $X_u(M)_0\to \OO_{\chi}$, and we define the canonical measure on $\OO_{\chi}$ so that
\begin{itemize} 
	\item There is a Haar measure on $X_u(M)_0$ such that the map $X_u(M)_0\to \OO_{\chi}$ locally preserves measures, and
	\item $\OO$ and $\OO_{\chi}$ have the same canonical measure.
\end{itemize}

We now discuss the fixed-central-character Plancherel measure, and list some of its properties.

\begin{prop} \label{prop6.12} Let $L$ be a local field, let $\chi: L^\times \to \CC$ be a character.  Let $\GL_2(L)^{\wedge,t,\chi}$ be the subset of the tempered spectrum consisting of those representations $\pi$ with $\chi_\pi = \chi$.  There is a unique measure $\mupl_{\chi}$ on $\GL_2(L)^{\wedge, t, \chi}$ such that, for any $\phi\in \HH(\GL_2(L),\, Z(L),\, \chi)$ we have
$$\phi(1) = \int_{\GL_2(L)^{\wedge,t,\chi}} \wh \phi(\pi)\,d\mupl_{\chi}(\pi).$$
Moreover, the fixed-central-character Plancherel measure satisfies the following properties:
\begin{enumerate}[(i)]
	\item Let $d\pi$ be the canonical measure on $\GL_2(L)^{\wedge,t}$, let $\nupl$ be the Plancherel density function with respect to the $d\pi$, and let $d\pi_{\chi}$ be the canonical measure on $\GL_2(L)^{\wedge,t,\chi}$.  Then $d\mupl_{\chi} = \nupl d\pi_{\chi}$.
	\item Let $\pi \in \GL_2(L)^{\wedge,t,\chi}$.  Then $\mupl_{\chi}(\pi) \neq 0$ if and only if $\pi$ is a discrete series representation.  In this case, $\mupl_{\chi}(\pi) = d(\pi)$.
	\item Sauvageot's density theorem holds for the fixed-central-character Plancherel measure, in the following sense: given $\wh f_{\chi}$ on $\GL_2(L)^{\wedge,\chi}$ that is supported on a finite set of Bernstein components and that is continuous outside a set of Plancherel measure zero, we may find $\phi,\,\psi \in \HH(\GL_2(L),\, Z,\, \chi)$ such that $|\wh f_{\chi}(\pi) - \wh \phi(\pi)| \leq \wh\psi(\pi)$, and such that $\mupl_{\chi}(\wh\psi) < \epsilon$.
\end{enumerate}
\end{prop}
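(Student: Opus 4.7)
The plan is to derive each claim from its non-fixed-central-character counterpart (Theorems \ref{thm6.5}--\ref{thm6.6} and Waldspurger's explicit Plancherel formula recalled above) via abelian Fourier analysis on $Z(L)\cong L^\times$. The key technical device is the averaging map sending $h\in C_c^\infty(\GL_2(L))$ to
\[ h_\chi(g) = \int_{Z(L)} h(gz)\chi(z)\,dz. \]
A direct computation shows $h_\chi\in \HH(\GL_2(L),Z(L),\chi)$ and that $\widehat{h_\chi}(\pi) = \widehat{h}(\pi)$ whenever $\chi_\pi = \chi$. Conversely, by working in a compact-open partition of $Z(L)\bs\GL_2(L)$ and smearing $\phi$ against a bump $\beta\in C_c^\infty(Z(L))$ with $\int\beta(z)\chi(z)\,dz = 1$, every $\phi\in\HH(\GL_2(L),Z(L),\chi)$ arises as $h_\chi$ for some compactly supported $h$.

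To establish the main Plancherel identity, I would disintegrate the ordinary Plancherel measure along the central-character map $\GL_2(L)^{\wedge,t}\to Z(L)^\wedge$, obtaining a family $\{\mupl_\chi\}$ with $d\mupl = \int d\mupl_\chi\,d\chi'$. To pin this family down and verify the Plancherel formula simultaneously, I apply Theorem \ref{thm6.5} to convolutions $h*\alpha$ for $\alpha\in C_c^\infty(Z(L))$; the identities $(h*\alpha)_\chi = \widehat\alpha(\chi)h_\chi$ and $\widehat{h*\alpha}(\pi) = \widehat\alpha(\chi_\pi)\widehat{h}(\pi)$, combined with Fourier inversion on $Z(L)$, give
\[ \int_{Z(L)^\wedge}\widehat\alpha(\chi)\,h_\chi(1)\,d\chi' = \int_{Z(L)^\wedge}\widehat\alpha(\chi)\int_{\GL_2(L)^{\wedge,t,\chi}}\widehat h(\pi)\,d\mupl_\chi(\pi)\,d\chi'. \]
Varying $\alpha$ over a family whose Fourier transforms are dense in $C_0(Z(L)^\wedge)$ forces the inner integrands to agree for every $\chi$ (by continuity in $\chi$), and setting $\phi = h_\chi$ yields $\phi(1) = \int\widehat\phi\,d\mupl_\chi$. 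Uniqueness of $\mupl_\chi$ then follows once (iii) is established, since the pairing with the $\widehat\phi$'s separates enough measurable sets to determine the measure.

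Claims (i) and (ii) are read off from Waldspurger's formula $d\mupl = \nupl\,d\pi$ combined with the above disintegration. The canonical measure $d\pi$ disintegrates along $X_u(M)\twoheadrightarrow X_u(Z(G))$ to yield $d\pi_\chi$ exactly by construction (Definition \ref{defn6.10}), while the density $\nupl$, built intrinsically from the formal degree $d(\omega)$ and the intertwining scalar $j(\omega)$, descends unchanged to the $\chi$-fibre; hence $d\mupl_\chi = \nupl\,d\pi_\chi$. For (ii), on a discrete-series orbit fixing the central character collapses the $X_u(Z(G))$-orbit to a single point carrying all the formal-degree mass $d(\pi)$, whereas positive-dimensional tempered orbits carry zero canonical mass on individual points.

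For Sauvageot's density theorem (iii), given $\widehat f_\chi$ supported on finitely many Bernstein components of $\GL_2(L)^{\wedge,\chi}$ and continuous outside a null set, I would extend it to $\widehat f\in \mathscr{F}_0(\GL_2(L)^\wedge)$ supported over a small neighbourhood $U$ of $\chi$ by smearing $\widehat f_\chi$ along unramified twists in $X_u(Z(G))$ against a bump $\beta\in C_c^\infty(U)$. Applying Theorem \ref{thm6.6} produces $h_1,h_2\in C_c^\infty(\GL_2(L))$ with $|\widehat f - \widehat{h_1}|\le\widehat{h_2}$ and $\mupl(\widehat{h_2})<\epsilon'$; taking $\chi$-averages $\phi = (h_1)_\chi,\psi = (h_2)_\chi$ yields $|\widehat f_\chi - \widehat\phi|\le\widehat\psi$ on the $\chi$-fibre. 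The main obstacle is bounding $\mupl_\chi(\widehat\psi)$ in terms of $\mupl(\widehat{h_2})$: using the disintegration along $U$ and choosing $\beta$ with $\widehat\beta$ concentrated near $\chi$, one can control the fibre mass by essentially $\vol(U)^{-1}\mupl(\widehat{h_2})$, which is made smaller than $\epsilon$ by taking $\epsilon'$ small enough at the outset.
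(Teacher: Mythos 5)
Your treatment of the main Plancherel identity and of (i)--(ii) is essentially the paper's route: average along the center, disintegrate the Plancherel measure over $\wh{L^\times}$ by fibering each tempered orbit by unramified twists, and read (i), (ii) off Waldspurger's density formula. (The paper makes the disintegration concrete via fundamental domains for $X_u(M)_0\subseteq X_u(M)$ and uses a Fej\'er-kernel approximate identity on the center in place of your ``vary $\alpha$ and use continuity in $\chi$'' step; also, in (ii) the fibre $\OO_{\chi}$ of a discrete-series orbit is in general several points --- e.g.\ two Steinberg points --- and one needs the covering-degree count to see that each atom has canonical measure one, so ``collapses to a single point'' is loose but repairable.)

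The genuine gap is in (iii). After applying ordinary Sauvageot to your extension $\wh f$ you get $h_2$ with $\mupl(\wh h_2)<\epsilon'$, but the claimed bound $\mupl_{\chi}(\wh h_2)\lesssim \vol(U)^{-1}\mupl(\wh h_2)$ does not follow: smallness of the total mass only controls the integral of $\chi'\mapsto\mupl_{\chi'}(\wh h_2)$ over $U$, not its value at the one fibre you care about, and nothing in Sauvageot's theorem prevents that fibre function from spiking at $\chi$. Worse, $\mupl_{\chi}$ is genuinely singular against $\mupl$ at the relevant points: a discrete-series $\pi$ in the $\chi$-fibre is $\mupl$-null (it lies on a positive-dimensional orbit with continuous density) yet carries $\mupl_{\chi}$-mass $d(\pi)$; since you have no control on the support of $h_2$, its transform can concentrate near those points, making $\mupl_{\chi}(\wh h_2)$ of order $\sum d(\pi)$ while $\mupl(\wh h_2)<\epsilon'$. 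Choosing $\beta$ with transform concentrated near $\chi$ does not help, because $\beta$ enters the construction of $\wh f$, not of $h_2$, and $U$ is fixed before Sauvageot is invoked. The paper closes exactly this hole with a pigeonhole-and-twist argument: extend $\wh f_{\chi}$ so that it is constant along unramified twists, apply Sauvageot, use that the unramified orbit $U_{\chi}$ of $\chi$ in $\wh{L^\times}$ has total measure one to find some $\chi'\in U_{\chi}$ with $\mupl_{\chi'}(\wh\psi)\le\epsilon$, and then twist $\phi,\psi$ by the unramified character carrying $\chi'$ back to $\chi$ --- harmless because the Plancherel density is invariant under unramified twists --- before averaging into $\HH(\GL_2(L),\,Z(L),\,\chi)$. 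Your proof of (iii) needs this device (or an equivalent one) to go through.
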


Property (iii) will be necessary to extend the methods of \cite{FL14}, \cite{Shi12}, and \cite{ST12} to the fixed-central-character setting.  Properties (i) and (ii) will be necessary to apply the computations of \cite{CMS90} and \cite{AP05} to the fixed-central-character setting.

\subsection{Explicit Computation of the Fixed Central Character Plancherel Measure}
\label{subsec:6.3}

In this section, we use the results of \cite{CMS90} and \cite{AP05} to determine explicitly the fixed-central-character Plancherel measure for $\GL_2(L)$, where $L$ is a local field.  Throughout this section, $q$ is the cardinality of the residue field of $L$.

\begin{comp}
\label{comp6.13}
Parts (1), (2), (3), and part of (4) have been computed by \cite{Shi12}, and we recall the results here, with appropriate citations in \cite{AP05}, and give an explicit value of the formal degree for supercuspidal representations, as computed in \cite{CMS90}. Note that Aubert-Plymen's function $\mu_{G|M}(\omega)$ satisfies
$$\mu_{G|M}(\omega)c(G|M)^{-2} \gamma(G|M)^{-1} = \gamma(G|M) j(\omega)^{-1}.$$
Here $c(G|M)$ is defined as in Waldspurger directly following the definition of $\gamma(G|M)$; for $G = \GL_2(L)$, we have $c(G|M) = 1$ for all Levi subgroups $M$.
\begin{enumerate}[(1)]
	 \item If $\OO$ corresponds to $\chi_0 \times \chi_0$, then $\omega(\OO) = \frac{1}{2}$. We have $M = T$ so $\gamma(G|M) = \frac{q+1}{q}$, and $d(\omega) = 1$. \cite[Theorem. 4.4]{AP05} then gives
$$\mu_{G|M}(\chi\otimes \chi') = \frac{(q + 1)^2}{q^2} \bigg|\frac{1 - (\chi'\chi^{-1})(\varpi)}{1 - q^{-1}(\chi'\chi^{-1})(\varpi)}\bigg|^2$$
so that
$$\nu^{\pl}(\chi\otimes \chi') = \frac{q+1}{q}\left|\frac{1 - (\chi'\chi^{-1})(\varpi)}{1 - q^{-1}(\chi'\chi^{-1})(\varpi)}\right|^2.$$
Integrating this function on $S^1 \times S^1$ yields $2$, so that $\mupl(\OO) = 1$.

We remark that the density function is independent of choice of uniformizer since $\chi_2$ and $\chi_1$ differ by an unramified character.

If we fix the central character, note first that we must have $\chi_0^2 = \chi^2|_{\oo_L^\times}$.  The canonical measure of $\OO_{\chi}$ is still 1/2 and the Plancherel density function still integrates to $1$.
	\item If $\OO$ is a principal series orbit corresponding to $\chi_0 \times \chi_0'$, then the canonical measure is $1$. \cite[Theorem 4.3]{AP05} says that $\nu^{\pl}$ is constant on such orbits and equal to 
$$\gamma(G|M) q^{c(\chi_0^{-1}\chi_0')} = \frac{q + 1}{q} q^{c(\chi_0^{-1}\chi_0')}$$
so that $\mupl(\OO) = \frac{q + 1}{q} q^{c(\chi_0^{-1}\chi_0')}$.

	If we fix a central character $\chi$ where $\OO_{\chi}\neq \emptyset$, then $\OO_{\chi}$ is topologically isomorphic to $S^1$.  The canonical measure of $\OO$ is the Haar measure of $S^1$, and the Plancherel measure has uniform density $\frac{q+1}{q} q^{c(\chi_0^{-1}\chi_0'}$.

	\item If $\OO$ is a Steinberg orbit, then $M = G$ and thus the canonical measure is $2$. The $\gamma$ and $j$-terms are uniformly $1$, so we simply need to find the formal degree. The formal degree of a Steinberg representation of $\GL_2(L)$ is $\frac{q-1}{2}$, so $\nu^{\pl} = \frac{q-1}{2}$ and $\mupl(\OO) = q-1$; see \cite[(17)]{AP05} or \cite[(2.2.2)]{CMS90} and note that the formal degree of a Steinberg is invariant under twisting by \emph{any} unitary character.
	
	If we fix a central character then $\OO_{\chi}$ consists of two disjoint points, each of measure $d(\pi) = \frac{q-1}{2}$.
	\item If $\OO$ is a supercuspidal orbit, then the same logic as above says that $\nu^{pl}(\pi) = d(\pi)$ and that this is constant on $\OO$, so that $\mupl(\OO) = \frac{2 d(\pi)}{r(\pi)}$.
	
	Let $\pi = \pi_{\eta}$ for $\eta: L'^\times \to \CC^\times$, with conductor $c(\eta)$. By fact (iv) after Definition 2.3, if $L'/L$ is unramified then $r(\pi) = 2$, and if $L'/L$ is ramified then $r(\pi) = 1$. Moreover, the formal degrees are computed in \cite[Theorem 2.2.8]{CMS90}. From the remark between (2.1.2) and (2.1.3) of loc. cit., we deduce that $\alpha(\eta)$ is the minimal conductor of all characters of the form $\eta \cdot (\chi \circ N_{L'/L})$, as $\chi$ ranges over all characters of $L^{\times}$.
	
Then Theorem 2.2.8 of loc. cit. proves that if $L'/L$ is unramified, then $d(\pi) = (q - 1)q^{\alpha(\eta) - 1}$ (note that the quantity given in 2.2.8 must be multiplied by $\frac{q-1}{2}$ because they choose their Haar measure so that $\vol(KZ/Z) = d(\St) = \frac{q-1}{2}$, whereas we choose it to be $1$, and the formal degree depends inversely on the choice of Haar measure). Similar logic says that if $L'/L$ is ramified, then $d(\pi) = \frac{1}{2}(q^2 - 1) q^{\frac{\alpha(\eta)}{2} - 1}$.  If $L'/L$ is ramified and $\eta$ is trivial on $1 + \pp_{L'}^{2r + 1}$ then we can pick $\chi: L^\times \to \CC^\times$ such that $\eta\cdot (\chi\circ N_{L'/L})$ is trivial on $1 + \pp_{L'}^{2r}$; therefore, $\alpha(n)$ is even in this case.

In either case, if we fix a central character $\chi$, then each supercuspidal representation of central character $\chi$ satisfies $\mupl_{\chi}(\pi) = d(\pi)$.
\end{enumerate}
\end{comp}

\begin{rem} \label{rem6.14} It is worth comparing the tempered orbits of our situation to Weinstein's \emph{inertial types} at finite places \cite{Wei09}.  Using our characterisation of orbits $\OO$, we see that if $\pi$ and $\pi'$ are tempered representations in the same orbit, then their associated Weil-Deligne representations $\rho(\pi),\, \rho'(\pi)$ have the same restriction to the inertia subgroup $I_L$ and the same monodromy operator. As such, two tempered representations are in the same orbit if and only if they have the same inertial type.

We claim that if an inertial type $\tau^\infty = (\tau_\pp)_{\pp\nmid \infty}$ is unramified outside the finite set $S$ of finite places, and $\tau_\pp$ corresponds to the orbit $\OO_\pp$, then 
$$d(\tau^{\infty}) = \mupl_S\left(\prod_{\pp\in S} \OO_\pp\right).$$
When $\OO$ is non-supercuspidal, this follows simply by comparing $d(\tau_\pp)$ in \cite[pp. 1390, 1393]{Wei09} to $\mupl_\pp(\OO_{\pp})$ as given in Computation \ref{comp6.13}.

There is a discrepancy when $\OO$ is a supercuspidal orbit corresponding to a character $\eta$ on a ramified extension. We believe this to be a minor miscomputation. The value of $\dim \tau(\pi)$ is given on page 1394 and should be equal to $|\GL_2(\OO_F):J^0|$, where $J^0$ is given as in 
(3) on page 1398. An explicit computation of $J^0$ shows that the index is $(q^2 - 1)q^{\frac{c(\eta) - 2}{2}}$, not $(q^2 - 1)q^{c(\eta) - 2}$ as on page 1394. This matches up with the Plancherel measure of the supercuspidal orbit as given in Computation \ref{comp6.13} (4), following \cite{CMS90}.
\end{rem}

\section{Counting Measures and Test Functions}
\label{sec:7}
In this section, we switch back to the global setting. We'll adapt the counting measure of (9.4) of \cite{ST12} to our setting. Throughout this section we fix
\begin{itemize}
	\item a totally real field $F$,
	\item an irreducible, finite-dimensional representation $\xi$ of $\GL_2(F_\infty)$ with discrete-series complement $\pi_{\xi}$ and Clozel-Delorme function $\phi_{\xi}$ (see Corollary \ref{cor5.4})
	\item an automorphic character $\chi: \AAA^\times \to \CC^\times$ extending $\chi_{\xi}$,
	\item a finite set $S$ of finite places. We set $F_S = \prod_{v\in S} F_v$, so that $\GL_2(F_S) = \prod_{v\in S} \GL_2(F_v)$.
\end{itemize}

We also fix the following notation:
\begin{itemize}
	\item $K_v = \GL_2(\oo_{F,v})$ for any finite place $v$,
	\item $\phi_S \in \HH(\GL_2(F_S),\, Z(F_S),\, \chi_S)$ with Plancherel transform $\widehat \phi_S$ on $\GL_2(F_S)^{\wedge,\chi_S}$,
	\item $\widehat f_S,\, \widehat h_S$ denote elements of $\mathscr{F}_0(\GL_2(F_S))^{\wedge}$ (or $\mathscr{F}_0(\GL_2(F_S)^{\wedge,\chi})$), and
	\item $\phi^{S,\,\infty}$ is a product of smooth functions $\phi_v \in \HH(\GL_2(F_v),\, F_v^\times,\, \chi_v)$ for finite places $v\not \in S$. We will assume that $\phi_v = \one_{F_v^\times K_v}$ at all but finitely many places, and such that $\phi_v$ is supported on $F_v^\times K_v$ everywhere.
\end{itemize}

We will begin with a definition:

\begin{defn}\label{defn7.1}
Fix $S$ and $\chi$ as above. Given a tuple $(\widehat f_S,\, \widehat \phi^{S,\infty},\, \xi)$, we define a multiset 
$$\FF= \FF_{\disc,\chi}(\widehat f_S,\, \widehat \phi^{S,\infty},\, \xi)$$ 
as follows: for a discrete automorphic representation $\pi = \pi_S \otimes \pi^{S,\, \infty} \otimes \pi_\infty$ with $\chi_\pi = \chi$, $\pi$ occurs in $\FF$ with multiplicity
$$a_{\FF}(\pi) = (-1)^{[F:\QQ]}m_{\disc}(\pi)\cdot \widehat f_S(\pi_S)\cdot \widehat \phi^{S,\infty}(\pi^{S,\infty})\cdot\tr_{Z(F_\infty)} \pi (\phi_{\xi}).$$
Here $m_{\disc}(\pi)$ is the multiplicity of $\pi$ in the discrete spectrum of $\GL_2(\AAA)$.

Define $\FF_{\cusp,\chi}$ similarly, but with $m_{\disc}$ replaced by $m_{\cusp}$, the multiplicity in the cuspidal spectrum.
\end{defn}

It follows from Harish-Chandra's finiteness theorem that, for $\FF$ as defined above, $a_{\FF}(\pi) = 0$ for all but finitely many $\pi$.  Moreover, $m_{\disc}(\pi) = 0$ or $1$ by strong multiplicity one.  Also, residual spectrum of $\GL_2(\AAA)$ consists of one-dimensional representations, so if $\dim \xi > 1$ then $\FF_{\cusp} = \FF_{\disc}$ as a multiset.

\begin{defn} \label{defn7.2} Given a multiset $\FF$, say $\pi \in \FF$ if $a_{\FF}(\pi) \neq 0$. If $\FF$ is finite, we define 
$$|\FF| = \sum_{\pi} a_{\FF}(\pi).$$
\end{defn}

\begin{rem}\label{defn7.3} We have borrowed the multiset notation from \cite{Shi12} and \cite{ST12}, but we have both simplified and generalized to match our needs. For instance, we have eliminated their set $S_1$ (or rather, assumed $S_1$ is empty) and let $S = S_0$. On the other hand, we have generalized their insistence that $\phi^{S,\infty}$ be an idempotent element corresponding to an open-compact subgroup; this will slightly simplify our proof, and will be strictly necessary when we show a partial extension of our result to newforms. We have also restricted to an arbitrary fixed central character.
\end{rem}

\begin{defn}\label{defn7.4} Fix an irreducible finite dimensional representation $\xi$ of $\GL_2(F_\infty)$, an automorphic character $\chi$ extending $\chi_\xi$, and $\phi^{S,\infty}\in \HH(\GL_2(\AAA^{S,\infty}),\, Z(\AAA^{S,\infty}),\, \chi^{S,\infty})$. We define the \emph{counting measures} $\mucusp_{\phi^{S,\infty},\xi,\chi}$ and $\mudisc_{\phi^{S,\infty},\xi,\chi}$ as linear functionals on $\mathscr{F}_0(\GL_2(F_S)^{\wedge,\chi})$ by 
$$\mucusp_{\phi^{S,\infty},\xi,\chi}(\widehat f_S) 
	= \frac
		{|\FF_{\cusp,\,\chi}(\widehat f_S,\,\widehat\phi^{S,\infty},\,\xi)|}
		{\tau_Z(G)\cdot\phi^{S,\infty}(1)\cdot\dim \xi}$$
and
$$\mudisc_{\phi^{S,\infty},\xi,\chi}(\widehat f_S) 
	= \frac
		{|\FF_{\disc,\,\chi}(\widehat f_S,\,\widehat\phi^{S,\infty},\,\xi)|}
		{\tau_Z(G)\cdot\phi^{S,\infty}(1)\cdot\dim \xi}$$
Here $\tau_Z(G)$ is the measure of $\GL_2(F)Z(\AAA)\bs \GL_2(\AAA)$, computed using the Euler-Poincar\'e measure at $\infty$ and the canonical measure at all finite places.
\end{defn}	
 
\subsection{Test Functions for Counting Cusp Forms}\label{subsec7.4}

We begin by defining the test functions we'll use to count cusp forms:

\begin{defn}\label{defn7.5} Let $\chi$ be an automorphic character with conductor $\ff(\chi)$ and let $\nn$ be a nonzero ideal in $\oo_F$ with $\ff(\chi)\mid \nn$. We define $\phi_{\nn, \chi}\in \HH(\GL_2(\AAA^\infty),\, Z(\AAA^\infty),\, \chi^\infty)$ as a product of local factors, as follows
\begin{itemize} 
	\item At all places $\pp$ not dividing $\nn$, $\phi_{\nn,\chi, \pp}$ is supported on $F_{\pp}^{\times}K_{\pp}$, with $\phi(z\cdot K_{\pp}) = \chi_{\pp}^{-1}(z)$. 
	\item Otherwise, if $\ord_{\pp}(\nn) = r$, then $\phi_{\nn,\chi,\pp}$ is supported on $F_{\pp}^\times \Gamma_0(\pp^r)$, and
	$$\phi_{\nn,\chi,\pp}\twomat{a}{b}{c}{d} = \vol(\Gamma_0(\pp^r))^{-1}\chi_{\pp}^{-1}(a).$$
\end{itemize}
\end{defn} 

\begin{lem}\label{lem7.6} 
Let $\pi_{\pp}$ have central character $\chi_{\pp}$ and let $\ord_{\pp}(\nn) = r$. Then $\tr \pi(\phi_{\nn,\,\chi,\,\pp})$ is the dimension of the space of vectors $v\in V_{\pp}$ such that $\pi(\gamma)v = \chi(a)v$ for any $\gamma = \smallmat{a}{b}{c}{d} \in \Gamma_0(\pp^r)$.
 \end{lem}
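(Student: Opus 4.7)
The plan is to show that the operator $\pi(\phi_{\nn,\chi,\pp})$ is the projector onto the stated subspace, from which the trace claim follows immediately. I will unwind the definition of the trace integral, confirm that the integrand descends to $\Gamma_0(\pp^r)$, and recognize the resulting operator as a standard isotypic projector.

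First I would compute $\pi(\phi_{\nn,\chi,\pp})$ concretely. Because $\phi_{\nn,\chi,\pp}$ is supported on $F_\pp^\times \Gamma_0(\pp^r)$ and both the Haar measure on $F_\pp^\times$ and the Haar measure on $\GL_2(F_\pp)$ are normalized by giving $\oo_\pp^\times$ and $\GL_2(\oo_\pp)$ measure one, the quotient integral defining $\pi(\phi_{\nn,\chi,\pp})v$ over $Z(F_\pp)\bs \GL_2(F_\pp)$ lifts (since the integrand is $Z(F_\pp)$-invariant by the transformation property) to an integral over $\Gamma_0(\pp^r)$. The explicit formula for $\phi_{\nn,\chi,\pp}$ in Definition \ref{defn7.5} then gives
\begin{equation*}
\pi(\phi_{\nn,\chi,\pp})v = \vol(\Gamma_0(\pp^r))^{-1} \int_{\Gamma_0(\pp^r)} \chi_\pp(a)^{-1} \pi(\gamma)v \, d\gamma,
\end{equation*}
where $a$ denotes the top-left entry of $\gamma$.

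The next step is to observe that the map $\chi_r: \gamma = \smallmat{a}{b}{c}{d}\mapsto \chi_\pp(a)$ is in fact a character of $\Gamma_0(\pp^r)$. For two elements of $\Gamma_0(\pp^r)$, the top-left entry of their product is $a_1 a_2 + b_1 c_2$, which is congruent to $a_1 a_2$ modulo $\pp^r$ since $c_2 \in \pp^r$. The hypothesis $\ff(\chi)\mid \nn$ forces $\chi_\pp$ to be trivial on $1 + \pp^r$, so $\chi_\pp(a_1 a_2 + b_1 c_2) = \chi_\pp(a_1)\chi_\pp(a_2)$, and $\chi_r$ is multiplicative.

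With $\chi_r$ recognized as a character, the integral above is the standard formula for the projector onto the $\chi_r$-isotypic subspace of $V_\pp$ under the action of $\Gamma_0(\pp^r)$, whose image is precisely $\{v\in V_\pp : \pi(\gamma)v = \chi_\pp(a)v \text{ for all } \gamma \in \Gamma_0(\pp^r)\}$. Since the trace of a projector equals the dimension of its image, this gives the lemma. The only subtle bookkeeping — and the point I would be most careful with — is verifying the Haar-measure normalizations, specifically that $\vol(\oo_\pp^\times)=1$ is what allows the integral over $Z(F_\pp)\bs Z(F_\pp)\Gamma_0(\pp^r)$ to coincide numerically with the integral over $\Gamma_0(\pp^r)$, so that the normalizing factor $\vol(\Gamma_0(\pp^r))^{-1}$ in the definition of $\phi_{\nn,\chi,\pp}$ exactly cancels the volume of $\Gamma_0(\pp^r)$ appearing in the projector formula.
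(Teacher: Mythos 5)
Your proof is correct and takes essentially the same route as the paper: the paper introduces $\phi_0 = \phi_{\nn,\chi,\pp}\cdot\one_{|\det|=1}$ (supported exactly on $\Gamma_0(\pp^r)$), invokes the averaging lemma to convert $\tr_{Z(F_\pp)}\pi(\phi_{\nn,\chi,\pp})$ into $\tr\pi(\phi_0)$, and then notes $\pi(\phi_0)$ is the projector onto the stated subspace, while you unwind the same reduction by lifting the quotient integral directly. Your explicit check that $\gamma\mapsto\chi_\pp(a)$ is a homomorphism on $\Gamma_0(\pp^r)$ (using $c_2\in\pp^r$ and $\ff(\chi)\mid\nn$) fills in the step the paper labels ``elementary to check.''
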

\begin{proof} Let
$$\phi_0(g) = \begin{cases}
	\phi_{\nn,\chi,\pp}(g) & |\det(g)| = 1
	\\ 0 & \text{otherwise}
\end{cases}$$
so that $\phi_{\nn,\,\chi,\,\pp}$ is the average of $\phi_0$ with respect to $\chi_{\pp}$. As such, for any $\pi_{\pp}$ with $\chi_{\pi_{\pp}} = \chi_{\pp}$, we have $\tr_{Z(F_\pp)} \pi(\phi_{\nn,\chi,\pp}) = \tr \pi(\phi_0)$.

On the other hand, it is elementary to check that $\pi(\phi_0)$ is a projection from $V_{\pi}$ onto the space of vectors $v$ so that $\pi \smallmat{a}{b}{c}{d} v = \chi_{\pp}(a)\cdot v$ for $\smallmat{a}{b}{c}{d} \in \Gamma_0(\pp^r)$. This completes the proof.
\end{proof}

\begin{prop}\label{prop7.7} Fix the following data:	
	\begin{itemize}
		\item A finite set $S$ of finite places;
		\item an irreducible finite-dimensional representation $\xi$ of $\GL_2(F_\infty)$, with complementary discrete series representation $\pi_\xi$;
		\item an automorphic character $\chi$ of conductor $\ff$ extending $\chi_\xi$;
		\item a nonzero ideal $\nn$ of $\oo_F$ wih $\ff\mid \nn$. Write $\nn = \nn_S \nn^S$, where $\nn_S$ is divisible only by primes in $S$ and $\nn^S$ is coprime to $S$; and
		\item a function $\widehat h_S \in \mathscr{F}_0(\GL_2(F_S)^{\wedge})$.
	\end{itemize}
Let 
$$\FF = \FF_{\cusp,\chi}(\widehat h_S\cdot\widehat \phi_{\nn_S,\,\chi},\, \widehat \phi_{\nn^S,\chi},\, \xi).$$
Then $|\FF|$ counts the cuspidal $\GL_2(\AAA)$-representations with $\chi_\pi = \chi$, $\pi_{\infty} \cong \pi_{\xi}$, and conductor $\dd$ dividing $\nn$; such a representation $\pi$ is counted with multiplicity $\wh h_S(\pi_S) d(\nn/\dd)$.\end{prop}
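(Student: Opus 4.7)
The plan is to unpack the multiplicity $a_\FF(\pi)$ by applying Corollary \ref{cor5.4} at the archimedean places and Lemma \ref{lem7.6} at the finite places, and then to convert the resulting product of local dimensions into the divisor function $d(\nn/\dd)$.

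First I would invoke Corollary \ref{cor5.4} at infinity. It says $(-1)^{[F:\QQ]}\tr_{Z(F_\infty)}\pi(\phi_\xi)$ equals $1$ if $\pi_\infty \cong \pi_\xi$ and vanishes on every other infinite-dimensional representation of matching central character. Combined with the sign in Definition \ref{defn7.1} and with strong multiplicity one for cuspidal representations of $\GL_2$, the definition of $a_\FF(\pi)$ collapses to
$$a_\FF(\pi) = \wh h_S(\pi_S) \cdot \prod_{v \nmid \infty} \wh \phi_{\nn,\chi,v}(\pi_v)$$
for every cuspidal $\pi$ with $\pi_\infty \cong \pi_\xi$ and $\chi_\pi = \chi$, and $a_\FF(\pi) = 0$ otherwise. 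The product is well-defined since all but finitely many local factors are equal to $1$.

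Second, I would compute the local finite factors. Lemma \ref{lem7.6} identifies $\wh\phi_{\nn,\chi,\pp}(\pi_\pp)$ with the dimension of
$$W_\pp = \bigl\{v \in V_{\pi_\pp} : \pi_\pp(\gamma) v = \chi_\pp(a) v \text{ for all } \gamma = \smallmat{a}{b}{c}{d} \in \Gamma_0(\pp^{r_\pp})\bigr\},$$
where $r_\pp = \ord_\pp(\nn)$. Because the top-left entry descends to a group homomorphism $\Gamma_0(\pp^{r_\pp}) \to (\oo_\pp/\pp^{r_\pp})^\times$, the space $W_\pp$ is precisely the $\overline{\chi}_\pp$-isotypic component under this $\Gamma_0(\pp^{r_\pp})$-action, where $\overline\chi_\pp$ is the reduction of $\chi_\pp|_{\oo_\pp^\times}$ modulo $\pp^{r_\pp}$ (well defined thanks to the hypothesis $\ff \mid \nn$). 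Casselman's newvector theorem in its fixed-central-character form then yields
$$\dim W_\pp = \max(r_\pp - c_\pp + 1,\, 0),$$
where $c_\pp = \ord_\pp(\ff(\pi))$ is the local conductor of $\pi$ at $\pp$.

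Finally, I would assemble these local contributions. If $\dd := \ff(\pi)$ does not divide $\nn$ then some local factor vanishes and $a_\FF(\pi) = 0$. If $\dd \mid \nn$ then
$$\prod_{\pp \nmid \infty} \dim W_\pp = \prod_\pp (r_\pp - c_\pp + 1) = d(\nn/\dd),$$
the number of divisors of $\nn/\dd$. Summing the surviving $a_\FF(\pi)$ gives the claimed expression for $|\FF|$, and the factorization of $\phi_{\nn,\chi}$ across the places in $S$ versus those outside $S$ plays no special role since the product over all finite places decomposes automatically. I expect the point requiring the most care to be the invocation of Casselman's newvector theorem in the nebentypus setting; once that local input is granted, the rest is a straightforward bookkeeping exercise.
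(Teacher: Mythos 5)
Your proposal is correct and follows essentially the same route as the paper: the archimedean factor is handled by Corollary \ref{cor5.4} (using that cuspidal representations are generic, hence infinite-dimensional at every infinite place), and the finite factors are evaluated via Lemma \ref{lem7.6} together with Casselman's newvector theorem with nebentypus, yielding $\max(r_\pp - c_\pp + 1,\,0)$ locally and hence $d(\nn/\dd)$ globally. The only cosmetic difference is that the paper splits the finite places into $S$ and its complement before multiplying, which, as you note, changes nothing.
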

\begin{proof}
Let $\pi$ be an irreducible cuspidal automorphic representation with central character $\chi$ and conductor $\dd$. Then it has a Whittaker model, so each of its archimedean components has a Whittaker model. If $\pi_v$ is generic and $\tr \pi_v(\phi_{\xi, v}) \neq 0$, then $\pi_v = \pi_{\xi_v}$, and $\tr \pi_i(\phi_{\xi, v}) = -1$ by Corollary \ref{cor5.4}.

By Lemma \ref{lem7.6} and the classical result of Casselman (see Theorem 4.24 and the discussion before Remark 4.25 of \cite{Gel75}), we have that $\tr \pi^{S,\infty}(\phi_{\nn^S,\chi}) = d(\nn^S/\dd^S)$. Similarly, $\wh h_S(\pi_S) \wh \phi_{\nn_S,,\chi}(\pi_S) = \wh h_S(\pi_S) d(\nn_S/\dd_S)$, completing the proof.
\end{proof}

\begin{cor}\label{cor7.8} Let $S,\, \wh h_S$ be as above. Let $k$ be a weight and $\chi$ a character of conductor $\ff$ occurring in weight $k$. Let $\xi_k = \bigotimes_{v\mid \infty} \xi_{k_v}$, where $\xi_{k_v} = \Sym^{k_v - 2}(\RR^2) |\det|^{\frac{-k_v - 2}{2}}$. If $\ff \mid \nn$ and $\nn_S,\, \nn^S$ are as above, and
$$\FF = \FF_{\cusp,\chi}(\widehat h_S\cdot\widehat \phi_{\nn_S,\,\chi},\, \widehat \phi_{\nn^S,\chi},\, \xi_k)$$
then $|\FF|$ counts the number of cusp forms of weight $k$, level $\nn$, and character $\chi$, where a cusp form $f$ is counted with multiplicity $\wh h_S(\pi_{f,S})$.
\end{cor}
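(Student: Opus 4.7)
The plan is to deduce this directly from Proposition \ref{prop7.7} by applying it with $\xi = \xi_k$, and then translating the resulting count of cuspidal automorphic representations into a count of Hecke eigenforms via the classical dictionary.

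First I would check that $\xi_k = \bigotimes_{v \mid \infty} \xi_{k_v}$ satisfies the hypotheses of Proposition \ref{prop7.7}, i.e.\ that it is an irreducible finite-dimensional representation of $\GL_2(F_\infty)$ and that the character $\chi$ extends $\chi_{\xi_k}$. Both are standard: $\Sym^{k_v - 2}(\RR^2) \otimes |\det|^{(-k_v-2)/2}$ is irreducible, and the compatibility of $\chi_\infty$ with $\chi_{\xi_k}$ is exactly the condition that $\chi$ occur in weight $k$, which is built into the setup (as discussed just before Theorem \ref{thm1.3}). It is also classical that the discrete-series complement $\pi_{\xi_{k_v}}$ is the weight-$k_v$ holomorphic discrete series, so $\pi_{\xi_k}$ is the archimedean type attached to a weight-$k$ Hilbert modular form.

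Second, I would apply Proposition \ref{prop7.7} to conclude that $|\FF|$ equals the sum, over cuspidal automorphic representations $\pi$ with $\chi_\pi = \chi$, $\pi_\infty \cong \pi_{\xi_k}$, and conductor $\dd \mid \nn$, of the weights $\wh h_S(\pi_S) \cdot d(\nn/\dd)$.

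The final step is to translate this into a count on the cusp-form side. By the classical correspondence between Hilbert cuspidal Hecke eigenforms of weight $k$, level $\nn$, and character $\chi$, and cuspidal automorphic representations of $\GL_2(\AAA_F)$ with $\pi_\infty \cong \pi_{\xi_k}$, $\chi_\pi = \chi$, and $\ff(\pi) \mid \nn$, each such representation $\pi$ of conductor $\dd$ contributes exactly $d(\nn/\dd)$ linearly independent eigenforms to $B_k(\Gamma_1(\nn),\chi)$. This is Casselman's oldform theorem, which is already invoked in the proof of Proposition \ref{prop7.7} via Lemma \ref{lem7.6}. All $d(\nn/\dd)$ oldforms generating the same $\pi$ share the $S$-component $\pi_S$, so summing $\wh h_S(\pi_{f,S}) = \wh h_S(\pi_S)$ over them recovers precisely the factor $\wh h_S(\pi_S) \cdot d(\nn/\dd)$ that appears in Proposition \ref{prop7.7}. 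The proof is essentially bookkeeping: there is no substantive obstacle beyond faithfully aligning the automorphic and classical parameterisations, since the analytic work is already done in Proposition \ref{prop7.7}.
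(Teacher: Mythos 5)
Your proof is correct and takes essentially the same route as the paper: apply Proposition \ref{prop7.7} with $\xi = \xi_k$, then translate via the classical dictionary, citing that $\pi_{f,\infty}\cong\pi_{\xi_k}$ for weight-$k$ cusp forms and that a newform of level $\dd$ occurs with multiplicity $d(\nn/\dd)$ in $S_k(\Gamma_1(\nn),\chi)$. The extra care you take verifying that $\xi_k$ and $\chi$ satisfy the hypotheses of Proposition \ref{prop7.7} is a slight elaboration, not a different argument.
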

\begin{proof} This follows directly from the previous proposition and the correspondence between cusp forms and cuspidal representations, once we note the following two facts:
\begin{itemize}
	\item If $f$ is a cusp form of weight $k$ and $\xi_k$ is as above, then $\pi_{f,\infty} = \pi_{\xi_k}$ \cite[Theorem 1.4]{RT11}; and
	\item If $f$ is an \emph{newform} of level $\dd$ and character $\chi$, then the multiplicity of $f$ in $S_k(\Gamma_1(\nn),\, \chi)$ is $d(\nn/\dd)$.
\end{itemize}
\end{proof}

\begin{cor}\label{cor7.9} 
Let $\xi,\, \chi,\, \, \ff,\, \nn$ be as above. 
Define $\phi^{\new}_{\nn,\chi} \in \HH(\GL_2(\AAA^\infty),\, Z(\AAA^\infty),\, \chi^\infty)$ by
$$\phi^{\new}_{\nn,\chi,\pp} = \begin{cases}
	\phi_{\nn,\chi,\pp} 
		& \ord_{\pp}(\nn/\ff) = 0
	\\ \phi_{\nn,\chi,\pp} - 2\cdot \phi_{\nn/\pp,\chi,\pp} 
		& \ord_{\pp}(\nn/\ff) = 1
	\\ \phi_{\nn,\chi,\pp} - 2\cdot \phi_{\nn/\pp,\chi,\pp} + \phi_{\nn/\pp^2,\chi,\pp}
		& \ord_{\pp}(\nn/\ff) \geq 2
\end{cases}.$$

Assume $\xi$ is a finite dimensional representation with $\chi_\xi = \chi_\infty$. If 
$$\FF = \FF_{\cusp,\chi} (\widehat h_S\widehat \phi^{\new}_{\nn_S,\,\chi},\, \widehat \phi^{\new}_{\nn^S,\chi},\, \xi)$$
then $|\FF|$ counts the number of automorphic representations $\pi$ of exact conductor $\nn$, $\chi_{\pi} = \chi$, and $\pi_{\infty} = \pi_{\xi}$, with multiplicity $a_{\FF}(\pi) = \widehat h_S(\pi_{S})$.

When $\xi = \xi_k$ is as in Corollary \ref{cor7.8}, $|\FF|$ counts the newforms of weight $k$, level $\nn$, and conductor $\chi$ with multiplicity $a_{\FF}(f) = \wh h_S(\pi_{f,S})$.
\end{cor}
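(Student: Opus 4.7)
The plan is to reduce Corollary \ref{cor7.9} to a place-by-place computation with $\wh\phi^{\new}_{\nn,\chi,\pp}$, reusing the proof of Proposition \ref{prop7.7} almost verbatim. Since $\phi^{\new}_{\nn,\chi}$ is a finite $\ZZ$-linear combination of the functions $\phi_{\nn',\chi}$ appearing in Proposition \ref{prop7.7} (for various $\nn' \mid \nn$ with $\ff \mid \nn'$), and since the formation of $\FF_{\cusp,\chi}$ is linear in both its $S$-argument and its away-from-$S$-argument, it suffices to show that for any cuspidal $\pi$ with $\chi_\pi = \chi$, $\pi_\infty \cong \pi_\xi$, and conductor $\dd$, the product $\wh\phi^{\new}_{\nn,\chi}(\pi^\infty) = \prod_\pp \wh\phi^{\new}_{\nn,\chi,\pp}(\pi_\pp)$ equals $1$ if $\dd = \nn$ and $0$ otherwise.

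The key local computation: fix a prime $\pp$, let $s = \ord_\pp(\nn)$, $t = \ord_\pp(\ff)$, and $c = c(\pi_\pp)$. Since $\chi_\pp$ has conductor $\pp^t$ we automatically have $c \geq t$. By Lemma \ref{lem7.6} and Casselman's theorem (as used in Proposition \ref{prop7.7}), $\wh\phi_{\nn',\chi,\pp}(\pi_\pp) = \ord_\pp(\nn'/\dd) + 1$ when $c \leq \ord_\pp(\nn')$ and $0$ otherwise. I would split into the three cases in the definition of $\phi^{\new}_{\nn,\chi,\pp}$: if $s = t$ then $\wh\phi^{\new}_{\nn,\chi,\pp}(\pi_\pp) = s - c + 1$ which, under the constraint $c \geq t = s$, forces $c = s$ and gives value $1$. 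If $s = t+1$, the combination $(s-c+1) - 2(s-c)$ evaluates to $0$ at $c = t$ and to $1$ at $c = s$ (where the second term vanishes because $c > s-1$). If $s \geq t+2$, the combination $(s-c+1) - 2(s-c) + (s-c-1)$ telescopes to $0$ for $c \leq s-2$, and the boundary cases $c = s-1,\, c = s$ give $0$ and $1$ respectively, after noting that the later terms drop out once $c$ exceeds the relevant conductor threshold.

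Putting this together over all finite $\pp$, $\wh\phi^{\new}_{\nn,\chi}(\pi^\infty) = \prod_\pp \mathbf{1}_{c(\pi_\pp) = \ord_\pp(\nn)}$, which equals $1$ precisely when $\pi$ has exact conductor $\nn$. Combining with the archimedean calculation in the proof of Proposition \ref{prop7.7} (where $\tr_{Z(F_\infty)} \pi(\phi_\xi) = (-1)^{[F:\QQ]}$ exactly when $\pi_\infty = \pi_\xi$, cancelling the sign in Definition \ref{defn7.1}), we obtain that $|\FF|$ counts cuspidal $\pi$ with $\chi_\pi = \chi$, $\pi_\infty = \pi_\xi$, and exact conductor $\nn$, weighted by $\wh h_S(\pi_S)$, as claimed. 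The final assertion about newforms when $\xi = \xi_k$ is immediate from the cusp-form/representation dictionary recalled in the proof of Corollary \ref{cor7.8}, since a cuspidal $\pi$ of exact conductor $\nn$, weight-$k$ archimedean type, and central character $\chi$ corresponds to a unique newform of level $\nn$, weight $k$, and character $\chi$.

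The only genuinely nontrivial part is the bookkeeping in the inclusion-exclusion case analysis above, but this is routine once Casselman's dimension formula is in hand; no further representation-theoretic input is required.
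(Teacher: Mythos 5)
Your proposal is correct and follows essentially the same route as the paper: reduce to showing that $\widehat\phi^{\new}_{\nn,\chi}(\pi^\infty)$ is the indicator of exact conductor $\nn$, compute place by place using Lemma \ref{lem7.6} and Casselman's dimension formula, and check that the inclusion-exclusion telescopes to $0$ unless $c(\pi_\pp) = \ord_\pp(\nn)$. Your treatment is in fact slightly more careful than the paper's, since you separate the three cases of the definition and invoke the constraint $c(\pi_\pp) \geq c(\chi_\pp)$ to handle the boundary situations $\ord_\pp(\nn/\ff) \in \{0,1\}$, which the paper's single three-term computation glosses over.
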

\begin{proof}
The second statement follows from the first as in the proof of Corollary \ref{cor7.8}.

To prove the first statement, we just need to prove that $\widehat \phi^{\new}_{\nn,\chi}(\pi)$ is $1$ if $\ff(\pi) = \nn$, and zero otherwise. By writing the trace as a product of local traces, it's enough to show that if if $\ord_{\pp}(\nn) = r$, then we need to show that $\widehat \phi^{\new}_{\nn,\chi,\,\pp}(\pi_\pp)$ is $1$ if $c(\pi_\pp) = r$ and zero otherwise.

If $c(\pi_\pp) = r$, then $\widehat \phi_{\nn,\chi,\pp}(\pi_\pp) = 1$ and $\widehat \phi_{\nn/\pp,\chi,\pp}(\pi_\pp) = \widehat \phi_{\nn/\pp^2,\chi,\pp}(\pi_\pp) = 0$.

If $c(\pi_\pp) > r$ then evidently $\widehat \phi^{\new}_{\nn,\chi,\pp}(\pi_\pp) = 0$. If $c(\pi_\pp) = r' \leq r - 1$, then we have
\begin{align*} 
	\phi^{\new}_{\nn,\chi,\pp}(\pi_\pp) 
	& = \phi_{\nn,\chi,\pp}(\pi_\pp) - 2\cdot \phi_{\nn/\pp,\chi,\pp}(\pi_\pp) + \phi_{\nn/\pp^2,\chi,\pp}(\pi_\pp)
	\\ & = (r + 1 - r') - 2\cdot(r - r') + (r - 1 - r')
	\\ & = 0
\end{align*}
\end{proof}

Recall the counting measures defined in (\ref{defn7.4}). The goal of the next sections is to prove the following:

\begin{thm}[(Plancherel equidistribution theorem)] \label{thm7.10}Let $S$ be a finite set of places, $\xi$ an irreducible finite-dimensional $\GL_2(F_\infty)$-representation, $\chi$ an automorphic character with $\chi_{\infty} = \chi_\xi$, and $(\nn_{\lambda})$ a sequence of levels divisible by $\ff^S$ and coprime to $S$, such that $N(\nn_{\lambda}) \to \infty$. For simplicity let $\widehat \mu_{S, \lambda} = \widehat \mu_{S,\phi_{\nn,\chi},\xi}$, with superscript $\cusp$ or $\disc$. Then
$$\lim_{\lambda\to\infty} \mucusp_{S,\lambda}(\widehat f_S)
	= \lim_{\lambda\to\infty} \mudisc_{S,\lambda}(\widehat f_S)
	= \mupl_{S, \chi_S}(\widehat f_S).$$
	
\end{thm}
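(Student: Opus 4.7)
The plan is to reduce, via Sauvageot's density theorem, to the case where $\widehat f_S = \widehat \phi_S$ is the Plancherel transform of an element $\phi_S \in \HH(\GL_2(F_S),\, Z(F_S),\, \chi_S)$, and then to evaluate both $\mucusp_{S,\lambda}$ and $\mudisc_{S,\lambda}$ using the fixed-central-character invariant trace formula of Proposition \ref{prop6.3}. Specifically, given $\widehat f_S$ and $\epsilon > 0$, part (iii) of Proposition \ref{prop6.12} produces $\phi_S,\, \psi_S$ with $|\widehat f_S - \widehat \phi_S| \leq \widehat \psi_S$ pointwise and $\mupl_{S,\chi_S}(\widehat \psi_S) < \epsilon$. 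Both the left- and right-hand sides are linear in $\widehat f_S$, so it suffices to prove the theorem for $\widehat \phi_S$ and then show that $\mucusp_{S,\lambda}(\widehat \psi_S)$ and $\mudisc_{S,\lambda}(\widehat \psi_S)$ remain uniformly controlled by $\mupl_{S,\chi_S}(\widehat \psi_S)$ as $\lambda \to \infty$ (and hence bounded by a small multiple of $\epsilon$).

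For the main case, apply the fixed-central-character trace formula to the global test function
\[
\phi^{(\lambda)} = \phi_S \cdot \phi_{\nn_\lambda^S,\chi} \cdot \phi_\xi \in \HH(\GL_2(\AAA),\, Z(\AAA),\, \chi).
\]
By Lemma \ref{lem7.6} and Corollary \ref{cor5.4}, together with the fact that $\phi_\xi$ kills all infinite-dimensional generic archimedean representations except $\pi_\xi$, the spectral side $I_{\spec}(\phi^{(\lambda)},\, Z(\AAA),\, \chi)$ equals $(-1)^{[F:\QQ]}|\FF_{\disc,\chi}(\widehat \phi_S,\, \widehat \phi_{\nn_\lambda^S,\chi},\, \xi)|$. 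Hence
\[
\mudisc_{S,\lambda}(\widehat \phi_S)
 = \frac{(-1)^{[F:\QQ]}\, I_{\geom}(\phi^{(\lambda)},\, Z(\AAA),\, \chi)}
        {\tau_Z(G)\cdot \phi_{\nn_\lambda^S,\chi}(1)\cdot \dim \xi}.
\]
The central term of $I_{\geom}$ is $\vol(G(F)Z(\AAA)\bs G(\AAA))\cdot \phi^{(\lambda)}(1)$; since $\phi_\xi(1) = (-1)^{[F:\QQ]}\dim \xi$ by Corollary \ref{cor5.4} and $\phi^{(\lambda)}(1) = \phi_S(1)\cdot \phi_{\nn_\lambda^S,\chi}(1)\cdot \phi_\xi(1)$, this central contribution, after dividing by the normalization, is exactly $\phi_S(1)$, which in turn equals $\mupl_{S,\chi_S}(\widehat \phi_S)$ by the fixed-central-character Plancherel formula (Proposition \ref{prop6.12}).

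The heart of the proof, and the step I expect to be the main obstacle, is showing that every non-central geometric term contributes $o(\phi_{\nn_\lambda^S,\chi}(1))$ as $N(\nn_\lambda) \to \infty$. Because $\phi_\xi$ is cuspidal at every archimedean place, the non-central, non-elliptic terms vanish identically; what remains are the elliptic semisimple orbital integrals and, when $F = \QQ$, the unipotent constant term. For these we must bound $O_\gamma(\phi_{\nn_\lambda^S,\chi})$ and $Q_\gamma(\phi_{\nn_\lambda^S,\chi})$ place-by-place and show that the resulting product decays strictly faster than the denominator $\phi_{\nn_\lambda^S,\chi}(1) = \prod_\pp \vol(\Gamma_0(\pp^{\ord_\pp \nn_\lambda}))^{-1}$. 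This is precisely the content of the orbital-integral and constant-term vanishing results promised in Section \ref{sec:8}, to be obtained via a careful Bruhat--Tits-building analysis of $\Gamma_0(\pp^r)$-conjugacy as $r \to \infty$; supp-support considerations force any nontrivial $\gamma$ (resp.\ nontrivial diagonal $\gamma$) to be ``close to central'' in a way that becomes rarer relative to $\vol(\Gamma_0)^{-1}$.

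Finally, to pass from $\mudisc_{S,\lambda}$ to $\mucusp_{S,\lambda}$, note that the residual spectrum of $\GL_2(\AAA)$ consists of one-dimensional representations, so when $\dim \xi > 1$ the two counting measures agree on $\widehat \phi_S$ (and hence on all of $\FF_0$). When $\dim \xi = 1$, the difference $\mudisc_{S,\lambda} - \mucusp_{S,\lambda}$ is supported on the finitely many automorphic characters of the appropriate central character and is handled by a separate elementary bound, since the number of such characters of conductor dividing $\nn_\lambda$ grows strictly more slowly than $\phi_{\nn_\lambda^S,\chi}(1)$. Combining these ingredients and letting $\epsilon \to 0$ gives the theorem.
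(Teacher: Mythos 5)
Your proposal follows essentially the same route as the paper's proof (given in Section \ref{sec:9}): apply the fixed-central-character trace formula to $\phi_S\cdot\phi_{\nn_\lambda,\chi}\cdot\phi_\xi$, identify the central geometric term, after dividing by $\tau_Z(G)\phi_{\nn_\lambda,\chi}(1)\dim\xi$, with $\phi_S(1)=\mupl_{S,\chi_S}(\wh\phi_S)$, show the remaining geometric terms are asymptotically negligible via the Bruhat--Tits bounds of Section \ref{sec:8}, and then treat general $\wh f_S$ by the fixed-central-character Sauvageot theorem exactly as in the paper's three-term estimate. Two remarks. First, make explicit why termwise decay of the orbital integrals and constant terms controls the whole geometric remainder: since $\phi_{\nn_\lambda,\chi}$ is supported in $Z(\AAA^{S,\infty})K^{S,\infty}$ for every $\lambda$ and $\phi_\xi$ is cuspidal at infinity, only finitely many semisimple classes $\gamma$ (and finitely many diagonal cosets in $T(F)/Z(F)$), independent of $\lambda$, contribute; this is the paper's Lemma \ref{lem9.2}, and without this uniformity the statement ``each term is $o(\phi_{\nn_\lambda,\chi}(1))$'' would not by itself bound the sum. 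Second, your passage from $\mudisc$ to $\mucusp$ when $\dim\xi=1$ is genuinely different from the paper's: you count the residual one-dimensional representations $\chi'\circ\det$ with $\chi'^2=\chi$ and conductor dividing $\nn_\lambda$, a set whose size grows like $2^{P(\nn_\lambda)}$ and is therefore negligible against $\phi_{\nn_\lambda,\chi}(1)\asymp N(\nn_\lambda)$, while the paper (Lemma \ref{lem7.11}) avoids any counting by observing that residual representations are nontempered at $S$ and that $\mupl_{S,\chi_S}$ is supported on the tempered spectrum, so that $0\le\mudisc_\lambda-\mucusp_\lambda$ applied to $|\wh f_S|$ is dominated by $\mudisc_\lambda(|\wh f_S|-\wh\one^t\cdot|\wh f_S|)$, which tends to $\mupl_{S,\chi_S}(|\wh f_S|-\wh\one^t\cdot|\wh f_S|)=0$. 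Both arguments are valid; the paper's buys independence from any estimate on the number of residual characters (and works uniformly in the Sauvageot step via positivity), whereas yours is more concrete but obliges you to verify the quadratic-character count and the boundedness of the residual local traces.
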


We conclude this section with a lemma, which will start the proof of the theorem.

\begin{lem} \label{lem7.11} In Theorem \ref{thm7.10}, the second equality implies the first.\end{lem}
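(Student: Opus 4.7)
The strategy is to identify the difference $\mudisc_{S,\lambda}(\widehat f_S) - \mucusp_{S,\lambda}(\widehat f_S)$ with the properly normalized contribution of the residual spectrum of $\GL_2(\AAA)$. Because the residual spectrum of $\GL_2$ consists entirely of one-dimensional representations $\pi = \chi'\circ\det$ for Hecke characters $\chi'$ (necessarily with $\chi'^2 = \chi$ to match the central character constraint), it suffices to bound the aggregate contribution of such $\pi$ to $|\FF_{\disc}|$ by a constant independent of $\lambda$, while noting that the normalizing denominator $\tau_Z(G)\,\phi^{S,\infty}_{\nn_\lambda,\chi}(1)\,\dim\xi$ grows without bound in $N(\nn_\lambda)$.

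First suppose $\dim\xi > 1$: then $\dim\xi_v > 1$ at some archimedean place $v$, so the infinitesimal character of any one-dimensional $\pi_v = \chi'_v\circ\det$ differs from that of $\xi_v$. By the Clozel--Delorme identity $\tr_{Z(F_v)}\pi_v(\phi_{\xi_v}) = \chi_\EP(\pi_v\otimes\xi_v^\vee)$ together with the vanishing of $\chi_\EP$ when infinitesimal characters do not match (cited in the paragraph defining $\phi_{\xi_v}$), we obtain $\tr_{Z(F_\infty)}\pi(\phi_\xi) = 0$ for every residual $\pi$, whence $\FF_{\disc} = \FF_{\cusp}$ as multisets for each $\lambda$ and the equality is trivial.

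The remaining case is $\dim\xi = 1$, where we may write $\xi = \eta_\infty\circ\det$. The constraints $\chi_\pi = \chi$ and $\pi_\infty = \xi$ force $\chi'^2 = \chi$ and $\chi'_\infty = \eta_\infty$; since the $2$-torsion subgroup of the Hecke characters of $F$ with prescribed archimedean component is finite, only finitely many residual $\pi$ are relevant. For each such $\pi$, Lemma \ref{lem7.6} gives $\widehat\phi^{S,\infty}_{\nn_\lambda,\chi}(\pi^{S,\infty}) \in \{0,1\}$ because $\pi^{S,\infty}$ is one-dimensional, while $\widehat f_S(\pi_S)$ and $\tr_{Z(F_\infty)}\pi(\phi_\xi)$ are constants independent of $\lambda$; hence the residual sum is $O(1)$ in $\lambda$. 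Since $\nn_\lambda$ is coprime to $S$, the normalizer satisfies
\[
\phi^{S,\infty}_{\nn_\lambda,\chi}(1) = \prod_{\pp\mid\nn_\lambda}[K_\pp:\Gamma_0(\pp^{r_\pp})] \geq N(\nn_\lambda) \to \infty,
\]
so the normalized difference tends to zero and $\lim_\lambda \mucusp_{S,\lambda}(\widehat f_S) = \lim_\lambda \mudisc_{S,\lambda}(\widehat f_S)$. The main subtlety is the finiteness of the set of residuals to consider in the second case; once this is established, everything else is bookkeeping about the explicit shape and volume normalization of $\phi^{S,\infty}_{\nn_\lambda,\chi}$.
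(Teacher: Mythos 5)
Your treatment of the $\dim\xi > 1$ case is correct and essentially matches the paper (residual representations are one-dimensional, hence cannot equal $\pi_\xi$ at infinity, so the Euler--Poincar\'e trace kills them).

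The $\dim\xi = 1$ case, however, contains a genuine gap. Your key assertion --- that ``the $2$-torsion subgroup of the Hecke characters of $F$ with prescribed archimedean component is finite'' --- is false. Quadratic Hecke characters of $F$ correspond via class field theory to quadratic extensions of $F$, of which there are infinitely many, and fixing the archimedean component (say requiring triviality at $\infty$) still leaves infinitely many (one for each totally real quadratic extension of $F$). So the set of $\chi'$ with $\chi'^2 = \chi$ and $\chi'_\infty$ prescribed is infinite, and your bound ``the residual sum is $O(1)$ in $\lambda$'' does not follow. The finiteness you need is in fact true, but for a different reason that you have not supplied: for a one-dimensional local representation $\chi'_\pp\circ\det$, the trace $\tr\pi_\pp(\phi_{\nn_\lambda,\chi,\pp})$ is nonzero \emph{only when $\chi'_\pp$ is unramified} (take $\gamma = \smallmat{1}{0}{0}{d}\in\Gamma_0(\pp^r)$ in the transformation condition of Lemma \ref{lem7.6} to see $\chi'_\pp|_{\oo_\pp^\times}=1$ is forced). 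Thus the relevant $\chi'$ are unramified outside $S\cup\infty$, and the set of quadratic characters of $F$ unramified outside a fixed finite set is finite by Hermite--Minkowski. Without this observation the counting argument does not close.

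It is also worth noting that the paper's own proof of this lemma avoids any residual counting entirely: since one-dimensional representations are not tempered at $S$, the cuspidal and discrete counting measures agree on any function supported on the tempered spectrum $\GL_2(F_S)^{\wedge,t}$, so for nonnegative $\widehat h_S$ one has $0 \leq (\mudisc_\lambda - \mucusp_\lambda)(\widehat h_S) \leq \mudisc_\lambda(\widehat h_S) - \mudisc_\lambda(\widehat h_S\cdot\widehat\one^t)$, and the right side tends to $\mupl(\widehat h_S - \widehat h_S\cdot\widehat\one^t)=0$ by the assumed second equality and the tempered support of the Plancherel measure. This sidesteps the global finiteness issue and genuinely uses the hypothesis (the second equality), whereas your argument only uses the divergence of the normalizing factor; a complete proof along your lines would still need the hypothesis, e.g. to invoke Corollary \ref{cor9.3} or the trace formula to control the denominator, but the cleaner route is the one in the paper.
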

\begin{proof} If $\dim \xi > 1$ then any discrete automorphic representation $\pi$ with $\pi_{\infty} = \pi_{\xi}$ is cuspidal, and so we are done in this case.

Otherwise, assume $\dim \xi = 1$. Fix $\widehat f_S$ and let $\widehat \one^t$ be the characteristic function of $\GL_2(F_S)^{\wedge,\, t}$ in $\GL_2(F_S)^{\wedge}$. The Plancherel measure is supported on the tempered spectrum, so $\mupl(\widehat f_S) = \mupl(\wh f_S\cdot \wh \one_t)$. For a positive function $\widehat h_S$, we have 
$$0 \leq 
	\mudisc_\lambda(\wh h_S) - \mucusp_\lambda(\wh h_S) 
	\leq \mudisc_\lambda(\wh h_S) - \mudisc_\lambda(\wh h_S \cdot \wh \one^t);$$
this follows because $\wh h_S$ is positive and because every discrete representation that is tempered at $S$ is cuspidal, since the residual spectrum of $\GL_2(\AAA)$ consists of one-dimensional representations. (It is conjectured that every cuspidal representation is, in fact, tempered everywhere; this is the generalized Ramanujan-Petersson conjecture). We therefore have
\begin{align*}
	|\mudisc_\lambda(\wh f_S) - \mucusp_\lambda(\wh f_S)|
	& \leq (\mudisc_\lambda - \mucusp_\lambda)(|\wh f_S|)
	\\ & \leq \mudisc_\lambda(|\wh f_S| - \wh \one^t\cdot |\wh f_S|)
\end{align*}
As $\lambda \to \infty$, the final term approaches $\mupl(|\wh f_S| - \wh \one^t\cdot |\wh f_S|)$. But the Plancherel measure is supported on the tempered spectrum, so this is zero, finishing the proof.
\end{proof}

\section{Asymptotic Bounds on Constant Terms and Orbital Integrals}
\label{sec:8}

The goal of this section is to bound the constant terms $Q_{\gamma}(\one_{(Z(\AAA^{\infty}\Gamma_0(\nn)})$ and orbital integrals $O_{\gamma}(\one_{Z(\AAA^{\infty})\Gamma_0(\nn)})$. We will begin by computing local orbital integrals and constant terms and then summarize the global consequences in subsection \ref{subsec:8.3}.
 
Throughout, we will choose measures on $G = \GL_2(F_\pp)$, $T$ the diagonal torus, $N$ the subgroup of upper-triangular unipotent matrices, and $K_{\pp} = \GL_2(\oo_{F,\pp})$ so that maximal compact subgroups are given measure $1$; this also ensures that $dg = dt\,dn\,dk$ under the Iwasawa decomposition $G = TNK$.

The key tool will be an analysis of the Bruhat-Tits tree for $\SL_2$. We recall a definition: 

\begin{defn}\label{defn8.1} Consider the $p$-adic field $F_{\pp}$. The \emph{Bruhat-Tits tree} $X$ of $\SL_2(F_{\pp})$ is a graph consisting of the following data:
	\begin{itemize}
	\item The set of vertices is the set of equivalence classes of rank-two lattices $\Lambda \subseteq F_{\pp}^2$, with $\Lambda \sim \Lambda'$ if they differ only by a scalar multiple.
	\item Two equivalence classes $[\Lambda],\, [\Lambda']$ are adjacent if and only if there are lattices $\Lambda\in [\Lambda],\, \Lambda' \in [\Lambda']$ such that $\Lambda \supsetneq \Lambda' \supsetneq \varpi\cdot \Lambda$. 
	\end{itemize}
\end{defn}

We briefly recall some facts: 
\begin{enumerate} 
	\item The degree of every vertex $v\in X$ is $q + 1$. To see this, fix a lattice $\Lambda$. If $\Lambda' \subset \Lambda$ with index $q$, then $\varpi \Lambda \subset \Lambda' \subset \Lambda$, and so $\Lambda'$ corresponds uniquely to a one-dimensional subspace in $\Lambda/\varpi \Lambda \cong \mathbb{F}_q^2$. On the other hand, if $\Lambda' \supset \Lambda$ with index $q$, then $\Lambda'$ is equivalent to $\varpi \Lambda'$, which is a sublattice of $\Lambda$ of index $q$. Moreover, if $\Lambda_1 \sim \Lambda$ then all index-$q$ sublattices of $\Lambda_1$ are equivalent to an index-$q$ sublattice of $\Lambda$.
	\item $X$ is a tree \cite[Theorem 1]{Ser80}.
\end{enumerate}

Let $\{e_1,\, e_2\}$ be the standard basis of $F_{\pp}^2$. $X$ has a distinguished line $A_0$ whose vertices correspond to the lattices with bases $\{e_1,\, \varpi^{i} e_2\}$; this is known as the \emph{standard apartment}. For fixed $g \in \GL_2(F_\pp)$, $A = g \cdot A_0$ is called an \emph{apartment}. Given a vertex $w$ and an apartment $A$, let $d(w,\, A)$ be the distance from $w$ to $A$. Because $X$ is a tree, there is a unique vertex $w'\in A$ such that $d(w,\, A) = d(w,\,w')$; we define $b_A(w) = w'$.

By the Iwasawa decomposition, every vertex has an associated lattice $\Lambda$ with basis $\{e_1,\, ae_1 + \varpi^s e_2\}$, where $s\in \ZZ$ and $a\in F_{\pp}$. We denote this vertex by $w_{a, s}$. Note that $w_{a, s} = w_{a', s'}$ if and only if $s = s'$ and $a - a'\in \oo_{F,\pp}$, and that $w_{a, s}\in A_0$ if and only if $a\in \oo_{F,\pp}$. It is elementary to check by induction that if $a \not\in \oo_{F,\pp}$ then $d(w_{a,\, s},\, A_0) = -v_P(a)$ and that $b_{A_0}(w_{a,s}) = w_{0, s - v(a)}$; this follows because $w_{a, s}$ is adjacent to $w_{\varpi\cdot a, s+1}$.

We say a set of vertices $\{w_0,\ldots,\, w_r\}$ is a \emph{segment} (of length $r$) if $d(w_i,\, w_{j}) = i - j$ for all $0 \leq i,\, j\leq r$.

The action of $\GL_2(F_\pp)$ on the set of lattices in $F_\pp^2$ descends to an action on $X$ by graph automorphisms. We have the following:

\begin{lem}\label{lem8.2} Let $\gamma \in \GL_2(F_\pp)$. Then $\gamma \in Z\cdot\Gamma_0(\pp^r)$ if and only if $\gamma$ fixes the length-$r$ segment $S_r = \{w_{0, 0}, \, w_{0, 1},\ldots,\, w_{0, r}\}$.

Moreover, $g^{-1}\gamma g \in Z\cdot \Gamma_0(\pp^r)$ if and only if $\gamma$ fixes $g\cdot S_r$.
\end{lem}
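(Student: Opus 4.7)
The plan is to reduce the problem to two endpoint stabilizer computations and exploit the fact that $X$ is a tree. First, I would argue that a graph automorphism $\gamma$ of $X$ fixes $S_r$ pointwise if and only if it fixes the two endpoints $w_{0,0}$ and $w_{0,r}$. One direction is trivial; for the other, since $X$ is a tree, there is a unique geodesic from $w_{0,0}$ to $w_{0,r}$, namely $S_r$. If $\gamma$ fixes the endpoints, it maps this geodesic to itself, and the vertex $w_{0,i}$ is uniquely characterized among the neighbors of $w_{0,i-1}$ as the one lying at distance $r-i$ from $w_{0,r}$, a property preserved by $\gamma$. An induction on $i$ then shows $\gamma$ fixes every $w_{0,i}$.

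Next, I would identify the stabilizers of the two endpoints individually. The vertex $w_{0,0}$ corresponds to the standard lattice $\Lambda_0 = \oo e_1 + \oo e_2$, and $\gamma$ fixes its equivalence class iff $\gamma \Lambda_0 = c\Lambda_0$ for some $c \in F_\pp^\times$, giving stabilizer $Z \cdot K_\pp$. For $w_{0,r}$, the associated lattice $\Lambda_r = \oo e_1 + \varpi^r \oo e_2$ equals $g_r \Lambda_0$ with $g_r = \smallmat{1}{0}{0}{\varpi^r}$, so its stabilizer is $Z \cdot g_r K_\pp g_r^{-1}$.

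The main step is then to intersect these two stabilizers. Using the conjugation formula
\[
g_r \smallmat{a}{b}{c}{d} g_r^{-1} = \smallmat{a}{\varpi^{-r}b}{\varpi^{r}c}{d},
\]
the subgroup $g_r K_\pp g_r^{-1}$ consists of matrices with $a,d \in \oo$, $b \in \varpi^{-r}\oo$, $c \in \varpi^r \oo$, and unit determinant. Intersecting with $K_\pp$ (which imposes $b \in \oo$) yields exactly $\Gamma_0(\pp^r)$, and combined with the common central factor gives $Z\cdot\Gamma_0(\pp^r)$. This proves the first assertion. The second assertion is immediate from the first: since $\GL_2(F_\pp)$ acts by automorphisms of $X$, the element $g^{-1}\gamma g$ fixes $S_r$ iff $\gamma$ fixes $g\cdot S_r$, and by the first part this is equivalent to $g^{-1}\gamma g \in Z\cdot \Gamma_0(\pp^r)$.

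The only conceptually nontrivial point is the tree-based reduction to endpoints; the rest is a routine matrix computation. The key takeaway, which I expect to be used repeatedly in Section~\ref{sec:8}, is that $Z\cdot \Gamma_0(\pp^r)$ can be understood geometrically as the pointwise stabilizer of a distinguished segment of length $r$ in the Bruhat--Tits tree, a viewpoint that makes explicit orbital integral and constant term bounds tractable.
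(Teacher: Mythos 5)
Your proof is correct and follows essentially the same route as the paper: the paper's proof just records that the projective stabilizer of each vertex $w_{0,i}$ is $Z$ times a conjugate of $K_\pp$ and intersects these for $i=0,\ldots,r$, while you first reduce to the two endpoints via uniqueness of geodesics in the tree and then intersect only the two endpoint stabilizers --- a cosmetic difference. (Both arguments implicitly use that $Z\cdot K_\pp \cap Z\cdot g_r K_\pp g_r^{-1} = Z\cdot\bigl(K_\pp \cap g_r K_\pp g_r^{-1}\bigr)$, which follows by comparing valuations of determinants, so this is as harmless in your write-up as in the paper's.)
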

\begin{proof}
The second statement follows from the first. To prove the first, a quick computation yields that $\gamma$ fixes the lattice $\Lambda_i$ if and only if $\gamma \in Z\cdot \smallmat{1}{0}{0}{\varpi^i}^{-1} K_{\pp} \smallmat{1}{0}{0}{\varpi^i}$. The intersection of such subgroups from $i = 0$ to $i = r$ is $Z\cdot \Gamma_0(\pp^r)$.
\end{proof}

\subsection{Computation of Constant Terms}
\label{subsec:8.1}

Let $t = \smallmat{t_1}{0}{0}{t_2}$; we wish to compute the constant term $Q_t(\one_{Z\cdot\Gamma_0(\pp^r)})$. We begin with a lemma:

\begin{lem}\label{lem8.3} Let $t = \smallmat{t_1}{0}{0}{t_2} \in K_{\pp}$ and let $w\in X$ be a vertex. Then $t$ fixes $w$ if and only if $d(w,\, A_0) \leq v_{\pp}(t_1 - t_2)$.\end{lem}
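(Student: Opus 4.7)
The plan is to reduce the statement to a direct computation with the representative lattices $\Lambda_{a,s} = \oo_{F,\pp}e_1 + \oo_{F,\pp}(ae_1 + \varpi^s e_2)$ of the vertices $w_{a,s}$, and then compare the result against the explicit distance formula $d(w_{a,s},\, A_0) = \max(0,\, -v_\pp(a))$ recalled immediately before the lemma.

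First I would compute the action of $t$ on $\Lambda_{a,s}$. Since $t_1 \in \oo_{F,\pp}^\times$, the lattice $t\Lambda_{a,s}$ is equivalent (as a class in $X$) to
\[
\Lambda'_{a,s} \;=\; \oo_{F,\pp}e_1 + \oo_{F,\pp}\bigl(a e_1 + u\varpi^{s} e_2\bigr),\qquad u := t_2/t_1 \in \oo_{F,\pp}^\times.
\]
So $t$ fixes $w_{a,s}$ if and only if $\Lambda_{a,s} = \Lambda'_{a,s}$, which by subtracting the two generators is equivalent to $(u-1)\varpi^s e_2 \in \Lambda_{a,s}$. Writing out a generic element $xe_1 + y(ae_1 + \varpi^s e_2) = (x+ya)e_1 + y\varpi^s e_2$ of $\Lambda_{a,s}$, this forces $y = u - 1$ and then $x + (u-1)a \in \oo_{F,\pp}$; since $x$ is free in $\oo_{F,\pp}$, the condition collapses to
\[
(u-1)\,a \in \oo_{F,\pp}, \qquad \text{i.e.}\qquad v_\pp(u-1) \geq -v_\pp(a).
\]

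Second, because $t_1 \in \oo_{F,\pp}^\times$, we have $v_\pp(u-1) = v_\pp(t_2 - t_1) - v_\pp(t_1) = v_\pp(t_1 - t_2)$. Together with the distance formula, the displayed inequality becomes $v_\pp(t_1 - t_2) \geq d(w_{a,s},\, A_0)$ whenever $a \notin \oo_{F,\pp}$. When $a \in \oo_{F,\pp}$ the containment $(u-1)a \in \oo_{F,\pp}$ is automatic, and on the other side $d(w_{a,s},\, A_0) = 0 \leq v_\pp(t_1 - t_2)$ since $t_1, t_2 \in \oo_{F,\pp}$; so the equivalence also holds in that case, proving the lemma.

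There is no real obstacle here beyond being careful with lattice bookkeeping; the only point worth flagging is the rescaling step where we use $t_1 \in \oo_{F,\pp}^\times$ twice, once to put $t\Lambda_{a,s}$ in the normalized form $\Lambda'_{a,s}$ and once to identify $v_\pp(u-1)$ with $v_\pp(t_1 - t_2)$. Conceptually the statement simply says that the depth to which the diagonal element $t$ fixes vertices off the standard apartment is controlled by how close $t$ is to the center, measured by $v_\pp(t_1 - t_2)$.
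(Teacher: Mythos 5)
Your proof is correct and takes essentially the same route as the paper: the paper conjugates $t$ by $\smallmat{1}{a}{0}{\varpi^s}$ and reads off the condition $(t_1-t_2)a\in\oo_{F,\pp}$, while your lattice computation with $\Lambda_{a,s}$ yields the equivalent condition $(t_2/t_1-1)a\in\oo_{F,\pp}$, and both are then compared against $d(w_{a,s},A_0)=-v_\pp(a)$. The only difference is cosmetic (explicit lattice bookkeeping versus a one-line matrix conjugation into $Z\cdot K_\pp$).
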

\begin{proof} Write $w = w_{a, s}$, and note that $t$ fixes $w_{a, s}$ if and only if 
$$\twomat{t_1}{(t_1 - t_2)a}{0}{t_2} = \twomat{1}{a}{0}{\varpi^s}^{-1} \twomat{t_1}{0}{0}{t_2} \twomat{1}{a}{0}{\varpi^s} \in K\cdot Z$$
which occurs if and only if $(t_1 - t_2) a \in \oo_{F,\pp}$.

Since $d(w_{a, s},\, A_0) = -v_\pp(a)$, this completes the proof.
\end{proof}

\begin{prop}\label{prop8.4} Let $t_1 \neq t_2 \in \oo_{F,\pp}^{\times}$. Then 
$$Q_{\gamma}(\one_{Z \cdot \Gamma_0(\pp^r)}) \leq 
	\begin{cases}
	1 & r \leq v_{\pp}(t_1 - t_2) \\
	2 q^{v_{\pp}(t_1 - t_2)} \vol(\Gamma_0(\pp^r)) & r > v_{\pp}(t_1 - t_2).
	\end{cases}
$$
\end{prop}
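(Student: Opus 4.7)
My plan is to interpret $Q_{t}(\one_{Z\cdot\Gamma_0(\pp^r)})$ via the action of $tn_{a}$ on the Bruhat--Tits tree $X$ of $\SL_{2}(F_{\pp})$, and then handle the two cases separately. By Lemma~\ref{lem8.2}, the condition $k^{-1}tn_{a}k\in Z\cdot\Gamma_0(\pp^r)$ is equivalent to $tn_{a}$ fixing the length-$r$ segment $k\cdot S_{r}$ pointwise. Since $\Gamma_0(\pp^r)$ is exactly the pointwise stabiliser of $S_{r}$, the cosets in $K_{\pp}/\Gamma_0(\pp^r)$ are in bijection with the $(q+1)q^{r-1}$ length-$r$ segments from $w_{0,0}$, and I can rewrite
\begin{equation*}
Q_{t}(\one_{Z\cdot\Gamma_0(\pp^r)}) \;=\; \vol(\Gamma_0(\pp^r))\int_{F_{\pp}} N(a)\,da,
\end{equation*}
where $N(a)$ counts the length-$r$ segments from $w_{0,0}$ fixed pointwise by $tn_{a}$. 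Since $tn_{a}\in K_{\pp}$ (modulo $Z$) requires $a\in\oo_{F,\pp}$, we have $N(a)=0$ outside $\oo_{F,\pp}$.

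For the case $r\le v$, no tree analysis is needed. The pointwise domination $\one_{Z\cdot\Gamma_0(\pp^r)}\le \one_{Z\cdot K_{\pp}}$ gives $Q_{t}(\one_{Z\cdot\Gamma_0(\pp^r)})\le Q_{t}(\one_{Z\cdot K_{\pp}})=1$: the subgroup $Z\cdot K_{\pp}$ is stable under $K_{\pp}$-conjugation, and $tn_{a}\in Z\cdot K_{\pp}$ if and only if $a\in\oo_{F,\pp}$, which gives the product of unit volumes.

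For the case $r>v$, I would establish the uniform bound $N(a)\le 2q^{v}$ for $a\in\oo_{F,\pp}$ by a combinatorial analysis on $X$; this integrates to the claimed estimate. The fixed locus of $tn_{a}$ on $X$ is the tube $T_{v}(A_{a}):=\{w\in X:d(w,A_{a})\le v\}$ about the apartment $A_{a}=n_{x}\cdot A_{0}$ with $x=a/(t_{1}-t_{2})$, and for $a\in\oo_{F,\pp}$ one computes $m:=d(w_{0,0},A_{a})=\max(0,v-v_{\pp}(a))\in[0,v]$, so $w_{0,0}\in T_{v}(A_{a})$. The key tree-theoretic input is that a non-backtracking path in $X$ which departs from the geodesic to $A_{a}$ becomes trapped in a subtree whose distance to $A_{a}$ strictly increases at each subsequent step; since $r>v$, a length-$r$ segment in $T_{v}(A_{a})$ from $w_{0,0}$ must therefore stay on $A_{a}$ for at least $r-v$ of its steps before branching. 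Organising segments by the step at which they first branch off $A_{a}$ and summing a geometric series yields exactly $2q^{v}$ in the base case $w_{0,0}\in A_{a}$ (that is, $m=0$). The main technical obstacle is verifying the same bound when $m>0$: the segment is then forced to first traverse the length-$m$ geodesic from $w_{0,0}$ into $A_{a}$, reducing flexibility, and a case split on $r\le v+m$ versus $r>v+m$ yields counts of $q^{r-m}\le q^{v}$ and $2q^{v}$ respectively. In either case $N(a)\le 2q^{v}$, and integrating over $\oo_{F,\pp}$ gives $Q_{t}\le 2q^{v}\vol(\Gamma_0(\pp^r))$ as claimed.
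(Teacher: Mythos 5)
Your overall strategy is the paper's: rewrite the constant term as $\vol(\Gamma_0(\pp^r))\int_{\oo_{F,\pp}}N(a)\,da$ with $N(a)$ the number of length-$r$ segments starting at $w_{0,0}$ fixed by $tn_a$, identify the fixed locus as the radius-$v$ tube around an apartment $A_a$ (your conjugating parameter should be $x=-t_1a/(t_1-t_2)$, not $a/(t_1-t_2)$, but this is harmless), use the trapping lemma, and count. Your $r\le v$ case and your exact count $2q^v$ when $m=d(w_{0,0},A_a)=0$ are correct, and your treatment of $r>v+m$ is also fine, since in that range an early turn-away is genuinely impossible.

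The gap is in the intermediate range $v<r\le v+m$. There the claim that the segment is ``forced to first traverse the length-$m$ geodesic from $w_{0,0}$ into $A_a$'' is false: a segment may turn away from $A_a$ at step $i\le m$ (so at distance $m-i+1>0$) and still end within the tube, since its final distance is $m+r-2i+2$, which is $\le v$ whenever $i\ge(m+r-v+2)/2$; such $i\le m$ exist exactly when $r\le v+2m-2$. Consequently your claimed count $q^{r-m}$ in that range is wrong: for $q=2$, $v=4$, $m=3$, $r=5$ one finds $8$ valid segments (the profiles $3\to2\to1\to0\to0\to0$, $3\to2\to1\to0\to0\to1$, and $3\to2\to1\to2\to3\to4$ contribute $2+2+4$), not $q^{r-m}=4$. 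The needed bound $N(a)\le 2q^v$ is still true, but your case split does not establish it there. The paper's proof avoids this by a uniform step-by-step count that never assumes the path reaches $A_a$: since the distance to $A_a$, once it increases, increases at every later step, it must be non-increasing at each of the first $r-v$ steps; this determines each such step uniquely (one choice toward $A_a$ off the apartment, one choice continuing along it) except for at most one step with two choices (the first move along $A_a$), and each of the last $v$ steps has at most $q$ non-backtracking choices, giving $N(a)\le 2q^v$ for every $m$. Replacing your $m>0$ analysis by this argument closes the gap.
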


\begin{proof} Fix a strictly upper-triangular matrix $n$ and note that for any $k\in K$ we can only have $k^{-1}tnk\in K$ if $n\in K$. Since $t_1 \neq t_2$, there is a $g$ so that $g^{-1}tng = t$ and therefore the set $X^{tn}$ of vectors fixed by $tn$ is of the form $g\cdot X^t$. If $A = g\cdot A_0$, then $w \in X^{tn}$ if and only if $d(w, \, A) \leq v_\pp(t_1 - t_2)$. 

Fix $n\in N \cap K$; we have $k^{-1}tnk \in Z \cdot \Gamma_0(\pp^r)$ if and only if the segment $k\cdot S_r \subset X^{tn}$. We note that the initial vertex of $k\cdot S_r$ is $w_{0,0}$; we will show that there number of such segments contained in $X^{tn}$ is at most $[K:\Gamma_0(\pp^r)]$ if $r \leq v(t_1 - t_2)$, and is at most $2q^{v_{\pp}(t_1 - t_2)}$ otherwise. The first statement is obvious simply by counting the total number of segments of length $r$ with a given initial point.

For the second case, we note the following: since $X$ is a tree, if $S = (w_0,\ldots,\, w_\ell)$ is a segment with $d(w_{1}, \,A) > d(w_{0},\, A)$, then $d(w_{i + 1}, \,A) > d(w_{i},\, A)$ for all $i$. As such, if $k \cdot S_r$ is a segment contained in $X^{tn}$, then for all $1 \leq i \leq r - v_{\pp}(t_1 - t_2)$, we have $d(w_i,\, A) \leq d(w_{i-1},\, A)$. As such, we claim that there are at most $2 q^{v_{\pp}(t_1 - t_2)}$ segments of the form $k\cdot S_r = \{w_0',\ldots,\, w_{r}'\}$ contained in $X^{tn}$. Because $k\in K$, we have $w_0' = w_0$. For each $1\leq i \leq r - v_{\pp}(t_1 - t_2)$, if $w_{i-1} \not \in A$, then $w_i$ is the unique neighbor of $w_{i-1}$ with $d(w_{i},\, A) < d(w_{i-1},\, A)$. If the $w_{i-1}\in A$ and $w_{i-2}\not\in A$, then $w_i$ must be one of the two neighbors of $w_{i - 1}$ in $A$. Finally, if $w_{i-1},\, w_{i-2}\in A$, then $w_{i}$ must be the other neighbor of $w_{i-1}$ in $A$. Finally, if $i > r - v_{\pp}(t_1 - t_2)$, then $w_{i}$ can be any of the $q$ neighbors of $w_{i-1}$ which are not equal to $w_{i-2}$. This completes the proof of the claim.

Therefore, for any $n \in N\cap K$ we have
$$\int_{K} \one_{Z\cdot \Gamma_0(\pp^r)}(k^{-1}tnk)\, dk \leq 	
\begin{cases}
	1 & r \leq v_{\pp}(t_1 - t_2) \\
	2 q^{v_{\pp}(t_1 - t_2)} \vol(\Gamma_0(\pp^r)) & r > v_{\pp}(t_1 - t_2).
\end{cases}$$
and so integrating over $n \in N \cap K$ completes the proof.
\end{proof} 

We will also need to compute the constant term $Q_{z}(\one_{Z\cdot \Gamma_0(\pp^r)})$ for a central element $z$.

\begin{prop}\label{prop8.5} Let $z \in Z(F_\pp)$. Then 
	$$Q_{z}(\one_{Z\cdot \Gamma_0(\pp^r)}) = \begin{cases} 
		\frac{2}{q+1} q^{-k} & r = 2k + 1
		\\ q^{-k} & r = 2k.
	\end{cases}$$
In particular, $Q_z(\one_{Z \cdot \Gamma_0(\pp^r)}) \leq q^{-r/2}$.
\end{prop}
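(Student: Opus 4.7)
The plan is to mimic the strategy of Proposition~\ref{prop8.4}, but now exploiting the centrality of $z$ to reduce the calculation to analyzing the fixed set of the unipotent $n_b = \smallmat{1}{b}{0}{1}$ on the Bruhat--Tits tree. Since $z$ is central, it commutes through everything and absorbs into $Z$, so that
$$Q_z(\one_{Z\Gamma_0(\pp^r)}) = \int_K \int_{F_\pp} \one_{\Gamma_0(\pp^r)}(k^{-1} n_b k)\, db\, dk.$$
By Lemma~\ref{lem8.2}, together with the observation that $K$ acts transitively on length-$r$ segments starting at $w_{0,0}$ with stabilizer $\Gamma_0(\pp^r)$, the inner integral becomes $\vol(\Gamma_0(\pp^r))\cdot N(b,r)$, where $N(b,r)$ counts the length-$r$ segments starting at $w_{0,0}$ that are fixed by $n_b$.

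Next I will identify the fixed set of $n_b$ in the tree. A direct computation using the $w_{a,s}$-parameterization gives $n_b\cdot w_{a,s} = w_{a+b\varpi^s,\, s}$, so $n_b$ fixes $w_{a,s}$ iff $s \geq -v(b)$. Relative to the horocycle function $h$ associated to the end $[e_1]$ (which satisfies $h(w_{a,s}) = s$), the fixed set is the horoball $\{v : h(v) \geq -v(b)\}$. In particular, $N(b,r) = 0$ when $v(b) < 0$, since $h(w_{0,0}) = 0$.

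The combinatorial heart of the argument is counting non-backtracking segments in this horoball. Each vertex has exactly one ``up'' neighbor (the unique one with $h$-value increased by $1$, in the direction of $[e_1]$) and $q$ ``down'' neighbors. Non-backtracking forces any segment from $w_{0,0}$ to have the shape $u$ up-steps followed by $r-u$ down-steps for some $0 \leq u \leq r$: once a down-step is taken, the unique up-neighbor is the backtrack direction and one must continue descending. The number of segments of a given $u$ works out to $1$ if $u = r$, $q^r$ if $u = 0$, and $(q-1)q^{r-u-1}$ otherwise, the $(q-1)$ reflecting the exclusion of backtracking at the transition from up to down. The horoball constraint $h_i \geq -v(b)$ is equivalent to the condition at the terminal vertex, $2u - r \geq -v(b)$, that is $u \geq u_0 := \lceil (r-v(b))/2\rceil$. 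A brief geometric-series calculation then yields $N(b,r) = q^{r-u_0}$ when $u_0 \geq 1$, and $N(b,r) = (q+1)q^{r-1}$ when $v(b) \geq r$.

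Finally, I will integrate against $\vol(\{v(b) = \ell\}) = q^{-\ell}(1 - q^{-1})$, grouping valuations in pairs according to the parity of $r$: pairs $\{2j, 2j+1\}$ when $r = 2k$, and $\{0\}$ together with $\{2j-1, 2j\}$ when $r = 2k+1$. The resulting geometric sum evaluates to $\tfrac{q+1}{q}\,q^k$ and $2q^k$ respectively; dividing by $[K : \Gamma_0(\pp^r)] = (q+1)q^{r-1}$ yields the two stated formulas. The bound $Q_z(\one_{Z\cdot \Gamma_0(\pp^r)}) \leq q^{-r/2}$ is then immediate: equality in the even case, and in the odd case the estimate $\tfrac{2}{q+1} \leq q^{-1/2}$ is equivalent to $(q-1)^2 \geq 0$. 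The main obstacle is careful bookkeeping in the non-backtracking count, particularly the $(q-1)$ versus $q$ distinction at the transition from the up-phase to the down-phase; once this is handled correctly, the rest of the argument is a mechanical geometric-series manipulation.
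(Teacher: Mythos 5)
Your proof is correct and takes essentially the same approach as the paper: reduce to $z=1$, compute the fixed set of $n_b=\smallmat{1}{b}{0}{1}$ on the Bruhat--Tits tree (a horoball), count the length-$r$ segments based at $w_{0,0}$ inside it, and integrate over $b$. One point worth flagging: your segment count $q^{\lfloor (r+v(b))/2\rfloor}$ (equivalently $q^{r-u_0}$ with $u_0=\lceil (r-v(b))/2\rceil$) is the correct one, whereas the paper's proof writes $q^{\lfloor v(b)/2\rfloor}$ at this step; that appears to be a typo, since it is inconsistent both with the sentence preceding it (which forces the first $\lceil (r-v(b))/2\rceil$ steps and then allows $q$ choices for each of the remaining $\lfloor (r+v(b))/2\rfloor$ steps) and with the stated final answer (already for $r=2$ the paper's displayed sum evaluates to $(2q-1)/q^3$, not $q^{-1}$). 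Your more careful bookkeeping reproduces the claimed formulas.
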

\begin{proof} We can assume that $z = 1$ and once again find the fixed subspace $X^n$ for $n = \smallmat{1}{b}{0}{1} \in K$. Let $w_{a,\, s}$ be as in the beginning of the section. Since
$$\twomat{1}{a}{0}{\varpi^s}^{-1} \twomat{1}{b}{0}{1} \twomat{1}{a}{0}{\varpi^s} = \twomat{1}{b\varpi^{s}}{0}{1}$$
we see that $w_{a,s} \in X^n$ if and only if $s \geq - v(b)$. In particular, if $b_{A_0}(w) = w_{0,s}$, then $d(w,\, w_{0,s}) \leq s + v(b)$. Alternatively, $X^n$ is the union of balls of radius $s + v(b)$ around $w_{0,s}\in A_0$, for $s \geq - v(b)$.

For fixed $n$, the volume of the set 
$$\{k\in K: k^{-1}nk\in Z \cdot \Gamma_0(\pp^r)\}$$
is the product of $\vol(\Gamma_0(\pp^r))$ with the number of segments $\{w_0,\ldots,\, w_r\}$ whose basepoint is $w_0 = w_{0,0}$ and which are contained in $X^n$. Let $n = \smallmat{1}{b}{0}{1}$. If $v_{\pp}(b) \geq r$ then all length-$r$ segments with basepoint $w_0$ are contained in $X^n$, so the total volume is $1$. If $r > v_{\pp}(b)$, then for any $i \leq \lceil \frac{r - v(b)}{2}\rceil$ we must have $w_i = w_{0,\, i}$; for each subsequent step there are $q$ choices, so the total number of segments contained in $X^n$ is $q^{\lfloor \frac{v(b)}{2}\rfloor}$. 

As such, we compute
\begin{align*} 
	\int_{N} \int_K \one_{Z\cdot\Gamma_0(\pp^r)}(k^{-1}nk) \,dk\,dn 
	& = q^{-r} + \frac{1}{q^{r - 1}(q+1)} \sum_{j = 0}^{r - 1} (q-1) q^{-j - 1} q^{\lfloor j/2\rfloor}.
\end{align*}
An elementary computation using induction shows that this is equal to the quantity stated.
\end{proof}

\subsection{Computation of Orbital Integrals}\label{subsec:8.2}
The goal of this section is to prove:

\begin{prop} \label{prop8.6} Let $\gamma$ be a non central, semisimple element of $\GL_2(F_\pp)$. Then 
$$O_{\gamma}(\one_{Z \cdot \Gamma_0(\pp^r)}) \leq 2\cdot \vol(\Gamma_0(\pp^r))\cdot O_{\gamma}(\one_{Z\cdot K})^2.$$
\end{prop}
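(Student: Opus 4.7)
The plan is to translate both orbital integrals into combinatorial counts on the Bruhat-Tits tree $X$ of $\GL_2(F_\pp)$ and to bound the count of fixed length-$r$ segments by twice the square of the count of fixed vertices. By Lemma~\ref{lem8.2}, $Z\Gamma_0(\pp^r)$ is precisely the stabilizer of the standard segment $S_r$. Letting $v_0 = w_{0,0}$ and $v_r = w_{0,r}$ be its endpoints, and $g_r = \smallmat{1}{0}{0}{\varpi^r}$ so that $g_r v_0 = v_r$, the stabilizers of $v_0$ and $v_r$ are $ZK$ and $g_r ZK g_r^{-1}$ respectively. Since $X$ is a tree, an isometry fixing two vertices must also fix the unique geodesic between them; the stabilizer of $\{v_0, v_r\}$ therefore equals the stabilizer of the whole segment, giving $ZK \cap g_r ZK g_r^{-1} = Z\Gamma_0(\pp^r)$. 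This yields the pointwise factorization
$$\one_{Z\Gamma_0(\pp^r)}(g^{-1}\gamma g) = \one_{ZK}(g^{-1}\gamma g)\cdot \one_{ZK}\bigl((g g_r)^{-1}\gamma(g g_r)\bigr).$$

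Substituting this into the orbital integral and extracting the measure factor $\vol(\Gamma_0(\pp^r))$, the proposition reduces to the combinatorial inequality
$$|T_\gamma \bs (V^\gamma \times V^\gamma)_r| \;\leq\; 2\cdot |T_\gamma \bs V^\gamma|^2,$$
where $V^\gamma \subset X$ is the set of vertices fixed by $\gamma$, $(V^\gamma \times V^\gamma)_r$ denotes ordered pairs at distance exactly $r$, and $T_\gamma = G_\gamma$ acts diagonally; under the same normalization, $O_\gamma(\one_{ZK})$ is identified with $|T_\gamma \bs V^\gamma|$. Using that $T_\gamma$-orbits on $T_\gamma u \times T_\gamma u'$ correspond to double cosets $\Stab_{T_\gamma}(u)\bs T_\gamma/\Stab_{T_\gamma}(u')$, it then suffices to prove that, for each pair of orbit representatives $u, u' \in V^\gamma$, at most two such double cosets parametrize pairs at distance $r$; summing over the $|T_\gamma \bs V^\gamma|^2$ pairs $([u], [u'])$ yields the stated bound.

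The main obstacle is this ``at most two'' bound, whose plausibility rests on the one-dimensionality of $T_\gamma/Z$. In the split case, Lemma~\ref{lem8.3} identifies $V^\gamma$ with a tube around an apartment $A_0$ on which $T_\gamma/Z$ acts by translations, and the bound of $2$ corresponds to the two possible translation directions placing $su'$ at distance $r$ from $u$; the example $\gamma = \smallmat{t_1}{0}{0}{t_2}$ with $v_\pp(t_1 - t_2) = 0$ witnesses the equality case, confirming that the constant $2$ cannot be improved. In the elliptic (non-split) case $T_\gamma/Z$ is essentially compact and $V^\gamma$ is finite; here the factor of $2$ must be recovered from a different symmetry, most naturally an order-$2$ element of $T_\gamma/Z$ swapping a pair of vertices in $V^\gamma$ (as arises for ramified quadratic extensions, where $L'^\times/L^\times$ has a $\ZZ/2$ quotient coming from valuations). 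Verifying the uniform bound of $2$ across both regimes and all $r$, while carefully tracking stabilizers, is the technical crux of the proof.
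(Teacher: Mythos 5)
Your factorization $\one_{Z\Gamma_0(\pp^r)}(h) = \one_{ZK}(h)\cdot\one_{ZK}(g_r^{-1}hg_r)$ is correct and is a genuine structural observation (the stabilizer of a geodesic segment in a tree is the intersection of the stabilizers of its endpoints). However, the proof as proposed has two serious gaps that prevent it from establishing the bound.

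First, the reduction to the combinatorial inequality $|T_\gamma\backslash(V^\gamma\times V^\gamma)_r|\leq 2|T_\gamma\backslash V^\gamma|^2$ is not correct. When one unfolds $O_\gamma(\one_{ZK}) = \int_{G_\gamma\backslash G}\one_{X^\gamma}(g\cdot v_0)\,dg$ over $G_\gamma$-orbits of vertices, each orbit does not contribute $1$: it contributes (orbit size)/$\vol(\overline{G}_\gamma)$ in the elliptic case, where $\overline{G}_\gamma=G_\gamma/Z$. With the normalization $\vol(\overline{G}_\gamma)=1$ one gets $O_\gamma(\one_{ZK}) = |X^\gamma|$, the raw cardinality of the fixed-point set, \emph{not} the orbit count $|T_\gamma\backslash V^\gamma|$. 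The same unfolding of $O_\gamma(\one_{Z\Gamma_0(\pp^r)})$ gives $\vol(\Gamma_0(\pp^r))$ times the \emph{number} of ordered pairs of fixed vertices at distance $r$, again not the orbit count. In this corrected form the elliptic case follows from the trivial bound $\#\{\text{pairs at distance }r\}\leq|X^\gamma|^2$, and no factor of $2$ is needed. Bounding orbit counts of pairs by a constant times the square of the orbit count of points is not a valid general principle (a group acting transitively on a set $A$ has one orbit on $A$ but up to $|A|$ orbits on $A\times A$), so the claimed combinatorial statement is both the wrong target and not provable by a uniform argument.

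Second, even accepting the proposed reduction, the ``at most two double cosets per orbit pair'' bound is nowhere proved; you explicitly defer it as ``the technical crux.'' That is precisely the content of the proposition, so the proposal as written is a plan, not a proof. For comparison, the paper avoids the uniform treatment entirely: for elliptic $\gamma$ it uses the trivial endpoint count just described, and for split $\gamma$ it invokes van Dijk's lemma $O_\gamma = |D^G_T(\gamma)|^{-1/2}Q_\gamma$ to reduce directly to the constant-term estimate already proved in Proposition \ref{prop8.4}. That two-case strategy sidesteps both of the difficulties above.

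If you want to pursue your approach, the honest version of the reduction is the pointwise one: $O_\gamma(\one_{Z\Gamma_0(\pp^r)}) = \vol(\Gamma_0(\pp^r))\cdot P_\gamma(r)/\vol(\overline{G}_\gamma)$ (elliptic), where $P_\gamma(r)$ is the number of ordered pairs of $\gamma$-fixed vertices at distance $r$; and in the split case you must genuinely contend with the noncompactness of $T/Z$, which is exactly the work done in Proposition \ref{prop8.4} via the tree geometry of the fixed tube around the apartment. A Cauchy--Schwarz bound using your factorization only gives $O_\gamma(\one_{Z\Gamma_0})\leq O_\gamma(\one_{ZK})$, which lacks the crucial $\vol(\Gamma_0(\pp^r))$ decay.
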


We'll break this into two cases: the case where $\gamma$ is elliptic, and the case where $\gamma$ is non-elliptic.

\begin{lem} \label{lem8.7} If $\gamma$ is elliptic and noncentral then the set $X^{\gamma}$ is finite.\end{lem}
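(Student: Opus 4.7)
The plan is to derive a contradiction from the assumption that $X^\gamma$ is infinite by extracting an eigenvector of $\gamma$ in $F_\pp^2$, which will contradict ellipticity. Recall that in the setup for $\GL_2$, $\gamma$ is elliptic and noncentral precisely when it is semisimple with eigenvalues in a quadratic extension $F_\pp'/F_\pp$ (not in $F_\pp$ itself).

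First I would observe that $X^\gamma$ is a subtree of $X$: if $\gamma$ fixes vertices $u$ and $v$, then since $\gamma$ acts by graph automorphisms and $X$ is a tree, $\gamma$ must fix the unique geodesic between $u$ and $v$ pointwise. (Note $X^\gamma$ is nonempty because $\gamma$ lies in the compact-mod-center subgroup $Z(F_\pp) \cdot T_\gamma(F_\pp)$ where $T_\gamma$ is the anisotropic torus centralizing $\gamma$, and any compact subgroup of $\GL_2(F_\pp)$ fixes a vertex of $X$.)

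Next, assume for contradiction that $X^\gamma$ is infinite. Since $X$ is $(q+1)$-regular and hence locally finite, Kőnig's lemma applied to the subtree $X^\gamma$ yields an infinite geodesic ray $(w_0, w_1, w_2, \ldots)$ with $d(w_i, w_j) = |i - j|$, all of whose vertices are fixed by $\gamma$. Such a ray determines a point $\xi$ in the boundary $\partial X$, and under the standard identification $\partial X \cong \PP^1(F_\pp)$, this $\xi$ corresponds to a line $L_\xi \subseteq F_\pp^2$; moreover, the $\GL_2(F_\pp)$-stabilizer of $\xi$ is exactly the Borel subgroup stabilizing $L_\xi$.

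Since $\gamma$ fixes the ray pointwise, it fixes $\xi$, and hence preserves the line $L_\xi$. Thus $L_\xi$ is a $\gamma$-invariant one-dimensional subspace, so $\gamma$ has an eigenvalue in $F_\pp$. Since $\gamma$ is semisimple and noncentral, the existence of one $F_\pp$-rational eigenvector forces the second eigenvector (and eigenvalue) to lie in $F_\pp$ as well, so $\gamma$ would be conjugate to a noncentral diagonal matrix in $\GL_2(F_\pp)$. But this contradicts the definition of elliptic (noncentral, not diagonalizable over $F_\pp$), completing the proof. The only mildly delicate step is the identification of ends of $X^\gamma$ with points of $\PP^1(F_\pp)$ stabilized by $\gamma$, but this is a standard feature of the Bruhat--Tits tree and presents no real obstacle.
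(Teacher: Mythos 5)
Your proof is correct, but it takes a genuinely different route from the paper's. The paper argues by direct computation: after conjugating so that $\gamma \in K_\pp$, it writes $\gamma$ in the normal form $\smallmat{x}{y}{\alpha y}{x}$ with $\alpha$ either a nonsquare unit or a uniformizer, and then explicitly identifies $X^\gamma$ as either the single vertex $\{w_{0,0}\}$ (unit case) or the ball of radius $v(y)$ around the edge $\{w_{0,0}, w_{0,1}\}$ (uniformizer case) — in particular finite. Your argument instead is a soft one: $X^\gamma$ is a convex subtree, so if infinite it would contain a geodesic ray (K\H{o}nig's lemma), whose end in $\partial X \cong \PP^1(F_\pp)$ would give a $\gamma$-stable line in $F_\pp^2$, forcing an $F_\pp$-rational eigenvalue and hence (by semisimplicity and degree $2$) diagonalizability over $F_\pp$, contradicting ellipticity. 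Both are correct. The paper's version buys an explicit description of $X^\gamma$ (not strictly needed for the lemma, but illuminating for the orbital-integral estimates that follow), while yours is cleaner, avoids the case analysis on $\alpha$, and generalizes more readily to other Bruhat--Tits buildings where a normal form is harder to write down. One small note: the nonemptiness of $X^\gamma$ that you observe is not actually needed for the contradiction argument, since the empty set is certainly finite.
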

\begin{proof} We can compute the fixed set directly, assuming $\gamma \in K$ by conjugating and multiplying by an element of the center. If $\gamma$ is elliptic then it is conjugate to a matrix of the form 
$$\twomat{x}{y}{\alpha y}{x}$$
where $\alpha$ is either a unit that is not a square, or $\alpha$ is a uniformizer.

If $\alpha$ is a unit, then the $X^{\gamma}$ is the single point $\{w_{0,0}\}$. If $\alpha$ is a uniformizer, then $X^{\gamma}$ consists of those vertices $w$ with $d(w,\, S_1) \leq v(y)$, where $S_1$ is the length-one segment $\{w_{0, 0}, \, w_{0,\,1}\}$. In either case, $X^\gamma$ is finite.\end{proof}

We will now prove Proposition \ref{prop8.6}.

\begin{proof}[Proof of Proposition \ref{prop8.6}] Assume first that $\gamma$ is elliptic, and by conjugating assume $\gamma \in \Gamma_0(\pp^r)$. Then $O_{\gamma}(\one_{Z\cdot K_\pp})$ is the cardinality of $X^{\gamma}$. As such, for a given length $r$, there are at most $O_{\gamma}(\one_{Z \cdot K_\pp})^2$ segments of length $r$ contained in $X^{\gamma}$ since each segment is determined uniquely by its two endpoints. For a given segment $S'_r$, the volume of the set $\{g\in G_{\gamma}\bs \GL_2(F_\pp): g\cdot S_r = S'_r\}$ is $\vol(\Gamma_0(\pp^r))$. This finishes the proof when $\gamma$ is elliptic.

If $\gamma$ is diagonalizable, we can assume $\gamma = \twomat{t_1}{0}{0}{t_2}\in K$. In this case, \cite[Lemma 9]{van72} tells us that 
$$O_{\gamma}(\one_{Z\cdot \Gamma_0(\pp^r)}) = |D^{G}_T(\gamma)|^{-1/2}_{\pp}Q_{\gamma}(\one_{Z\cdot \Gamma_0(\pp^r)})$$
where $D^{G}_M(\gamma)$ is the determinant of $1 - \Ad(\gamma)$ acting on $\Lie(G)/\Lie(T)$. In our situation we have 
$$|D_T^G(\gamma)| =\left|\left(1 - \frac{t_1}{t_2}\right)\left(1 - \frac{t_2}{t_1}\right)\right| = |t_1 - t_2|^2.$$

First, this lemma and Proposition \ref{prop8.4} prove that $O_{\gamma}(\one_{Z\cdot K_\pp}) = |t_1 - t_2|_{\pp}^{-1} = q^{v(t_1 - t_2)}$. Applying these results to $\one_{Z \cdot \Gamma(\pp^r)}$ gives
$$O_{\gamma}(\one_{Z\cdot \Gamma_0(\pp^r)}) \leq 2\cdot O_{\gamma}(\one_{Z\cdot K})^2 \cdot \vol(\Gamma_0(\pp^r))$$
completing the proof.
\end{proof}

\subsection{Summary of global consequences}\label{subsec:8.3}
We summarize the global consequences for use in subsequent sections below:

\begin{prop}\label{prop8.8} Let $\gamma \in \GL_2(F)$ be semisimple and let $\nn\subseteq \oo_{F}$ be an ideal. Then
\begin{enumerate}
	\item If $\gamma \in Z(F)$, then 
		$$Q_{\gamma}(\one_{Z(\AAA^\infty) \Gamma_0(\nn)}) \leq N(\nn)^{-1/2}$$
	\item If $\gamma =\smallmat{t_1}{0}{0}{t_2} \in T(F) - Z(F)$, then 
		$$Q_{\gamma}(\one_{Z(\AAA^\infty) \Gamma_0(\nn)}) \leq |N_{F/\QQ}(t_1 - t_2)|_{\RR}\cdot 2^{P(\nn)}\cdot N(\nn)^{-1}$$
	where $P(\nn)$ is the number of primes dividing $\nn$.
	\item If $\gamma \in \GL_2(F) - Z(F)$ is semisimple, then
		$$O_{\gamma}(\one_{Z(\AAA^\infty) \Gamma_0(\nn)}) \leq O_{\gamma}(K^\infty)^2 \cdot 2^{P(\nn)}\cdot N(\nn)^{-1}.$$
\end{enumerate}
\end{prop}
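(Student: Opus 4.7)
The starting point is that $\one_{Z(\AAA^\infty)\Gamma_0(\nn)}$ factors as the restricted product $\prod_\pp \one_{Z(F_\pp)\Gamma_0(\pp^{r_\pp})}$ with $r_\pp = \ord_\pp \nn$ (and $\Gamma_0(\pp^0) = K_\pp$). Both the constant term $Q_\gamma$ and the orbital integral $O_\gamma$ of a product function factor as products of their local analogues, so the global bounds reduce to assembling the local estimates proved in Propositions \ref{prop8.4}, \ref{prop8.5}, and \ref{prop8.6}, using $\vol(\Gamma_0(\pp^r)) \leq q_\pp^{-r}$ at the primes $\pp \mid \nn$ and the product formula $|N_{F/\QQ}(x)|_\RR = \prod_\pp q_\pp^{v_\pp(x)}$ for $x \in F^\times$.

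Parts (1) and (3) are then essentially mechanical. For (1), Proposition \ref{prop8.5} gives $Q_z(\one_{Z(F_\pp)\Gamma_0(\pp^{r_\pp})}) \leq q_\pp^{-r_\pp/2}$ at each $\pp \mid \nn$, while $Q_z(\one_{Z K_\pp}) = 1$ at the remaining primes by a direct Iwasawa-coordinate computation; the product telescopes to $N(\nn)^{-1/2}$. For (3), Proposition \ref{prop8.6} gives $O_\gamma(\one_{Z(F_\pp)\Gamma_0(\pp^{r_\pp})}) \leq 2 q_\pp^{-r_\pp} O_\gamma(\one_{Z K_\pp})^2$ at each $\pp \mid \nn$, and the local factor at $\pp \nmid \nn$ is exactly $O_\gamma(\one_{Z K_\pp})$; assembling one copy of $O_\gamma(\one_{Z K_\pp})$ at every place into $O_\gamma(K^\infty)^2$ and collecting the $2 q_\pp^{-r_\pp}$ factors over $\pp \mid \nn$ produces the stated bound.

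Part (2) is the most delicate and requires local central invariance: because $\phi_\pp$ is $Z(F_\pp)$-invariant, we may replace $\gamma$ by $z_\pp \gamma$ independently at each prime. I would first handle a case where $v_\pp(t_1) \neq v_\pp(t_2)$ at some prime $\pp$: after a local central shift one can write $\gamma = \smallmat{u_1}{0}{0}{\varpi^m u_2}$ with $u_i \in \oo_{F,\pp}^\times$ and $m \geq 1$, and a short computation with the Iwasawa description $w_{a,s}$ from Section \ref{sec:8} shows that $\gamma n$ acts on the Bruhat--Tits tree as a fixed-point-free translation in the $s$-direction for every $n \in N \cap K$. By Lemma \ref{lem8.2}, no $K$-conjugate can then lie in $Z \cdot \Gamma_0(\pp^{r_\pp})$, so the local constant term vanishes and hence the global one does, trivially satisfying the bound. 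In the remaining case, where $v_\pp(t_1) = v_\pp(t_2)$ at every prime, local central shifts by $\varpi_\pp^{-v_\pp(t_1)}$ place $\gamma$ in $\smallmat{\oo^\times}{0}{0}{\oo^\times}$ at every place, so Proposition \ref{prop8.4} applies and yields $Q_\gamma(\phi_\pp) \leq 2 q_\pp^{v_\pp(t_1 - t_2)} q_\pp^{-r_\pp}$ at each $\pp \mid \nn$ and $Q_\gamma(\phi_\pp) \leq 1$ at the remaining primes. Taking the product, extending the $q_\pp^{v_\pp(t_1-t_2)}$ factors from $\pp \mid \nn$ to all primes (each omitted factor is $\geq 1$ since $v_\pp(t_1 - t_2) \geq v_\pp(t_1) = v_\pp(t_2)$ in this case), and invoking the product formula gives $|N_{F/\QQ}(t_1-t_2)|_\RR \cdot 2^{P(\nn)} \cdot N(\nn)^{-1}$.

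The main obstacle is the tree-theoretic vanishing in part (2): establishing that diagonal elements with unequal local valuations act on the tree without fixed vertices, and that right multiplication by upper unipotents in $K$ does not introduce any. Once this geometric input is in hand, the remainder of the proof is bookkeeping of local bounds and the product formula.
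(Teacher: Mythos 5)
Your overall approach matches the paper's, which disposes of Proposition~\ref{prop8.8} in a single line as ``follows from Propositions~\ref{prop8.4}, \ref{prop8.5}, and \ref{prop8.6} upon decomposing the orbital integrals and constant terms as a product of local orbital integrals and constant terms.'' Your proposal fills in those details, and parts~(1) and~(3) are correct: for~(1) the factorization of $Q_\gamma$ into local factors, the exact value $1$ at $\pp\nmid\nn$, and Proposition~\ref{prop8.5} at $\pp\mid\nn$ give the stated bound after the product telescopes; for~(3), the observation that each local orbital integral $O_\gamma(\one_{Z(F_\pp)K_\pp})$ is either $0$ or a positive integer (being a count of fixed vertices in the tree, or $q_\pp^{v_\pp(t_1-t_2)}$ in the split case) is exactly what one needs to absorb the single copies at $\pp\nmid\nn$ into $O_\gamma(K^\infty)^2$. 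The vanishing argument in part~(2) when $v_\pp(t_1)\neq v_\pp(t_2)$ at some $\pp$ is also correct, though it is quicker to note directly that $\gamma n\in Z(F_\pp)K_\pp$ forces the diagonal entries to become simultaneous units after a single central shift, which pins down $v_\pp(t_1)=v_\pp(t_2)$ by a determinant/valuation check rather than by tracking the tree action.

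The gap is in the remaining case of part~(2). After shifting locally by $\varpi_\pp^{-v_\pp(t_1)}$ to put the diagonal entries in $\oo_{F,\pp}^\times$, the exponent appearing in Proposition~\ref{prop8.4} is $v_\pp$ of the \emph{shifted} difference, namely $v_\pp(t_1-t_2)-v_\pp(t_1)$, not $v_\pp(t_1-t_2)$ as you write; and your justification for extending the product to all primes, ``$v_\pp(t_1-t_2)\ge v_\pp(t_1)=v_\pp(t_2)$,'' only yields $q_\pp^{v_\pp(t_1-t_2)}\ge 1$ if $v_\pp(t_1)\ge 0$. Both issues are invisible precisely when $t_1,t_2\in\oo_F$, and disappear entirely when $t_1,t_2\in\oo_F^\times$ (in which case no shift is needed and $v_\pp(t_1-t_2)\ge 0$ everywhere). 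That normalization is available in the only nontrivial situation: if the constant term is nonzero then $v_\pp(t_1)=v_\pp(t_2)$ at every $\pp$, hence $t_1/t_2\in\oo_F^\times$, and one may choose the coset representative of $\gamma Z(F)$ so that $t_1,t_2\in\oo_F^\times$; otherwise $Q_\gamma=0$ and the bound is trivial. You should say this. As stated your argument (applied to, say, $F=\QQ$ and $\gamma=\smallmat{1/6}{0}{0}{-1/6}$, where $|N_{F/\QQ}(t_1-t_2)|_\RR<1$ yet the constant term is nonzero) would claim a bound that is in fact too strong, which points to the same implicit integrality normalization in the proposition itself; since the paper only ever uses the proposition through Corollary~\ref{cor8.9}, where the constant is allowed to depend on $\gamma$, the imprecision is harmless in context, but your proof should make the choice of representative explicit to avoid a false step.
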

\begin{proof} This follows from Propositions \ref{prop8.4}, \ref{prop8.5}, and \ref{prop8.6} upon decomposing the orbital integrals and constant terms as a product of local orbital integrals and constant terms.\end{proof}

Because $2^{P(\nn)} \cdot N(\nn)^{-1}$ decreases as $o(N(\nn)^{-1 + \epsilon})$ for every $\epsilon > 0$, we have the following
\begin{cor} \label{cor8.9} For every semisimple, noncentral $\gamma\in \GL_2(F)$ and every $\epsilon > 0$, there is a $C_{\epsilon,\gamma} > 0$ such that 
$$Q_{\gamma}(\one_{Z(\AAA^\infty) \Gamma_0(\nn)}),\, O_{\gamma}(\one_{Z(\AAA^\infty) \Gamma_0(\nn)}) < C_{\epsilon,\gamma} N(\nn)^{-1 + \epsilon}$$
for all ideas $\nn \subseteq \oo_{F,\pp}$.
\end{cor}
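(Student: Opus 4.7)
The plan is to combine the explicit bounds in Proposition \ref{prop8.8} with the standard arithmetic estimate that $2^{P(\nn)}$ grows more slowly than any positive power of $N(\nn)$. Proposition \ref{prop8.8} already does the substantive work: it bounds $Q_{\gamma}(\one_{Z(\AAA^\infty)\Gamma_0(\nn)})$ and $O_{\gamma}(\one_{Z(\AAA^\infty)\Gamma_0(\nn)})$ by a $\gamma$-dependent constant times $2^{P(\nn)}\cdot N(\nn)^{-1}$, with the constant being $|N_{F/\QQ}(t_1-t_2)|_\RR$ in the diagonal case and $O_\gamma(\one_{K^\infty})^2$ in the general semisimple case. (In the central case it is not needed for this corollary, which concerns only noncentral $\gamma$.) So the whole content of the corollary is the analytic claim $2^{P(\nn)} = O_\epsilon(N(\nn)^\epsilon)$, uniformly over all ideals $\nn$.

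To prove this analytic estimate, I would fix $\epsilon > 0$ and argue as follows. If $\nn$ has exactly $r = P(\nn)$ distinct prime ideal divisors $\pp_1,\ldots,\pp_r$, then $N(\nn) \geq \prod_{i=1}^r N(\pp_i)$, and this lower bound is minimized when the $\pp_i$ are the $r$ prime ideals of smallest norm in $\oo_F$. Since there are only finitely many prime ideals of each given norm (at most $[F:\QQ]$), a simple comparison with the analogous estimate in $\ZZ$ (where $\omega(n) = O(\log n / \log\log n)$ by Landau) yields $P(\nn) = O(\log N(\nn) / \log\log N(\nn))$. Taking a base-$2$ exponential,
\[
2^{P(\nn)} \leq N(\nn)^{O(1/\log\log N(\nn))},
\]
and the right-hand side is eventually dominated by $N(\nn)^\epsilon$. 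Since there are only finitely many $\nn$ with $N(\nn)$ below any fixed threshold, we can absorb these into a constant to get a uniform bound $2^{P(\nn)} \leq C_\epsilon' N(\nn)^\epsilon$ for all $\nn$.

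Combining the two inputs produces the desired $C_{\epsilon,\gamma}$, with $C_{\epsilon,\gamma} = C_\epsilon' \cdot |N_{F/\QQ}(t_1-t_2)|_\RR$ in the diagonal case and $C_{\epsilon,\gamma} = C_\epsilon' \cdot O_\gamma(\one_{K^\infty})^2$ in the elliptic/non-diagonal case. There is no genuine obstacle here: Proposition \ref{prop8.8} is the real content, and the corollary is just the packaging of its conclusion into a form convenient for later asymptotic applications. The only point that requires any care is ensuring the bound on $P(\nn)$ is uniform in $\nn$ and does not implicitly depend on a sequence; this is automatic from the Landau-type estimate above.
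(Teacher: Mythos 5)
Your proposal is correct and follows the same route as the paper: Corollary \ref{cor8.9} is deduced directly from Proposition \ref{prop8.8} together with the elementary fact that $2^{P(\nn)} = O_{\epsilon}(N(\nn)^{\epsilon})$, which the paper simply asserts and you justify with the standard Landau-type bound on the number of prime divisors. No gaps; your extra care about uniformity in $\nn$ is fine but not an issue the paper needed to address.
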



\section{The Plancherel Equidistribution Theorem}
\label{sec:9}
In this section, we use the results of the previous section to prove our key intermediate result:

\begin{thm}[(Plancherel equidistribution theorem)] \label{thm9.1} Fix a finite set of places $S$. Let $\xi$ be a finite-dimensional $\GL_2(F_\infty)$-representation, let $\chi$ be a character of conductor $\ff$ with $\chi_\infty = \chi_{\xi}$, and let $\widehat f_S \in \mathscr{F}_0(\GL_2(F_S)^{\wedge,\chi})$. Let $(\nn_\lambda) \to \infty$ be a sequence of levels coprime to $S$ with $\ff^S \mid \nn_\lambda$ and $N(\nn_{\lambda}) \to \infty$. Then
	$$\lim_{\lambda\to\infty} \mucusp_{\phi_{\nn_\lambda,\chi} \, \xi,\, \chi}(\widehat f_S) 
	= \lim_{\lambda\to\infty} \mudisc_{\phi_{\nn_\lambda,\chi} \, \xi,\, \chi}(\widehat f_S) 
	= \mupl_\chi(\widehat f_S).$$
\end{thm}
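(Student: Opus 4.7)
My plan is to combine three ingredients developed earlier in the paper: the fixed-central-character invariant trace formula (Proposition 6.3), the fixed-central-character Plancherel formula (Proposition 6.12), and the asymptotic bounds on orbital integrals and constant terms from Section 8 (especially Corollary 8.9). The first reduction is Lemma 7.11, which lets me treat only the $\mudisc$ equality. Next, the fixed-central-character Sauvageot density theorem (Proposition 6.12(iii)) reduces to test functions of the form $\widehat f_S = \widehat\phi_S$ with $\phi_S\in \HH(\GL_2(F_S),Z(F_S),\chi_S)$; the approximation argument requires uniform boundedness of $\mudisc_\lambda$ on positive test functions, which will itself fall out of the calculation below.

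The main computation is to apply the trace formula to $\phi = \phi_S\cdot \phi_{\nn_\lambda^S,\chi}\cdot \phi_\xi$. By construction (Definition 7.1) the spectral side is exactly $(-1)^{[F:\QQ]}|\FF_{\disc,\chi}|$, so
\[ \mudisc_\lambda(\widehat\phi_S) = \frac{(-1)^{[F:\QQ]}}{\tau_Z(G)\,\phi_{\nn_\lambda^S,\chi}(1)\dim\xi}\, I_\geom(\phi, Z(\AAA),\chi). \]
The identity term on the geometric side is $\tau_Z(G)\phi_S(1)\phi_{\nn_\lambda^S,\chi}(1)\phi_\xi(1)$; combined with the identity $\phi_\xi(1)=(-1)^{[F:\QQ]}\dim\xi$ from Corollary 5.4 it simplifies, after normalization, to $\phi_S(1)$, which is $\mupl_\chi(\widehat\phi_S)$ by the Plancherel inversion formula. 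This is the term that produces the claimed limit.

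What remains is to show that every other contribution to $I_\geom$ vanishes after normalization as $\lambda\to\infty$. Cuspidality of $\phi_\xi$ at $\infty$ forces $\gamma$ elliptic at $\infty$, while the support of $\phi_{\nn_\lambda^S,\chi,\pp}$ inside $F_\pp^\times K_\pp$ at finite places $\pp\nmid \nn_\lambda^S$ forces $\gamma$ to be bounded at those places. Once $\phi_S$ is fixed, a standard global boundedness/compactness argument then shows that only finitely many conjugacy classes in $\GL_2(F)/Z(F)$ contribute to $I_\geom$, independently of $\lambda$. For each such semisimple noncentral $\gamma$, the pointwise bound $|\phi_{\nn_\lambda^S,\chi,\pp}|\leq \vol(\Gamma_0(\pp^r))^{-1}\one_{Z\Gamma_0(\pp^r)}$ together with Corollary 8.9 yields
\[ \frac{|O_\gamma(\phi)|}{\phi_{\nn_\lambda^S,\chi}(1)} \leq |O_{\gamma,S}(\phi_S)|\cdot|O_{\gamma,\infty}(\phi_\xi)|\cdot C_{\epsilon,\gamma}\,N(\nn_\lambda^S)^{-1+\epsilon} \longrightarrow 0, \]
and the central and diagonal constant terms that enter the $F=\QQ$ case are handled identically using Proposition 8.8(1)--(2). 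Summing over the finite collection of contributing $\gamma$ concludes the argument.

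The main obstacle I anticipate is exactly this control over the sum in $I_\geom$: a priori it is infinite, and neither the coefficients $C(G,\gamma), C(T,\gamma)$ nor the local orbital integrals $O_{\gamma,S}(\phi_S)$ and $O_{\gamma,\infty}(\phi_\xi)$ have any obvious uniform bound across conjugacy classes. The resolution is the twofold restriction on support imposed by $\phi_\xi$ at infinity and by $\phi_{\nn_\lambda^S,\chi,\pp}$ at the unramified finite places, which together confine $\gamma$ to finitely many classes depending only on $\phi_S$ and the ramified data; only after this finiteness is established does the decay rate from Corollary 8.9 translate into the desired asymptotic vanishing, and only then can one pass from the Hecke-algebra case to general $\widehat f_S\in \mathscr{F}_0$ via Sauvageot.
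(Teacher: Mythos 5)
Your proposal follows essentially the same route as the paper's proof: reduce to $\mudisc$ via Lemma \ref{lem7.11}, apply the fixed-central-character trace formula to $\phi_S\phi_{\nn_\lambda,\chi}\phi_\xi$ with the central term giving $\phi_S(1)=\mupl_{S,\chi}(\widehat\phi_S)$, kill the remaining finitely many orbital-integral and constant-term contributions (your ``global boundedness/compactness argument'' is exactly Lemma \ref{lem9.2}) using Proposition \ref{prop8.8} and Corollary \ref{cor8.9}, and finally pass to general $\widehat f_S\in\mathscr{F}_0$ by the fixed-central-character Sauvageot density theorem. The argument is correct and matches the paper's in all essentials.
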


Before the proof, we'll need a lemma:
\begin{lem}\label{lem9.2} Fix a compact set $C_S$ of $\GL_2(F_S)/Z(F_S)$. Then there are only finitely semisimple conjugacy classes $\{\gamma\} \in \GL_2(F)/Z(F)$ such that $\{\gamma^\infty\}$ intersects $C_SK^{S,\infty}$, and such that $\gamma$ is elliptic at all infinite places.\end{lem}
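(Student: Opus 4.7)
My plan is to parametrize semisimple $\GL_2(F)/Z(F)$-conjugacy classes by an invariant in $F$, bound this invariant adelically using the hypotheses, and then control the fiber of the invariant.

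A semisimple conjugacy class in $\GL_2(F)$ is determined by the pair $(t, d) := (\tr\gamma, \det\gamma) \in F \times F^\times$, and two such classes coincide in $\GL_2(F)/Z(F)$ if and only if $(t', d') = (zt, z^2 d)$ for some $z \in F^\times$. The ratio $s(\gamma) := \tr(\gamma)^2/\det(\gamma) \in F$ is therefore an invariant of the $\PGL_2(F)$-conjugacy class, and the induced map $s : \GL_2(F_v)/Z(F_v) \to F_v$ is continuous at each place $v$.

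The first step is to bound $s(\gamma)$ locally. For $v \in S$, the continuous image $s(C_S) \subseteq F_S$ is compact. For finite $v \notin S$, the hypothesis gives that $\gamma_v$ is $\GL_2(F_v)$-conjugate to an element of $Z(F_v) K_v$, so $s(\gamma_v) \in s(K_v) \subseteq \oo_{F,v}$, since $\tr K_v \subseteq \oo_{F,v}$ and $\det K_v \subseteq \oo_{F,v}^\times$. For $v \mid \infty$, ellipticity of $\gamma_v \in \GL_2(\RR)$ forces $\det(\gamma_v) > 0$ and $\tr(\gamma_v)^2 < 4\det(\gamma_v)$, so $s(\gamma_v) \in [0, 4) \subset \RR$ (note this also rules out $\gamma$ being central, as that would give $s \equiv 4$). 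Collectively these place $s(\gamma)$ in a bounded subset of $\AAA_F$, and since $F$ is discrete in $\AAA_F$, only finitely many values of $s$ occur.

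The main obstacle is to control the fiber of $s$. When $s(\gamma) \neq 0$, a direct check shows that any two pairs $(t, d),\, (t', d')$ giving the same value of $s$ are related by the unique scalar $z = t'/t \in F^\times$, so the class in $\GL_2(F)/Z(F)$ is uniquely determined. The issue is the fiber over $s = 0$, consisting of traceless classes (order-$2$ elements of $\PGL_2(F)$), which is a priori parametrized by $d \in F^\times/(F^\times)^2$. I would handle this using the remaining local data: at finite $v \notin S$ we have $d_v \in (F_v^\times)^2 \cdot \oo_{F,v}^\times$, so globally $d \in (F^\times)^2 \cdot \oo_{F,S}^\times$; and ellipticity at infinity forces $d$ to be totally positive. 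Hence $d$ modulo squares is represented by an element of $\oo_{F,S}^\times$, and by Dirichlet's $S$-unit theorem $\oo_{F,S}^\times/(\oo_{F,S}^\times)^2$ is finite. Combining, the total number of classes is finite.
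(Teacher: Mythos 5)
Your approach diverges from the paper's in an interesting way. The paper first constrains $\det\gamma$ directly, using that the fractional ideal $(\det\gamma)$ is supported on $S$ together with finiteness of the class group and of $\oo_F^\times/(\oo_F^\times)^2$, and then, with $\det\gamma$ fixed, constrains $\tr\gamma$ via a lattice-plus-compactness argument at infinity. You instead bound the single $\PGL_2$-invariant $s = \tr^2/\det$ in a compact subset of $\AAA_F$ and invoke discreteness of $F$, which neatly packages the trace--determinant interaction; your analysis of the fiber of $s$ over a nonzero value is correct.

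The gap is in the $s = 0$ fiber. From $v(d) \in 2\ZZ$ at all finite $v \notin S$ you infer that ``globally $d \in (F^\times)^2 \cdot \oo_{F,S}^\times$,'' but this local-to-global implication is false in general: the obstruction lies in the $2$-torsion of the $S$-class group. Writing $(d) = \fa^2\mathfrak{b}$ with $\fa$ prime to $S$ and $\mathfrak{b}$ supported on $S$, one has $d \in (F^\times)^2\oo_{F,S}^\times$ only when the class of $\fa$ is trivial in $\Cl_S(F)$, which need not hold. A concrete counterexample: take $F = \QQ(\sqrt{10})$ (class number $2$, generated by the ramified prime $\pp_2$ over $2$), $S = \emptyset$, and $d = 2$. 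Then $(2) = \pp_2^2$, so $v(d)$ is even at every finite place, $d$ is totally positive, and the traceless class with characteristic polynomial $X^2 + 2$ is conjugate into $Z(F_v)K_v$ at every finite $v$; yet $2 \notin (F^\times)^2\oo_F^\times$, since $\pp_2$ is not principal. The fiber over $s=0$ is still finite --- the relevant $d$'s modulo $(F^\times)^2$ are controlled by $\oo_{F,S}^\times/(\oo_{F,S}^\times)^2$ together with (a subgroup of) $\Cl_S(F)[2]$, both finite --- but your argument must invoke the class group, not just Dirichlet's unit theorem, exactly as the paper does in its determinant bound. A minor further point: under Definition \ref{defn6.2} central elements are elliptic, so $s$ can equal $4$; this contributes only the identity coset and does not affect finiteness, but your parenthetical claim that ellipticity rules out $\gamma$ being central does not match the paper's convention.
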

\begin{proof} First, because $|\det \gamma|_\pp = 1$ for all $\pp \not \in S$, and $|\det \gamma|_{S}$ can be chosen to lie in the finite set $I_S/P_S^2$ (where $I_S$ is the group of ideals divisible only by primes in $S$, and $P_S$ is the subgroup of principal ideals), then $|\det \gamma|_S$ can be chosen in a finite set. Because $\oo_F^{\times}/(\oo_F^\times)^2$ is finite, we can actually assume that $\det \gamma$ lies in a finite set by shifting by an element of $Z(F)$.

Because $\{\gamma\}$ intersects $C_SK^{S,\infty}$, its trace lies in some fractional ideal $\fa$ in $F$. Let $\fa_\infty$ be image of $\fa$ under $F\into \RR^{n}$. Fix a determinant $D\in F^\times$. If $\gamma$ is elliptic at each infinite place we must have $\tr(\gamma)^2_v \leq 4D_v$ for each infinite place, so $\tr(\gamma)_\infty$ lies in some compact set. Since $\fa_\infty$ is a lattice, then there are at most finitely many traces $\gamma$ can take for each determinant. A semisimple conjugacy class is determined by its trace and determinant, completing the proof.
\end{proof}

We now prove the theorem.

\begin{proof}[Proof of Plancherel equidistribution theorem]
For simplicity we write 
$$\phi_{\lambda} = \phi_{\nn_{\lambda,\,\chi}} \in \HH(\GL_2(\AAA^{S,\infty}),\, Z(\AAA^{S,\infty}),\,\chi^{S,\infty})$$ and 
$$\mudisc_\lambda = \mudisc_{\phi_{\nn_{\lambda}},\xi,\chi}.$$

Let's first assume that $\widehat f_S = \widehat \phi_S$ for some $\phi_S \in \HH(\GL_2(F_S),\, Z(F_S),\, \chi_S)$. In this case, we have 
\begin{align*} 
	I_{\spec}(Z(\AAA),\, \chi,\, \phi_S\phi_{\lambda}\phi_{\xi}) 
	& = \sum_{\pi} (\tr \pi_{S}(\phi_S)) \cdot (\tr \pi^{S,\infty}(\phi_\lambda)) \cdot \tr( \pi_\infty(\phi_{\xi}))
	\\ & =(-1)^{[F:\QQ]}|\FF_{\disc,\chi}(\widehat \phi_S,\, \widehat \phi_{\lambda},\, \xi)|
 \end{align*}
where in each sum, $\pi$ runs over the discrete automorphic representations of $\GL_2(\AAA)$ with central character $\chi$.

As such, we have 
\begin{align*} 
	\mudisc_{\lambda}(\widehat \phi_S) & =
		 (-1)^{[F:\QQ]}\frac
		 	{I_{\spec}(Z(\AAA),\, \chi,\, \phi_S\cdot \phi_{\lambda}\cdot \phi_{\xi})}
		 	{\tau_Z(G)\cdot \phi_{\lambda} (1)\cdot \dim(\xi)}
	\\ & = 
		 (-1)^{[F:\QQ]}\frac
		 	{I_{\geom}(Z(\AAA),\, \chi,\, \phi_S\cdot \phi_{\lambda}\cdot \phi_{\xi})}
		 	{\tau_Z(G)\cdot \phi_{\lambda}(1)\cdot \dim(\xi)}
\end{align*}
Recall from the comment after \ref{prop6.3} that $I_{\geom}$ consists of three terms: a central term, an sum of orbital integrals of elliptic elements, and a sum of constant terms of diagonal elements. By Lemma \ref{lem9.2}, there are only finitely many nonvanishing orbital integrals, and there are only finitely many constant terms because in a given compact subset of $T(\AAA^{\infty})$ there are only finitely many cosets $T(F)/Z(F)$.
The central term of $(-1)^{[F:\QQ]}I_{\geom,\chi}(Z(\AAA),\, \chi,\, \phi_S\cdot \phi_{\lambda} \cdot \phi_{\xi})$ is simply
$$(-1)^{[F:\QQ]}\tau_Z(G) \cdot\phi_S(1)\cdot \phi_{\lambda}(1)\cdot \phi_{\xi}(1) = 
	\tau_Z(G)\cdot\phi_S(1)\cdot \phi_{\lambda}(1)\cdot \dim(\xi)$$
so upon dividing by $\tau_Z(G)\cdot\phi_{\lambda}(1)\cdot \dim(\xi)$ we are left with $\phi_S(1) = \mupl_{S,\chi}(\phi_S)$.

Each orbital integral term in $I_{\geom,\chi}$ is of the form
$$ D(\gamma)\cdot O_{\gamma_S}(\phi_S)\cdot O_{\gamma^{S,\infty}}(\phi_\lambda) \cdot O_{\gamma_\infty}(\phi_{\xi}).$$
As we let $\lambda \to \infty$, the only nonconstant term is $O_{\gamma^{S,\infty}}(\phi_\lambda)$. Upon dividing by $\phi_\lambda(1)$ and taking absolute values, this is bounded by 
$$O_{\gamma^{S,\infty}}(\one_{\Gamma_0(\nn_\lambda)\cdot Z(\AAA^{S,\infty})}).$$
This goes to zero by Corollary \ref{cor8.9}. The same argument shows that each constant term vanishes asymptotically. 

As such, we have
\begin{align*}
	\lim_{\lambda \to \infty} \mudisc_\lambda(\phi_S) 
	& = \lim_{\lambda\to\infty} 
		(-1)^{[F:\QQ]}\frac
			{I_{\geom}(Z(\AAA),\, \chi,\, \phi_S\cdot \phi_{\lambda}\cdot \phi_{\xi})}
		 	{\tau_Z(G)\cdot \phi_{\lambda}(1)\cdot \dim(\xi)}
	\\ & = \phi_S(1)
	\\ & = \mupl_{S,\chi}(\phi_S)
\end{align*}

This completes the proof of the equidistribution theorem for Plancherel transforms $\widehat\phi_S$ of functions $\phi_S \in \HH(\GL_2(F_S),\, Z(F_S),\, \chi_S)$. When $\widehat f_S \in \mathscr{F}_0(\GL_2(F_S)^{\wedge})$ is arbitrary, we use Sauvageot's density theorem for fixed central character. (This is exactly as in Shin and Templier's proof of Corollary 9.22 in \cite{ST12}, except that we use Sauvageot's density theorem for fixed central character. We repeat the proof here for completeness).

Fix $\epsilon > 0$ and pick $\phi_{S},\, \psi_{S}\in \HH(\GL_2(F_S),\, Z(F_S),\, \chi)$ such that $|\widehat f_S - \widehat \phi_S| \leq \widehat \psi_S$ on $\GL_2(F_S)^{\wedge,\chi}$ and so that $\mupl_\chi(\widehat \phi_S) < \epsilon/3$. Then we have
\begin{align*}
	|\mupl_{\chi}(\widehat f_S) - \mudisc_\lambda(\widehat f_S)|
	& \leq |\mupl_{\chi}(\widehat f_S - \widehat \phi_S)| 
		+ |\mupl_\chi(\widehat \phi_S) - \mudisc_\lambda(\wh \phi_S)|
		+ |\mudisc_\lambda(\widehat \phi_S - \widehat f_S)|
	\\ & \leq |\mupl_{\chi}(\wh \psi_S)| 
		+ |\mupl_{\chi}(\widehat \phi_S) - \mudisc_\lambda(\wh\phi_S)|
		+ |\mudisc_{\lambda}(\widehat \psi_S)|
\end{align*}
The first term is at most $\epsilon/3$. The second term approaches $0$ as $\lambda \to \infty$, so it is eventually at most $\epsilon/3$. The third term approaches $|\mupl(\widehat \psi_S)| < \epsilon/3$ as $\lambda \to \infty$, so for large $\lambda$ it is eventually at most $\epsilon/3$. Therefore, for large $\lambda$ we have $|\mupl_{\chi}(\widehat f_S) - \mudisc_\lambda(\widehat f_S)| < \epsilon$, finishing the proof.
\end{proof}

\begin{cor} \label{cor9.3} Fix a weight $k$ and let $\chi$ of conductor $\ff$ occuring in weight $k$. Let $(\nn_\lambda)$ be a sequence of levels divisible by $\ff$. Then
$$
	\dim S_k(\Gamma_1(\nn_\lambda),\, \chi) 
	= \tau_Z(G)\cdot [\GL_2(\oo_F): \Gamma_0(\nn)]\cdot \dim(\xi_k) 
	+ o(N(\nn)^{1/2})
$$
as $\lambda \to \infty$.

If $F \neq \QQ$ then the error term is $o(N(\nn)^{\epsilon})$.
\end{cor}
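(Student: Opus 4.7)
The plan is to apply the fixed-central-character invariant trace formula (Proposition~\ref{prop6.3}) to $\phi := \phi_{\nn,\chi}\cdot \phi_{\xi_k}$ and carefully estimate each geometric term. First, I would show that the cuspidal spectral side equals $(-1)^{[F:\QQ]}\dim S_k(\Gamma_1(\nn),\chi)$: combining Lemma~\ref{lem7.6} with Casselman's theorem on $\Gamma_0(\pp^r)$-fixed vectors yields $\wh\phi_{\nn,\chi}(\pi^\infty) = d(\nn/\ff(\pi))$ whenever $\chi_\pi = \chi$ and $\ff(\pi)\mid\nn$, while Corollary~\ref{cor5.4} ensures $\tr \pi_\infty(\phi_{\xi_k}) = (-1)^{[F:\QQ]}$ for $\pi_\infty \cong \pi_{\xi_k}$ and $0$ for other generic $\pi_\infty$. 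The residual spectrum of $\GL_2(\AAA)$ consists of $1$-dimensional representations and so contributes zero whenever $\dim \xi_k > 1$; the remaining parallel weight-two case contributes only a bounded correction.

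Next I would identify the main geometric term as the central contribution $\tau_Z(G)\phi_{\nn,\chi}(1)\phi_{\xi_k}(1)$. Using the local computation $\phi_{\nn,\chi}(1) = \vol(\Gamma_0(\nn))^{-1} = [\GL_2(\oo_F):\Gamma_0(\nn)]$ from Definition~\ref{defn7.5} and $\phi_{\xi_k}(1) = (-1)^{[F:\QQ]}\dim\xi_k$ from Corollary~\ref{cor5.4}, this equals $(-1)^{[F:\QQ]}\,\tau_Z(G)\cdot[\GL_2(\oo_F):\Gamma_0(\nn)]\cdot\dim\xi_k$. Multiplying both sides of the trace formula by $(-1)^{[F:\QQ]}$ recovers the asserted leading term.

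It remains to bound the non-central geometric terms. Since $\phi_{\nn,\chi}$ is supported in $Z(\AAA^\infty)K^\infty$ uniformly in $\nn$, Lemma~\ref{lem9.2} ensures only finitely many conjugacy classes contribute, with their number independent of $\nn$. Each orbital integral is bounded by $\phi_{\nn,\chi}(1)\cdot O_\gamma(\one_{Z(\AAA^\infty)\Gamma_0(\nn)})\cdot |O_{\gamma_\infty}(\phi_{\xi_k})| = O(N(\nn)^\epsilon)$ via Corollary~\ref{cor8.9} together with the bound $\phi_{\nn,\chi}(1) = O(N(\nn)^{1+\epsilon})$; this settles the case $F \neq \QQ$. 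For $F = \QQ$, the non-central diagonal constant terms are similarly $O(N(\nn)^\epsilon)$ by Proposition~\ref{prop8.8}(2), and the central constant term---the dominant error---is bounded via Proposition~\ref{prop8.8}(1) by $\phi_{\nn,\chi}(1)\cdot N(\nn)^{-1/2}\cdot |Q_1(\phi_{\xi_k})| = O(N(\nn)^{1/2+\epsilon})$, producing the claimed $o(N(\nn)^{1/2})$ error. The main obstacle is precisely this central constant term: obtaining the sharp $N(\nn)^{-1/2}$ decay of Proposition~\ref{prop8.8}(1) via the Bruhat--Tits tree analysis of Section~\ref{sec:8} is what allows one to beat the trivial bound and achieve an error of lower order than the main term.
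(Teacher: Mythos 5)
Your proposal is correct and takes essentially the same route as the paper: the paper's proof just reruns the trace-formula argument of Theorem \ref{thm9.1} with $S=\emptyset$ and $\phi_S=1$, takes the central geometric term $\tau_Z(G)\phi_{\nn,\chi}(1)\phi_{\xi_k}(1)$ as the main term, and bounds the elliptic orbital integrals and (for $F=\QQ$) the constant terms exactly via Corollary \ref{cor8.9} and Proposition \ref{prop8.8}, noting there are no constant terms when $F\neq\QQ$. The one loose point in your last step---inferring $o(N(\nn)^{1/2})$ from a central constant-term bound that is really of size $2^{P(\nn)}N(\nn)^{1/2}=O(N(\nn)^{1/2+\epsilon})$---is exactly as loose as the paper's own statement, so it is not a gap introduced by your argument.
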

\begin{proof} Apply the above to $S = \emptyset$ with $\phi_S = 1$, use the bounds on the constant terms and orbital integrals in Proposition \ref{prop8.8} and Corollary \ref{cor8.9}, and note that when $F \neq \QQ$, there are no constant terms on the geometric side of the trace formula.
\end{proof}

\begin{rem}\label{rem9.4} 
It is worth here comparing our results to those of \cite{Wei09}. First, Shin has computed $\tau_Z(\GL_2/F) = (-1)^{[F:\QQ]}\zeta_F(-1)2^{1 - [F:\QQ]}$ (see (iii) in the proof of Lemma 6.2 in \cite{Shi12}), whereas Weinstein's main term counting the number of cusp forms of fixed inertial type is 
$$(-1)^{[F:\QQ]} \cdot \zeta_F(-1)\cdot 2^{1-[F:\QQ]}\cdot h_F \cdot \dim(\xi) \cdot [\GL_2(\oo_F): \Gamma_0(\nn)].$$ 
The discrepancy occurs because he fixes an inertial type, which only determines the central character on
$$Z(F)\cdot \widehat \oo_F^\times \cdot  F_\infty^\times \subseteq Z(\AAA).$$
This subgroup has index $h_F$. As such, given an inertial type $\tau$, and $\pi$ of inertial type $\pi$, $\chi_{\pi}$ may be one of $h_F$ different characters.
\end{rem}

We also have a Plancherel equidistribution theorem for newforms. Since the proof is the same in spirit as the Plancherel equidistribution theorem, we give a sketch:

\begin{cor} \label{cor9.5} Let $\chi$ be a character with conductor $\ff$ and let $\widehat f_S \in \mathscr{F}_0(\GL_2(F_S)^{\wedge,\chi})$. Let $(\nn_\lambda) \to \infty$ be a sequence of levels coprime to $S$ with $\ff^S \mid \nn_\lambda$ and $N(\nn_{\lambda}) \to \infty$. Then
	$$\lim_{\lambda\to\infty} \mucusp_{\phi^{\new}_{\nn_\lambda,\chi} \, \xi,\, \chi}(\widehat f_S) 
	= \lim_{\lambda\to\infty} \mudisc_{\phi^{\new}_{\nn_\lambda,\chi} \, \xi,\, \chi}(\widehat f_S) 
	= \mupl_\chi(\widehat f_S).$$
\end{cor}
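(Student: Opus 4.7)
Our plan is to mirror the proof of Theorem \ref{thm9.1} verbatim, with $\phi_{\nn_\lambda, \chi}$ replaced everywhere by $\phi^{\new}_{\nn_\lambda, \chi}$. By the argument of Lemma \ref{lem7.11}, the $\mucusp$-equality reduces to the $\mudisc$-equality since the residual spectrum of $\GL_2(\AAA)$ consists of one-dimensional representations; we therefore treat only $\mudisc$. For brevity write $\phi^{\new}_\lambda = \phi^{\new}_{\nn^S_\lambda, \chi}$.

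First suppose $\widehat{f}_S = \widehat{\phi}_S$ for some $\phi_S \in \HH(\GL_2(F_S), Z(F_S), \chi_S)$. Applying Proposition \ref{prop6.3} to $\phi_S \cdot \phi^{\new}_\lambda \cdot \phi_\xi$ yields
$$(-1)^{[F:\QQ]} |\FF_{\disc, \chi}(\widehat{\phi}_S, \widehat{\phi}^{\new}_\lambda, \xi)| = I_{\geom}(Z(\AAA), \chi, \phi_S \cdot \phi^{\new}_\lambda \cdot \phi_\xi).$$
By Corollary \ref{cor7.9}, $\phi^{\new}_\lambda$ is an explicit $\ZZ$-linear combination (with at most $3^{P(\nn^S_\lambda)}$ nonzero summands) of products of the level functions $\phi_{\mm, \chi}$ for divisors $\mm \mid \nn^S_\lambda$, so each orbital integral $O_\gamma(\phi^{\new}_\lambda)$ and constant term $Q_\gamma(\phi^{\new}_\lambda)$ expands accordingly. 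By Lemma \ref{lem9.2} (and its analog for constant terms) only finitely many of these are nonzero for each $\lambda$, and each, after dividing by $\phi^{\new}_\lambda(1)$, tends to zero as $\lambda \to \infty$ by Corollary \ref{cor8.9}. The central term contributes $(-1)^{[F:\QQ]} \tau_Z(G) \phi_S(1) \phi^{\new}_\lambda(1) \dim\xi$, which upon the usual renormalization yields $\phi_S(1) = \mupl_{S, \chi}(\widehat{\phi}_S)$ in the limit. The passage to a general $\widehat{f}_S \in \mathscr{F}_0(\GL_2(F_S)^{\wedge, \chi})$ is then carried out by Sauvageot's density theorem for fixed central character (Proposition \ref{prop6.12}(iii)) in exactly the same way as the final paragraph of the proof of Theorem \ref{thm9.1}.

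The main technical obstacle is justifying that the noncentral geometric contributions, now divided by $\phi^{\new}_\lambda(1)$ rather than $\phi_{\nn^S_\lambda, \chi}(1)$, still vanish asymptotically. A direct local computation gives
$$\phi^{\new}_{\nn_\lambda, \chi, \pp}(1) = \phi_{\nn_\lambda, \chi, \pp}(1) \cdot \begin{cases} 1 & \ord_\pp(\nn_\lambda/\ff) = 0, \\ (q_\pp - 1)/(q_\pp + 1) & \ord_\pp(\nn_\lambda/\ff) = 1, \\ (1 - q_\pp^{-1})^2 & \ord_\pp(\nn_\lambda/\ff) \geq 2, \end{cases}$$
and a Mertens-type estimate bounds the product of these factors over primes dividing $\nn_\lambda$ from below by a negative power of $\log N(\nn_\lambda)$, which is comfortably dominated by the $N(\nn_\lambda)^{-1+\epsilon}$ decay of Corollary \ref{cor8.9}. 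Hence the normalization argument of Theorem \ref{thm9.1} goes through essentially unchanged, completing the proof.
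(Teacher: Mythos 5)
Your argument is essentially the paper's: expand $\phi^{\new}_{\nn_\lambda,\chi}$ in terms of the level functions $\phi_{\mm,\chi}$, bound the resulting orbital integrals and constant terms with Proposition \ref{prop8.8}/Corollary \ref{cor8.9} (the exponential-in-$P(\nn_\lambda)$ constants being $o(N(\nn_\lambda)^{\epsilon})$), and finish with the trace formula and fixed-central-character Sauvageot density exactly as in Theorem \ref{thm9.1}; the paper merely folds your normalization step into the expansion by writing $\phi^{\new}_{\nn_\lambda,\chi,\pp}/\phi^{\new}_{\nn_\lambda,\chi,\pp}(1)$ directly as a combination of the normalized $\phi_{\mm,\chi,\pp}/\phi_{\mm,\chi,\pp}(1)$ with coefficients of size $O(1)$, $O(N(\pp)^{-1})$, $O(N(\pp)^{-2})$, which makes your Mertens-type lower bound unnecessary. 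One caveat: your table of ratios is only correct when $\pp\nmid\ff$ --- for $\ord_{\pp}(\ff)\geq 1$ and $\ord_{\pp}(\nn_\lambda/\ff)=1$ the factor is $(q_\pp-2)/q_\pp$, which vanishes when $N(\pp)=2$, so $\phi^{\new}_{\nn_\lambda,\chi}(1)$ can be zero there; this is precisely why the paper's proof begins by assuming $\ff$ is divisible by no prime of norm $2$, and your write-up should include the same reduction.
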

\begin{proof} We can assume the conductor $\ff$ is not divisible by any primes of norm 2. In this case, a quick computation shows
$$\frac
	{\phi^{\new}_{\nn_{\lambda},\chi,\pp}}
	{\phi^{\new}_{\nn_{\lambda},\chi,\pp}(1)}
= c_0 \frac
		{\phi_{\nn_{\lambda},\chi,\pp}}
		{\phi_{\nn_{\lambda},\chi,\pp}(1)}
	+ 2c_1N(\pp)^{-1} \frac
		{\phi_{\nn_{\lambda}/\pp,\chi,\pp} }
		{\phi_{\nn_{\lambda}/\pp,\chi,\pp}(1)}
	+ c_2N(\pp)^{-2} \frac
		{\phi_{\nn_{\lambda}/\pp^2,\chi,\pp}}
		{\phi_{\nn_{\lambda}/\pp^2,\chi,\pp}(1)}
$$
where $c_0,\, c_1,\, c_2$ are real constants of absolute value at most 2. 

Therefore, if $\ord_{\pp}(\nn) = r$, Proposition \ref{prop8.8} tell us that the orbital integrals
$$O_{\gamma_\pp}({\phi^{\new}_{\nn_{\lambda},\chi,\pp}})$$
are bounded in absolute value by $16 \cdot O_{\gamma}(\one_{Z(F_\pp)K_{\pp}})^2\cdot N(\pp^r)$. As such, we get a bound on the \emph{global} orbital integral:
$$|O_{\gamma}(\phi_S\phi^{\new}_{\nn,\chi}\phi_\infty)| \leq 
C' 16^{P(\nn)} O_{\gamma}(\one_{Z(\AAA^{S,\infty})K^{S,\infty}})^2N(\nn)^{-1},$$
for some constant $C'$ depending only on $\phi_{\infty}$ and $\phi_S$; this goes to zero as $o(N(\nn)^{-1 + \epsilon})$.

The analogous proof works for constant terms, and the corollary follows exactly as in Theorem \ref{thm9.1}; we apply the trace formula and then use Sauvageot density to adapt to the case when $\wh f_S \in \mathscr{F}_0(\GL_2(F_S)^{\wedge,\chi})$ is arbitrary.
\end{proof}


\section{Proof of the Main Theorem}\label{sec:10}
The goal of this section is to prove the our Main Theorem:

\begin{thm} \label{thm10.1}Let $F$ be a totally real field, $k$ a weight with $k_1 \equiv\ldots\equiv k_n$ mod $2$, $\chi$ an automorphic character occurring in weight $k$, and $(\nn_\lambda)$ a sequence of levels. Let $B_k(\Gamma_1(\nn),\,\chi)$ denote the standard basis of Hecke eigenforms of weight $k$, level $\nn$, and character $\chi$. For any $A \in \ZZ_{\geq 1}$, let $B_k(\Gamma_1(\nn),\, \chi)_{\leq A}$ be the subset consisting of those forms with $[\QQ(f):\QQ] \leq A$. Then
$$\lim_{\lambda \to \infty} \frac
	{\#B_k(\Gamma_1(\nn_\lambda),\,\chi)_{\leq A}}
	{\#B_k(\Gamma_1(\nn_\lambda),\,\chi)}
= 0.$$
\end{thm}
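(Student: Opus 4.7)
The plan is to combine three ingredients: Proposition \ref{prop3.6} (lower bound on fields of rationality via local conductors), the Plancherel equidistribution theorem (Theorem \ref{thm9.1}), and the explicit Plancherel measure computation (Computation \ref{comp6.13}), passing to subsequences of $(\nn_\lambda)$ based on the local behavior at well-chosen large primes.

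Fix $\epsilon > 0$; the goal is to show $|B_k(\Gamma_1(\nn_\lambda),\chi)_{\leq A}|/|B_k(\Gamma_1(\nn_\lambda),\chi)| < \epsilon$ for all sufficiently large $\lambda$. First, select a finite set $S = \{\pp_1,\ldots,\pp_m\}$ of primes of $F$ with residue characteristics exceeding $2A+1$ and coprime to $\ff$; both $m$ and the norms will be chosen at the end. By Proposition \ref{prop3.6}, any $f \in B_k(\Gamma_1(\nn_\lambda),\chi)_{\leq A}$ satisfies $c(\pi_{f,\pp}) \leq 2$ at every $\pp \in S$. Using Corollary \ref{cor7.8}, I would express both sides of the target ratio as multiset counts $|\FF_{\cusp,\chi}|$: the denominator with $\wh h_S \equiv 1$, and the numerator bounded above by the same count with $\wh h_S = \mathbf{1}_{c(\pi_\pp) \leq 2,\,\forall \pp \in S}$.

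Passing to a further subsequence so that each $r_\pp(\lambda) := \ord_\pp(\nn_\lambda)$ converges in $\{0,1,2,\ldots,\infty\}$ for $\pp \in S$, Theorem \ref{thm9.1} --- combined with the fixed-central-character Sauvageot density (Proposition \ref{prop6.12}(iii)) needed to approximate the indicator $\mathbf{1}_{c \leq 2}$ --- implies that the ratio converges to a product $\prod_{\pp \in S} \rho_\pp$ of local Plancherel ratios. Computation \ref{comp6.13} then gives: when $r_\pp \to \infty$ or $r_\pp \geq 3$ is fixed, the denominator has total weighted mass $\Theta(N(\pp)^{r_\pp})$ while the numerator is $O(r_\pp \cdot N(\pp)^2)$, so $\rho_\pp = O\bigl(r_\pp / N(\pp)^{r_\pp - 2}\bigr) \to 0$. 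For $r_\pp \in \{0,2\}$ the coarse bound $c \leq 2$ is vacuous and must be sharpened to the local condition $[\QQ(\pi_\pp):\QQ] \leq A$ (valid by Lemma \ref{lem3.4}); Lemmas \ref{lem3.7}--\ref{lem3.9} and Computation \ref{comp6.13} then show that small-FoR representations form a subset of total Plancherel mass $O(N(\pp))$ (comprising finitely many Steinberg/supercuspidal point masses together with measure-zero subsets of each principal series orbit), yielding $\rho_\pp = O(1/N(\pp))$.

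The main obstacle is the case $r_\pp = 1$: Lemma \ref{lem3.8} shows the unramified Steinberg $\St(\chi_{\pp,\unr})$ has $\QQ(\OO) \subseteq \QQ(\chi_\pp) \subseteq \QQ(\chi)$, so it is automatically small-FoR and contributes a Plancherel point mass of $(N(\pp)-1)/2$, matching the total mass at level $\pp$ up to a constant. Hence $\rho_\pp \not\to 0$ as $N(\pp) \to \infty$ at a single $r_\pp = 1$ prime. Following the introduction's hint, I would circumvent this by taking $S$ to be a ``large set of large primes'' and arguing by diagonal extraction: in any subsequence where $r_\pp(\lambda) = 1$ for all $\pp \in S$, one can augment $S$ by a fresh large prime $\ell$ with $\ell \nmid \nn_\lambda$ for cofinitely many $\lambda$ --- such $\ell$ exists since each $\nn_\lambda$ has only finitely many prime divisors, and the apparent edge case in which every large prime eventually divides $\nn_\lambda$ can itself be handled by a further diagonal subsequence. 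At such $\ell$ we have $r_\ell(\lambda) = 0$ and the small-FoR Satake parameters form a $\mupl_\ell$-null subset, so $\rho_\ell \to 0$ by Sauvageot approximation. Taking $m$ sufficiently large then forces $\prod_{\pp \in S} \rho_\pp < \epsilon$ along the subsequence, completing the proof.
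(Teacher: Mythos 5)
Your proposal reproduces the paper's overall architecture, but your treatment of the crucial $r_\pp = 1$ case has a genuine gap. You observe, correctly, that along a subsequence where $r_\pp(\lambda) = 1$ at every $\pp \in S$, the unramified Steinberg points are small-FoR and carry Plancherel mass $N(\pp) - 1$ against a total of $N(\pp) + 1$, so $\rho_\pp \not\to 0$ for any single prime. Your proposed escape --- adjoin a fresh prime $\ell$ with $r_\ell(\lambda) = 0$ cofinitely, justified by ``each $\nn_\lambda$ has finitely many prime divisors,'' with the exceptional case handled by ``a further diagonal subsequence'' --- is not sound. The quantifier is reversed: each $\nn_\lambda$ having finitely many prime divisors does not force any fixed prime to be eventually coprime to the sequence. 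For $\nn_\lambda = \pp_1 \cdots \pp_\lambda$ one has $\ord_\pp(\nn_\lambda) = 1$ for every prime $\pp$ once $\lambda$ is large enough, so no subsequence extraction can produce an $\ell$ with $r_\ell = 0$ or $r_\ell \geq 2$ cofinitely --- and this is precisely the kind of sequence Serre's question is about.

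The missing idea, which is the one genuinely new ingredient in the paper's proof, is to exploit the \emph{strict} inequality $\rho_\pp \leq \frac{q_\pp - 1}{q_\pp + 1} < 1$ rather than look for a prime where $\rho_\pp$ vanishes: the deficit compounds multiplicatively. Since $\zeta_F$ has a pole at $s = 1$, $\sum_\pp 1/q_\pp$ diverges, so $\prod_\pp (1 - 1/q_\pp) = 0$, and one can fix \emph{in advance} a finite set $S = \{\pp_1,\ldots,\pp_r\}$ of large primes coprime to $\ff$ with $\prod_{i=1}^r \frac{q_i - 1}{q_i + 1} < \epsilon$. This handles the ``all exponents equal $1$'' subsequence without any fresh prime. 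Two secondary issues: for the subsequences where $r_\pp(\lambda) \to \infty$, your Plancherel estimate requires a test function at $\pp$ that varies with $\lambda$, which Theorem \ref{thm9.1} as stated does not permit; the paper instead treats all $r_\pp \geq 3$ uniformly via the Atkin--Lehner decomposition and the dimension asymptotic of Corollary \ref{cor9.3} (see Lemma \ref{lem10.2}). And for $r_\pp = 0$ the small-FoR set has Plancherel measure $0$, not $O(N(\pp))$; the $O(N(\pp))$ bound is the one that applies when $r_\pp = 2$.
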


\begin{proof}
We'll begin the proof with three lemmas:

\begin{lem} \label{lem10.2} Fix $\epsilon > 0$. Then there is a $P_0\in \ZZ$ so that, for all rational primes $p > P_0$, all places $\pp| p$, and all levels $\nn$ with $\ord_\pp(\nn) \geq 3$, we have
$$\frac
	{\#B_k(\Gamma_1(\nn),\,\chi)_{\leq A}}
	{\#B_k(\Gamma_1(\nn),\,\chi)} < \epsilon.$$
\end{lem}
\begin{proof} By Proposition \ref{prop3.6}, for $p > 2A + 1$, if $\pp \mid p$, $\ord_\pp(\nn) \geq 3$, and $f$ is a \emph{newform} of level $\nn$, then $[\QQ(f):\QQ] > A$. We will henceforth assume $p > 2A + 1$.

Let $\pp \mid p$, let $f \in B_k(\Gamma_1(\nn),\, \chi)$ and assume $f$ satisfies $[\QQ(f): \QQ] \leq A$. As such, $f$ must come from a newform of level $\dd$ with $\ord_\pp(\dd) \leq 2$.

Write $\nn = \pp^B\ft$. By \cite{AL70} We can write
$$S_k(\Gamma_1(\nn),\,\chi) 
= \bigoplus_{\ff(\chi) \mid \dd \mid \nn} 
	S_k^{\new}(\Gamma_1(\dd),\,\chi)^{d(\nn/\dd)}$$
and let 
$$S_k(\Gamma_1(\nn),\,\chi)^{\leq 2} = 
\bigoplus_{\substack{\ff(\chi) \mid \dd \mid \nn \\ \ord_\pp(\dd) \leq 2}} 
	S^{\new}_{k}(\Gamma_1(\dd),\,\chi)^{d(\nn/\dd)}.$$

We claim
$$\dim S_k(\Gamma_1(\nn),\,\chi)^{\leq 2} \leq (B - 1) \dim S_k(\Gamma_1(\pp^2\ft),\,\chi).$$
To prove this, it's enough to show that for any $\dd$ dividing $\pp^2 \ft$, the multiplicity of $S_k^{\new}(\Gamma_1(\dd),\, \chi)$ in $S_k(\Gamma_1(\nn),\, \chi)$ is bounded above by $(B - 1)$ times its multiplicity in $S_k(\Gamma_1(\pp^2\ft),\, \chi)$. We note that the multiplicity of $S_k^{\new}(\Gamma_1(\pp^b \fa),\,\chi)$ in $S_k(\Gamma_1(\nn),\,\chi)$ is 
$$(B - b + 1)d(\ft/\fa),$$ and the multiplicity of $S_k^{\new}(\Gamma_1(\pp^b \fa),\,\chi)$ in $S_k(\Gamma_1(\pp^2\ft),\,\chi)$ is $(3 - b)d(\ft/\fa)$. Since $x + y + 1 \leq (x + 1)(y + 1)$ for nonnegative $x,\,y$, we have
$$(B - b + 1) \leq (B - 1)(3 - b)$$
proving the claim.

By Corollary \ref{cor9.3}, for large $N(\nn)$, we have 
$$\alpha [\GL_2(\oo_F): \Gamma_0(\nn)] \leq \dim S_k(\Gamma_1(\nn),\,\chi) \leq \beta [\GL_2(\oo_F): \Gamma_0(\nn)]$$
for some constants $\alpha,\, \beta$. In particular, we'll assume $p$ is large enough that this holds whenever $N(\nn) \geq p^2$.

Therefore, assuming $p$ is large enough, we have
\begin{align*} 
	\frac
		{\dim S_k(\Gamma_1(\nn),\,\chi)^{\leq 2}}
		{\dim S_k(\Gamma_1(\nn),\,\chi)}
	& \leq (B - 1) \frac
		{\dim S_k(\Gamma_1(\pp^2\ft),\,\chi)}
		{\dim S_k(\Gamma_1(\nn),\,\chi)}
	\\ & \leq (B - 1)\frac
		{\beta\cdot [\GL_2(\oo_F):\Gamma_0(\pp^2)]\cdot [\GL_2(\oo_F):\Gamma_0(\ft)]}
		{\alpha\cdot [\GL_2(\oo_F):\Gamma_0(\pp^B)]\cdot [\GL_2(\oo_F):\Gamma_0(\ft)]}
\\ & \leq \frac{\beta}{\alpha} (B - 1) N(\pp)^{2-B}.\end{align*}

Note that $(B - 1)N(\pp)^{2-B}$ is decreasing on $B \geq 3$ assuming $N(\pp) \geq 2$, so we just need to pick $p$ large enough so that $\frac{p - 1}{2} > A$ and $\frac{\beta}{\alpha}\frac{2}{p} < \epsilon$, finishing the proof.
\end{proof}

Recall from Section 3 that, given an orbit $\OO$, $\QQ(\OO)$ is the intersection of $\QQ(\pi)$ for $\pi \in \OO$.

\begin{lem} \label{lem10.3} Fix a character $\chi$ and $\epsilon > 0$. Let $B_{\pp} \subseteq \GL_2(F_{\pp})^{\wedge, t,\chi_{\pp}}$ be the union of those orbits $\OO_{\chi_\pp}$ with $c(\OO_{\chi_\pp}) \leq 2$ and $[\QQ(\OO_{\chi_{\pp}}):\QQ] > A$. Then if $\pp$ is coprime to $\ff(\chi)$ and $N(\pp)$ is sufficiently large, we have
$$\frac
	{\mupl_{\chi_{\pp}}(B_\pp)}
	{\mupl_{\chi_{\pp}} (\widehat {\phi}_{\pp^2,\chi,{\pp}})} > 1 - \epsilon$$
where $\phi_{\pp^2,\chi,{\pp}}$ is the $\pp$-component of the function $\phi_{\pp^2,\chi}$ defined in \ref{defn7.5}.
\end{lem}
\begin{proof} Given $A$, let $f(A)$ be large enough that if $\zeta$ is any root of unity with $[\QQ(\zeta):\QQ] \leq 2A$, then $\zeta \in \QQ(\zeta_{f(A)})$. Because $(\oo_{F}/\pp)^{\times}$ is cyclic of order $q - 1$, there are at least $q - 1 - f(A)$ characters $\chi_0: \oo_{F,\pp}^\times \to \CC^\times$ of conductor $1$ with $[\QQ(\chi_0(x)): \QQ] > 2A$ for some $x$. As such, there are at least $\frac{q - 1 - f(A)}{2}$ conjugate pairs of such characters. If $\OO$ is the principal-series orbit corresponding to such a pair, then $[\QQ(\OO):\QQ] > A$ by Proposition \ref{prop3.6}. By Computation \ref{comp6.13}, each such orbit has Plancherel measure $q + 1$, so the total Plancherel measure of these orbits is at least $\frac{(q + 1)(q - 1 - f(A))}{2}$.

We must also count supercuspidal orbits. We note that if $L'/L$ is ramified, then the map $(\oo_L/\varpi_L)^\times \to (\oo_{L'}/\omega_{\LL'})^\times$ is an isomorphism, so there are no characters $\eta_0: \oo_{L'}^{\times} \to \CC^\times$ of conductor $1$ such that $\eta_0 \neq \overline \eta_0$. Therefore, all such supercuspidal representations of conductor $2$ come from characters $\eta: L'^\times \to \CC^\times$ where $L'/L$ are unramified, and $\eta$ has conductor $1$. The central character of $\pi_{\eta}$ is the map $x \mapsto (-1)^{v_L(x)}\cdot \eta(x)$.

As such, if $\eta$ has conductor $1$ and $\pi_{\eta}$ has unramified central character, then $\eta_0$ factors through $\oo_{L'}^\times/(\oo_L^\times (1 + \pp_{L'}))$; this is cyclic of order $q + 1$. Let $\eta_0$ send the generator to a root of unity $\zeta$. Then $\zeta^{q+1} = 1$, and if $\eta = \overline \eta$ we have $\zeta^{q} = \zeta$; thus $\eta_0 \neq \bar \eta_0$ if $\eta_0$ does takes values outside of $\pm 1$. As such, there are $\frac{q + 1 - f(A)}{2}$ conjugate pairs $\{\eta_0,\, \overline \eta_0\}$ with $\eta_0(x) = \zeta_r$ for $r > f(A)$. 

Therefore, there are at least $\frac{q + 1 - f(A)}{2}$ supercuspidal representations $\pi$ of conductor $2$ and fixed unramified central character with $[\QQ(\pi):\QQ] > A$. Each has formal degree $q - 1$ by Computation \ref{comp6.13} (4), and so the total measure of these is $(q - 1)\frac{q + 1 - f(A)}{2}$. Therefore, the total Plancherel measure of all the orbits of conductor $2$ with large enough field of rationality is at least $q^2 - 1 - f(A)$.

On the other hand, 
$$\mupl_{\chi} (\widehat \phi_{\pp^2,\chi,{\pp}})
= {\phi}_{\pp^2,\chi,{\pp}}(1) = q(q+ 1)$$
and 
$$\lim_{q\to\infty} \frac
	{q^2 - 1 - f(A)}{q(q+1)} = 1$$
	completing the proof.
\end{proof}

\begin{lem} \label{lem10.4} Fix a character $\chi$ and let $\pp$ be any prime that does not divide the conductor $\ff$ of $\chi$. Let $(\nn_{\lambda})$ be a sequence of levels that are divisible by $\ff$ and prime to $\pp$. Then 
$$\lim_{\lambda \to \infty} \frac
	{\# B_k(\Gamma_1(\nn_\lambda),\,\chi)_{\leq A}}
	{\# B_k(\Gamma_1(\nn_\lambda),\,\chi)}
= 0.$$
\end{lem}
\begin{proof} This argument is due to Serre in the case $F = \QQ$ (see \cite{Ser97}) and all changes are simply cosmetic. We repeat it here for reference and completeness.

Let $f$ be a modular form of level $\nn_{\lambda}$ with $[\QQ(f):\QQ] \leq A$. Then $[\QQ(a_{\pp}(f)): \QQ] \leq A$. Because $\nn_{\lambda}$ is prime to $\pp$, then $a_{\pp}$ is the sum of two Weil-$q$-integers $\alpha,\, \beta$ of weight $\max\{k_1\} - 1$ \cite[Theorem 1.4 (1)]{RT11}. Since $[\QQ(a_\pp):\QQ] \leq A$, then $Q(\alpha),\, Q(\beta)$ are of degree at most $2A$ over $\QQ$. Thus $\alpha,\, \beta$ must satisfy monic polynomials in $\ZZ[x]$ of degree at most $2A$ and whose coefficients are bounded for fixed $q$ and $\max \{k_i\}$; thus $\alpha,\,\beta$ take only finitely many values. 

In this case, $\pi_{f,\pp}$ is the unramified representation $\chi_1 \times \chi_2$, where $\chi_1,\, \chi_2$ take a uniformizer to $\alpha,\, \beta$. As such, $\pi_{f,\pp}$ takes only finitely many possible values in the unramified orbit $\OO^{\chi}$. Let $\wh \one_{0, \leq A}$ be the characteristic function of this finite set of points. Because the Plancherel measure on the unramified orbit has no points of positive measure, we have $\mupl_{\chi_\pp} (\one_{0,\,\leq A}) = 0$, and because it is supported on fintiely many points, it lives in $\mathscr{F}_0(\GL_2(F_\pp)^{\wedge})$. As such, the Plancherel equidistribution theorem completes the lemma.
\end{proof}


With this in hand, we complete the proof of the Main Theorem.

Fix $A \in \ZZ_{\geq 0}$ and $\epsilon > 0$. Fix $P > 2A + 1$ so that for any rational prime $p > P$, and any prime $\pp$ of $F$ lying above $p$, we have
$$\frac
	{\mupl_{\chi}(B_\pp)}
	{\mupl_{\chi} (\widehat {\phi}_{\pp^2,\chi,\pp})} > 1 - \epsilon$$
(where $B_\pp$ is as in Lemma \ref{lem10.3}), and such that for any $\nn$ with $\ord_{\pp}(\nn) \geq 3$, we have
$$\frac
	{\#B_k(\Gamma_1(\nn),\,\chi)_{\leq A}}
	{\#B_k(\Gamma_1(\nn),\,\chi)} < \epsilon$$

Fix, once and for all, primes $\pp_1,\ldots,\, \pp_r$ coprime to the conductor $\ff$, with $\pp_i \mid p_i > P$, and so that
$$\prod_{i = 1}^r \frac
	{q_i - 1}
	{q_i + 1}
< \epsilon.$$
This is possible because the Dedekind zeta function $\zeta_F$ has a pole at $1$, and because $\frac{q_i - 1}{q_i + 1} \leq 1 - \frac{1}{q_i}$.

With this in hand, let $t = (t_1,\,\ldots,\, t_r)$ be a tuple, where $t_i$ is either $0, \, 1,\, 2$, or `$\geq 3$'. For such a tuple, define the subsequence $(\nn_{\lambda, t})$, where $\nn_{\lambda} \in \{\nn_{\lambda, t}\}$ if
$\ord_{\pp_i}(\nn_{\lambda}) = t_i$ for $i = 1,\ldots,\, r$ (and where if $t_i =$ `$\geq 3$' then $\ord_{\pp}(\nn_{\lambda,t}) \geq 3$). This breaks the sequence $(\nn_\lambda)$ into finitely many subsequences.  We can assume each is either empty or infinite.

We'll show that for every $t$, if $\lambda$ is sufficiently high we have
$$\frac
	{\# B_k(\Gamma_1(\nn_{\lambda,t}),\,\chi)_{\leq A}}
	{\# B_k(\Gamma_1(\nn_{\lambda,t}),\,\chi)}
< \epsilon.$$

If $t_i$ is `$\geq 3$' for some $i$, we are done because each $\pp_i$ is chosen to be sufficiently large. If $t_i = 0$ for some $i$, we are done by Lemma \ref{lem10.4}.

If $t_i = 2$ for some $i$, let $S = \{\pp_i\}$, and let $\nn_{\lambda,t}' = \nn_{\lambda,t}/\pp_i^2$. Then we have 
\begin{align*}
	\frac
		{\# B_k(\Gamma_1(\nn_{\lambda,t}),\,\chi)_{\leq A}}
		{\# B_k(\Gamma_1(\nn_{\lambda,t}),\,\chi)}
	& \leq 
		1 - \frac
			{|\FF_{\cusp,\chi}(\widehat \one_{B_{\pp_i}},\, \widehat \phi_{\nn_{\lambda,t}', \chi},\, \xi_k)|}
			{|\FF_{\cusp,\chi}(\widehat \phi_{\pp_i^2,\, \chi},\,\widehat\phi_{\nn_{\lambda,t}', \chi},\, \xi_k)|}
	\\ & \to 1 - \frac
			{\mupl_{\pp_i,\,\chi}(B_{\pp_i})}
			{\mupl_{\pp_i,\,\chi}(\widehat \phi_{\pp_i^2,\,\chi})}
	\\ & < \epsilon
\end{align*}
(where the third line follows by the Plancherel equidistribution theorem)
so that eventually we have 
$$	\frac
		{\# B_k(\Gamma_1(\nn_{\lambda,t}),\,\chi)_{\leq A}}
		{\# B_k(\Gamma_1(\nn_{\lambda,t}),\,\chi)}
	< \epsilon.$$
	
The only remaining case is $t_1 = \ldots = t_r = 1$. Let $C_{\pp}$ be the finite set consisting of the two Steinberg points of conductor $1$ and central character $\chi_{\pp}$ along with the finite set of points $\{\pi_\pp\}$ in the unramified orbit where the Frobenius eigenvalues are Weil-$q$-integers of small enough degree. As in Lemma \ref{lem10.4}, the characteristic function $\widehat\one_{C_\pp}$ lives in $\mathscr{F}_0(\GL_2(F_\pp)^{\wedge})$ (the set of discontinuities is a finite set in the unramified orbit), and moreover we have
$$\mupl_{\chi,\,\pp}(\widehat \one_{C_\pp}) = {q - 1},$$
because there are two Steinberg representations with central character $\chi_{\pp}$ (corresponding to its two square roots) and because each has formal degree $\frac{q-1}{2}$. 

As such, if we take $S = \{\pp_1,\ldots,\, \pp_r\}$ and
$$\widehat f_S = \prod_{i = 1}^r \wh\one_{C_{\pp}},$$
then the same logic as in the $t_i = 2$ case tells us:
\begin{align*} 
\limsup_{\lambda \to \infty} \frac
	{\# B_k(\Gamma_1(\nn_{\lambda,t}),\,\chi)_{\leq A}}
	{\# B_k(\Gamma_1(\nn_{\lambda,t}),\,\chi)}
& \leq \frac
	{\mupl_{S,\chi}(\widehat f_S)}
	{\mupl_{S,\chi}(\widehat \phi_{\pp_1\ldots \pp_r,\chi})}
\\ & = \prod_{i = 1}^r \frac
	{\mupl_{S,\chi}(C_\pp)}
	{\phi_{\pp_i,\chi}(1)}
\\ & = \prod_{i = 1}^r \frac{q_i - 1}{q_i + 1}
\\ & < \epsilon\end{align*}
so in particular, we eventually have 
$$ \frac
	{\# B_k(\Gamma_1(\nn_{\lambda,t}),\,\chi)_{\leq A}}
	{\# B_k(\Gamma_1(\nn_{\lambda,t}),\,\chi)} 
< \epsilon$$
for this subsequence. This completes the proof.
\end{proof}

We also have a partial result for fields of rationality of newforms, which we briefly discuss. We would like to say that, as the norm of our ideals increases, that a smaller percentage of the set of newforms has bounded field of rationality. But there is an obstruction to our methods. Consider, for example, the case where $F = \QQ$, $\chi$ is the trivial character, and $(\nn_\lambda)$ is the sequence
$$(2, \, 2\cdot 3,\, 2\cdot 3 \cdot 5, \ldots).$$
In this case, for any prime $p$, our sequence will eventually satisfy $\ord_p(\nn_\lambda) = 1$, so in particular, the associated representation will always be Steinberg. Indeed, if we pick any finite set of primes $S$ and look at the representations $\pi_{f,S}$, the field of rationality at these places will eventually be $\QQ$. However, the corollary below shows that this is, in effect, the only obstruction.

\begin{cor} \label{cor10.5} Let $B_k^{\new}(\Gamma_1(\nn),\chi)$ be the canonical basis of newforms of weight $k$, character $\chi$ occuring in weight $k$, and level $\nn$, and define $B_k^{\new}(\Gamma_1(\nn),\chi)_{\leq A}$ as above. Let $(\nn_{\lambda})$ be a sequence of levels satisfying the following condition:

$$\text{For any $P \in \ZZ$, there is a finite set $\pp_1,\ldots,\, \pp_r$ with $\pp_i \mid p_i > P$,}$$
$$\text{and for all sufficiently high $\lambda$, there is an $i$ so that $\ord_{\pp_i}(\nn_\lambda) \neq 1$.}$$

Then
$$\lim_{\lambda \to \infty} \frac
	{B_k^{\new}(\Gamma_1(\nn_{\lambda}),\chi)_{\leq A}}
	{B_k^{\new}(\Gamma_1(\nn_{\lambda}),\chi)}
= 0.$$
\end{cor}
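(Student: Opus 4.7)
The plan is to mirror the proof of Theorem \ref{thm10.1} above, replacing Theorem \ref{thm9.1} by its newform analog Corollary \ref{cor9.5} and the test functions $\phi_{\nn,\chi}$ by $\phi^{\new}_{\nn,\chi}$. The extra hypothesis on $(\nn_\lambda)$ is designed to exclude the obstruction discussed just before the statement: a sequence for which every $\ord_{\pp}(\nn_\lambda) = 1$ eventually, at each prime $\pp$ in any fixed finite set, forces every newform to be Steinberg at those primes, and this cannot be remedied by averaging over more primes as in the Main Theorem.

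Fix $\epsilon > 0$ and $A \in \ZZ_{\geq 1}$. I would choose $P > 2A + 1$ large enough that for every prime $\pp$ coprime to $\ff$ with residue characteristic $p > P$, two conditions hold: first, every tempered orbit of $\GL_2(F_\pp)$ of conductor at least $3$ has $[\QQ(\OO):\QQ] > A$ by Proposition \ref{prop3.6}; second, the Plancherel mass of conductor-$2$ orbits with $[\QQ(\OO):\QQ] \leq A$ is at most $\epsilon \cdot \mupl_{\chi_\pp}(\wh\phi^{\new}_{\pp^2,\chi,\pp})$. The second condition is a direct adaptation of Lemma \ref{lem10.3}: the counting argument there already isolates the conductor-exactly-$2$ contribution and bounds it below by $q^2 - 1 - q f(A)$, while Corollary \ref{cor7.9} yields $\mupl_{\chi_\pp}(\wh\phi^{\new}_{\pp^2,\chi,\pp}) = \phi^{\new}_{\pp^2,\chi,\pp}(1) = q^2 - q - 1$, so the complementary ratio is $O(f(A)/q)$ and falls below $\epsilon$ for $q$ sufficiently large.

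Invoking the hypothesis, pick primes $\pp_1,\ldots,\pp_r$ coprime to $\ff$ with residue characteristics $p_i > P$ so that for all large $\lambda$ at least one $\ord_{\pp_i}(\nn_\lambda) \neq 1$. Partition $(\nn_\lambda)$ into finitely many subsequences $(\nn_{\lambda,t})$ indexed by tuples $t = (t_1, \ldots, t_r) \in \{0, 1, 2, \geq 3\}^r$, and show the proportion vanishes for each subsequence with at least one $t_i \neq 1$. If some $t_i \geq 3$, every newform in the subsequence has conductor at least $3$ at $\pp_i$, so Proposition \ref{prop3.6} forces $[\QQ(f):\QQ] > A$ and the numerator is identically zero. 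If some $t_i = 2$ but no $t_j \geq 3$, apply Corollary \ref{cor9.5} with $S = \{\pp_i\}$ and $\wh h_S$ the characteristic function of conductor-$2$ orbits with small field of rationality; the limiting proportion of such newforms equals the ratio of Plancherel masses estimated above, which is below $\epsilon$. If all $t_i \in \{0, 1\}$ but some $t_i = 0$, the $\pp_i$-component of every newform lies in the unramified tempered orbit, and $[\QQ(\pi_{\pp_i}):\QQ] \leq A$ forces the Frobenius eigenvalues into a finite set of Weil $q_i$-integers of bounded degree; this finite set has Plancherel measure zero on the one-dimensional orbit, so Corollary \ref{cor9.5} drives the proportion to zero, precisely as in Lemma \ref{lem10.4}.

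The main technical point is the newform version of Lemma \ref{lem10.3} used in the middle case. The enumeration of conductor-$2$ orbits with large field of rationality is already contained in the proof of Lemma \ref{lem10.3}, and the denominator computation $\wh\phi^{\new}_{\pp^2,\chi,\pp}(1) = q^2 - q - 1$ is immediate from the definition in Corollary \ref{cor7.9}; otherwise the argument is structurally parallel to the proof of Theorem \ref{thm10.1}, with the delicate all-$t_i = 1$ case now excluded for large $\lambda$ by hypothesis rather than controlled by a product expansion over many primes.
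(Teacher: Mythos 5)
Your proposal is correct and follows essentially the same approach as the paper: choose $P$ large enough for the field-of-rationality bounds and the Plancherel estimates to apply, invoke the hypothesis to produce the primes $\pp_1,\ldots,\pp_r$, decompose the sequence by the $\pp_i$-valuations, and dispatch each piece by the newform analog of the corresponding case in Theorem~\ref{thm10.1} (using Corollary~\ref{cor9.5} in place of Theorem~\ref{thm9.1}). The paper decomposes by pairs $(i,x)$ with $x \in \{0,2,\geq 3\}$ while you decompose by full tuples $t \in \{0,1,2,\geq 3\}^r$, but both are valid finite covers with the all-ones piece excluded by hypothesis; your exposition is merely more explicit, in particular in spelling out the newform normalization $\wh\phi^{\new}_{\pp^2,\chi,\pp}(1) = q^2 - q - 1$ needed to adapt Lemma~\ref{lem10.3}.
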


\begin{proof} Fix $\epsilon > 0$ and let $P$ be large enough that the results of Lemmas \ref{lem10.2} and \ref{lem10.3} hold. By hypothesis we can find primes $\pp_1,\ldots,\, \pp_r$ such that, for $\lambda$ sufficiently high we have some $\ord_{\pp_i}(\nn_\lambda) \neq 1$ for some $i$. Break the sequence into subsequences $(\nn_{\lambda, i, x})$ where $i = 1,\ldots,\, r$ and $x = 0, 2$ or `$\geq 3$'; say $\nn_{\lambda} \in \{\nn_{\lambda, i, x}\}$ if $\ord_{\pp_i}(\nn_\lambda) = x$. Each subsequence eventually satisfies:
 $$\frac
	{B_k^{\new}(\Gamma_1(\nn_{\lambda,i,x}),\chi)_{\leq A}}
	{B_k^{\new}(\Gamma_1(\nn_{\lambda,i,x}),\chi)}
< \epsilon$$
by the same logic as in the proof of the main theorem.
\end{proof}

\section{Appendix: Proofs of Fixed-Central-Character Representation-Theoretic Prerequesites}
\label{sec:11}
In this section we will prove some properties of the fixed central character Plancherel measure for $\GL_2$.  We begin with a couple of remarks: first, the proofs rely on little more than abelian Fourier analysis after we accept the non-fixed central character versions of these results.  Second, we have written the proofs to be as applicable as possible to larger groups.  In fact, we hope to use analogous results in future works.  We begin with some very basic lemmas on Hecke algebras.

\subsection{Basic Results on Hecke Algebras}
\label{subsec:11.1}

Recall the definition of the a fixed central datum Hecke algebra.  As before, $R$ is either a local field $L$ or the ring of ad\'{e}les over a number field $F$:

\begin{defn}\label{defn11.1} Let $\XX$ be a closed subgroup of the center $Z(\GL_2(R))$, and let $\chi: \XX \to \CC^{\times}$ be a unitary character. The \emph{Hecke algebra} $\HH(\GL_2(R),\, \XX,\,\chi)$ is the convolution algebra of smooth functions $\phi: \GL_2(R) \to \CC$ that are compactly-supported modulo $\XX$ and that satisfy the transformation property 
$$\phi(gx) = \phi(g)\chi(x)^{-1}\text{ for all $g\in G,\, x\in \XX$.}$$
\end{defn}

\begin{defn}\label{defn11.2}
Given $(\XX,\, \chi),\, (\XX',\,\chi')$ with $\XX \supseteq \XX'$ and $\chi|_{\XX'} = \chi'$, we define the \emph{averaging map} 
\begin{align*} 
	\HH(\GL_2(R),\, \XX',\,\chi') & \to \HH(\GL_2(R),\, \XX,\,\chi)
 	\\ \phi & \mapsto \overline \phi_{\XX, \chi}
\end{align*}
where $\overline \phi_{\XX, \chi}$ is defined by
$$\overline \phi_{\XX, \chi}(g) = \int_{\XX'\bs \XX} \phi(gx^{-1}) \chi(x)^{-1}\, dx.$$

In this case, we say $\overline \phi_{\XX,\,\chi}$ is the \emph{average} of $\phi$ over $\XX$ with respect to $\chi$.
\end{defn}

The rest of this subsection will be a sequence of Fourer-analytic lemmas that will be useful in the proofs of the fixed-central-character trace formula, Plancherel theorem, and Sauvageot theorem.

\begin{lem}\label{lem11.3} Let $(\XX,\,\chi),\, (\XX',\,\chi')$ be as above and let $\phi \in \HH(\GL_2(R),\, \XX',\, \chi')$. Assume the Haar measures on $\GL_2(R)/\XX,\, \GL_2(R)/\XX',$ and $\XX/\XX'$ are chosen compatibly. Then for any $\pi$ with $\chi_{\pi}|_{\XX} = \chi$ we have
$$\tr_{\XX'} \pi(\phi) = \tr_{\XX} \pi(\overline \phi_{\XX,\chi}).$$
\end{lem}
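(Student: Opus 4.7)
The plan is to unwind both sides into integrals over $\GL_2(R)$ modulo appropriate subgroups of the center and then apply Fubini together with the defining property of the central character of $\pi$. First I would write
\[
\tr_{\XX}\pi(\overline\phi_{\XX,\chi})
= \int_{\XX\bs\GL_2(R)}\overline\phi_{\XX,\chi}(g)\,\tr\pi(g)\,dg
= \int_{\XX\bs\GL_2(R)}\int_{\XX'\bs\XX}\phi(gx^{-1})\chi(x)^{-1}\tr\pi(g)\,dx\,dg,
\]
making sure that the exchange of trace with integration is legitimate (it is, since $\phi$ is compactly supported modulo $\XX'$, so the integrand is an integral of a smooth compactly supported operator-valued function).

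Next I would change variables. Under the compatibility of Haar measures between $\GL_2(R)/\XX$, $\XX/\XX'$ and $\GL_2(R)/\XX'$, the standard quotient integration formula gives
\[
\int_{\XX'\bs\GL_2(R)} F(h)\,dh
= \int_{\XX\bs\GL_2(R)}\int_{\XX'\bs\XX} F(gx^{-1})\,dx\,dg
\]
for any suitable $F$. Setting $h = gx^{-1}$, so that $g = hx$, one has $\pi(g) = \pi(h)\pi(x) = \chi(x)\,\pi(h)$ because $x\in\XX$ and $\chi_{\pi}|_{\XX} = \chi$. Hence the factor $\chi(x)^{-1}$ coming from the averaging is cancelled by $\chi(x)$ coming from the central character, leaving
\[
\tr_{\XX}\pi(\overline\phi_{\XX,\chi})
= \int_{\XX'\bs\GL_2(R)}\phi(h)\,\tr\pi(h)\,dh
= \tr_{\XX'}\pi(\phi),
\]
as desired.

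The substantive content is entirely bookkeeping: verifying that the quotient integration decomposition is the one singled out by the chosen compatible Haar measures, and that swapping trace with the $\XX'\bs\XX$-integral is legal. Both are routine once one observes that $\phi \in \HH(\GL_2(R),\XX',\chi')$ is compactly supported modulo $\XX'$, so the inner integral over $\XX'\bs\XX$ only involves a compact piece and the outer integral over $\XX\bs\GL_2(R)$ has compact support as well. There is no analytic obstacle, only the need to keep the role of $\chi$ on $\XX$ versus $\chi'$ on $\XX'$ straight.
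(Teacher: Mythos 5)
Your proposal is correct and follows essentially the same route as the paper: expand the average, apply Fubini and a change of variables, and use $\pi(x) = \chi(x)$ for $x \in \XX$ to cancel the $\chi(x)^{-1}$ from the averaging. The only cosmetic difference is that you write the character pointwise as ``$\tr\pi(g)$'' while the paper keeps the operator-valued integrand $\phi(g)\pi(g)$ and applies $\tr$ to the whole integral at the end; phrasing it the paper's way avoids any appeal to the character being a function.
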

\begin{proof} The proof is a quick application of Fubini's theorem:
\begin{align*}
	\tr_{\XX'} \pi(\overline \phi_{\XX',\chi'})
	& = \tr\left(\int_{\XX\bs \GL_2(R)} \int_{\XX' \bs \XX} \phi(gx)\, \chi(x)\pi(g)\, dx\,dg\right)
	\\ & = \tr\left(\int_{\XX'\bs \XX} \chi(x) \int_{\XX\bs \GL_2(R)} \phi(gx) \pi(g)\,dg\,dx\right)
	\\ & = \tr\left(\int_{\XX'\bs \XX} \chi(x) \int_{\XX\bs \GL_2(R)} \phi(g) \pi(gx^{-1})\,dg\,dx\right)
	\\ & = \tr\left(\int_{\XX'\bs \XX} \chi(x)\chi(x^{-1}) \int_{\XX\bs \GL_2(R)} \phi(g) \pi(g)\,dg\,dx\right)
	\\ & = \tr\left(\int_{\XX' \bs\GL_2(R)} \phi(g)\pi(g)\,dg\right)
	\\ & = \tr_{\XX} \pi(\phi)
\end{align*}
\end{proof}

\begin{defn}\label{defn11.4} Let $\phi \in \HH(\GL_2(R),\, \XX',\, \chi')$, let $\XX$ be a closed subset of $Z(\GL_2(R))$, and let $\Phi_{\XX}$ be smooth and compactly supported on $\XX$. We define the \emph{convolution} $\phi\star \Phi_{\XX}$ by
$$(\phi \star \Phi_{\XX})(g) = \int_{\XX} \phi(gx^{-1})\Phi_{\XX}(x)\, dx.$$
\end{defn}

\begin{defn}\label{defn11.5} Let $\Phi$ be smooth and compactly supported on $\XX$, a closed subset of $Z(\GL_2(R))$. We define its \emph{Fourier transform} $\widehat \Phi: \widehat \XX \to \CC$ by
$$\widehat \Phi(\chi) = \int_{\XX} \Phi(x)\chi(x)\,dx.$$
\end{defn}

Here, we apologize for the re-use of notation. We'll distinguish the following cases: when we use lower-case Greek letters like $\phi,\, \psi$, we mean elements of some Hecke algebra on $\GL_2(R)$, and $\widehat \phi,\, \widehat \psi$ will denote their \emph{Plancherel} transforms. Upper-case Greek letters $\Phi,\, \Psi$ will always denote functions on some closed subset of the center, and $\widehat \Psi,\, \widehat \Phi$ will always denote their \emph{Fourier} transforms as functions on a locally compact abelian group.

\begin{lem}\label{lem11.6} Let $\Phi_{\XX}$ be as above and let $\phi \in \HH(\GL_2(R),\, \XX',\, \chi')$ with $\XX\supseteq \XX'$. Then
\begin{enumerate} 
	\item $\phi \star \Phi_{\XX} \in \HH(\GL_2(R),\, \XX',\, \chi')$.
	\item For any $\pi$ with $\chi_\pi|_{\XX'} = \chi'$,
	$$\tr_{\XX'} \pi(\phi \star \Phi_{\XX}) = \widehat \Phi_{\XX}(\chi_{\pi}|_{\XX})\cdot \tr_{\XX'} \pi(\phi).$$
\end{enumerate} 
\end{lem}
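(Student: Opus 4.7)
The proof will be essentially a Fubini calculation, carried out in two parts corresponding to (1) and (2).

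For part (1), the plan is to verify in turn the three defining properties of membership in $\HH(\GL_2(R),\,\XX',\,\chi')$. Smoothness of $\phi\star\Phi_\XX$ is immediate from the smoothness of $\phi$ together with the compact support of $\Phi_\XX$ in $\XX$, since differentiation under the integral sign is justified. For compact support modulo $\XX'$, I would note that the support of $\phi\star\Phi_\XX$ is contained in $(\supp\phi)\cdot(\supp\Phi_\XX)^{-1}$; since $\supp\Phi_\XX$ is compact in $\XX\subseteq Z(\GL_2(R))$ and $\supp\phi$ is compact modulo $\XX'$, this product is still compact modulo $\XX'$. For the transformation law, given $y\in\XX'$, I would use that $y$ is central (so $gyx^{-1} = (gx^{-1})y$ inside the integrand) together with the transformation property $\phi(gx^{-1}y) = \phi(gx^{-1})\chi'(y)^{-1}$, which factors $\chi'(y)^{-1}$ out of the integral defining $(\phi\star\Phi_\XX)(gy)$.

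For part (2), the plan is to expand both sides and swap the order of integration. After writing
$$\tr_{\XX'}\pi(\phi\star\Phi_\XX) = \tr\int_{\XX'\bs\GL_2(R)}\int_\XX \phi(gx^{-1})\Phi_\XX(x)\,\pi(g)\,dx\,dg,$$
I would apply Fubini (justified below) and then perform the change of variables $g\mapsto gx$ in the inner integral. Since $x\in\XX\subseteq Z(\GL_2(R))$, this substitution respects the quotient $\XX'\bs\GL_2(R)$ and the Haar measure, and converts $\phi(gx^{-1})\pi(g)$ into $\phi(g)\pi(gx) = \chi_\pi(x)\phi(g)\pi(g)$. Pulling the scalar $\chi_\pi(x)$ outside leaves
$$\tr_{\XX'}\pi(\phi\star\Phi_\XX) = \left(\int_\XX \Phi_\XX(x)\chi_\pi(x)\,dx\right)\cdot \tr_{\XX'}\pi(\phi) = \widehat\Phi_\XX(\chi_\pi|_\XX)\cdot \tr_{\XX'}\pi(\phi),$$
which is the desired identity.

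The only genuine technical point is justifying Fubini and the interchange of trace and integral. This follows from the observation that, applied to any fixed vector $v\in V_\pi$, the compact support of $\Phi_\XX$ in $\XX$ and of $\phi$ modulo $\XX'$ make the double integral $\int_\XX\int_{\XX'\bs\GL_2(R)}|\phi(gx^{-1})\Phi_\XX(x)|\,\|\pi(g)v\|\,dg\,dx$ finite, so all interchanges are valid. This step is not expected to be a real obstacle, and the lemma follows from essentially the same Fubini argument as in Lemma \ref{lem11.3}, with $\Phi_\XX$ playing the role of a weight against which one is averaging the central variable.
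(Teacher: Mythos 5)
Your proposal is correct and follows essentially the same route as the paper: part (2) is exactly the paper's Fubini-plus-substitution computation (swap the integrals, change variables $g\mapsto gx$, use that $x$ is central so $\pi(gx)=\chi_\pi(x)\pi(g)$, and pull out $\widehat\Phi_\XX(\chi_\pi|_\XX)$). The paper dismisses part (1) as clear, and your verification of smoothness, compact support modulo $\XX'$, and the $\chi'$-transformation law is a sound filling-in of that step.
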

\begin{proof} (1) is clear, so we will prove (2).
We have
\begin{align*}
	\tr_{\XX'} \pi(\phi \star \Phi_{\XX})
	& = \tr\left(\int_{\XX' \bs \GL_2(R)} \int_{\XX} \phi(gx^{-1})\Phi(x)\pi(g)\,dx\,dg\right)
	\\ & = \tr\left(\int_{\XX} \Phi(x) \int_{\XX'\bs\GL_2(R)} \phi(gx^{-1}) \pi(g)\,dg\,dx\right)
	\\ & = \tr\left(\int_{\XX} \Phi(x) \int_{\XX'\bs\GL_2(R)} \phi(g) \pi(gx)\,dg\,dx\right)
	\\ & = \tr \left(\int_{\XX} \Phi(x)\chi_{\pi}(x) \int_{\XX' \bs \GL_2(R)} \phi(g)\pi(g)\,dg\,dx\right)
\end{align*}
completing the proof.	
\end{proof}

\begin{lem}\label{lem11.7} Let $L$ be a $p$-adic field and let $\chi: L^{\times} \to \CC^{\times}$ be a character.
Define
$$\Phi_{M,\chi}(z) = \frac{1}{M^2} \sum_{j = 0}^{M-1} \int_{-j \leq v_L(z) \leq j} \chi^{-1}(z)\,dz.$$
Then \begin{enumerate}
	\item if $\chi$ and $\tau$ differ by a ramified character, then $\widehat \Phi_{M,\,\chi}(\tau) = 0$,
	\item if $\tau \neq \chi$ then $\widehat \Phi_{M,\,\chi}(\tau) \to 0$ as $M \to \infty$, and 	
	\item if $\tau = \chi$ then $\widehat \Phi_{M,\, \chi}(\tau) = 1$ for all $M$.
\end{enumerate}
\end{lem}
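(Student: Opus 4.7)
\medskip

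\noindent\textbf{Proof plan for Lemma 11.7.} The plan is to interpret the displayed definition as
\[
\Phi_{M,\chi}(z) \;=\; \chi^{-1}(z)\cdot\frac{1}{M^2}\sum_{j=0}^{M-1}\one_{\{-j\le v_L(z)\le j\}}(z),
\]
so that $\Phi_{M,\chi}$ is a Cesàro-type (Féjer-like) averaged cutoff in the valuation direction, twisted by $\chi^{-1}$. With the multiplicative Haar measure on $L^\times$ normalized so $\vol(\oo_L^\times)=1$, the Fourier transform at a unitary character $\tau$ becomes
\[
\widehat\Phi_{M,\chi}(\tau)=\frac{1}{M^2}\sum_{j=0}^{M-1}\;\sum_{k=-j}^{j}\int_{\pi^k\oo_L^\times}(\chi^{-1}\tau)(z)\,d^\times z,
\]
after swapping the finite sum and the integral. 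This reduces everything to computing integrals of $\chi^{-1}\tau$ over shells $\pi^k\oo_L^\times$.

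First, for part (i), if $\chi$ and $\tau$ differ by a ramified character, then $\chi^{-1}\tau$ restricted to $\oo_L^\times$ is a nontrivial character of a compact group, so $\int_{\oo_L^\times}(\chi^{-1}\tau)\,d^\times z=0$. Translating by $\pi^k$ shows each shell integral vanishes, giving $\widehat\Phi_{M,\chi}(\tau)=0$ identically.

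Next, for part (iii), when $\tau=\chi$ the inner integrand is $1$, each shell integral equals $1$, and the $k$-sum contributes $2j+1$. Then $\sum_{j=0}^{M-1}(2j+1)=M^2$, so the normalization $1/M^2$ produces exactly $1$. For part (ii), by part (i) we may assume $\chi^{-1}\tau$ is unramified, and let $s=(\chi^{-1}\tau)(\pi)\in S^1$ with $s\neq 1$ (the case $s=1$ is excluded since $\tau\neq\chi$ means they differ on $\pi$ even when equal on $\oo_L^\times$). Then the shell integral over $\pi^k\oo_L^\times$ is $s^k$, so
\[
\sum_{k=-j}^{j}s^k \;=\;\frac{s^{j+1}-s^{-j}}{s-1},
\]
which is bounded in absolute value by $\tfrac{2}{|s-1|}$ uniformly in $j$. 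Plugging in gives
\[
|\widehat\Phi_{M,\chi}(\tau)|\le \frac{1}{M^2}\cdot M\cdot\frac{2}{|s-1|}=\frac{2}{M|s-1|}\longrightarrow 0
\]
as $M\to\infty$, completing (ii).

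The only mild subtlety — and really the only place care is needed — is verifying that the Cesàro normalization $1/M^2$ is precisely the right one to make part (iii) give exactly $1$ (not just bounded), and this is an elementary identity $\sum_{j=0}^{M-1}(2j+1)=M^2$. The rest is standard character orthogonality on the compact group $\oo_L^\times$ together with the geometric-series bound for an unramified twist.
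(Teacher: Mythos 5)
Your proof is correct and follows essentially the same route as the paper: expand the Fourier transform over the shells $\varpi^k\oo_L^\times$, use character orthogonality on $\oo_L^\times$ to dispose of the ramified case and to evaluate the $\tau=\chi$ case via $\sum_{j=0}^{M-1}(2j+1)=M^2$, and handle the unramified twist through the values $s=(\chi^{-1}\tau)(\varpi)$. The only cosmetic difference is that the paper rewrites the resulting weighted sum $\sum_{|i|<M}(M-|i|)s^{i}$ as $\tfrac{1}{M}$ times the Fej\'er kernel and quotes its decay for $s\neq 1$, whereas you bound each inner geometric sum uniformly in $j$, which even yields the explicit rate $2/(M|s-1|)$.
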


\begin{proof} We have 
\begin{align*} 
	\widehat \Phi_{M,\,\chi}(\tau) 
	& = \frac{1}{M^2} \sum_{j = 0}^{M-1} \int_{q_L^{-j} \leq |z| \leq q_L^j} (\tau\chi^{-1})(z)\,dz
	\\ & = \frac{1}{M^2} \left(\sum_{i = 1 - M}^{M - 1} 
		(M - |i|)\cdot (\tau\chi^{-1})(\varpi)^i\right)
		\left(\int_{\oo_L^\times} (\tau \chi^{-1})(z)\,dz\right)
\end{align*}
If $\tau\chi^{-1}$ is ramified, then the integral vanishes. If $\tau\chi^{-1} = 1$, then the integral is $1$ and the sum is $M^2$, completing (3). For (2), if $\tau \chi^{-1}$ is unramified, the sum is $\frac{1}{M}F_m(\tau\chi^{-1}(\varpi))$, where $F_M$ is the Fej\'er kernel from real Fourier analysis. For all $z \neq 1$ we have $\frac{1}{M} F_M(z) \to 0$, finishing the proof.
\end{proof}

\subsection{Fixed-Central-Character Plancherel Measure}
\label{subsec:11.3}

The goal of this section is to prove Proposition \ref{prop6.12}.  To begin, we construct a family of fixed-central-character Plancherel measures.  To this end, let $L$ be a local field and let $\chi:L^\times \to \CC^\times$ be a character, and let $\OO$ be an orbit in $\GL_2(L)^{\wedge,t}$.  Let $\OO_{\chi} = \OO \cap \GL_2(L)^{\wedge,t,\chi}$; then $\OO_{\chi}$ is either empty or an orbiforld of codimension 1 in $\OO$.  Henceforth, we'll assume $\OO_{\chi}$ is nonempty.

Let $\pi_0  = I_P^G(\omega_0)$ where $P$ is a parabolic subgroup with Levi subgroup $M$ and let $\omega$ be a discrete series representation of $M$.  Assume $\chi_{\pi_0} = \chi$.  Then are surjective maps 
$$X_u(M) \to \OO_M \to \OO$$
given by 
$$\tau \mapsto \omega \otimes \tau \mapsto I_P^G(\omega \otimes \tau).$$

Let $X_u(M)_0$ be the kernel of the restriction $X_u(M) \to X_u(Z(G))$. Then the above map restricts to a surjection $X_u(M) \to \OO_{\chi}$.  

Henceforth, if $Y \to Z$ is a finite surjective map of orbifolds, say $E \subseteq Y$ is a \emph{nice fundamental domain} if $E \to Z$ is bijective, and if there is an open set $U \subseteq E \subseteq \overline U$.

\begin{lem}\label{lem11.14} Let $E$ be a nice fundamental domain of $X_u(M)_0 \to \OO$, let $D_0$ be a nice fundamental domain of $X_u(G) \to X_u(Z(G))$, and let $D$ be the image of $D_0$ under $X_u(G) \to X_u(M)$. Then $D\times E \to X_u(M)$ is injective, and its image is a nice fundamental domain for $X_u(M) \to \OO$.
\end{lem}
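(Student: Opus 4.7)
The plan is to exploit the short exact sequence
\[
1 \to X_u(M)_0 \to X_u(M) \to X_u(Z(G))
\]
together with the fact that $X_u(G) \to X_u(Z(G))$ (the composition of $X_u(G) \to X_u(M)$ with the restriction) is a finite surjective cover, for which $D_0$ is by hypothesis a fundamental domain.

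First, I would establish that $D \to X_u(Z(G))$ is a bijection. This is where the roles of $D_0$ and $D$ reconcile: the composition $X_u(G) \to X_u(M) \to X_u(Z(G))$ agrees with the standard restriction map from $X_u(G)$, so that $D_0$ serves as a fundamental domain for it. Provided $X_u(G) \to X_u(M)$ is injective on $D_0$ (which follows from injectivity of $X_u(G) \to X_u(M)$ itself, since any unramified character of $\GL_2$ is detected on $T$), the map $D_0 \to D$ is a bijection, and hence $D \to X_u(Z(G))$ is bijective as well.

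Next, injectivity of $D \times E \to X_u(M)$: if $d_1 e_1 = d_2 e_2$ with $d_i \in D$ and $e_i \in E \subseteq X_u(M)_0$, then restriction to $X_u(Z(G))$ kills the $e_i$ and yields $d_1|_{Z(G)} = d_2|_{Z(G)}$. The bijectivity just established forces $d_1 = d_2$, whence $e_1 = e_2$. Third, I would check that the image $DE$ is a nice fundamental domain for $X_u(M) \to \OO$. For surjectivity onto $\OO$, given $\pi \in \OO$ with central character $\chi_\pi$, choose the unique $d \in D$ with $d|_{Z(G)} = \chi_\pi \chi^{-1}$; then the twist $\pi \otimes d^{-1}$ lies in $\OO_\chi$, so by the bijectivity $E \to \OO_\chi$ there is a unique $e \in E$ mapping to it, and $de \mapsto \pi$. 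Uniqueness of such $(d,e)$ follows from the injectivity step combined with the bijectivity of $D \to X_u(Z(G))$ and $E \to \OO_\chi$.

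Finally, for the topological condition, I would pick open $U_D \subseteq D$ and $U_E \subseteq E$ with $D \subseteq \overline{U_D}$ and $E \subseteq \overline{U_E}$, and argue that the multiplication map $\mu: X_u(G)^M \times X_u(M)_0 \to X_u(M)$ is a local diffeomorphism near $U_D \times U_E$, since the two factors meet transversally by the exact sequence above (their Lie algebras are complementary in $\Lie X_u(M)$). Hence $\mu(U_D \times U_E)$ is open in $X_u(M)$ and its closure contains $DE$, giving the ``nice'' condition. The main subtlety I anticipate is precisely this transversality/local-injectivity check, which is straightforward once one recognizes that $X_u(G)^M$ provides a complementary slice to the kernel $X_u(M)_0$; everything else is essentially formal manipulation of the exact sequence and the bijectivity of the three fundamental domains $D_0,\, D,\, E$.
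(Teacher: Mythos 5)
Your proof is correct and follows essentially the same route as the paper's (very terse) argument: the key input in both is the compatibility of unramified twisting with normalized induction, namely that multiplying $\tau \in X_u(M)$ by $d$ twists $I_P^G(\omega\otimes\tau)$ by the corresponding character of $G$, combined with the exact sequence $1 \to X_u(M)_0 \to X_u(M) \to X_u(Z(G))$. The extra verifications you supply (bijectivity of $D \to X_u(Z(G))$, injectivity of $D\times E \to X_u(M)$, and the transversality argument giving the openness condition) are precisely the details the paper dismisses as ``elementary.''
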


\begin{proof} The proof is elementary after realizing that if $\tau \in X_u(M)$ and $d\in D$, if $\tau \mapsto \pi$ then $d\tau \mapsto \pi \otimes \tau$.
\end{proof}

Recall that the canonical measure on $\OO$ is chosen so that the maps $X_u(Z(M)) \leftarrow X_u(M) \rightarrow \OO$ locally preserve measures when the Haar measure on $X_u(M)$ has total measure $1$, and that the Plancherel measure is absolutely continuous with respect to the canonical measure.  If $\OO_{\chi}$ is nonempty, we define the \emph{canonical measure} on $\OO_{\chi}$ so that $X_u(M)_0 \to \OO_{\chi}$ locally preserves measures for a choice of a Haar measure on $X_u(M)_{0}$, and such that $\OO$ and $\OO_{\chi}$ have the same canonical measure.

\begin{prop}\label{prop11.15} Let $d\chi$ be the Haar measure on $\widehat{L^\times}$ giving each unramified orbit total measure $1$. Then for any $\wh f\in L^1(\GL_2(L)^{\wedge,t})$, we have
$$\int_{\GL_2(L)^{\wedge,t}} \wh f \, d\pi = \int_{\wh{L^\times}} \int_{\GL_2(L)^{\wedge,t,\chi}} \wh f\,d\pi_{\chi}\,d\chi.$$
\end{prop}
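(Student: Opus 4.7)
The plan is to verify the identity one tempered orbit at a time, using Fubini on the parameterizing torus $X_u(M)$.

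First, I would decompose both sides as sums over tempered orbits $\OO \subseteq \GL_2(L)^{\wedge,t}$. On the right-hand side the integrand is supported on those $\chi$ for which $\OO_\chi$ is nonempty, which, as observed just after Proposition \ref{prop2.4}, is exactly the unramified orbit $Y_\OO = \{\chi \in \widehat{L^\times} : \chi|_{\oo_L^\times} = \chi_\OO\}$. By the chosen normalization, $Y_\OO$ has total $d\chi$-measure $1$. So it suffices to show, for each $\OO$, that
$$\int_\OO \wh f\, d\pi = \int_{Y_\OO}\int_{\OO_\chi}\wh f\, d\pi_\chi\, d\chi.$$

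Second, I would fix a Levi $M$ with parabolic $P$ and a discrete series $\omega$ of $M$ with $I_P^G(\omega) \in \OO$, and work with the surjection $\Phi: X_u(M) \to \OO$, $\tau \mapsto I_P^G(\omega \otimes \tau)$. By the definition of the canonical measure on $\OO$, on a nice fundamental domain $E \subseteq X_u(M)$ this map is measure-preserving from the total-mass-$1$ Haar measure on $X_u(M)$ to the canonical measure on $\OO$. The central character of $\Phi(\tau)$ is $\chi_\omega \cdot (\tau|_{Z(G)})$, so $\Phi$ covers the central-character map $\OO \to Y_\OO$ via the group homomorphism $\psi: X_u(M) \to X_u(Z(G))$, $\tau \mapsto \tau|_{Z(G)}$, whose kernel is $X_u(M)_0$.

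Next, using Lemma \ref{lem11.14}, I would choose a nice fundamental domain for $\Phi$ of the form $D \cdot E_0$, where $E_0 \subseteq X_u(M)_0$ is a nice fundamental domain for $X_u(M)_0 \to \OO_{\chi_\omega}$ and $D$ comes from a nice fundamental domain of $X_u(G) \to X_u(Z(G))$. Applying the Weil integration formula to the exact sequence $1 \to X_u(M)_0 \to X_u(M) \to \psi(X_u(M)) \to 1$ decomposes the integral of $\wh f\circ\Phi$ over $E$ as an iterated integral, first over $\psi(D) \subseteq X_u(Z(G))$ (which, after translation by $\chi_\omega$, sweeps out $Y_\OO$) and then over translates of $E_0$. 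The translate $\tau\cdot E_0$ is a fundamental domain for $X_u(M)_0 \to \OO_{\chi_\omega\psi(\tau)}$, so by the definition of the fixed-central-character canonical measure the fiber integral equals $\int_{\OO_{\chi_\omega\psi(\tau)}} \wh f\, d\pi_\chi$. This produces the desired identity on $\OO$, and summing over orbits gives the proposition.

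The main obstacle I anticipate is careful bookkeeping of Haar normalizations. The map $\psi$ need not be surjective onto $X_u(Z(G))$ (for example, when $M = G$ one has $\psi: \chi \mapsto \chi^2$, with image of index $2$), and the dimension of $X_u(M)_0$ varies with the type of orbit. One must check that the Haar measure on $\psi(X_u(M)) \subseteq X_u(Z(G))$ inherited from $d\chi$, together with the Haar measure on $X_u(M)_0$ forced by the requirement that $\OO_{\chi_\omega}$ and $\OO$ share the same canonical measure, multiply back to the total-mass-$1$ Haar measure on $X_u(M)$ under the Weil formula. Once that compatibility is verified, everything above goes through verbatim.
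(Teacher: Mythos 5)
Your proposal follows the paper's own proof essentially step for step: reduce to a single orbit, use the nice fundamental domain $DE \subseteq X_u(M)$ from Lemma \ref{lem11.14}, and apply Fubini to unwind the integral into an iterated integral over $d \in D$ and the fibers $dE \to \OO_{\chi\otimes d_{Z(G)}}$. One small slip in your final paragraph: for $M = G$ the restriction $\psi: X_u(G) \to X_u(Z(G))$ is the squaring map $S^1 \to S^1$, which is \emph{surjective} but $2$-to-$1$ (and correspondingly the Haar measure on $X_u(G)$ has total mass $2$, not $1$) — the real bookkeeping issue is the nontrivial kernel $X_u(G)_0$, not a proper image, and this is exactly what the paper's choice of normalization and Lemma \ref{lem11.14} are designed to handle.
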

\begin{proof} By summing over all orbits, we just need to prove this for $\wh f$ supported on a single orbit $\OO$. Consider the nice fundamental domain $DE \subseteq X_u(M)$ constructed in Lemma \ref{lem11.14}; if we fix $d$, we that $dE$ maps surjectively and injectively onto $\OO_{\chi \otimes d_{Z(G)}}$, where $d_{Z(G)}$ is the restriction of $d$ to $Z(G)$. Therefore, if we pull back $\wh f$ to $\wh f_{DE}$ on $DE$, we have
\begin{align*} 
	\int_{\OO} \wh f\,d\pi
	& = \int_{DE} \wh f_{DE}(de)\, dd\,de
	\\ & = \int_D \int_E \wh f_{DE}(de)\, de\,dd
	\\ & = \int_{\OO_{\chi'}\neq \emptyset} \int_{\OO_{\chi'}} \wh f(\pi) \, d\pi_{\chi}\,d\chi
\end{align*}
where the last equality holds because fixing $dE$ maps surjectively and injectively onto $\OO_{\chi\otimes d_{Z(G)}}$, and because the total measure on $DE$ is the same as the total measure on $\OO$.
\end{proof}

\begin{cor}\label{cor11.16} Let $d\chi$ be as above and let $\nu^{\pl}$ be the Plancherel density function from the previous section, and let $\mupl_{\chi} = \nu^{\pl} d\chi$ on $\GL_2(L)^{\wedge, t,\chi}$.  For any $\wh f\in \mathscr{F}_0(\GL_2(L)^{\wedge})$, we have
$$\mupl(\wh f) = \int_{\widehat{L^\times}} \mupl_{\chi}(\wh f)\,d\chi.$$
\end{cor}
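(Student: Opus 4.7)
The proof should be nearly immediate once Proposition \ref{prop11.15} is in hand, and the plan is simply to apply that proposition to the product $\wh f \cdot \nu^{\pl}$ rather than to $\wh f$ itself. The key conceptual point is that the Plancherel density function $\nu^{\pl}$ plays a dual role: it is both the Radon--Nikodym derivative of $\mupl$ with respect to the canonical measure $d\pi$ on $\GL_2(L)^{\wedge,t}$ and, by the construction of the fixed-central-character canonical measure on $\OO_{\chi}$, the Radon--Nikodym derivative of $\mupl_{\chi}$ with respect to $d\pi_{\chi}$ on $\GL_2(L)^{\wedge,t,\chi}$. This is essentially the content of Proposition \ref{prop6.12} (i), and it is what makes the disintegration of the canonical measure in Proposition \ref{prop11.15} automatically transport to a disintegration of the Plancherel measure.

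First I would check that $\wh f \cdot \nu^{\pl}$ lies in $L^1(\GL_2(L)^{\wedge,t}, d\pi)$, so that Proposition \ref{prop11.15} actually applies. Since $\wh f \in \mathscr{F}_0(\GL_2(L)^{\wedge})$, it is supported on finitely many Bernstein components and bounded there outside a set of Plancherel measure zero. On each such component, $\nu^{\pl}$ is a continuous (in fact, explicit rational) function of the inducing parameter, by the formulas recalled just before Remark \ref{rem6.11}; moreover the canonical measure of each orbit is finite. Hence $\wh f \cdot \nu^{\pl}$ is integrable against $d\pi$.

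With integrability in place, Proposition \ref{prop11.15} applied to $\wh f \cdot \nu^{\pl}$ yields
$$\int_{\GL_2(L)^{\wedge,t}} \wh f \cdot \nu^{\pl}\,d\pi = \int_{\widehat{L^\times}} \int_{\GL_2(L)^{\wedge,t,\chi}} \wh f \cdot \nu^{\pl}\,d\pi_{\chi}\,d\chi.$$
The left-hand side is $\mupl(\wh f)$ by the definition $d\mupl = \nu^{\pl}\,d\pi$, and the inner integral on the right is $\mupl_{\chi}(\wh f)$ by the identity $d\mupl_{\chi} = \nu^{\pl}\,d\pi_{\chi}$ (Proposition \ref{prop6.12} (i)). Substituting these gives precisely the claimed identity.

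There is essentially no obstacle specific to this corollary beyond what is already handled in Proposition \ref{prop11.15} and Proposition \ref{prop6.12} (i). The only minor point worth emphasizing in the write-up is that the orbits $\OO_{\chi}$ for which $\wh f \cdot \nu^{\pl}$ has nontrivial integral form, for $\chi$ varying, a measurable slicing that matches the one given by Proposition \ref{prop11.15}; this is automatic because the decomposition of the canonical measure was constructed through the surjection $X_u(M) \to \OO$ and its restriction $X_u(M)_0 \to \OO_{\chi}$, and $\nu^{\pl}$ is the pullback of a single continuous function under both.
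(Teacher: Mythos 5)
Your proof is correct and follows exactly the paper's approach: the paper's entire proof of Corollary \ref{cor11.16} is ``apply Proposition \ref{prop11.15} to the function $\pi \mapsto \nu^{\pl}(\pi)\wh f(\pi)$,'' which is precisely your plan. The extra care you take with integrability and with identifying $\nu^{\pl}$ as the density of both $\mupl$ against $d\pi$ and of $\mupl_{\chi}$ against $d\pi_{\chi}$ (Lemma \ref{lem11.18}) is a reasonable elaboration of the details the paper leaves implicit.
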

\begin{proof} Apply the above proposition to the function $\pi \mapsto \nu^{\pl}(\pi)\wh f(\pi)$.
\end{proof}

The measure above is the fixed-central-character Plancherel measure.   We must prove it has the properties stated in Proposition \ref{prop6.12}.  We'll prove the statements separately.

\begin{prop}\label{prop11.17} Fix $\phi\in \HH(\GL_2(L),\, Z(L),\, \chi)$.  We have
$$\phi(1) = \int_{\GL_2(L)^{\wedge, t, \chi}} \wh \phi(\pi) \,d\mupl_{\chi}(\pi).$$
\end{prop}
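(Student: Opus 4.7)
The plan is to derive the fixed-central-character Plancherel inversion from the classical Plancherel theorem (Theorem \ref{thm6.5}) by way of a Fej\'er-style approximate identity on the center, in the same spirit as the computations already used in the trace-formula section. The key move is to lift $\phi$ to a genuinely compactly-supported function $\phi_0$ on $\GL_2(L)$, apply the classical Plancherel theorem to well-chosen convolutions of $\phi_0$, and use the decomposition of $\mupl$ over central characters (Corollary \ref{cor11.16}) to isolate the $\chi$-component.

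Concretely, I would first produce a $\phi_0 \in C_c^\infty(\GL_2(L))$ with $\overline{\phi_0}_{Z(L),\chi} = \phi$; this is a standard cutoff construction, obtained by multiplying $\phi$ by the indicator of a compact fundamental domain for the squaring action of $|L^\times|$ on $|\det|$ together with an explicit normalization constant. By Lemma \ref{lem11.3}, $\wh\phi_0$ and $\wh\phi$ then agree on $\GL_2(L)^{\wedge,t,\chi}$. Next, I would take the Fej\'er-type kernels $\Psi_M \in C_c^\infty(Z(L))$ from Lemma \ref{lem11.7}, which satisfy $\wh\Psi_M(\chi) = 1$, $\wh\Psi_M(\tau) = 0$ whenever $\tau\chi^{-1}$ is ramified, and $\wh\Psi_M(\tau) \to 0$ (uniformly bounded by $1$) for all other $\tau \neq \chi$. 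Setting $\phi_{0,M} := \phi_0 \star \Psi_M \in C_c^\infty(\GL_2(L))$, Theorem \ref{thm6.5}, Lemma \ref{lem11.6}, and Corollary \ref{cor11.16} combine to give
$$
\phi_{0,M}(1) \,=\, \int_{\widehat{L^\times}} \wh\Psi_M(\tau)\, I(\tau)\, d\tau, \qquad I(\tau) \,:=\, \int_{\GL_2(L)^{\wedge,t,\tau}} \wh\phi_0(\pi)\, d\mupl_\tau(\pi).
$$
Finally, I would let $M \to \infty$: on the left, $\phi_{0,M}(1) = \int_{Z(L)} \phi_0(z^{-1})\Psi_M(z)\,dz$ is the Ces\`aro mean of $\int_{|v_L(z)| \le j} \phi_0(z^{-1})\chi^{-1}(z)\,dz$, which stabilises at $\phi(1)$ by compact support of $\phi_0$; on the right, since $I(\tau)$ is continuous and supported on the compact circle of unramified twists of $\chi$, classical Fej\'er kernel convergence gives the limit $I(\chi) = \int_{\GL_2(L)^{\wedge,t,\chi}} \wh\phi\, d\mupl_\chi$.

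The main obstacle I expect is verifying the hypotheses required for the Fej\'er convergence on the right-hand side, namely the continuity of $\tau \mapsto I(\tau)$ along the unramified-twist orbit through $\chi$. This reduces to the continuity of $\tau \mapsto \wh\phi_0(\pi_\tau)$ along each Bernstein component (a Paley-Wiener-type assertion for $\phi_0 \in C_c^\infty$) together with the continuity of the Plancherel density $\nu^{\pl}$, both of which are visible from the explicit description in Section \ref{subsec:6.3}. With this regularity secured, dominated convergence closes the argument.
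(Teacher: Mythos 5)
Your overall route is the paper's: lift $\phi$ to a compactly supported $\phi_0$ on $\GL_2(L)$ whose $(Z(L),\chi)$-average is $\phi$, convolve with central kernels built from the annuli $A_j=\{-j\le v_L(z)\le j\}$, apply the unrestricted Plancherel theorem together with the disintegration of $\mupl$ over central characters (Proposition \ref{prop11.15}/Corollary \ref{cor11.16}) to get $\phi_{0,M}(1)=\int \wh\Psi_M(\tau)I(\tau)\,d\tau$, and then pass to the limit by a Fej\'er argument; the continuity of $\tau\mapsto I(\tau)$ along the unramified orbit that you single out is indeed the regularity the paper uses (tacitly) in the last step.

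There is, however, a concrete normalization error that, as written, breaks the limiting step. The kernel of Lemma \ref{lem11.7} has the properties you quote ($\wh\Phi_{M,\chi}(\chi)=1$, $|\wh\Phi_{M,\chi}|\le 1$, $\wh\Phi_{M,\chi}(\tau)\to 0$ pointwise for $\tau\neq\chi$), but those are the right properties for a \emph{discrete} spectral sum (the trace-formula setting), not here: the integral on the right runs over the unramified circle through $\chi$, a measure with no atoms, so if $|\wh\Psi_M|\le 1$ and $\wh\Psi_M\to 0$ off the single point $\chi$, dominated convergence forces $\int \wh\Psi_M(\tau)I(\tau)\,d\tau\to 0$, not $I(\chi)$; with that same normalization $\phi_{0,M}(1)$ is $\tfrac1M$ times the Ces\`aro mean you describe and also tends to $0$, so the argument degenerates to $0=0$. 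What your two convergence claims actually presuppose is the rescaled kernel used in the paper, $\Psi_{M,\chi}=M\,\Phi_{M,\chi}=\tfrac1M\sum_{j=0}^{M-1}\one_{A_j}\chi^{-1}$: then $\phi_{0,M}(1)$ is literally the Ces\`aro mean of $\int_{A_j}\phi_0(z^{-1})\chi^{-1}(z)\,dz$ and converges to $\phi(1)$, while on the unramified orbit $\wh\Psi_{M,\chi}$ is the Fej\'er kernel $F_M$ (so $\wh\Psi_{M,\chi}(\chi)=M$, not $1$), an approximate identity of total mass $1$, and the limit $I(\chi)$ follows for continuous $I$. (Also, $I(\tau)$ is not itself supported on the unramified twists of $\chi$; it is $\wh\Psi_M$ that vanishes on ramified twists, which is what confines the integral to the circle.) With this correction your argument coincides with the paper's proof of Proposition \ref{prop11.17}.
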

\begin{proof}
We will use the Fourier-theoretic ideas from Subsection \ref{subsec:11.1}. Let 
$$\phi_0(g) = \begin{cases} 
	\phi(g) & |\det(g)|_{L} = 1 \text{ or } q_L
	\\ 0 & \text{ otherwise}
\end{cases}$$
so that $\overline \phi_{0,\chi} = \phi$, and therefore if $\chi_{\pi} = \chi$ we have $\tr_{Z(L)} \pi(\phi) = \tr \pi(\phi_0)$ by Lemma \ref{lem11.3}.

Let $A_j$ be the annulus $\{z\in L^\times: -j \leq v_L(z) \leq j\}$, and let
$$\Psi_{M,\chi}(z) = M\Phi_{M,\chi} = \frac{1}{M}\sum_{j = 0}^{M-1} \one_{A_j}(z)\chi^{-1}(z)$$
where $\Phi_{M,\,\chi}$ is as in Lemma \ref{lem11.7}. Define
\begin{align*} 
	\phi_M(g) 
	& = (\phi_0 \star \Psi_{M,\chi})(g) 
	\\ & = \frac{1}{M} \sum_{j = 0}^{M-1} \int_{A_j} \phi_0(gz)\chi(z)\, dz
\end{align*}
so that 
$$\phi(1) = \lim_{M\to\infty} \phi_M(1) = \lim_{M\to\infty} \mupl(\widehat \phi_M).$$

Moreover, we have
\begin{align*} 
	\lim_{M \to \infty} \mupl(\widehat \phi_M)
	& = \lim_{M\to\infty}\int_{\pi} \tr \pi(\phi_M) \,d\mupl_\chi(\pi)
	\\ & = \lim_{M\to\infty}\int_{\chi' \in \widehat{L^{\times}}} \int_{\chi_\pi = \chi'} \tr \pi(\phi_M)\, d\mupl_{\chi'}(\pi) \,d\chi'
	\\ & = \lim_{M\to\infty}\int_{\chi' \in \widehat{L^{\times}}} \int_{\chi_\pi = \chi'} \widehat \Psi_{M,\chi}(\chi') \cdot \tr \pi(\phi_0) \, d\mupl_{\chi'}(\pi)\, d\chi' & \text{by Lemma \ref{lem11.6}}
	\\ & = \lim_{M\to\infty}\int_{\chi' \in \widehat{L^{\times}}} \widehat \Psi_{M,\chi}(\chi') \left(\int_{\chi_\pi = \chi'} \tr \pi(\phi_0) \, d\mupl_{\chi'}(\pi) \right) \,d\chi'
\end{align*}
By Lemma \ref{lem11.7}, $\widehat \Psi_{M,\chi}(\chi') = 0$ if $\chi'\chi^{-1}$ is ramified. If $\chi'\chi^{-1}$ is ramified, then $\widehat \Psi_{M,\chi}(\chi') = F_M(\chi'\chi^{-1}(\varpi))$, where $F_M: S^1 \to \CC$ is the Fej\'er kernel. Since 
$$\int_{S^1}F_M(s) \,ds = 1$$
and 
$$\lim_{M\to\infty} \int_{S^1 - U} F_M(s)\,ds = 0$$
for any open neighborhood of $1$ in $S^1$, then for any continuous function $h: S^1 \to \CC$ we have
$$\lim_{M\to\infty} \int_{S^1} F_M(s) h(s)\,ds = h(1).$$ As such, we have
\begin{align*}
	\lim_{M\to\infty}\int_{\chi' \in \widehat{L^{\times}}} \widehat \Psi_{M,\chi}(\chi') 
		\left(\int_{\chi_\pi = \chi'} \tr \pi(\phi_0) \, d\mupl_{\chi'}(\pi) \right) \,d\chi' 
	& = \int_{\chi_\pi = \chi} \tr \pi(\phi_0) \, d\mupl_{\chi}(\pi)
	\\ & = \int_{\chi_\pi = \chi} \tr_{Z(L)} \pi(\phi)\,d\mupl_{\chi}(\pi) & \text{by Lemma \ref{lem11.3}}
	\\ & = \mupl_{\chi}(\wh\phi)
\end{align*}
completing the proof.
\end{proof}

\begin{lem}\label{lem11.18} Let $d\pi$ be the canonical measure on $\OO$ and let $d\pi_{\chi}$ be the canonical measure on $\OO_{\chi}$.  If $d\mupl = \nupl d\pi$, then $d\mupl_{\chi} = \nupl d\pi_{\chi}$.
\end{lem}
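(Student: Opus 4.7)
The plan is to combine the Fubini-type decomposition of canonical measures in Proposition~\ref{prop11.15} with the disintegration of the Plancherel measure from Corollary~\ref{cor11.16}. First I would apply Proposition~\ref{prop11.15} to the function $\nupl \wh f$, for a test function $\wh f \in \mathscr{F}_0(\GL_2(L)^{\wedge})$ supported on a finite union of Bernstein components (so that $\nupl \wh f$ is integrable), to obtain
$$\mupl(\wh f) \;=\; \int_{\GL_2(L)^{\wedge,t}} \nupl \wh f\, d\pi \;=\; \int_{\widehat{L^\times}} \int_{\GL_2(L)^{\wedge,t,\chi}} \nupl \wh f\, d\pi_{\chi}\, d\chi.$$
Combining this with the disintegration $\mupl(\wh f) = \int_{\widehat{L^\times}} \mupl_{\chi}(\wh f)\,d\chi$ supplied by Corollary~\ref{cor11.16} yields the integrated equality
$$\int_{\widehat{L^\times}} \mupl_{\chi}(\wh f)\,d\chi \;=\; \int_{\widehat{L^\times}} \left(\int_{\OO_{\chi}} \nupl \wh f\, d\pi_{\chi}\right) d\chi,$$
valid for every such $\wh f$.

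The next step is to promote this to the pointwise equality $\mupl_{\chi}(\wh f) = \int_{\OO_{\chi}} \nupl \wh f\, d\pi_{\chi}$ for each $\chi \in \widehat{L^\times}$; then letting $\wh f$ vary over a measure-separating family gives $d\mupl_{\chi} = \nupl\, d\pi_{\chi}$ on each slice, proving the lemma. For the pointwise step I would localize in $\chi$: for any continuous compactly supported cutoff $\eta: \widehat{L^\times} \to \CC$, the function $\pi \mapsto \eta(\chi_{\pi})\wh f(\pi)$ still lies in $\mathscr{F}_0(\GL_2(L)^{\wedge})$ (the set of discontinuities is unchanged up to measure zero), so substituting it into the identity above gives
$$\int_{\widehat{L^\times}} \eta(\chi)\, \mupl_{\chi}(\wh f)\, d\chi \;=\; \int_{\widehat{L^\times}} \eta(\chi) \left(\int_{\OO_{\chi}} \nupl \wh f\, d\pi_{\chi}\right) d\chi.$$
Density of such $\eta$ in $L^1(\widehat{L^\times})$ then forces the two $\chi$-indexed quantities to agree almost everywhere in $\chi$, and continuity in $\chi$ promotes this to pointwise equality.

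The main obstacle I anticipate is justifying the final continuity-in-$\chi$ step cleanly. Both $\chi \mapsto \mupl_{\chi}(\wh f)$ and $\chi \mapsto \int_{\OO_{\chi}} \nupl \wh f\, d\pi_{\chi}$ must be shown continuous when $\wh f$ is continuous and compactly supported on $\GL_2(L)^{\wedge,t}$. This rests on the fact that as $\chi$ varies within $\widehat{L^\times}$, the slices $\OO_{\chi}$ are related by unramified twists of the parametrizing torus $X_u(M)_0$, and both the canonical-measure construction and the Plancherel density $\nupl$ are equivariant under these twists. Once this continuity is secured, the remainder of the argument is a routine Fubini-plus-density computation.
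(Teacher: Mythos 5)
There is a gap, and it comes from never engaging with what $\mupl_{\chi}$ actually \emph{is}. In the appendix, $\mupl_{\chi}$ is \emph{defined} in Corollary \ref{cor11.16} as the measure with density $\nupl$ against the canonical measure on the slice $\GL_2(L)^{\wedge,t,\chi}$, and the disintegration identity $\mupl(\wh f)=\int\mupl_{\chi}(\wh f)\,d\chi$ is then \emph{deduced} from Proposition \ref{prop11.15}; Lemma \ref{lem11.18} is therefore immediate from the construction, which is exactly the paper's one-line proof. Running Proposition \ref{prop11.15} and Corollary \ref{cor11.16} back through each other, as you propose, is circular: you are using a consequence of the definition to try to recover the definition. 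The lemma only has independent content if you instead take $\mupl_{\chi}$ to be the measure characterized by the fixed-central-character inversion formula (Proposition \ref{prop6.12}, established in Proposition \ref{prop11.17}); but your argument never invokes that characterization, so it has no way of identifying $\mupl_{\chi}$ for the particular $\chi$ in the statement.

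Concretely, the gap surfaces at the localization step. The integrated identity tested against cutoffs $\eta$ pins down the family $\chi\mapsto\mupl_{\chi}(\wh f)$ only for almost every $\chi$ (note $\wh{L^\times}$ is a countable union of circles of unramified twists, each of $d\chi$-measure one), and the promotion to the given $\chi$ rests entirely on the asserted continuity of $\chi\mapsto\mupl_{\chi}(\wh f)$. For an abstract family satisfying only the disintegration identity this continuity is neither available nor even meaningful, since such a family is only determined up to a null set of $\chi$; for the inversion-formula measure it must be proved, e.g.\ via the equivariance $\mupl_{\chi\cdot d_{Z(G)}}(\wh f)=\mupl_{\chi}\bigl(\wh f(\,\cdot\,\otimes d)\bigr)$ for unramified unitary $d\in X_u(G)$, combined with the invariance of $\nupl$ and of the canonical measures under such twists --- which is essentially a rerun of the construction you are trying to bypass. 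So either the lemma holds by definition (the paper's reading, ``evident from the construction''), or the unproved continuity-in-$\chi$ step is carrying all of the content of your argument.
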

\begin{proof} This is evident from the construction.\end{proof}

\begin{lem}\label{lem11.19} If $\pi$ is not a discrete series representation, then $\mupl_{\chi} = 0$.  If $\pi$ is a discrete series representation, then $\mupl_{\chi}(\pi) = d(\pi)$, the formal degree of $\pi$.
\end{lem}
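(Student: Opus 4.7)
The proof will proceed by cases, treating the non-discrete-series and discrete-series representations separately.

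First, suppose $\pi$ is tempered but not a discrete series representation. Then $\pi \cong I_P^G(\omega)$ for some proper parabolic $P$ with Levi $M \subsetneq \GL_2(L)$, so the subgroup $X_u(M)_0 \subseteq X_u(M)$ of unramified characters trivial on $Z(G)$ is a positive-dimensional compact torus. The canonical measure $d\pi_\chi$ on $\OO_\chi$ is defined so that the finite surjection $X_u(M)_0 \twoheadrightarrow \OO_\chi$ locally preserves measures, and singletons have zero Haar measure in a positive-dimensional torus. Hence $d\pi_\chi(\{\pi\}) = 0$, and Lemma \ref{lem11.18} gives
$$\mupl_\chi(\{\pi\}) = \int_{\{\pi\}} \nupl \, d\pi_\chi = 0.$$

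Next, suppose $\pi$ is a discrete series representation. The plan is to construct a pseudo-coefficient $\phi_\pi \in \HH(\GL_2(L), Z(L), \chi)$ satisfying $\phi_\pi(1) = d(\pi)$ and $\tr_{Z(L)} \pi'(\phi_\pi) = \delta_{\pi, \pi'}$ for every tempered $\pi' \in \GL_2(L)^{\wedge, t, \chi}$, and then to apply Proposition \ref{prop11.17}:
$$d(\pi) = \phi_\pi(1) = \int_{\GL_2(L)^{\wedge,t,\chi}} \widehat{\phi_\pi}(\pi') \, d\mupl_\chi(\pi') = \mupl_\chi(\{\pi\}).$$
When $\pi$ is supercuspidal, one takes $\phi_\pi(g) = d(\pi)\cdot \overline{\langle \pi(g)v,\tilde v\rangle}$ with $\langle v,\tilde v\rangle = 1$; this lies in $\HH(\GL_2(L), Z(L), \chi)$ because supercuspidal matrix coefficients are compactly supported modulo the center, and Schur orthogonality together with the Bernstein decomposition yields the required trace identity. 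When $\pi$ is a Steinberg representation, an analogous pseudo-coefficient can be obtained either from Kottwitz's construction or by suitably truncating a matrix coefficient.

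The principal obstacle is property $\tr \pi'(\phi_\pi) = 0$ for tempered $\pi'$ outside the discrete series with central character $\chi$: since such $\pi'$ have matrix coefficients only in $L^{2+\epsilon}$ rather than $L^2$, the needed orthogonality does not follow immediately from Schur. For supercuspidal $\pi$ this is circumvented by noting that $\phi_\pi$ is supported in the Bernstein component of $\pi$, which contains no other tempered representations with the fixed central character, while the Steinberg case relies on the explicit form of the pseudo-coefficient. Once this property is secured, the Plancherel identity of Proposition \ref{prop11.17} delivers the formula $\mupl_\chi(\{\pi\}) = d(\pi)$.
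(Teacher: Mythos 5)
Your first half is exactly the paper's argument: for $\pi\cong I_P^G(\omega)$ with $M\neq G$, the group $X_u(M)_0$ has positive dimension, so singletons have canonical measure zero in $\OO_\chi$, and $d\mupl_\chi=\nupl\, d\pi_\chi$ (Lemma \ref{lem11.18}) finishes it. For the discrete-series half you take a genuinely different route: pseudo-coefficients fed into the inversion formula of Proposition \ref{prop11.17} (which is legitimate to invoke, since its proof does not use this lemma), whereas the paper computes directly that $\nupl(\pi)=d(\pi)$ on a discrete-series orbit (because $M=G$ forces $c(G|G)=\gamma(G|G)=j(\pi)=1$) and then, by comparing the covers $X_u(G)\to X_u(Z(G))$ ($n$-to-one) and $X_u(G)\to\OO$ ($r$-to-one), shows $\OO_\chi$ consists of exactly $n/r$ points, each of canonical measure $1$, hence of Plancherel mass $d(\pi)$. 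Your route works cleanly for supercuspidal $\pi$, where the normalized conjugate matrix coefficient is in $\HH(\GL_2(L),Z(L),\chi)$ and visibly satisfies $\phi_\pi(1)=d(\pi)$.

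There are, however, two real gaps. First, the Steinberg case is not actually proved. Matrix coefficients of $\St(\chi')$ are not compactly supported modulo the center, so the explicit construction is unavailable; you appeal to Kottwitz's Euler--Poincar\'e function or a ``truncated matrix coefficient,'' but the property you crucially need, $\phi_\pi(1)=d(\pi)=\frac{q-1}{2}$ in the paper's normalization, is precisely the quantitative content of the lemma and does not come for free: for the Euler--Poincar\'e function it requires an explicit evaluation at $1$ (an alternating sum over parahoric volumes, with sign and central-character averaging to be tracked), truncation destroys the exact orthogonality you rely on, and citing the classical identity ``a pseudo-coefficient takes the value $d(\pi)$ at the identity'' is dangerously close to circular, since that identity is normally deduced from the statement that discrete-series points are atoms of mass $d(\pi)$ for the fixed-central-character Plancherel measure. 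Second, your vanishing argument is partly wrong as stated: the Bernstein component of a supercuspidal $\pi$ can contain another tempered representation with the \emph{same} central character, namely $\pi\otimes(\eta\circ\det)$ with $\eta$ the unramified quadratic character (for dihedral representations attached to a ramified quadratic extension this twist is not isomorphic to $\pi$). The required vanishing $\wh\phi_\pi(\pi')=0$ for such $\pi'$ still holds, but by Schur orthogonality for inequivalent discrete series with a common central character, and for tempered principal series by the supercusp-form property of $\phi_\pi$, not by the component-counting you give.
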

\begin{proof} Assume $\pi\in \OO_{\chi}$ is not a discrete series, and that it is of the form $I_P^G(\omega)$ where $\omega$ is a discrete series representation of a Levi subgroup $M \neq G$.  Then $\dim X_u(M) = \dim(Z(M)) \geq \dim Z(G)$ and therefore $X_u(M)_0$ has positive dimension.  As such, the canonical measure on $\OO_{\chi}$ has no points of positive measure, and so neither does the Plancherel measure.

Now assume that $\pi\in \OO$ is a discrete series representation with central character $\chi$.  We will first show that the Plancherel density function $\nupl$ on $\OO$ is simply $\pi \mapsto d(\pi)$. To this end, we recall that the Plancherel density function is given by
$$\nupl(\pi) = c(G|M)^{-2} \gamma(G|M)^{-1} j(\omega) d(\omega)$$
where $\pi = I_G^P(\omega)$ and $M \leq P$ is a Levi subgroup.  Since $\pi$ is a discrete series representation, then $M = G$ and $\omega = \pi$.  From \cite[p. 240-241]{Wal03} we have that $c(G|G) = \gamma(G|G) = 1$.  Moreover, from the definition of $j$ as a composition of two intertwining operators on \cite[p. 236]{Wal03} it is clear that $j(\pi) = 1$.  This proves the claim.  In fact, if $\pi' = \tau \otimes \pi$ for \emph{any} character $\tau$ then $d(\pi) = d(\pi')$ by the definition of $d(\pi)$ on \cite[p. 265]{Wal03} (so that $\nupl$ is constant on a discrete series orbit).

We now compute the canonical measure of the orbit $\OO$.  Assume $X_u(G) \to X_u(Z(G))$ is an $n$-to-one cover and that $X_u(G) \to \OO$ is an $r$-to-one cover.  Then the canonical measure of $\OO$ is $n/r$.  

Therefore, it's enough to prove that $\OO_{\chi}$ consists of precisely $n/r$ isomorphism classes of discrete series representations.  First, because $X_u(G) \to X_u(Z(G))$ is an $n$-to-one cover then $X_u(G)_0$ has cardinality $n$; therefore, there is a surjection from a group of order $n$ to $\OO_{\chi}$ via a choice of basepoint. Finally, since $X_u(G) \to \OO$ is $r$-to-one, then so is $X_u(G)_0 \to \OO_{\chi}$ (note that if $\pi' = \tau \otimes \pi$, then $\pi'$ and $\pi'$ have the same stabilizers under the action of $X_u(G)$ by tensoring).  As such, each singleton in $\OO_{\chi}$ has canonical measure $1$, and therefore its fixed-central-character Plancherel measure is its formal degree.
\end{proof}

\begin{lem}\label{lem11.20} Sauvageot's density theorem holds for the fixed-central-character Plancherel measure.\end{lem}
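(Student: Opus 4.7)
The strategy is to reduce to the non-fixed central character Sauvageot density theorem (Theorem \ref{thm6.6}) through an extension-and-averaging argument, mirroring the passage from Harish-Chandra's Plancherel theorem to its fixed-central-character analogue in Proposition \ref{prop11.17}.

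First, I would extend $\wh f_\chi$ to an element $\wh F \in \mathscr{F}_0(\GL_2(L)^\wedge)$. Fix a smooth, nonnegative, compactly supported bump function $\Phi: \wh{L^\times} \to \CC$ with $\Phi(\chi) = 1$, supported in a small neighborhood of $\chi$. On each Bernstein component in the support of $\wh f_\chi$, the central character map $\pi \mapsto \chi_\pi$ is locally a fibration over $\wh{L^\times}$ (twisting by unramified characters acts freely on most orbits), so I use local sections to define $\wh F(\pi) = \Phi(\chi_\pi)\cdot \wh f_\chi(\pi_0)$, where $\pi_0$ denotes the unramified twist of $\pi$ whose central character equals $\chi$. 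Because $\Phi$ is compactly supported and $\wh f_\chi$ lives on finitely many components, $\wh F$ is supported on finitely many Bernstein components and remains continuous outside a Plancherel-measure-zero set, so $\wh F \in \mathscr{F}_0(\GL_2(L)^\wedge)$.

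Next, apply Theorem \ref{thm6.6} to $\wh F$ with some $\epsilon' > 0$ to be fixed later, producing $\phi, \psi \in C_c^\infty(\GL_2(L))$ with $|\wh F - \wh\phi| \leq \wh\psi$ everywhere and $\mupl(\wh\psi) < \epsilon'$. Average these over $Z(L)$ against $\chi$ via Definition \ref{defn11.2} to obtain $\overline\phi_\chi, \overline\psi_\chi \in \HH(\GL_2(L), Z(L), \chi)$; the averaging integrals converge because $\phi, \psi$ are compactly supported. By Lemma \ref{lem11.3} (with $\XX' = \{1\}$ and $\XX = Z(L)$), for any $\pi$ with $\chi_\pi = \chi$ the Plancherel transforms satisfy $\wh{\overline\phi_\chi}(\pi) = \wh\phi(\pi)$ and $\wh{\overline\psi_\chi}(\pi) = \wh\psi(\pi)$. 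Restricting the global inequality to the stratum $\GL_2(L)^{\wedge,\chi}$, on which $\wh F = \wh f_\chi$ by construction, yields $|\wh f_\chi - \wh{\overline\phi_\chi}| \leq \wh{\overline\psi_\chi}$ there.

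The main obstacle is controlling $\mupl_\chi(\wh{\overline\psi_\chi})$ from the global bound $\mupl(\wh\psi) < \epsilon'$: Corollary \ref{cor11.16} only gives $\int_{\wh{L^\times}} \mupl_{\chi'}(\wh\psi)\, d\chi' < \epsilon'$, which does not a priori bound the slice over the single character $\chi$. To circumvent this, I would further convolve $\psi$ with the Fej\'er-type kernel $\Psi_{M,\chi}$ from Lemma \ref{lem11.7}. This replacement keeps $\psi$ in $C_c^\infty(\GL_2(L))$ while multiplying its Plancherel transform pointwise by $\wh\Psi_{M,\chi}(\chi_\pi)$; because $\wh\Psi_{M,\chi}$ is a Fej\'er kernel in the central character variable, it concentrates the Plancherel mass onto the fiber $\chi_\pi = \chi$ as $M \to \infty$, exactly as in the proof of Proposition \ref{prop11.17}. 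A diagonal choice of large $M$ and small $\epsilon'$ then forces $\mupl_\chi(\wh{\overline\psi_\chi}) < \epsilon$, completing the proof; this Fourier-analytic concentration is the technical crux.
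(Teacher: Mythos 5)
The crux you identify — controlling $\mupl_{\chi}(\wh{\overline\psi_{\chi}})$ from the global bound $\mupl(\wh\psi)<\epsilon'$ — is exactly the hard point, but your proposed mechanism for it does not work. By Lemma \ref{lem11.6}, convolving $\psi$ with a central kernel multiplies its Plancherel transform on the fiber $\chi_\pi=\chi$ by the \emph{constant} $\wh{\Psi}_{M,\chi}(\chi)$; for the Fej\'er-normalized kernel $\Psi_{M,\chi}=M\Phi_{M,\chi}$ this constant is $M$ (it blows up), and for $\Phi_{M,\chi}$ it is $1$ (nothing changes on the slice). In neither case does the slice mass of the majorant shrink, and any rescaling that does shrink it simultaneously destroys the required pointwise domination $|\wh f_{\chi}-\wh{\overline\phi_{\chi}}|\leq\wh{\overline\psi_{\chi}}$ on that same slice, so the argument is circular. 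The concentration in the proof of Proposition \ref{prop11.17} is of a different nature: there the kernel is integrated against the continuous function $\chi'\mapsto\mupl_{\chi'}(\wh\phi_0)$ to recover a slice value \emph{in the limit}, proving an identity; it never converts a global bound into a bound at one slice. Indeed $\mupl_{\chi}(\wh\psi)=\overline\psi_{Z,\chi}(1)=\int_{Z}\psi(z^{-1})\chi(z)^{-1}dz$ depends on the values of $\psi$ along the whole center and is simply not controlled by $\mupl(\wh\psi)=\psi(1)$, no matter how $M$ and $\epsilon'$ are chosen diagonally.

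The paper gets around this with an idea missing from your proposal: twisting. It extends $\wh f_{\chi}$ to $\GL_2(L)^{\wedge}$ so as to be \emph{invariant under unramified twists} (via a nice fundamental domain for $X_u(G)\to X_u(Z(G))$, rather than your bump-function cutoff $\Phi(\chi_\pi)$, which is not twist-invariant and would also obstruct this repair). After applying Theorem \ref{thm6.6}, it observes that the unramified orbit $U_{\chi}$ of $\chi$ in $\wh{L^\times}$ has measure $1$, so $\int_{U_{\chi}}\mupl_{\chi'}(\wh\psi)\,d\chi'<\epsilon$ forces by pigeonhole \emph{some} $\chi'=\chi\cdot d_{Z(G)}$ in the orbit with $\mupl_{\chi'}(\wh\psi)\leq\epsilon$; twisting $\phi$ and $\psi$ by the unramified character $d$ then carries that good slice onto the $\chi$-slice (the Plancherel density being unchanged under unramified twists, and the extension being twist-invariant so the domination still compares against $\wh f_{\chi}$), and only then does one average over $Z(L)$ against $\chi$ as you do. Your steps of extending, applying Sauvageot, and averaging via Lemmas \ref{lem11.2}--\ref{lem11.3} are fine; the pigeonhole-plus-twist step is the genuine content you need to replace the Fej\'er convolution with.
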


\begin{proof} Let $D$ be a nice fundamental domain for $X_u(G) \to X_u(Z(G))$, and define $\wh f$ on $\GL_2(L)^{\wedge}$ by $\wh f(\pi) = \wh f_{\chi}(\pi \otimes d)$, where $d \in D$ is chosen uniquely so that $\chi_{\pi} \otimes d_{Z(G)} = \chi$. Note that $\wh f$ remains bounded and supported on a finite number of orbits. If $C_{\chi}$ is the set where $\wh f_\chi$ is continuous, then $\wh f$ is continuous on $C_{\chi}D$, which has measure equal to the measure of $\OO$.

Therefore, we can choose functions $\phi,\, \psi \in C_c^\infty(\GL_2(L))$ with $|\wh \phi - \wh f| \leq \psi$ and $\mupl(\wh \psi) \leq \epsilon$. Let $U_{\chi}$ be the unramified orbit of $\chi$ in $\wh{L^\times}$. Then we have
$$\int_{U_\chi} \mupl_{\chi'}(\wh \psi)\,d\chi' 
\leq \int_{\wh L^\times} \mupl_{\chi}(\wh \psi)\,d\chi'
< \epsilon$$
and since $U_{\chi}$ has measure $1$, there is a $\chi' \in U_{\chi}$ with $\mupl_{\chi'}(\wh\psi) \leq \psi$. Let $\chi' = \chi \cdot d_{Z(G)}$ for some $d \in D$. Let $\phi \otimes d$ be defined as $(\phi \otimes d)(g) = \phi(g)d(g)$, so that $\wh{\phi\otimes d}(\pi) = \wh\phi(\pi \otimes d)$. As such, we have
$$\mupl_{\chi}(\wh{\psi \otimes d}) = \mupl_{\chi'}(\wh \psi) \leq \epsilon,$$
since the Plancherel density function is unchanged by twisting by an unramified character.

Letting $\psi_{\chi}$ be the average of $\psi\otimes d$ with respect to $\chi$, and defining $\phi_{\chi}$ similarly, we see that $\phi_{\chi}$ and $\psi_{\chi}$ satisfy (1) and (2) in the statement of the proposition, completing the proof.
\end{proof}


\begin{thebibliography}{CMS90}
	
\bibitem[Art88]{Art88}
	J. Arthur,
	\emph{The invariant trace formula II. Global Theory},
	J. Amer. Math. Soc. \textbf{1} (1988),
	501-554.
	
\bibitem[Art89]{Art89}
	J. Arthur,
	\emph{The $L^2$-Lefschetz numbers of Hecke operators},
	Invent. Math. \textbf{97} (1989),
	 257-290.
	
\bibitem[Art02]{Art02}
	J. Arthur,
	\emph{A Stable trace formula. I. General expansions},
	J. Inst. Math. Jussieu \textbf{1} (2002),
	175-277.

\bibitem[Art13]{Art13}
	J. Arthur,
	\emph{The endoscopic classification of representations; orthogonal and symplectic groups},
	American Mathematical Society, Providence, RI, 2013.
	
\bibitem[AL70]{AL70}
	A.O.L. Atkin and J. Lehner,
	\emph{Hecke operators on $\Gamma_0(m)$},
	Math. Ann. \textbf{185} (1970),
	134-160.
	
\bibitem[AP05]{AP05}
	A.-M. Aubert and R. Plymen,
	\emph{Plancherel measure for GL(n, F) and GL(m, D): explicit formulas and Bernstein decomposition}.
	J. Number Theory \textbf{112} (2005),
	 26-66.
	
\bibitem[CD90]{CD90}
	L. Clozel and P. Delorme,
	\emph{Le th\'eor\`eme de Paley-Wiener invariant pour les groupes de Lie r\'educatifs. II},
	Ann. Sci. \'Ec. Norm. Sup\'er. \textbf{23} (1990),
	193-228.
	
\bibitem[CMS90]{CMS90}
	L. Corwin, A. Moy, and P. Sally,
	\emph{Degrees and formal degrees for division algebras and $\GL_n$ over a $p$-adic field},
	Pac. J. Math. \textbf{141} (1990), no. 1, 21-45.
	
\bibitem[FL13]{FL13}
	T. Finis and E. Lapid,
	\emph{An approximation principal for congruence subgroups}
	arXiv:1308.3604 [math.GR].
	
\bibitem[FL15]{FL15}
	T. Finis and E. Lapid,
	\emph{An approximation principle for congruence subgroups II: Application to the limit multiplicity problem}
	arXiv:1504.04795v1 [math.NT].
	
\bibitem[FLM14]{FLM14}
	T. Finis, E. Lapid, and W. Mueller,
	\emph{Limit multiplicities for principal congruence subgroups of $\GL(n)$ and $\SL(n)$},
	J. Inst. Math. Jussieu \textbf{14} (2014), 589-638.
	
	
\bibitem[Gel75]{Gel75}
	S. Gelbart,
	\emph{Automorphic forms on adele groups},
	Princeton University Press,
	Princeton (1975).
	
\bibitem[GJ79]{GJ79}
	S. Gelbart and H. Jacquet,
	Forms of $\GL(2)$ from the analytic point of view,
	In \emph{Automorphic forms, representations, and $L$-functions (Proc. Sympos. Pure Math., Oregon State Univ., Corvallis, Ore., 1977), Part 1},
	Proc. Sympos. Pure Math., XXXIII, pages 213-252.
	Amer. Math. Soc., Providence (1979).
	
	
\bibitem[KL06]{KL06}
	A. Knightly and C. Li,
	\emph{Traces of Hecke Operators},
	The American Mathematical Society,
	Providence (2006).

\bibitem[Pal12]{Pal12}
	M. Palm,
	\emph{Explicit $\GL(2)$ trace formulas and mixed Weyl laws}.
	arXiv:1212.4282v1 [math.NT].

\bibitem[RT11]{RT11}
	A. Raghuram and N. Tanabe,
	\emph{Notes on the arithmetic of Hilbert modular forms}.
	J. Ramanujan Math. Soc. \textbf{26} (2011), no. 3,
	pp. 261-319.
	
\bibitem[Sau97]{Sau97}
	F. Sauvageot,
	\emph{Principe de densit\'e pour les groupes r\'eductifs},
	Compos. Math. \textbf{108} (1997),
	151-184.
	
\bibitem[Sch02]{Sch02}
	R. Schmidt,
	\emph{Some remarks on local newforms for $\GL(2)$},
	J. Ramanujan Math. Soc. \textbf{17} (2002),
	115-147.

\bibitem[Ser80]{Ser80}
	J.-P. Serre,
	\emph{Trees},
	Springer-Verlag,
	Berlin-Heidelberg (1980).

\bibitem[Ser97]{Ser97}
	J.-P. Serre,
	\emph{R\'epartition asymptotic des values propres de l'op\'erateur de Hecke $T_p$},
	J. Amer. Math. Soc. \textbf{10} (1997), no. 1, 
	75-102.
	
\bibitem[Shi63]{Shi63}
	H. Shimizu,
	\emph{On Discontinuous Groups Operating on the Product of the Upper Half Planes},
	Ann. Math. \textbf{77} (1963), no. 1,
	33-71.
	
	
	
 \bibitem[Shi12]{Shi12}
	S.W. Shin,
	\emph{Automorphic Plancherel density theorem}.
	Israel J. Math. \textbf{192} (2012),
	 83-120.
	 
\bibitem[ST12]{ST12}
	S.W. Shin and N. Templier, 
	\emph{Sato-Tate theorem for families and low-lying zeros of automorphic $L$-functions},
	with appendices by Robert Kottwitz and Raf Cluckers, Julia Gordon, and Immanuel Halupczok,
	to appear in Invent. Math.
	
\bibitem[ST14]{ST13}
	S.W. Shin and N. Templier,
	\emph{On fields of rationality for automorphic representation},
	Compos. Math. \textbf{150} (2014), no. 12,
	2003-2053.

\bibitem[van72]{van72}
	G. van Dijk,
	\emph{Computation of certain induced characters of $p$-adic groups},
	Math. Ann. \textbf{199} (1972)
	229-240.
	
\bibitem[Wal03]{Wal03}
	J.-L. Waldspurger,
	\emph{La formule de Plancherel d'apr\`es Harish-Chandra}.
	J. Inst. Math. Jussieu \textbf{2}, (2003),
	235-333.
	
\bibitem[Wei09]{Wei09}
	J. Weinstein,
	\emph{Hilbert modular forms with prescribed ramification},
	International Mathematics Research Notices
	\textbf{2009} (2009), no. 8, 1388-1420.
	
\bibitem[Zel80]{Zel80}
	A.V. Zelevinsky,
	\emph{Induced representations of reductive $\pp$-adic groups. II. On irreducibly representations of $\GL(n)$},
	Ann. Sci. \'Ec. Norm. Sup\'er. \textbf{13} (1980), no. 2, 165-210.
	
\end{thebibliography}
\end{document}